\mathchardef\mhyphen="2D
\def\on{\operatorname}
\providecommand{\leftsquigarrow}{%
  \mathrel{\mathpalette\reflect@squig\relax}%
}
\newcommand{\reflect@squig}[2]{%
  \reflectbox{$\m@th#1\rightsquigarrow$}%
}
\definecolor{ao}{rgb}{0.0, 0.5, 0.0}
\newtheorem{theorem}{Theorem}[section]
\newtheorem{lemma}[theorem]{Lemma}
\newtheorem{proposition}[theorem]{Proposition}
\newtheorem{corollary}[theorem]{Corollary}
\newtheorem{introthm}{Theorem}
\theoremstyle{definition}
\newtheorem{construction}[theorem]{Construction}
\newtheorem{definition}[theorem]{Definition}
\newtheorem{notation}[theorem]{Notation}
\newtheorem{remark}[theorem]{Remark}
\newtheorem{example}[theorem]{Example}
\newtheorem{introex}{Example}
\title{Geometric models for the derived categories of Ginzburg algebras of $n$-angulated surfaces via local-to-global principles}
\author{Merlin Christ}
\date{\today}
\begin{document}
\maketitle

\abstract{We consider a class of relative $n$-Calabi--Yau dg-algebras, referred to as relative Ginzburg algebras, associated with marked surfaces equipped with a decomposition into $n$-gons ($n$-angulation). We relate their derived categories to the geometry of the surface. Results include the description of a subset of the objects in the derived categories in terms of curves in the surfaces and their Homs in terms of intersections. The description of these derived categories as the global sections of perverse schobers greatly facilitates the construction of these geometric models, as the construction reduces to gluing local data. This approach may be considered as a generalized, algebraic analogue of matching sphere constructions appearing in the symplectic geometry of Lefschetz fibrations. Most results also hold for the perverse schobers defined over any commutative ring spectrum. 

As an application of the geometric model in the case $n=3$, we match certain $\on{Ext}$-groups in the derived categories of these relative Ginzburg algebras and the extended mutation matrices of a class of cluster algebras with coefficients, associated to multi-laminated marked surfaces by Fomin-Thurston.}
\tableofcontents

\section{Introduction}

We consider a class of relative Ginzburg algebras arising from marked surfaces equipped with an ideal triangulation, introduced in \cite{Chr22}. These dg-algebras describe relative Calabi--Yau versions of $3$-Calabi-Yau Ginzburg dg-algebras associated with quivers with potentials associated with triangulated marked surfaces by Labardini-Fragoso \cite{Lab09}. We continue the study of the derived categories of these relative Ginzburg algebras, using their description as the global sections of perverse schobers, as started by the author in \cite{Chr22}. More generally, we further consider the derived categories of relative higher Ginzburg algebras associated to marked surfaces equipped with an ideal $n$-angulation (meaning a decomposition into $n$-gons). 

The aim of this paper is to relate these derived categories and the geometry of the underlying surfaces. We describe a subset of the objects in terms of curves in the surface and their Homs in terms of intersections of the curves, which forms a partial geometric model for these categories. The main innovation is that we arrive at these geometric models by local-to-global arguments, by using the sheaf properties of perverse schobers. Our results substantially improve on previous results in the literature on partial geometric models for non-relative Ginzburg algebras associated with marked surfaces \cite{Qiu16,Qiu18,QZ19,IQZ20}. While we focus on one class of examples of derived categories, these methods may find further applications in the study of global sections of other classes of perverse schobers.

Perverse schobers refer to categorifications of perverse sheaves, introduced by Kapranov-Schechtman in \cite{KS14}. While the general theory of perverse schobers is still conjectural, there is a working definition of perverse schobers on surfaces with nonempty boundary in terms of constructible sheaves of stable $\infty$-categories defined on ribbon graphs, see \cite{Chr22}. Such perverse schober are called parametrized by the ribbon graph. The derived category of the relative Ginzburg algebra associated with an $n$-angulated marked surfaces is equivalent to the category of global sections of a perverse schober on the surfaces, which is the limit of a certain finite diagram of stable $\infty$-categories. The perverse schober is parametrized by the dual ribbon graph of the given ideal $n$-angulations. Similar to the gluing of sections of sheaves, we are able to glue objects of the categories (global sections of the perverse schobers) from local data (local sections of the perverse schobers). This greatly reduces the complexity of constructing the partial geometric model. 

After a brief introduction to relative Ginzburg algebras of marked surfaces in \Cref{sec:1.1}, we describe their partial geometric model in \Cref{sec:1.2}. We comment on the relation of the geometric model to constructions in Fukaya categories of Lefschetz fibrations in \Cref{sec:1.3}. In \Cref{sec:1.4}, we describe an application of the geometric model to the categorification of the extended mutation matrices of a class of cluster algebras with coefficients associated to marked surfaces.

\subsection{Relative Ginzburg algebras of \texorpdfstring{$n$}{n}-angulated surfaces}\label{sec:1.1}

Ginzburg algebras are a class of $3$-Calabi-Yau dg-algebras, associated to quivers with potential, first considered by Ginzburg in \cite{Gin06}. Their derived categories have been used for, among other things, the categorification of cluster algebras, see \cite{Kel12} for a survey, and the algebraic description of Fukaya categories \cite{Smi15,Smi21,SW23}. Particularly relevant for this work are a class of Ginzburg algebras obtained from a quiver with potential constructed from a marked surface with an ideal triangulation. We also allow marked surface with marked points in the interior, called punctures. 

We are further concerned with a generalization of this class of Ginzburg algebras which are called relative Ginzburg algebras, introduced in \cite{Wu21,Chr22}. A relative Ginzburg algebra can be associated to an ice quiver with potential, meaning a quiver with the extra datum of a frozen subquiver and a potential. In the case of Ginzburg algebras arising from triangulated marked surfaces, the difference to the non-relative versions is that these dg-algebras incorporate additional information about the boundary of the surface; the frozen vertices of the ice quiver are given by the boundary edges of the triangulation. As demonstrated in \cite{Chr22}, such relative Ginzburg algebras can be glued from smaller relative Ginzburg algebras in the same way that the underlying marked surfaces can be glued from ideal triangles. These gluing properties are realized in loc.~cit.~in terms of the description of the derived $\infty$-categories of these relative Ginzburg algebras as the $\infty$-categories of global sections of perverse schobers. 

The first result of this paper is the following theorem, describing a generalization of the perverse schober description to include the derived $\infty$-category of the relative (higher) Ginzburg algebra $\mathscr{G}_\mathcal{T}$ associated with a marked surface equipped with an $n$-valent spanning graph $\mathcal{T}$, for some $n\geq 3$. The graph $\mathcal{T}$ is dual to an ideal $n$-angulation $\mathcal{T}$ of ${\bf S}$, meaning a decomposition of the surface into $n$-gons with vertices at the marked points. 

\begin{introthm}[\Cref{gluethm}]
Let ${\bf S}$ be a marked surface with an ideal $n$-angulation with dual $n$-valent spanning graph $\mathcal{T}$. There exists an equivalence of $\infty$-categories
\[
 \mathcal{H}(\mathcal{T},\mathcal{F}_\mathcal{T}(k))\simeq \mathcal{D}(\mathscr{G}_\mathcal{T})
\]
between the $\infty$-category of global sections of the perverse schober $\mathcal{F}_{\mathcal{T}}(k)$ and the derived $\infty$-category of the relative (higher) Ginzburg algebra $\mathscr{G}_\mathcal{T}$.
\end{introthm}

The integer $n$ describes the relative Calabi-Yau dimension of the relative Ginzburg algebra. The reason we consider $n$-angulated surfaces, rather than arbitrary polygonal surfaces, is that there exists a canonical, and particularly simple, choice of grading of the quiver underlying the relative higher Ginzburg algebra.

The $\mathcal{T}$-parametrized perverse schober $\mathcal{F}_\mathcal{T}(k)$ used for the description of $\mathcal{D}(\mathscr{G}_\mathcal{T})$ depends on the choice of commutative ground ring $k$. In large parts of this paper, we will replace $k$ by an arbitrary $\mathbb{E}_\infty$-ring spectrum and consider an $R$-linear version of the perverse schober, which is denoted $\mathcal{F}_\mathcal{T}(R)$.

\subsection{Geometric models}\label{sec:1.2}

Fix a marked surface ${\bf S}$ with an ideal $n$-angulation with dual $n$-valent spanning graph $\mathcal{T}$. We develop a geometric model for a full subcategory of the $\infty$-category of global sections of the perverse schober $\mathcal{F}_\mathcal{T}(R)$. The case $R=k$ then gives the geometric model for a full subcategory of the derived $\infty$-category of the relative Ginzburg algebra $\mathscr{G}_\mathcal{T}$. The derived $\infty$-category of the non-relative Ginzburg algebra associated with the $n$-angulation arises as the full subcategory of $\mathcal{D}(\mathscr{G}_\mathcal{T})$ consisting of the global sections of $\mathcal{F}_\mathcal{T}(k)$ whose evaluation at the external edges of $\mathcal{T}$ vanish. Our results for $\mathcal{D}(\mathscr{G}_\mathcal{T})$ thus restrict to a geometric model for (a full subcategory of) the derived category of the non-relative Ginzburg algebra, and this overlaps considerably with results from the series of papers \cite{Qiu16,Qiu18,QZ19,IQZ20}. We however substantially extend these previous results in generality. For instance, the previous geometric models were restricted to the $k$-linear case, to marked surfaces without interior marked points and, for $n\geq 4$, to (a subset of) the finite modules.

For the geometric model, we consider certain homotopy classes of curves in ${\bf S}$, referred to as matching curves. Such matching curves have endpoints on the boundary of ${\bf S}$ or at the vertices of $\mathcal{T}$ and may also be closed. We associate a global section $M_{\gamma}^L$ of $\mathcal{F}_{\mathcal{T}}(R)$ to each matching datum $(\gamma,L)$ in the surface, which consists of a matching curve, and a 'local value' $L$, which in the case $R=k$ corresponds to a module over the polynomial algebra $k[t_{n-2}]$ with generator in degree $|t_{n-2}|=n-2$.  By choosing different values of $L$, we can realize in the geometric model classes of finite, perfect and even non-perfect $\mathscr{G}_\mathcal{T}$-modules. For example, we show that the direct summands (i.e.~projective modules associated to vertices of the quiver) of the relative Ginzburg algebra $\mathscr{G}_\mathcal{T}$ are equivalent to modules $M_{\gamma}^{k[t_{n-2}]}$ associated to matching data with local value the polynomial algebra itself. Two other distinguished choices of local value $L$ are $L=k$ the trivial module and $L=k[t_{n-2}^\pm]$ the module of Laurent polynomials.

For a certain subclass of the matching curves, called pure matching curves, we further describe the $R$-linear morphism objects (i.e.~derived Homs) between the $M_{\gamma}^L$'s in terms of multiple types of intersections of the curves, see \Cref{homthm,homthm2}. The simplest examples of the description of the endomorphisms in \Cref{homthm2} are the following:

\begin{introex}
Let $\gamma$ be a pure matching curve in ${\bf S}$ without self-intersections. Let $L=k\in \mathcal{D}(k[t_{n-2}])$ be the trivial module.
\begin{enumerate}
\item[(1)] Suppose that $\gamma$ has both its endpoints at the vertices of $\mathcal{T}$. Then $M_{\gamma}^k$ is an $n$-spherical object, meaning that its derived endomorphisms are given by
\[ \on{REnd}(M_{\gamma}^k)\simeq k\oplus k[-n]\,.\]
\item[(2)] Suppose that $\gamma$ has one endpoint at a vertex of $\mathcal{T}$ and one endpoint on the boundary of ${\bf S}$. Then $M_{\gamma}^k$ is an exceptional object, meaning that
\[ \on{REnd}(M_{\gamma}^k)\simeq k\,.\]
\item[(3)] Suppose that $\gamma$ has both its endpoints on the boundary of ${\bf S}$. Then $M_{\gamma}^k$ is an $(n-1)$-spherical object, meaning that
\[ \on{REnd}(M_{\gamma}^k)\simeq k\oplus k[1-n]\,.\]
\end{enumerate}
See also \Cref{homex} for a precise statement.

The above $n$-spherical objects arise from modules over the non-relative Ginzburg algebras. The exceptional and $(n-1)$-spherical objects do not arise from the non-relative Ginzburg algebra. Their natural interpretation is as algebraic analogues of canonical Lagrangians appearing in partially wrapped Fukaya categories of Lefschetz fibrations, see also \Cref{sec:1.3}. From this perspective, the exceptional objects correspond to Lefschetz thimbles.
\end{introex}

For non-pure matching curves and $L$ arbitrary, we show in \Cref{ex:counter} that the morphism objects do not simply count intersections. An apparent exception is given by the extremal case $L=k$. We nevertheless exclude non-pure matching data with local value $L=k$ in our treatment, because a systematic description of the morphisms objects would require dealing with gradings of the surface and the curves (which is not necessary when dealing with pure matching curves). In the other extremal case $L=k[t_{n-2}^\pm]$, all matching curves in matching data can be chosen to be pure without losing generality, so that we effectively impose no restriction in this case.  

Finally, we also express derived equivalences between relative Ginzburg algebras associated to flips of the $n$-angulations in the geometric model via rotations of parts of the surface, see \Cref{mutthm}.

We give two direct applications of the geometric model in this paper, further applications related to the categorification of cluster algebras are given in \cite{Chr22b}. The first is the description of the extended mutation matrices of a class of cluster algebras with coefficients associated to marked surfaces in terms of $\on{Ext}$-groups in $\mathcal{D}(\mathscr{G}_\mathcal{T})$, see below in \Cref{sec:1.4}. The second is the description of the homology algebra $H_\ast(\mathscr{G}_\mathcal{T})$ in the case that the surface has no punctures. We show in \Cref{jacprop4}, that $H_*(\mathscr{G}_\mathcal{T})$ is equivalent to the tensor algebra $\mathscr{J}_\mathcal{T}\otimes_k k[t_{n-2}]$, where $\mathscr{J}_\mathcal{T}=H_0(\mathscr{G}_\mathcal{T})$ is the Jacobian algebra. We will also see that $\mathscr{J}_\mathcal{T}$ is always a gentle algebra, generalizing an observation from \cite{ABCP10}. However, if ${\bf S}$ contains punctures, the Jacobian algebra may be infinite dimensional.
The homology algebras of non-relative Ginzburg algebras are generally more complicated; we propose to consider relative Ginzburg algebras as the simpler versions of Ginzburg algebras, which encode the same or more information in their derived categories.

\subsection{Remarks on the relation to Fukaya categories of Lefschetz fibrations}\label{sec:1.3}

The handling of Fukaya categories comes with many analytical and geometric difficulties. For an efficient application of the rich intuition which Fukaya categories provide to typical representation theoretic questions, such as classifications problems, an algebraic approach to the construction of Fukaya categories is desirable. The emerging theory of perverse schobers seeks to give a higher categorical and algebraic approach to the construction of (some classes of) Fukaya categories and other Fukaya-type categories. 

For a typical instance where perverse schobers might be applied, consider a Lefschetz fibration $\pi:X\rightarrow {\bf S}$ from a suitable exact symplectic manifold $X$ to a surface equipped with a set of marked points $M\subset \partial {\bf S}$. There should exist a perverse schober on ${\bf S}$, parametrized by a spanning graph of ${\bf S}$, which can be described in terms of the typical fiber of the fibration and the vanishing cycles at the singular fibers. The category of global sections of the parametrized perverse schober should describe the partially wrapped Fukaya category of $X$ with stops which lie above $M$. In case that ${\bf S}$ is a disc, the resulting theory is supposed be similar to the approach to Fukaya categories of Lefschetz fibrations by Fukaya-Seidel categories, see \cite{Sei08}.

These expectations formed the main motivation for the perverse schober description of the derived category of a relative Ginzburg algebra of a triangulated surface in terms of a perverse schober in \cite{Chr22}. As shown by Ivan Smith \cite{Smi15}, the finite derived category of the corresponding non-relative Ginzburg algebra admits an embedding into the (non-wrapped) derived Fukaya category of a Calabi-Yau $3$-fold $Y$, equipped with a Lefschetz fibration to the surface. The typical fiber of the fibration is the cotangent bundle $T^*S^2$ of the $2$-sphere; the generic stalk of the perverse schober is hence given by its Ind-complete wrapped Fukaya category $\on{Ind}\mathcal{W}(T^*S^2)\simeq \mathcal{D}(k[t_1])$. Even though the derived categories of higher Ginzburg algebras of $n$-angulated surfaces have for $n>3$ so far not been related to Fukaya categories, we can nevertheless exhibit algebraic versions of many of the usual features possessed by partially wrapped Fukaya categories of Calabi-Yau $n$-folds equipped with Lefschetz fibrations to surfaces with typical fiber $T^*S^{n-1}$. 

Before we return to relative Ginzburg algebras, we briefly recall the geometric model for the derived categories of (graded) gentle algebras, proven in the ungraded, respectively, underived setting in \cite{OPS18,BS19}. These derived categories are by \cite{HKK17,LP20} equivalent to the partially wrapped Fukaya categories of surfaces. These categories are further known as topological Fukaya categories and can be described as the global sections of constructible (co)sheaves on ribbon graphs embedded in the surfaces, see \cite{DK18,DK15}, which fit into the framework of parametrized perverse schobers. For this class of categories, it is clear how the geometric model, describing objects in terms of decorated curves, relates to the symplectic geometry. These curves themselves simply describe Lagrangians (half-dimensional submanifolds on which the symplectic form vanishes) inside the surface and thus objects in the Fukaya category. 

The geometric model for the derived category $\mathcal{D}(\mathscr{G}_\mathcal{T})$ of the relative Ginzburg algebra $\mathscr{G}_\mathcal{T}$ does not seek to describe the objects in terms of some half-dimensional subspaces of some speculative Calabi-Yau $n$-fold $Y$. Instead, given a Lagrangian $U\subset Y$ whose image under the Lefschetz fibration is a curve $\gamma$ in the surface, the corresponding object should be given by $M_{\gamma}^L\in \mathcal{D}(\mathscr{G}_\mathcal{T})$ (see \Cref{sec:1.2} for the notation), where $L\subset T^*S^{n-1}$ is the Lagrangian given by the typical fiber of the map $U\rightarrow \gamma$. Indeed, in our geometric model, $L$ is an object of $\mathcal{D}(k[t_{n-2}])$, which is by \cite{Abo11} equivalent to the $\on{Ind}$-complete wrapped Fukaya category of $T^*S^{n-1}$. Two particularly interesting choices of $L$ are the trivial $k[t_{n-2}]$-modules $k$, which corresponds to the Lagrangian zero-section of $T^*S^{n-1}$, and also $k[t_{n-2}]$, which corresponds to the Lagrangian fiber of the projection $T^*S^{n-1}\rightarrow S^{n-1}$. The other interesting choice $L=k[t_{n-2}^\pm]$, the module of Laurent polynomials, is a non-compact object and thus does not lie in the wrapped Fukaya category, only in its $\on{Ind}$-completion.

Let us further highlight the special case that a Lagrangian $U$ in $Y$ intersects the singular fibers of the Lefschetz fibration. In that case, if $U$ maps to a curve $\gamma$, this curve will end in a vertex of the parametrizing ribbon graph; which describe in the case $n=3$ the singular values of the Lefschetz fibration. If both endpoints of $\gamma$ are at singular values, then $U$ is a compact Lagrangian sphere and the corresponding object $M_{\gamma}^k\in \mathcal{D}(\mathscr{G}_\mathcal{T})$ is a spherical object, see \Cref{homex}. In this case, we thus algebraically recover the well-known construction of Lagrangian matching cycles, see \cite{Sei08}. 

\subsection{Categorification of the extended mutation matrix}\label{sec:1.4}

Ginzburg algebras can be used for the categorification of cluster algebras. Central in this regard is Amoit's quotient construction \cite{Ami09}, which defines the $2$-CY generalized cluster category in terms of the derived category of the Ginzburg algebra. There are also more direct links between Ginzburg algebras and the combinatorics of cluster algebras. For example, the mutation matrix of a cluster algebra can be recovered via the Euler-characteristics of the $\on{Ext}$-complexes of the simple $3$-spherical modules over the Ginzburg algebra associated to the vertices of the underlying quiver. This observation is made, formulated in the more general setting of cluster collections, in \cite[Section 8.1]{KS08}. As an application of the geometric model for relative Ginzburg algebras, we extend the relation between the mutation matrices and Ginzburg algebras to extended mutation matrices and relative Ginzburg algebras of triangulated surfaces. The extended mutation matrix consists of the mutation matrix and the $c$-matrix, the latter encodes the coefficients of the cluster algebra. 

We consider the class of cluster algebras with coefficients introduced in \cite{FT12}. The input is a marked surface with an ideal triangulation and a multi-lamination, that is a collection of laminations which are each a collection of certain disjoint curves in the surface, which we call lamination curves. The cluster variables of the cluster algebra are the lambda lengths, i.e.~certain coordinates on the decorated Teichm\"uller space of the surface. Fomin and Thurston define the $c$-matrix in terms of the shear coordinates of the surface, which describe a signed count of intersections of the laminations and the edges of the triangulation.

Each lamination curve $\gamma$ gives rise to a matching datum $(\gamma,k)$ with local value $L=k$ chosen to be the trivial representation $k\in \mathcal{D}(k[t_{1}])$, and thus to a global section $M_{\gamma}^k\in \mathcal{D}(\mathscr{G}_\mathcal{T})$. We can hence associate a $\mathscr{G}_\mathcal{T}$-module $M_{\lambda}^k$ to a lamination $\lambda$ as the direct sum of the modules associated to the lamination curves. The categorical model for the $c$-matrix consists of the Euler characteristics of the $\on{Ext}$-groups between the $3$-spherical modules associated to vertices of the quiver and the $M_{\lambda}^k$'s. Using our description of the morphism objects in terms of intersections, we can match the shear coordinates and the Euler characteristic of the $\on{Ext}$-groups, which gives us the following.

\begin{introthm}[\Cref{bmatthm}]\label{introthm3}
Consider a marked surface ${\bf S}$ with an ideal triangulation and a multi-lamination $\Lambda$. The extended mutation matrix of \cite{FT12} agrees with the categorical extended mutation matrix.
\end{introthm}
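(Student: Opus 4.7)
The plan is to reduce the identification of the two extended mutation matrices to the Hom-object description of \Cref{homthm,homthm2} and a case analysis matching signed intersections with Fomin--Thurston shear coordinates. First I would identify the relevant modules geometrically. The simple $3$-spherical module $S_i$ at a vertex $i$ of the underlying quiver of $\mathscr{G}_\mathcal{T}$ is obtained as $M_{\gamma_i}^{k}$ for a short matching arc $\gamma_i$ dual to the edge $i\in\mathcal{T}$, and the module $M_\lambda$ associated to a lamination $\lambda$ decomposes as the direct sum of $M_{\ell}^{k}$ over the lamination curves $\ell\in\lambda$, each taken with constant local system $k$. Both identifications follow directly from the construction of the $M_\gamma^L$'s developed earlier in the paper.

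With these identifications in hand, the $(i,\lambda)$-entry of the categorical $c$-matrix is $\chi\,\on{Ext}^\ast(S_i, M_\lambda)$, and by \Cref{homthm,homthm2} this is computed as a graded count of transverse intersections of $\gamma_i$ with the curves of $\lambda$. The homological degree of each generator depends only on the local combinatorics of the crossing, namely on which sides of $i$ the lamination curve enters and leaves the quadrilateral surrounding $i$. Taking the alternating sum over homological degrees produces a $\pm 1$ signed intersection rule supported on that quadrilateral. I would then verify by a direct case analysis on the finite list of combinatorial configurations of a lamination traversing a quadrilateral that this rule coincides with Fomin--Thurston's definition of the shear coordinate $b_{i\lambda}$ in \cite{FT12}. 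For the $B$-matrix block of the extended mutation matrix, the same Hom-to-intersection computation, applied to two dual arcs of $\mathcal{T}$, recovers the known identification of arrows in the quiver of $\mathscr{G}_\mathcal{T}$ with Euler characteristics of $\on{Ext}$'s between $3$-spherical simples, in the spirit of \cite[Sec.~8.1]{KS08}.

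The main obstacle is the sign and degree bookkeeping in the intersection count: \Cref{homthm,homthm2} produces generators in several homological degrees determined by the local orientation data at each crossing, and these need to be repackaged precisely into Fomin--Thurston's $\pm 1$ sign convention. Additional care is required to handle singular lamination curves (those with endpoints at punctures) via the singular matching curve analysis, and to account for the excluded surfaces: the three-punctured sphere and the once-punctured monogon must be omitted because in these degenerate cases either the dual-arc prescription of the $S_i$'s or the separation of laminations by their shear coordinates fails.
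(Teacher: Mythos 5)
Your outline follows the same strategy as the paper's proof of \Cref{bmatthm}: realize the objects $M_{\gamma_{e_i}}$ and $M_\lambda$ geometrically, compute $\on{Ext}^\ast$ via the intersection-counting theorems, and then match each crossing with the local $\pm 1$ rule defining the shear coordinate. There are, however, two real gaps in the proposal as written.

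First, you never address self-folded edges, and this is where most of the actual work in the paper's proof lives. Fomin--Thurston's Definition~12.3 of shear coordinates does not directly count intersections with a self-folded edge $e_i$; it declares $(e_i,\gamma) = (e_i',\tilde\gamma)$ where $e_i'$ is the enclosing edge of the self-folded triangle and $\tilde\gamma$ is obtained from $\gamma$ by reversing the direction of any spiraling ends. Similarly, the quiver $Q_\mathcal{T}$ in \Cref{def2.1} has an ad hoc arrow-cloning rule for self-folded edges. Matching the categorical Euler characteristics with these conventions is not automatic: one has to check, for a spiraling lamination curve approaching a self-folded vertex, which of the four local configurations (clockwise/counterclockwise spiral $\times$ two possible entry edges) produce nonzero $\on{Ext}$, and verify this reproduces exactly the $(e_i',\tilde\gamma)$ rule; and one must separately verify that $\hat b_{i,j}=0$ when $e_i,e_j$ are the two edges of a self-folded triangle. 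Your phrase about ``singular lamination curves'' gestures vaguely at spiraling, but does not identify the self-folded case as a separate, substantive step, and your local case analysis is framed only for the generic quadrilateral.

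Second, the reason you give for excluding the three-punctured sphere and the once-punctured monogon is wrong. It is not about ``the dual-arc prescription'' or ``separation of laminations by shear coordinates''; the hypothesis is there because \Cref{homthm,homthm2} are proved under a semi-regularity hypothesis on $\mathcal{T}$, and \Cref{excptrem} shows every ideal $3$-angulation of every other surface is semi-regular, while these two degenerate surfaces are exactly the ones for which that fails. Finally, a small but material bookkeeping point: each generic crossing contributes $k\oplus k[-2]$ or $k[-1]\oplus k[-3]$, whose Euler characteristics are $\pm 2$; the factor $\tfrac12$ in \Cref{catdef1} is essential, and your account of the ``alternating sum producing $\pm 1$'' omits it.
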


We wish to mention a related result for non-relative higher Ginzburg algebras of acyclic quivers. In \cite{KQ15}, it is shown that the Euler characteristics of the $\on{Ext}$-groups between the simple modules of the higher Ginzburg algebras match the combinatorics of colored quivers of \cite{BT09}.

\subsection*{Structure of the paper} 

We begin in \Cref{sec:mutation} by surveying a part of the geometric model for relative Ginzburg algebras of triangulated surfaces, followed by the application to the categorification of the extended mutation matrices of cluster algebras of the surfaces. The main technical constructions and proofs are deferred to the later \Cref{sec:objsfromcurves,sec:homs}. In \Cref{sec:highercats}, we introduce some preliminaries from higher category theory, most importantly the Grothendieck construction. In \Cref{sec:schobers}, we recall the notion of a parametrized perverse schober from \cite{Chr22} and discuss the perverse schobers describing relative higher Ginzburg algebras. In \Cref{sec:objsfromcurves,sec:homs}, we construct the global sections associated to curves and describe the morphisms objects in terms of intersections. In \Cref{subsec:Jacobian}, we observe that the Jacobian algebra of such a relative Ginzburg algebra $\mathscr{G}_\mathcal{T}$ is a gentle algebra and compute the homology algebra of $\mathscr{G}_\mathcal{T}$. In \Cref{subsec:flips}, we give the geometric description of the derived equivalences associated to flips of the $n$-angulations.

\subsection*{Notation and conventions}

This paper is formulated in the language of stable $\infty$-categories, as developed in the extensive works \cite{HTT,HA}. We generally follow the terminology and notation used in \cite{HTT,HA}. In particular, we use the homological grading convention.

\subsection*{Acknowledgements}
I wish to thank my Ph.D.~advisor Tobias Dyckerhoff for his guidance and many helpful conversations. I further wish to thank Fabian Haiden for helpful discussions. Finally, I wish to thank Yu Qiu for giving me the opportunity to present the approach of this paper in a lecture course in spring 2022 at YMSC/BIMSA, Beijing. The author acknowledges support by the Deutsche Forschungsgemeinschaft under Germany’s Excellence Strategy – EXC 2121 “Quantum Universe” – 390833306.

\section{Categorification of the extended mutation matrix}\label{sec:mutation}

Marked surfaces ${\bf S}$ are compact and possibly with boundary $\partial {\bf S}$, equipped with marked points $M\subset {\bf S}$ on the boundary and possibly in the interior. Interior marked points are called punctures. See also \Cref{spgrdef} for a precise definition. An ideal triangulation of a marked surface is a decomposition of the surface into ideal triangles with vertices at the marked points. Dual to an ideal triangulation is a trivalent spanning ribbon graph of the surface, usually denoted $\mathcal{T}$. The graph $\mathcal{T}$ inherits the structure of a ribbon graph, via the counterclockwise cyclic ordering.

\subsection{The geometric definition of the extended mutation matrix}

In the following we recall the definition of the extended mutation matrix of the cluster algebra with coefficients of a marked surface equipped with an ideal triangulation and a multi-lamination $\Lambda$, as given by Fomin-Thurston \cite{FT12}.  We express the definition in terms of the dual trivalent spanning graph $\mathcal{T}$ of the ideal triangulation. 

We fix a marked surface ${\bf S}$ with an ideal triangulation and dual trivalent spanning graph $\mathcal{T}$. We denote by $M\subset {\bf S}$ the subset of marked points.

\begin{definition}[{$\!\!$\cite[Definition 4.1]{FST08}}]\label{def2.1}
We arbitrarily label the internal edges of $\mathcal{T}$ by $e_1,\dots,e_m$. We define the quiver $Q_\mathcal{T}$ as follows.
\begin{itemize}
\item The vertices are the internal edges of $\mathcal{T}$.
\item Let $e_i\neq e_j$ be two edges which are not loops. We add an arrow $a:e_i\rightarrow e_j$ for each vertex $v$ of $\mathcal{T}$ incident to halfedges of $e_i,e_j$ at which the halfedge of $e_j$ precedes the halfedge of $e_i$ in the cyclic (counterclockwise) order. The arrows of $Q_\mathcal{T}$ thus go in the clockwise direction. 
\item For each loop $e_i$, we add further arrows obtained as follows. Consider the unique edge $e_j$ that such $e_i$ and $e_j$ are incident to the same vertex of $\mathcal{T}$, meaning that $e_j$ is dual to the outer edge of the self-folded ideal triangle containing the dual of $e_i$. For $l\neq i,j$, we add an arrow $e_l\rightarrow e_i$ for each arrow $e_l\rightarrow e_j$ and an arrow $e_i\rightarrow e_l$ for each arrow $e_j\rightarrow e_l$.
\end{itemize}
The signed adjacency matrix of $\mathcal{T}$ is the skew-symmetric $m\times m$-matrix $B(\mathcal{T})=(b_{i,j})$ where $b_{i,j}$ is the number of arrows from $e_i$ to $e_j$ minus the number of arrows from $e_j$ to $e_i$ in $Q_\mathcal{T}$.
\end{definition}

\begin{definition}[$\!\!${\cite[Definition 12.1]{FT12}}]
A lamination curve is a curve $\gamma:U\rightarrow {\bf S}\backslash M$ with $U=S^1,[0,1],[0,\infty),(-\infty,\infty)$ such that 
\begin{itemize}
\item $\gamma$ does not self-intersect.
\item all endpoints of $\gamma$ lie in $\partial {\bf S}\backslash M$.
\item the curve does not bound any unpunctured disc, once-punctured disc or unpunctured $1$-gon in ${\bf S}$.
\item if $U$ is not compact, then at the infinite ends the curve spirals around a puncture.
\item if $U=(-\infty,\infty)$, then $\gamma$ is not homotopic to a curve both of whose ends spiral around the same puncture $p$ and which lies in a contractible neighborhood of $p$ containing no further punctures.
\end{itemize}
Laminations curves are considered as equivalence classes under homotopies fixing endpoints. A lamination $\lambda$ on ${\bf S}$ is a collection of pairwise non-intersecting lamination curves in ${\bf S}$. A multi-lamination $\Lambda=(\lambda_1,\dots,\lambda_l)$ on ${\bf S}$ is a collection of $l\geq 1$ laminations on ${\bf S}$.
\end{definition}

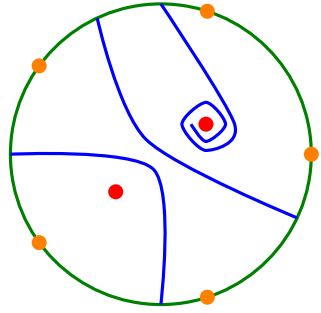
\begin{figure}[ht]
\begin{center}
\begin{tikzpicture}
\draw[color=ao][very thick] (0,0) circle(2);
\node (0) at (0.6,0.4){};
\node (6) at (-0.6,-0.5){};
\node (1) at (2,0){};
\node (2) at (0.618,1.902){};
\node (3) at (-1.618,1.176){};
\node (4) at (-1.618,-1.176){};
\node (5) at (0.618,-1.902){};

\fill[red] (0) circle(0.1);
\fill[orange] (1) circle(0.1);
\fill[orange] (2) circle(0.1);
\fill[orange] (3) circle(0.1);
\fill[orange] (4) circle(0.1);
\fill[orange] (5) circle(0.1);
\fill[red] (6) circle(0.1);

\draw[color=blue, very thick]  plot [smooth] coordinates {(-2,0) (-0.1,-0.2) (0,-2)};
\draw[color=blue, very thick]  plot [smooth] coordinates {(-0.846,1.81) (-0.2,0.2) (1.81,-0.846)};
\draw[color=blue, very thick]  plot [smooth] coordinates {(0.4,0.4) (0.6,0.17) (0.86,0.4) (0.6,0.69) (0.28,0.4) (0.6,0.05) (0.98,0.4) (0,2)};
\end{tikzpicture}
\caption{A lamination (in blue) with one spiraling curve in a surface with boundary (in green) with $7$ marked points. Boundary marked points are in orange, punctures in red.
}\label{fig:lam}
\end{center}
\end{figure}

Some more pretty examples and counterexamples of laminations can be found in \cite[Figures 32 and 33]{FT12}.

\begin{definition}\label{intsgndef} 
Denote the internal edges of $\mathcal{T}$ by $e_1,\dots,e_m$. 
\begin{itemize}
\item Let $\gamma_i$ be a lamination curve and $e_j$ not a loop. We call a crossing of $\gamma_i$ with $e_j$ positive (or negative), if their local arrangement is as depicted on the left (or right) in \Cref{intsgn}. We denote the signed count of such crossings of $\gamma_i$ and $e_j$ by $(e_j,\gamma_i)$.
\item Let $\gamma_i$ be a lamination curve and $e_j$ a loop. Let $e_l$ be unique other edge incident to the same vertex of $\mathcal{T}$ as $e_j$. We define $(e_j,\gamma_i)\coloneqq (e_l,\tilde{\gamma}_i)$, where $\tilde{\gamma}_i$ is the lamination curve obtained by replacing each infinite end of $\gamma_i$ spiraling around a puncture $p$ by the infinite end spiraling around $p$ in the opposite direction.
\item The shear coordinates of a lamination $\lambda$ with respect to $\mathcal{T}$ are given by the $m$-tuple $v_{\lambda,\mathcal{T}}\in \mathbb{Z}^n$ whose $j$-th entry is given by 
\[ (v_{\lambda,\mathcal{T}})_j=\sum_{\gamma\in \lambda} (e_j,\gamma)\,.\] 
\end{itemize}   
\end{definition}

\begin{figure}[h]
\begin{center}
\begin{tikzpicture}
  \draw[color=blue!255, very thick] plot [smooth] coordinates {(-0.8,-1) (0.1,-0.15) (1.5, 0.15) (2.6, 1) };
  \node (0) at (0,0){};  
  \node (6) at (1.8,0){};  
  \node (8) at (0.8,1){\bf +1};
  \node (2) at (0.8,0.2){$e_j$};
  \node (3) at (-0.3,-0.8){$\gamma_i$};
  \fill (0) circle (0.1);
  \fill (6) circle (0.1);
  \draw[very thick]
  (0,0)
  (2,0)
  (-0.5,-1)
  (-0.5,1)
  (1.8,0) -- (2.8,1)
  (1.8,0) -- (2.8,-1)
  (-1,-1) -- (0,0)
  (-1,1) -- (0,0)
  (0,0) -- (1.8,0); 
\end{tikzpicture}
\hspace{3em}
\begin{tikzpicture}
  \draw[color=blue!255, very thick] plot [smooth] coordinates {(-0.8,1) (0.1,0.15) (1.5, -0.15) (2.6, -1) };
  \node (0) at (0,0){}; 
  \node (6) at (1.8,0){};  
  \node (8) at (0.8,1){\bf -1};
  \node (2) at (0.8,0.2){$e_j$};
  \node (3) at (-0.3,0.8){$\gamma_i$};
  \fill (0) circle (0.1);
  \fill (6) circle (0.1);

  \draw[very thick]
  (0,0)
  (2,0)
  (-0.5,-1)
  (-0.5,1)
  (1.8,0) -- (2.8,1)
  (1.8,0) -- (2.8,-1)
  (-1,-1) -- (0,0)
  (-1,1) -- (0,0)
  (0,0) -- (1.8,0); 
\end{tikzpicture}
\caption{A crossing of a lamination curve $\gamma_i$ (in blue) with an edge $e_j$ of the triangulation contributes to the shear coordinates $+1$ if the crossing is as on the left and $-1$ if the crossing is as on the right.} \label{intsgn}
\end{center}
\end{figure}
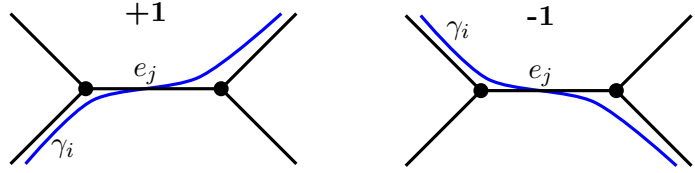

\begin{definition}
Let $\Lambda=(\lambda_1,\dots,\lambda_l)$ be a multi-lamination on ${\bf S}$. The extended mutation matrix $B(\mathcal{T},\Lambda)$ is given by the $m\times (m+l)$-matrix with
\begin{itemize}
\item the upper $m\times m$-submatrix is given by the signed adjacency matrix $B(\mathcal{T})$, 
\item the $(m+l)$-th row of $B(\mathcal{T},\Lambda)$ is given by the shear coordiantes $v_{\lambda_i,\mathcal{T}}$ of $\lambda_i$ with respect to $\mathcal{T}$, for $1\leq i \leq l$.  
\end{itemize}
The $c$-matrix $C(\mathcal{T},\Lambda)$ is the $m\times l$ submatrix of $B(\mathcal{T},\Lambda)$ consisting of the columns $m+1,\dots,m+l$.  
\end{definition}

\begin{theorem}[$\!\!${\cite[Theorem 13.5]{FT12}}]\label{FTthm}
If two trivalent spanning graphs $\mathcal{T},\mathcal{T}'$ of ${\bf S}$ are related by the flip of an edge, then $B(\mathcal{T},\Lambda)$ and $B(\mathcal{T}',\Lambda)$ are related by matrix mutation.
\end{theorem}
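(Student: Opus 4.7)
The plan is to verify the two row blocks of $B(\mathcal{T},\Lambda)$ separately: the top $m\times m$ block $B(\mathcal{T})$ and the bottom $k\times m$ block $C(\mathcal{T},\Lambda)$ given by shear coordinates. The mutation rule for an extended exchange matrix at a column $k$ updates every row by the same formula $b_{ij}' = -b_{ij}$ if $j=k$, and $b_{ij}' = b_{ij} + \tfrac{1}{2}(|b_{ik}|b_{kj} + b_{ik}|b_{kj}|)$ otherwise, so the two blocks can be treated independently.

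For the top block, the claim that $B(\mathcal{T})$ mutates to $B(\mathcal{T}')$ under a flip is the theorem of Fomin-Shapiro-Thurston \cite{FST08} for signed adjacency matrices of ideal triangulations of surfaces. First I would recall the quiver $Q_\mathcal{T}$ from \Cref{def2.1}, and in the absence of self-folded edges verify by a direct local computation in the quadrilateral $Q$ formed by the two triangles adjacent to the flipped edge $e_k$ that the number of oriented $2$-paths through $e_k$ in $Q_{\mathcal{T}}$ matches the number of arrows produced around $e_k'$ in $Q_{\mathcal{T}'}$, together with cancellation of $2$-cycles, reproducing the mutation formula. The case of a self-folded edge must be treated by unfolding: the rule in \Cref{def2.1} identifying the arrows at a self-folded edge $e_i$ with those at its enclosing edge $e_j$ is designed precisely so that flipping the outer edge produces the correct signed adjacency matrix.

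For the bottom block, it suffices by linearity in the laminations to treat a single lamination curve $\gamma$. The strategy is a case analysis of the local configuration of $\gamma$ in the quadrilateral containing the flipped edge $e_k$. I would enumerate how $\gamma$ can enter and leave $Q$ (as an arc between two sides, as a spiral around a puncture inside $Q$, or avoiding $Q$ altogether); in each case compute directly the shear coordinate contributions $(e_k,\gamma)$, $(e_k',\gamma)$, and $(e_j,\gamma)$ for the boundary edges $e_j$ of $Q$ before and after the flip using the sign convention of \Cref{intsgn}. The identity to verify in each local picture is exactly the scalar version of the mutation formula, namely $(e_j,\gamma)_{\mathcal{T}'} = (e_j,\gamma)_{\mathcal{T}} + \tfrac{1}{2}\bigl(|b_{jk}|\,(e_k,\gamma)_{\mathcal{T}} + b_{jk}\,|(e_k,\gamma)_{\mathcal{T}}|\bigr)$, with the sign and absolute value forcing a contribution only from a single ``positive''-turning configuration at the flipped edge. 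Self-folded edges require unfolding $\gamma$ to $\tilde\gamma$ as in \Cref{intsgndef} before counting.

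The main obstacle is the exhaustiveness and bookkeeping of the case analysis at the flipped edge, particularly when the quadrilateral degenerates (sides identified, a puncture lying inside, or the flip involving a self-folded triangle whose outer edge is flipped). The hypothesis that $\mathbf{S}$ is not the three-punctured sphere or the once-punctured monogon is what guarantees these degenerate pictures are still well-posed and that each flippable edge has a genuine quadrilateral neighborhood (possibly with identifications) in which the local computation can be performed; the identification conventions in \Cref{def2.1,intsgndef} are exactly what makes the local count of signed crossings transform by the same mutation formula as the interior block, completing the proof.
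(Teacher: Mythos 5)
The paper does not prove this statement: as the bracketed citation in the theorem header indicates, it is imported directly from \cite[Theorem 13.5]{FT12}, and no proof environment follows it in the source. There is therefore no proof in the paper with which to compare your attempt. Your sketch does follow the standard route from the literature: mutation of the top block under a flip is a result of Fomin--Shapiro--Thurston \cite{FST08}, and the transformation rule for shear coordinates is established in \cite{FT12} by a local case analysis in the quadrilateral around the flipped edge, which is essentially what you outline.

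Two caveats. Your stated mutation rule omits the case $i=k$, where one should also set $b'_{ij}=-b_{ij}$; for the coefficient rows this never triggers since $i>m\geq k$, so the bottom block is unaffected, but the omission matters if the top block is to be rederived rather than simply cited. And the role you assign to the excluded surfaces is slightly off: the three-punctured sphere and once-punctured monogon are excluded in \cite{FST08,FT12} primarily because their exchange graphs of (tagged) triangulations fail to be connected (see also \Cref{excptrem} in the paper, which uses the exclusion to guarantee semi-regularity), not because flippable edges would otherwise lack a well-posed quadrilateral neighborhood --- the degenerate local pictures (identified sides, self-folded triangles, interior punctures) do still occur for admissible surfaces and each must be worked through in the case analysis, which your sketch correctly flags as the hard part but leaves entirely to the reader.
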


\subsection{The geometric model}

We again fix a marked surface ${\bf S}$ with an ideal triangulation and dual trivalent spanning graph $\mathcal{T}$. We also fix a base commutative ring $k$. Let $\mathscr{G}_\mathcal{T}$ be the relative Ginzburg algebra of \Cref{gqdef}. We describe the part of the geometric model for $\mathcal{D}(\mathscr{G}_\mathcal{T})$ obtained from pure matching curves  in ${\bf S}\backslash M$ with local value $k\in \on{RMod}_{k[t_1]}$ (we also use the notation $k=\phi^*(k)$ later on). Such a curve $\gamma$ in ${\bf S}\backslash M$ is composed of (possibly infinitely many) pure segments of the following two types such that $\gamma$ intersects $\partial {\bf S}\backslash M$ and the vertices of $\mathcal{T}$ only at the endpoints (possibly none).

\begin{figure}[h!]
\begin{center}
\begin{tikzpicture}
\node (1) at (-2.7,1.5){\large $1)$};
\node (0) at (0,0){};
\fill (0) circle (0.1);
\draw[color=ao][very thick] 
(-2.25,-0.75)--(2.25,-0.75)
(0,1.5) --(-2.25,-0.75)
(0,1.5) --(2.25,-0.75);
\draw[very thick]
(0,0)--(0,-1.35)
(0,0)--(1.35,1.2)
(0,0)--(-1.35,1.2);
\node (2) at (-2.25,-0.75){};
\node (3) at (2.25,-0.75){};
\node (4) at (0,1.5){};
\fill[color=orange] (2) circle (0.1);
\fill[color=orange] (3) circle (0.1);
\fill[color=orange] (4) circle (0.1);

\draw[color=blue][very thick] plot [smooth] coordinates {(0,0) (0.15,-0.075) (0.18,-0.75)};
\end{tikzpicture}
\quad\quad
\begin{tikzpicture}
\node (1) at (-2.7,1.5){\large $2)$};
\node (0) at (0,0){};
\fill (0) circle (0.1);
\draw[color=ao][very thick] 
(-2.25,-0.75)--(2.25,-0.75)
(0,1.5) --(-2.25,-0.75)
(0,1.5) --(2.25,-0.75);
\draw[very thick]
(0,0)--(0,-1.35)
(0,0)--(1.35,1.2)
(0,0)--(-1.35,1.2);
\node (2) at (-2.25,-0.75){};
\node (3) at (2.25,-0.75){};
\node (4) at (0,1.5){};
\fill[color=orange] (2) circle (0.1);
\fill[color=orange] (3) circle (0.1);
\fill[color=orange] (4) circle (0.1);

\draw[color=blue][very thick] plot [smooth] coordinates {(0.93,0.57) (0.255,0) (0.18,-0.75)};
\end{tikzpicture}
\caption{An ideal triangle of the ideal triangulation (in green), containing a pure matching curve of the first type on the left and a pure matching curve of the second type on the right (each in blue). The dual ribbon graph is in black. The orange points denote marked points (possibly punctures) and are removed in ${\bf S}\backslash M$.}\label{fig:curves3gon}
\end{center}
\end{figure}
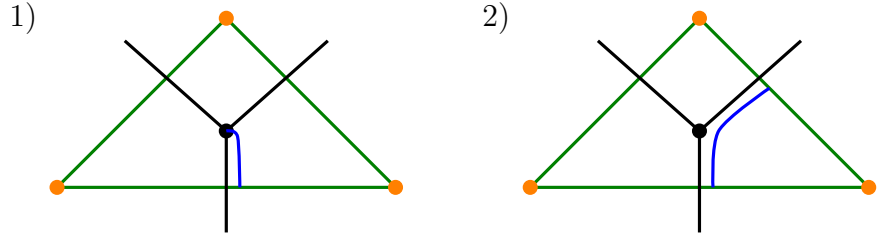

The pure segments of the first type start at a vertex of the ribbon graph and end on an edge of the triangulation. The pure segments of the second type start and end on edges of the triangulation and only wrap around the vertex of the ribbon graph by one step in the clockwise or counterclockwise direction. Stated differently, we simply consider the segments given by those in \Cref{fig:curves3gon}, up to a rotation of the triangle. Note that in \Cref{fig:curves3gon}, two edges of the ideal triangle may coincide in the case that the triangle is self-folded (or equivalently an edge of $\mathcal{T}$ is a loop). 

Homotopy classes of pure matching curves, relative $\partial {\bf S}\backslash M$ and the vertices of $\mathcal{T}$, are in bijection with homotopy classes of curves in ${\bf S}$ relative $\partial {\bf S}\backslash M$ which do not cut out any discs in ${\bf S}\backslash M$, see \Cref{mcprop1}.

\begin{example}\label{geomexp}
Let $e$ be an internal edge of the ribbon graph $\mathcal{T}$. If $e$ is not a loop, we let $\gamma_e$ denote the pure matching curve which traces along $e$. If $e$ is a loop, we let $\gamma_e$ denote the pure matching curve which traces along $e$ (in any direction, e.g.~clockwise) and then traces along the other edge incident to the same vertex as $e$. We depict $\gamma_e$ in the these two cases as follows.
\begin{center}
\begin{tikzpicture}
  \draw[color=ao, very thick]
    (0, 3) -- (0, 0)
    (3, 0) -- (0, 0)
    (3, 3) -- (3, 0)
    (0, 3) -- (3, 3)
    (0, 3) -- (3, 0);
  \draw[very thick]
   (1,1) -- (2, 2)
   (1,1) -- (1,-0.45)
   (1,1) -- (-0.45,1)
   (2, 2) -- (3.45, 2)
   (2, 2) -- (2, 3.45);
  \draw[color=blue][very thick] plot [smooth] coordinates {(1,1) (1.3,1) (2,1.7) (2, 2)};
  \node (0) at (2, 2){};
  \node (1) at (1, 1){};
  \node (2) at (1.5,1.8){$e$};
  \node () at (2.1,1.35){$\gamma_e$};
  \fill (0) circle (0.1);
  \fill (1) circle (0.1);

\node (2) at (0,0){};
\node (3) at (3,0){};
\node (4) at (0,3){};
\node (5) at (3,3){};
\fill[color=orange] (2) circle (0.1);
\fill[color=orange] (3) circle (0.1);
\fill[color=orange] (4) circle (0.1);
\fill[color=orange] (5) circle (0.1);

\draw[color=ao, very thick]
(6,1.5)--(10.5,1.5)--(8.25,0)--(6,1.5)
(8.25,3)--(8.25,2.175);
\draw[color=ao, very thick] (10.5,1.5) arc [start angle=0, end angle=180, x radius=2.25, y radius=1.5];
\draw[very thick]
(6.45,0.45)--(8.25,0.75)--(10,0.45)
(8.25,0.75)--(8.25,1.8);
\draw[very thick] (9,2.2125) arc [start angle=0, end angle=360, x radius=0.75, y radius=0.4125];
\draw[color=blue][very thick] (8.25,1.8) arc [start angle=270, end angle=0, x radius=1.05, y radius=0.525];
\draw[color=blue][very thick] plot [smooth] coordinates {(9.3,2.325) (9.15, 1.95)  (8.25,0.75)};
\node () at (8.85,2.175){$e$};
\node () at (9.45,1.8){$\gamma_e$};
\node (2) at (6,1.5){};
\node (3) at (10.5,1.5){};
\node (4) at (8.25,0){};
\node (5) at (8.25,2.175){};
\fill (8.25,0.75) circle (0.1);
\fill (8.25,1.8) circle (0.1);
\fill[color=orange] (2) circle (0.1);
\fill[color=orange] (3) circle (0.1);
\fill[color=orange] (4) circle (0.1);
\fill[color=red] (5) circle (0.1);
\end{tikzpicture}
\end{center}
\end{example}

\begin{proposition}\label{geomprop}
There exists a full subcategory of $\mathcal{D}(\mathscr{G}_\mathcal{T})$ whose objects are labeled $M_{\gamma}$, where $\gamma$ is a pure matching curve considered modulo orientation. Given two such curves $\gamma_1\neq \gamma_2$, the $k$-module $\on{Ext}^\ast_{\mathcal{D}(\mathscr{G}_\mathcal{T})}(M_{\gamma_1},M_{\gamma_2})$ is given by the direct sum of the free $k$-modules obtained by counting intersections described as below.
\end{proposition}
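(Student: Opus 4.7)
The plan is to exploit the perverse schober description of $\mathcal{D}(\mathscr{G}_\mathcal{T})$ as the $\infty$-category of global sections of $\mathcal{F}_\mathcal{T}(k)$, following the local-to-global strategy advertised in the introduction. A pure matching curve $\gamma$ decomposes by definition into elementary segments of types $1$ and $2$, each contained in a single ideal triangle and therefore local to a single vertex of the ribbon graph $\mathcal{T}$. I would first associate to each such segment a local object in the relevant stalk of $\mathcal{F}_\mathcal{T}(k)$, with its structure determined by the local system $f^*(k)\in \on{Fun}(S^2,\on{RMod}_k)$ transported along $\gamma$. Gluing these local objects into a global section $M_\gamma$ should then follow from the sheaf property of $\Gamma(-;\mathcal{F}_\mathcal{T}(k))$. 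Independence of the orientation of $\gamma$ ought to reduce to a self-duality of $f^*(k)$, while well-definedness up to homotopy in the sense of \Cref{mcprop1} would reduce to checking invariance under the elementary moves relating different combinatorial presentations of a fixed homotopy class; each such move should correspond to a canonical equivalence inside the relevant local category.

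For the Ext computation, the sheaf property delivers $\on{Ext}^\ast_{\mathcal{D}(\mathscr{G}_\mathcal{T})}(M_{\gamma_1},M_{\gamma_2})$ as the homology of a limit of local mapping complexes indexed by the ribbon graph. On any region of the ribbon graph avoiding intersections of $\gamma_1$ and $\gamma_2$, the local contribution to this limit should be contractible: either one of the two curves has no local presence there, or their local presentations do not interact. The limit should therefore collapse to a direct sum indexed by the intersection loci enumerated in the statement, namely transverse crossings in the interior of edges, shared boundary endpoints, and encounters at ribbon graph vertices. At each intersection type I would then compute the local Ext explicitly inside the schober's local category, which amounts to a computation involving $k[t_1]$-modules and their duals, and identify each generator as a free $k$-module of the prescribed rank.

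The main obstacle I anticipate is the combinatorial bookkeeping needed to show that the sheaf-theoretic limit truly collapses to the direct sum over intersections, and to handle the delicate cases of boundary endpoints, spiraling ends at interior marked points, the self-folded configuration illustrated in \Cref{geomexp}, and coincident endpoints meeting at ribbon graph vertices. Matching the degree shifts at each intersection type with the geometric pictures would likewise require careful local computation, in particular ensuring that the local contributions assemble without hidden differentials in the limit spectral sequence. I would expect these technicalities to constitute the bulk of the argument, with the qualitative statement following cleanly from the schober formalism once the local model is correctly identified.
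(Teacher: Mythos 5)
Your proposal follows exactly the route taken in the paper: the result is proved by gluing segment-local sections of $\mathcal{F}_\mathcal{T}(k)$ along the sheaf property (\Cref{sec4}) and by expressing the morphism object as a limit over the segment diagram that decomposes into direct summands indexed by intersection loci, with the non-intersection contributions shown to have vanishing limits (\Cref{homsec}). The one detail worth flagging is that orientation-independence of $M_\gamma$ does not come from a self-duality of $f^*(k)$ but from the fact that reversing orientation shifts $M_\gamma$ by $[-2d(\gamma)]$, which is trivial for pure matching curves since $d(\gamma)=0$.
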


\begin{proof}
Using the equivalence of $\infty$-categories $\mathcal{D}(\mathscr{G}_\mathcal{T})\simeq \mathcal{H}(\mathcal{T},\mathcal{F}_\mathcal{T}(k))$ of \Cref{gluethm}, the proposition is shown in more generality in \Cref{sec:objsfromcurves,sec:homs}. To match the descriptions of the $\on{Ext}$-groups, we also use \Cref{endomorlem}. If $\gamma$ is closed, we further need to specify a rank $a\geq 1$ and a monodromy equivalence, to have an associated object $M_{\gamma}$. We set $a=1$ and the monodromy equivalence to be arbitrary.
\end{proof}

\noindent {\bf Singular intersections}\\
For each singular intersection between $\gamma_1$ and $\gamma_2$ as below, where the orange arrow goes in the counterclockwise direction, we have $k\subset \on{Ext}^1(\gamma_1,\gamma_2)$ and $k\subset \on{Ext}^2(\gamma_2,\gamma_1)$.
\begin{center}
\begin{tikzpicture}
\draw[color=ao][very thick] 
(-2.25,-0.75)--(2.25,-0.75)
(0,1.5) --(-2.25,-0.75)
(0,1.5) --(2.25,-0.75);
\draw[very thick]
(0,0)--(0,-1.35)
(0,0)--(1.35,1.2)
(0,0)--(-1.35,1.2);
\node (2) at (-2.25,-0.75){};
\node (3) at (2.25,-0.75){};
\node (4) at (0,1.5){};
\fill[color=orange] (2) circle (0.1);
\fill[color=orange] (3) circle (0.1);
\fill[color=orange] (4) circle (0.1);

\draw[color=blue][very thick] plot [smooth] coordinates {(0,0) (0.15,-0.075) (0.18,-0.75)};
\draw[color=blue][very thick] plot [smooth] coordinates {(0,0) (-0.15,-0.075) (-0.98,0.53)};
\node () at (0.375,-0.3){$\gamma_2$};
\node () at (-0.9,0.225){$\gamma_1$};
\node (1) at (-0.75,0.3){};
\node (2) at (0,-0.45){} edge [<-, bend left, color =orange, very thick] (1);
\node (0) at (0,0){};
\fill (0) circle (0.1);
\end{tikzpicture}
\end{center}
If $\gamma_1$ and $\gamma_2$ have a singular intersection such that both $\gamma_1$ and $\gamma_2$ exit the ideal triangle at the same edge $f$ the above description of the $\on{Ext}$-groups has to be adapted as follows. Choosing $\gamma_1$ and $\gamma_2$ with the minimal number of intersections, we instead have $k\subset \on{Ext}^0(\gamma_1,\gamma_2)$ and $k\subset \on{Ext}^3(\gamma_2,\gamma_1)$, if the intersection of $\gamma_2$ with $f$ follows the intersection of $\gamma_1$ with $f$ in the counterclockwise direction.\\

\noindent {\bf Crossings}\\
For each crossing of $\gamma_1$ and $\gamma_2$ as below, where the orange arrows go in the counterclockwise direction, we have $k\subset \on{Ext}^0(\gamma_1,\gamma_2),\on{Ext}^2(\gamma_1,\gamma_2)$ and $k\subset \on{Ext}^1(\gamma_2,\gamma_1),\on{Ext}^3(\gamma_2,\gamma_1)$.
\begin{center}
\begin{tikzpicture}
  \draw[color=ao, very thick]
    (0, 3) -- (0, 0)
    (3, 0) -- (0, 0)
    (3, 3) -- (3, 0)
    (0, 3) -- (3, 3)
    (0, 3) -- (3, 0);
  \draw[very thick]
   (1,1) -- (2, 2)
   (1,1) -- (1,-0.45)
   (1,1) -- (-0.45,1)
   (2, 2) -- (3.45, 2)
   (2, 2) -- (2, 3.45);
   \node (0) at (2, 2){};
  \node (1) at (1, 1){};
  \fill (0) circle (0.1);
  \fill (1) circle (0.1);

\node (2) at (0,0){};
\node (3) at (3,0){};
\node (4) at (0,3){};
\node (5) at (3,3){};
\fill[color=orange] (2) circle (0.1);
\fill[color=orange] (3) circle (0.1);
\fill[color=orange] (4) circle (0.1);
\fill[color=orange] (5) circle (0.1);

  \draw[color=blue][very thick] plot [smooth] coordinates {(1,1) (1.2,1) (2,1.8) (2, 2)};
   \draw[color=blue][very thick] plot [smooth] coordinates {(0,1.35) (3, 1.65)};
  \node (0) at (2, 2){};
  \node (1) at (1, 1){};
  \node () at (1.5,0.75){$\gamma_2$};
  \node () at (0.75,1.65){$\gamma_1$};
  \fill (0) circle (0.075);
  \fill (1) circle (0.075);
  \node (3) at (0.7,1.4){};
  \node (4) at (1.2,0.6){} edge [<-, bend left, color=orange, very thick] (3);
  \node (5) at (2.4,1.553){};
  \node (6) at (1.75,2.4){} edge [<-, bend left, color=orange, very thick] (5);
  \node (2) at (0,0){};
\end{tikzpicture}
\end{center}

\noindent {\bf Directed boundary intersections}\\
For each intersection of $\gamma_1,\gamma_2$ with the same boundary component of ${\bf S}\backslash M$ as below, where the orange arrow goes in the clockwise direction, we have $k\subset \on{Ext}^0(\gamma_1,\gamma_2),\on{Ext}^2(\gamma_1,\gamma_2)$.
\begin{center}
\begin{tikzpicture}
\node (0) at (0,0){};
\fill (0) circle (0.1);
\draw[color=ao][very thick] 
(-2.25,-0.75)--(2.25,-0.75)
(0,1.5) --(-2.25,-0.75)
(0,1.5) --(2.25,-0.75);
\draw[very thick]
(0,0)--(0,-1.35)
(0,0)--(1.35,1.2)
(0,0)--(-1.35,1.2);
\node (2) at (-2.25,-0.75){};
\node (3) at (2.25,-0.75){};
\node (4) at (0,1.5){};
\fill[color=orange] (2) circle (0.1);
\fill[color=orange] (3) circle (0.1);
\fill[color=orange] (4) circle (0.1);

\draw[color=blue][very thick] plot [smooth] coordinates {(0.93,0.57) (0.255,0) (0.18,-0.75)};
\draw[color=blue][very thick] plot [smooth] coordinates {(0.645,0.855) (0,0.405) (-0.645,0.855)};
\node ()  at (-0.15,0.825){$\gamma_1$};
\node () at (0.45,-0.3){$\gamma_2$};
\node (1) at (0.6,1.2){};
\node (2) at (1.35,0.45){} edge [<-, bend right, color=orange, very thick] (1);
\end{tikzpicture}
\end{center}

\subsection{The categorical description}

We fix a marked surface ${\bf S}$ with an ideal triangulation with dual trivalent spanning graph $\mathcal{T}$, and a commutative ring $k$ as the base ring.

\begin{notation}
Consider a lamination $\lambda$ of ${\bf S}$, i.e.~a collection of lamination curves. Using \Cref{mcprop1}, we can consider each lamination curve as a pure matching curve. We denote by $M_\lambda=\bigoplus_{\gamma\in \lambda} M_{\gamma}\in \mathcal{D}(\mathscr{G}_\mathcal{T})$ the direct sum of the objects associated to the lamination curves, see \Cref{geomprop}. For $e$ an internal edge of $\mathcal{T}$, we denote by $M_{\gamma_e}\in \mathcal{D}(\mathscr{G}_\mathcal{T})$ the object associated to the pure matching curve $\gamma_e$, see \Cref{geomexp}.
\end{notation}

\begin{definition}\label{catdef1}
Label the internal edges of $\mathcal{T}$ by $e_1,\dots,e_m$. Let $\Lambda=(\lambda_1,\dots,\lambda_l)$ be a multi-lamination on ${\bf S}$. The categorical extended mutation matrix $\hat{B}(\mathcal{T},\Lambda)=(\hat{b}_{i,j})$ is given by the $m\times (m+l)$-matrix with 
\begin{itemize}
\item $\hat{b}_{i,j}=\chi \on{Ext}^\ast(M_{\gamma_{e_i}},M_{\gamma_{e_j}})$ for $1\leq i,j\leq m$ and
\item $\hat{b}_{i,m+j}=\frac{1}{2}\chi \on{Ext}^\ast(M_{\gamma_{e_i}},M_{\lambda_j})$ for $1\leq i\leq m$ and $1\leq j\leq l$,
\end{itemize}
where $\chi$ denotes the Euler-characteristic, see \Cref{sec:lincats}.
The categorical $c$-matrix $\hat{C}(\mathcal{T},\Lambda)$ is the $m\times l$ submatrix of $\hat{B}(\mathcal{T},\Lambda)$ consisting of the columns $m+1,\dots,m+l$.  
\end{definition}

\begin{remark}
We will see below that in the setting of \Cref{catdef1} 
\[ \chi \on{Ext}^\ast(M_{\gamma_{e_i}},M_{\gamma_{e_j}})=\on{dim}_k\on{Ext}^2(M_{\gamma_{e_i}},M_{\gamma_{e_j}})-\on{dim}_k\on{Ext}^1(M_{\gamma_{e_i}},M_{\gamma_{e_j}})\] 
and 
\[ \frac{1}{2}\chi \on{Ext}^\ast(M_{\gamma_{e_i}},M_{\lambda_j})=\on{dim}_k\on{Ext}^2(M_{\gamma_{e_i}},M_{\lambda_j})-\on{dim}_k\on{Ext}^1(M_{\gamma_{e_i}},M_{\lambda_j})\,.\]
\end{remark}

\begin{theorem}\label{bmatthm}
Let ${\bf S}$ be a marked surface with a multi-lamination $\Lambda$ and let $\mathcal{T}$ be a trivalent spanning graph of ${\bf S}$. The extended mutation matrices $\hat{B}(\mathcal{T},\Lambda)$ and $B(\mathcal{T},\Lambda)$ are identical. 
\end{theorem}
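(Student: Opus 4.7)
The plan is to apply \Cref{geomprop} entrywise to $\hat{B}(\mathcal{T},\Lambda)$: each entry becomes a signed count of intersections of pure matching curves, which I would then identify with the combinatorial data defining $B(\mathcal{T})$ and the shear coordinates. The exceptional surfaces (three-punctured sphere and once-punctured monogon) are excluded throughout \Cref{mutsec}, so the local curve/intersection calculus applies without degeneracies. I would split the argument into the upper $m\times m$ block and the lower $k\times m$ block.

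For the upper block, consider first distinct non-self-folded edges $e_i,e_j$. The matching curves $\gamma_{e_i},\gamma_{e_j}$ trace along the respective edges (\Cref{geomexp}), so their intersections all occur at the trivalent vertices of $\mathcal{T}$ incident to both edges, each producing one endpoint intersection in the sense of \Cref{geomprop}. By the endpoint intersection rule, such a vertex contributes either a copy of $k$ to $\on{Ext}^1(M_{\gamma_{e_i}},M_{\gamma_{e_j}})$ or to $\on{Ext}^2(M_{\gamma_{e_i}},M_{\gamma_{e_j}})$, depending on the counterclockwise cyclic order of the two halfedges at the vertex. Comparing with \Cref{def2.1}, the $\on{Ext}^2$ case corresponds to an arrow $e_i\to e_j$ in $Q_\mathcal{T}$ and the $\on{Ext}^1$ case to an arrow $e_j\to e_i$, so $\chi\on{Ext}^*(M_{\gamma_{e_i}},M_{\gamma_{e_j}})=\#\{e_i\to e_j\}-\#\{e_j\to e_i\}=b_{i,j}$. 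The self-folded case is handled by the same calculus applied to the extended $\gamma_e$ from \Cref{geomexp}: its endpoint intersections with the other $\gamma_{e_k}$'s reproduce precisely the duplicated arrows introduced in the last bullet of \Cref{def2.1}, and the intersections of $\gamma_e$ with $\gamma_{e'}$ (the outer edge) pair up with opposite cyclic orientations and cancel, matching the absence of arrows between $e$ and $e'$ in $Q_\mathcal{T}$.

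For the lower block, I would first use \Cref{mcprop1} to realize each lamination curve $\gamma\in\lambda_l$ as a pure matching curve. Transverse crossings of $\gamma$ with a non-self-folded edge $e_j$ become crossings with $\gamma_{e_j}$, and the endpoints of $\gamma$ on $\partial\mathbf{S}\setminus M$ become directed boundary intersections with $\gamma_{e_j}$ (because $\gamma_{e_j}$ has an endpoint on the same boundary arc adjacent to $e_j$). Both rules in \Cref{geomprop} contribute $k$ simultaneously to $\on{Ext}^0$ and $\on{Ext}^2$ (and the opposite hom receives $\on{Ext}^1$ and $\on{Ext}^3$ contributions); this is precisely the mechanism behind the identity in the remark after \Cref{catdef1}, and it shows $\tfrac{1}{2}\chi\on{Ext}^*(M_{\gamma_{e_j}},M_\gamma)=\dim\on{Ext}^2-\dim\on{Ext}^1$. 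A local comparison of \Cref{intsgn} with the clockwise orange-arrow convention of the crossing rule identifies a positive crossing with an $\on{Ext}^2$-contribution and a negative one with an $\on{Ext}^1$-contribution, so the signed count reproduces $(e_j,\gamma)$. Summing over crossings, boundary endpoints, and $\gamma\in\lambda_l$ gives $(v_{\lambda_l,\mathcal{T}})_j=\hat{b}_{j,m+l}$. For self-folded $e_j$, I would use the substitution $(e_j,\gamma)=(e_j',\tilde\gamma)$ from \Cref{intsgndef}: because $\gamma_{e_j}$ runs along both $e_j$ and its outer edge $e_j'$, the crossings of $\gamma$ with the $e_j$-portion of $\gamma_{e_j}$ are exactly the crossings of $\tilde\gamma$ (with spirals reversed) with the $e_j'$-portion, matching the Fomin--Thurston definition.

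The main obstacle is sign and cyclic-order bookkeeping: one must carefully check that the counterclockwise/clockwise conventions of \cite{FST08,FT12} line up with the orange-arrow conventions in \Cref{geomprop}. A secondary subtlety is the treatment of lamination curves spiraling around punctures, which topologically cross an adjacent edge infinitely many times, but whose pure-matching-curve representatives (after minimizing intersections within the given homotopy class) have only finitely many transverse crossings with $\gamma_{e_j}$. This matches the finiteness of the shear coordinate, which comes from cancellation of opposite-sign crossings near the puncture, and ensures that the Euler characteristic on the categorical side and the signed count on the shear coordinate side are each finite and equal.
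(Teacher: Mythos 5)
Your overall strategy—apply \Cref{geomprop} entrywise, match endpoint intersections of the $\gamma_{e_i}$'s with arrows of $Q_\mathcal{T}$, and match crossings with shear coordinates—is indeed the paper's approach, and the upper block argument for distinct non-self-folded $e_i,e_j$ is essentially correct. However, there is one genuine error and two places where you gloss over the cases that actually require work.

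The error is the assertion that ``the endpoints of $\gamma$ on $\partial\mathbf{S}\setminus M$ become directed boundary intersections with $\gamma_{e_j}$ (because $\gamma_{e_j}$ has an endpoint on the same boundary arc adjacent to $e_j$).'' This is false: by \Cref{geomexp}, $\gamma_{e_j}$ traces along the internal edge $e_j$ and has \emph{both} of its endpoints at trivalent vertices (singularities) of $\mathcal{T}$; it never touches $\partial\mathbf{S}\setminus M$. Directed boundary intersections require both curves to meet the same boundary component of $\partial\mathbf{S}\setminus M$, so none occur between $\gamma_{e_j}$ and a lamination curve. You are likely conflating $\gamma_{e_j}$ with the curves $c_{e_j}$ of \Cref{sec5.3}, which \emph{do} run from boundary to boundary but play no role in this theorem. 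Including phantom boundary contributions is not merely harmless overcounting—the shear coordinate $(e_j,\gamma)$ in \Cref{intsgndef} is a pure crossing count, so if boundary endpoints of $\gamma$ contributed to $\chi\on{Ext}^*$, the identity $\tfrac12\chi\on{Ext}^*(M_{\gamma_{e_j}},M_\gamma)=(e_j,\gamma)$ would fail. The correct statement, which the paper uses, is that \emph{only} crossings contribute: each crossing as in \Cref{intsgn} gives $\on{Ext}^*\simeq k\oplus k[-2]$ (contributing $+1$ to $\tfrac12\chi$) or $k[-1]\oplus k[-3]$ (contributing $-1$), matching the sign convention.

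The two gaps are in the self-folded cases. For the upper block with $e_i$ self-folded inside a triangle with outer edge $e_j$, saying the contributions ``pair up with opposite cyclic orientations and cancel'' is not a proof: because $\gamma_{e_i}$ and $\gamma_{e_j}$ share the outer edge, the special endpoint-intersection rule (both curves exiting through the same boundary edge $f$) from \Cref{secpmci} applies, and one has to compute that $\on{Ext}^*(M_{\gamma_{e_i}},M_{\gamma_{e_j}})\simeq k\oplus k[-1]$ and $\on{Ext}^*(M_{\gamma_{e_j}},M_{\gamma_{e_i}})\simeq k[-2]\oplus k[-3]$, each of Euler characteristic $0$, matching $b_{i,j}=b_{j,i}=0$. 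For the $c$-matrix entry with $e_j$ self-folded, the claim that ``crossings of $\gamma$ with the $e_j$-portion of $\gamma_{e_j}$ are exactly the crossings of $\tilde\gamma$'' needs to be established, not asserted; the shear coordinate is defined by substitution $(e_j,\gamma)=(e_j',\tilde\gamma)$ and the categorical side must track the spiral direction of $\gamma$ at the puncture. The paper does this by an explicit four-case analysis at the vertex $v$ of $e_j'$ outside the self-folded triangle (which of $e_1,e_2$ the spiral passes along, and whether it spirals clockwise or counterclockwise), finding that $\on{Ext}^*$ is $k\oplus k[-2]$, $k[-1]\oplus k[-3]$, or $0$ exactly when $(e_j',\tilde\gamma)$ equals $1$, $-1$, or $0$. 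Without this case analysis your identification of the two sides is not yet a proof.

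Finally, your remark on spiraling ends is correct in spirit but slightly misdirected: the potential infinitude lives in the crossings of $\gamma$ with the edges of the triangulation (which FT12 handle by sign cancellation), but once $\gamma$ is realized as a pure matching curve in minimal position with $\gamma_{e_j}$, the crossing number $i^{\on{cr}}(\gamma_{e_j},\gamma)$ is automatically finite (\Cref{homthm} requires only that the curves have no common infinite end, which holds since $\gamma_{e_j}$ is finite), so no separate finiteness argument is needed on the categorical side.
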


\begin{corollary}\label{bmatcor}
Let ${\bf S}$ be a marked surface with a multi-lamination $\Lambda$. 
Suppose that two trivalent spanning graphs $\mathcal{T},\mathcal{T}'$ of ${\bf S}$ are related by the flip of an edge. The categorical extended mutation matrices $\hat{B}(\mathcal{T},\Lambda)$ and $\hat{B}(\mathcal{T}',\Lambda)$ are related by matrix mutation.
\end{corollary}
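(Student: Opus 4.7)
The plan is to deduce this corollary directly by combining \Cref{bmatthm} with \Cref{FTthm}, so essentially no further work is needed beyond citing both results.

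First I would invoke \Cref{bmatthm} for the two $3$-angulations $\mathcal{T}$ and $\mathcal{T}'$: since both are ideal $3$-angulations of the same marked surface ${\bf S}$ (still subject to the standing hypothesis that ${\bf S}$ is neither the three-punctured sphere nor the once-punctured monogon), the theorem identifies the categorical extended mutation matrices with the geometric ones, giving $\hat{B}(\mathcal{T},\Lambda)=B(\mathcal{T},\Lambda)$ and $\hat{B}(\mathcal{T}',\Lambda)=B(\mathcal{T}',\Lambda)$.

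Next I would apply the Fomin--Thurston result \Cref{FTthm}: since $\mathcal{T}$ and $\mathcal{T}'$ are related by the flip of an edge, $B(\mathcal{T},\Lambda)$ and $B(\mathcal{T}',\Lambda)$ are related by the corresponding matrix mutation. Combining the two equalities above with this mutation relation yields immediately that $\hat{B}(\mathcal{T},\Lambda)$ and $\hat{B}(\mathcal{T}',\Lambda)$ are related by matrix mutation, which is the claim.

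The only subtle point is that the edge labelings used in the two matrices need to be compatible: the categorical matrix is indexed by the internal edges of the respective $3$-angulation, and when comparing mutation on the geometric and categorical sides one must use the same bijection between the edges of $\mathcal{T}$ and those of $\mathcal{T}'$ (only the flipped edge is replaced, the others being canonically identified). Since this bijection is already the one implicit in the statement of \Cref{FTthm} and is inherited by the identifications furnished by \Cref{bmatthm}, there is nothing to check. Thus the corollary follows with no additional argument; there is no genuine obstacle, as the real content has been placed into \Cref{bmatthm}.
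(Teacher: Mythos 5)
Your proposal is correct and follows exactly the paper's proof, which simply combines \Cref{FTthm} and \Cref{bmatthm}. The extra remark about compatible edge labelings is a reasonable sanity check but is not needed as additional argument.
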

\begin{proof}
Combine \Cref{FTthm,bmatthm}.
\end{proof}

\begin{proof}[Proof of \Cref{bmatthm}.]
We begin by showing that the upper $m\times m$-submatrix of $\hat{B}(\mathcal{T},\Lambda)$ agrees with the signed adjacency matrix $B(\mathcal{T})$. Let $e_i,e_j$ be two edges of $\mathcal{T}$. If $e_i=e_j$, it is obvious that $\hat{b}_{i,i}=0=b_{i,i}$. We can thus assume that $e_i\neq e_j$. Assume that $e_i,e_j$ are not a loop and a non-loop incident to the same vertex (i.e.~dual to the two edges of a self-folded ideal triangle). By \Cref{geomprop}, we know that $\on{dim}_k\on{Ext}^2(M_{\gamma_{e_i}},M_{\gamma_{e_j}})$ counts the number of singular intersections where $\gamma_{e_j}$ follows $\gamma_{e_i}$ in the clockwise order. This number is equal to the number of arrows from $e_i$ to $e_j$ in $Q_\mathcal{T}$. Similarly, $\on{dim}_k\on{Ext}^1(M_{\gamma_{e_i}},M_{\gamma_{e_j}})$ is equal to the number of arrows from $e_j$ to $e_i$ in $Q_\mathcal{T}$. All other $\on{Ext}$-groups vanish and it follows that $\hat{b}_{i,j}=\chi \on{Ext}^\ast(M_{\gamma_{e_i}},M_{\gamma_{e_j}})=b_{i,j}$. In the case that $e_i,e_j$ are a loop and a non-loop incident to the same vertex, with $e_i$ the loop, we have $b_{i,j}=b_{j,i}=0$ and $\on{Ext}^\ast(\gamma_{e_i},\gamma_{e_j})=k\oplus k[-1]$ and $\on{Ext}^\ast(\gamma_{e_j},\gamma_{e_i})=k[-2]\oplus k[-3]$ so that also $\hat{b}_{i,j}=\hat{b}_{j,i}=0$.

We continue by showing that the $c$-matrices are identical.
Using the additivity of $\on{Ext}^*$, it suffices to verify that for each lamination curve $\gamma$ and each edge $e_i$ there exists an equality  
\begin{equation}\label{inteq} 
\frac{1}{2}\chi \on{Ext}^\ast(M_{e_i},M_{\gamma})=(e_i,\gamma)\,.
\end{equation} 
We begin with the case that $e_i$ is not a loop. \Cref{geomprop} shows that $\on{Ext}^\ast(M_{\gamma_{e_i}},M_{\gamma})$ is the direct sum of contributions arising from crossings of $e_i$ and $\gamma$. If a crossing of $\gamma$ and $e_i$ is as on the left in \Cref{intsgn}, then $\on{Ext}^\ast(M_{\gamma_{e_i}},M_{\gamma})\simeq k\oplus k[-2]$ and $\frac{1}{2}\chi\on{Ext}^\ast(M_{\gamma_{e_i}},M_{\gamma})=1$ and the intersection thus contributes the same amount to both sides of \eqref{inteq}. Similarly, if the crossing of $\gamma$ and $\gamma_{e_i}$ is as on the right in \Cref{intsgn}, then $\on{Ext}^\ast(M_{\gamma_{e_i}},M_{\gamma})=k[-1]\oplus k[-3]$ and the intersection also contributes with $-1$ to both sides of \eqref{inteq}. 

Consider now the case that $e_i$ is a loop and let $e_i'$ be the unique other edge of $\mathcal{T}$ incident to $e_i$. If $\gamma$ does not have an infinite end spiraling around the puncture at which $e_i$ lies, then both sides of \eqref{inteq} vanish. We thus assume that such a spiraling infinite end exists. Consider the vertex $v$ incident to $e_i'$ at which $e_i$ does not lie and consider the two edges $e_1\neq e_2$ incident to $v$ such their cyclic (counterclockwise) order is given by $e_i',e_1,e_2,e_i'$. There are four possible arrangements: the end of $\gamma_i$ either arrives at $e_i'$ first passing along $e_1$ or $e_2$ and the infinite end either spirals clockwise or counterclockwise. In the clockwise case, one finds $\on{Ext}^\ast(M_{\gamma_{e_i}},M_{\gamma})\simeq k[-1]\oplus k[-3]$ if $\gamma$ passes along $e_1$ and $\on{Ext}^\ast(M_{\gamma_{e_i}},M_{\gamma})\simeq k\oplus k[-1]\oplus k[-2]\oplus k[-3]$ if $\gamma$ passes along $e_2$ (in this case, there are two crossings). In the counterclockwise case, one finds $\on{Ext}^\ast(M_{\gamma_{e_i}},M_{\gamma})\simeq 0$ if $\gamma$ passes along $e_1$ and $\on{Ext}^\ast(M_{\gamma_{e_i}},M_{\gamma})\simeq k \oplus k[-2]$ if $\gamma$ passes along $e_2$. In each case, we thus find as desired
\[\frac{1}{2}\chi\on{Ext}^\ast(M_{\gamma_{e_i}},M_{\gamma})= \frac{1}{2}\chi\on{Ext}^\ast(M_{\gamma_{e_i'}},M_{\tilde{\gamma}})= (e_i',\tilde{\gamma})=(e_i,\gamma)\,,\]
where $\tilde{\gamma}$ is as in \Cref{intsgndef}. We have shown that the $c$-matrices are also identical, concluding the proof.
\end{proof}

\section{Background from higher category theory}\label{sec:highercats} 

In \Cref{sec:limits}, we recall the description of limits in the $\infty$-category of $\infty$-categories in terms of coCartesian sections of the Grothendieck construction, which will be used heavily throughout the paper. We also discuss the computation of limits in such a limit $\infty$-category. In \Cref{sec:lincats}, we discuss morphisms objects in linear $\infty$-categories. In \Cref{sec:locsystemsSn}, we discuss a spherical adjunction arising from $\infty$-categorical local systems on spheres.

\subsection{Limits of \texorpdfstring{$\infty$}{infinity}-categories}\label{sec:limits}

We denote by $\on{Cat}_\infty$ the $\infty$-category of $\infty$-categories and by $\on{St}$ the subcategory of stable $\infty$-categories and exact functors. 

An $\infty$-category $\mathcal{C}$ if called presentable if it is accessible\footnote{This means $\mathcal{C}$ is equivalent to the $\infty$-category of $\kappa$-$\on{Ind}$-objects in a small $\infty$-category, with $\kappa$ some regular cardinal.} and admits small colimits, see \cite[5.5.0.1]{HTT}. Limits and colimits will in the following always be assumed to be small. Examples of presentable $\infty$-categories include unbounded derived $\infty$-categories of dg-algebras and $\infty$-categories of modules over $\mathbb{E}_\infty$-ring spectra. A remarkable property of presentable $\infty$-categories is the adjoint functor theorem, see \cite[5.5.2.9]{HTT}, and the duality for limits and colimits it entails. Namely, a functor $F:\mathcal{C}\rightarrow \mathcal{D}$ between stable $\infty$-categories admits 
\begin{itemize}
\item a right adjoint if and only if $F$ preserves colimits.
\item a left adjoint if and only if $F$ preserves limits and $\kappa$-filtered colimits for some regular cardinal $\kappa$ (this latter condition is also called being an accessible functor). 
\end{itemize}

We denote by $\mathcal{P}r^L$ and $\mathcal{P}r^R$ the subcategories with objects the presentable $\infty$-categories and morphisms the functors which preserve colimits, respectively, preserve limits and are accessible.

\begin{theorem}[$\!\!${\cite[5.5.3.4]{HTT}}]\label{dualthm}
There exists a canonical equivalence of $\infty$-categories $\mathcal{P}r^L\simeq (\mathcal{P}r^R)^{\on{op}}$, restricting to the identity on objects and mapping each functor in $\mathcal{P}r^L$ to its right adjoint.
\end{theorem}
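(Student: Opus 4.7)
The plan is to construct the duality equivalence by exhibiting both sides as classifying a common intermediate object: the $\infty$-category of adjunctions between presentable $\infty$-categories, packaged as bifibrations over $\Delta^1$. The key input is the adjoint functor theorem recalled just above the statement, which guarantees that every colimit-preserving functor between presentable $\infty$-categories admits a right adjoint, and dually that every limit- and filtered-colimit-preserving functor admits a left adjoint.

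First I would introduce an auxiliary $\infty$-category $\mathcal{B}$ whose objects are functors $p\colon \mathcal{M}\to \Delta^1$ that are simultaneously Cartesian and coCartesian fibrations with presentable fibers, subject to the condition that the associated coCartesian transport functor preserves colimits (equivalently, by the adjoint functor theorem, that the Cartesian transport functor preserves limits and filtered colimits). Morphisms are functors over $\Delta^1$ preserving both classes of marked edges. Straightening the coCartesian structure yields a forgetful functor $\Phi^L\colon \mathcal{B}\to \mathcal{P}r^L$ sending $p$ to the transport functor $p^{-1}(0)\to p^{-1}(1)$; straightening the Cartesian structure yields a forgetful functor $\Phi^R\colon \mathcal{B}\to (\mathcal{P}r^R)^{\on{op}}$ sending $p$ to the right adjoint transport $p^{-1}(1)\to p^{-1}(0)$.

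The second and main step is to show that both $\Phi^L$ and $\Phi^R$ are trivial fibrations of $\infty$-categories. Essential surjectivity of $\Phi^L$ is precisely the content of the adjoint functor theorem: given a colimit-preserving $F\colon \mathcal{C}\to \mathcal{D}$, one builds a bifibration with coCartesian transport $F$ by upgrading $F$ to an adjunction using the right adjoint $G$. Fully faithfulness requires verifying that the space of bifibration structures extending a fixed $F$ is contractible, which amounts to showing that the right adjoint, along with all of the unit, counit, and higher coherence data, is unique up to a contractible space of choices. This is the principal technical obstacle; it is typically handled by analyzing the space of Cartesian lifts through a coCartesian fibration and invoking the uniqueness of pointwise right Kan extensions, reducing the assertion fiberwise to the universal property of the adjunction counit. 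The argument for $\Phi^R$ is formally dual.

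Finally, composing the zigzag $\mathcal{P}r^L \xleftarrow{\Phi^L} \mathcal{B} \xrightarrow{\Phi^R} (\mathcal{P}r^R)^{\on{op}}$ with an inverse of $\Phi^L$ yields the desired canonical equivalence. By construction it restricts to the identity on objects and sends each morphism $F\in \mathcal{P}r^L$ to the chosen right adjoint $G\in \mathcal{P}r^R$. I expect the uniqueness of coherent right adjoints to be the delicate point; everything else is a reasonably formal application of straightening/unstraightening and the adjoint functor theorem.
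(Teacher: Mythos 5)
Your underlying strategy---encode adjunctions as bifibrations over $\Delta^1$, invoke the adjoint functor theorem for essential surjectivity, and reduce full faithfulness to the uniqueness of right adjoints---is indeed the strategy behind Lurie's proof of HTT 5.5.3.4, which the paper simply cites without reproducing. However, your set-up as written does not typecheck. You declare $\Phi^L\colon\mathcal{B}\to\mathcal{P}r^L$ to send a bifibration $p\colon\mathcal{M}\to\Delta^1$ to its coCartesian transport $p^{-1}(0)\to p^{-1}(1)$, but that transport is a \emph{morphism} of $\mathcal{P}r^L$, i.e.\ an object of the arrow category $\on{Fun}(\Delta^1,\mathcal{P}r^L)$, not an object of $\mathcal{P}r^L$. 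The zigzag $\mathcal{P}r^L\xleftarrow{\Phi^L}\mathcal{B}\xrightarrow{\Phi^R}(\mathcal{P}r^R)^{\on{op}}$ therefore does not exist in the form you describe; and even after correcting the targets, showing that $\Phi^L$ and $\Phi^R$ are trivial fibrations would only give an equivalence $\on{Fun}(\Delta^1,\mathcal{P}r^L)\simeq\on{Fun}(\Delta^1,(\mathcal{P}r^R)^{\on{op}})$ of arrow categories, which by itself does not produce the desired equivalence $\mathcal{P}r^L\simeq(\mathcal{P}r^R)^{\on{op}}$: one would still need to know that the comparison is compatible with sources, targets, composition, and all higher coherences.

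The repair is standard but cannot be elided. Rather than a single $\infty$-category $\mathcal{B}$ of bifibrations over $\Delta^1$, one organizes the data over all simplices: for each $n\geq 0$ let $\mathcal{B}^n$ denote the space of functors $\mathcal{M}\to\Delta^n$ that are simultaneously Cartesian and coCartesian fibrations with presentable fibers and colimit-preserving coCartesian transports. This assembles into a simplicial space equipped with levelwise comparison maps to the complete Segal spaces underlying $\mathcal{P}r^L$ and $(\mathcal{P}r^R)^{\on{op}}$, and one proves that each of those comparisons is an equivalence. That levelwise verification is exactly where your two ingredients---the adjoint functor theorem and the contractibility of the space of coherent adjunction data---enter, and you have correctly identified the latter as the delicate point. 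With this adjustment the argument goes through and is in substance the proof given in HTT.
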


\Cref{dualthm} implies that the colimit of a diagram $D\colon Z\to \mathcal{P}r^L$ is equivalent to the limit of the right adjoint diagram $D^{\on{radj}}\colon Z^{\on{op}}\to\mathcal{P}r^R$. 

The inclusions $\mathcal{P}r^L,\mathcal{P}r^R,\on{St}\subset \on{Cat}_\infty$ preserve all limits. Limits in $\on{Cat}_\infty$ can be computed as follows. Consider a $1$-category $C$ and a diagram $D:C\rightarrow \on{Set}_\Delta$ of simplicial sets taking values in $\infty$-categories. We obtain the diagram of $\infty$-categories $D':\on{N}(C)\rightarrow \on{Cat}_\infty$, given by considering $D$ as a strictly commuting diagram of $\infty$-categories. Let $p:\Gamma(D)\rightarrow \on{N}(C)$ be the relative nerve construction of \cite[3.2.5.2]{HTT}, where $\on{N}(C)$ denotes the nerve of $C$. We call $p$ or $\Gamma(D)$ the (covariant) \textit{Grothendieck construction} of $D$. The functor $p$ is a coCartesian fibration classified by the functor $D'$. The objects and morphisms in the Grothendieck construction $\Gamma(D)$ can be described as follows. 

The fiber of $p$ over $x\in N(C)$, i.e.~the pullback $\infty$-category $\Gamma(D)\times_{N(C)} \{x\}$, is given by $D(x)$. The set of objects of $\Gamma(D)$ is thus the disjoint union of the sets of objects of the $\infty$-categories $D(x)$ with $x\in C$. Given $x,y\in C$ and two objects $X\in D(x)$ and $Y\in D(y)$, a morphism $\alpha:X\rightarrow Y$ in $\Gamma(D)$ consists of 
\begin{itemize}
\item a morphism $f:x\rightarrow y$ in $C$ and 
\item a morphism $D(f)(X)\rightarrow Y$ in $D(y)$.
\end{itemize}
If $D(f)(X)\rightarrow Y$ is an equivalence, we call the morphism $\alpha$ a  \textit{coCartesian} morphism and write $\alpha:X\xrightarrow{!}Y$. If $D(f)$ admits a right adjoint, we call the morphism $\alpha$ a \textit{Cartesian} morphism if $D(f)(X)\rightarrow Y$ is a counit morphism of the adjunction $D(f)\dashv \on{radj}(D(f))$ and write $\alpha:X\xrightarrow{\ast}Y$. We note that these notions coincide with the notions of $p$-coCartesian and $p$-Cartesian edges of \cite[2.4.1.1]{HTT}.

By \cite[7.4.1.9]{Ker}, a choice of limit of the functor $D'$ is given by the $\infty$-category of coCartesian sections of $p:\Gamma(D)\rightarrow \on{N}(C)$, that is the full subcategory of the $\infty$-category 
\[ \on{Fun}_{\on{N}(C)}(\on{N}(C),\Gamma(D))\coloneqq \on{Fun}(\on{N}(C),\Gamma(D))\times_{\on{Fun}(\on{N}(C),\on{N}(C))} \{\on{id}_{\on{N}(C)}\}\,,\]
spanned by coCartesian sections, i.e.~those sections $s:N(C)\rightarrow \Gamma(D)$ satisfying that $s(e)$ is a coCartesian morphism for each morphism $e$ in $N(C)$. We can thus describe objects in the limit of $D$ as coCartesian sections and morphisms as natural transformations between coCartesian sections. 

In the cases of interest for us, the diagram $D'$ takes values in stable and presentable $\infty$-categories and limit and colimit preserving functors, which we thus assume in the following. Limits and colimits in the limit of $D'$ can in this case be computed in the respective fibers of $p$. This can be seen as follows: the computation of limits and colimits in the $\infty$-category of sections of $p:\Gamma(D')\rightarrow N(C)$ is equivalent to the computation of $p$-relative limits and colimits in the $\infty$-category $\on{Fun}(N(C),\Gamma(D'))$. By \cite[5.1.2.3, 4.3.1.10]{HTT} limits and colimits are computed pointwise in the respective fibers of $p$. It thus suffices to note that the property of a section of $p$ being coCartesian is preserved under limits and colimits in the $\infty$-category of sections of $p$, which follows from $D'(\alpha)$ being a limit and colimit preserving functor for each $1$-simplex $\alpha$ of $\on{N}(C)$. 

Finally, we also introduce the following terminology used later on.
\begin{definition}\label{def:supportofmorph}
Let $p:\Gamma(D')\rightarrow N(C)$ be as above, with $D'$ taking values in stable $\infty$-categories. We define the support of a morphism $\beta:s\rightarrow s'$ between (not necessarily coCartesian) sections of $p$ to be the subset of vertices $x$ of $N(C)$ such that $\beta(x):s(x)\rightarrow s'(x)$ is not zero. 
\end{definition}

\subsection{Linear \texorpdfstring{$\infty$}{infinity}-categories and morphism objects}\label{sec:lincats}

Given an $\mathbb{E}_\infty$-ring spectrum $R$, the $\infty$-category $\on{RMod}_R$ of right $R$-module spectra admits the structure of a symmetric monoidal $\infty$-category such that the monoidal product preserves colimits separately in both variables. The $\infty$-category $\on{RMod}_R$ thus defines a commutative algebra object in the symmetric monoidal $\infty$-category $\mathcal{P}r^L$, see \cite[Section 4.8]{HA}, and we can consider its left modules. Note that if $R=k$ is a commutative ring, then $\on{RMod}_k$ is equivalent as a symmetric monoidal $\infty$-category to the (unbounded) derived $\infty$-category $\mathcal{D}(k)$.

\begin{definition}
Let $R$ be an $\mathbb{E}_\infty$-ring spectrum. The $\infty$-category $\on{LinCat}_R=\on{LMod}_{\on{RMod}_R}(\mathcal{P}r^L)$ of $R$-linear $\infty$-categories is defined as the $\infty$-category of left modules over $\on{RMod}_R$ in $\mathcal{P}r^L$.
\end{definition}

\begin{definition}[$\!\!${\cite[4.2.1.28]{HA}}]
Let $R$ be an $\mathbb{E}_\infty$-ring spectrum. Let $\mathcal{C}$ be an $R$-linear $\infty$-category and let $X,Y\in \mathcal{C}$. A morphism object is an $R$-module $\on{Mor}_{\mathcal{C}}(X,Y)\in \on{RMod}_R$ equipped with a map $\alpha:\on{Mor}_{\mathcal{C}}(X,Y)\otimes_R X\rightarrow Y$ in $\mathcal{C}$ such that for every object $C\in \on{RMod}_R$ composition with $\alpha$ induces an equivalence of spaces
\[ \on{Map}_{\on{RMod}_R}(C,\on{Mor}_{\mathcal{C}}(X,Y))\rightarrow \on{Map}_{\mathcal{C}}(C\otimes X,Y)\,.\]
\end{definition}

We thus have $\pi_i\on{Mor}_{\mathcal{C}}(X,Y)\simeq \pi_0\on{Map}_{\mathcal{C}}(X[i],Y)$. 

We will also denote $\on{Mor}_\mathcal{C}(X,X)$ by $\on{End}(X)$.

\begin{remark}
Morphism objects always exist and the formation of morphism objects forms a functor 
\[ \on{Mor}_\mathcal{C}(\mhyphen,\mhyphen):\mathcal{C}^{\on{op}}\times \mathcal{C}\longrightarrow \on{RMod}_R\]
which preserves limits in both entries.
\end{remark}

Given a stable $\infty$-category $\mathcal{C}$ and two objects $A,B\in \mathcal{C}$, the $n$-th $\on{Ext}$-group is defined as 
\[ \on{Ext}^n_{\mathcal{C}}(A,B)\coloneqq\pi_0\on{Map}_{\mathcal{C}}(A,B[n])\simeq \pi_{-n} \on{Mor}_{\mathcal{C}}(A,B) \,.\] 
If $\mathcal{C}$ is $k$-linear, $\on{Mor}_{\mathcal{C}}(A,B)\in \on{RMod}_k\simeq \mathcal{D}(k)$ describes a chain complex and $\on{Ext}^n_{\mathcal{C}}(A,B)$ is its $(-n)$-th homology group. The abelian group $\on{Ext}^n_{\mathcal{C}}(A,B)$ thus inherits the structure of a (discrete) $k$-module. If all involved $\on{Ext}$-groups are free $k$-modules, we denote by 
\[ \chi \on{Ext}^\ast_{\mathcal{C}}(A,B)=\sum_{i\in\mathbb{Z}} (-1)^i \on{rk}_{k}\on{Ext}^{i}_{\mathcal{C}}(A,B)\] 
the Euler characteristic.

\subsection{\texorpdfstring{$\infty$}{infinity}-categories of local systems on spheres}\label{sec:locsystemsSn}

Let $R$ be an $\mathbb{E}_\infty$-ring spectrum and $n\geq 3$. The $R$-linear $\infty$-category $\on{Fun}(S^{n-1},\on{RMod}_R)$ is called the $\infty$-category of $\on{RMod}_R$-valued local systems on the $(n-1)$-sphere $S^{n-1}$. The pullback functor $f^*:\on{RMod}_R\rightarrow \on{Fun}(S^{n-1},\on{RMod}_R)$ along $f:S^{n-1}\rightarrow \ast$ admits a right adjoint $f_*$, given by the limit functor and the adjunction $f^*\dashv f_*$ is spherical, see \cite[Cor. 3.9]{Chr20}. This spherical adjunction will appear in all perverse schobers studied in this paper. In this section, we describe some of the properties of the adjunction $f^*\dashv f_*$ which are needed later on. 

\begin{lemma}[{$\!\!$\cite[Prop. 7.15]{Chr22}}]\label{model1}
Let $R[t_{n-2}]$ be the free algebra object in the symmetic monoidal $\infty$-category $\on{RMod}_R$ generated by $R[n-2]$ and consider the morphism $\phi:R[t_{n-2}]\xrightarrow{t_{n-2}\mapsto 0}R$ of algebra objects in $\on{RMod}_R$.
There exists an equivalence of $R$-linear $\infty$-categories $\on{Fun}(S^{n-1},\on{RMod}_R)\simeq \on{RMod}_{R[t_{n-2}]}$ making the following diagram commute.
\[
\begin{tikzcd}
{\on{Fun}(S^{n-1},\on{RMod}_R)} \arrow[rr, "\simeq"] &                                                     & {\on{RMod}_{R[t_{n-2}]}} \\
                                                     & \on{RMod}_R \arrow[ru, "\phi^*"'] \arrow[lu, "f^*"] &                         
\end{tikzcd}
\] 
\end{lemma}

\begin{lemma}\label{endomorlem}
There exists an equivalence of $R$-modules 
\[ \on{End}(f^*(R))=\on{Mor}_{\on{Fun}(S^{n-1},\on{RMod}_R)}(f^*(R),f^*(R))\simeq R\oplus R[1-n]\,.\]
\end{lemma}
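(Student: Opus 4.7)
The plan is to compute the morphism object directly using the $R$-linear adjunction $f^* \dashv f_*$ cited just before the lemma. Since both $f^*$ (constant local system) and $f_*$ (global sections / limit over $S^{n-1}$) are $R$-linear functors, the universal property of the morphism object gives, for every $C \in \on{RMod}_R$,
\[
\on{Map}_{\on{RMod}_R}(C, \on{Mor}(f^*(R), f^*(R))) \simeq \on{Map}(f^*(C), f^*(R)) \simeq \on{Map}_{\on{RMod}_R}(C, f_* f^*(R)),
\]
so Yoneda yields $\on{Mor}(f^*(R), f^*(R)) \simeq f_* f^*(R)$. The task is reduced to identifying the underlying $R$-module $f_* f^*(R)$.

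Next I would identify $f_* f^*(R)$ with the cotensor of $R$ by the space $S^{n-1}$. Since $f^*$ is precomposition along $S^{n-1} \to \ast$, its right adjoint is the limit over the Kan complex $S^{n-1}$, and the limit of the constant diagram with value $R$ is, by definition, the cotensor $R^{S^{n-1}}$. Equivalently, this is the $R$-valued cochain spectrum $C^\ast(S^{n-1}; R)$.

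Finally I would evaluate this cotensor. For $n \geq 2$, use the cofiber sequence $S^{n-2} \to \ast \to S^{n-1}$ obtained from the pushout presentation $S^{n-1} = \ast \sqcup_{S^{n-2}} \ast$ (or equivalently the CW structure with a single $0$-cell and a single $(n-1)$-cell). Applying the contravariant functor $R^{(-)}$ converts this pushout of spaces into a pullback of $R$-modules, and iterating (or using the known formula $R^{S^k} \simeq R \oplus R[-k]$ for a sphere) yields a fiber sequence $R^{S^{n-1}} \to R \to R^{S^{n-2}}$ whose computation gives $R^{S^{n-1}} \simeq R \oplus R[-(n-1)]$. Translating into the homological convention used in the paper, $R[-(n-1)] = R[1-n]$, producing the desired splitting. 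The case $n=1$ is handled directly: $R^{S^0} = R \times R \simeq R \oplus R$, which agrees with $R \oplus R[1-1]$.

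No step is a serious obstacle; the only point that requires a bit of care is the first one, namely that the $R$-linear structure on $\on{Fun}(S^{n-1},\on{RMod}_R)$ coming from the pointwise $\on{RMod}_R$-action makes both $f^*$ and $f_*$ into $R$-linear functors, so that the morphism object in the $R$-linear sense is indeed computed by $f_* f^*(R)$ rather than just its underlying spectrum. This is a formal consequence of the fact that $f^*$ is induced by the symmetric monoidal structure on $\on{Fun}(S^{n-1}, \mhyphen)$ and hence lifts to a morphism of $\on{RMod}_R$-modules in $\mathcal{P}r^L$, and everything then follows from standard properties of adjunctions in $\on{LinCat}_R$.
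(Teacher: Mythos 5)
Your proof is correct, and it takes a genuinely different route from the paper's. The paper works with the combinatorial simplicial model $P_{n-1}$ of the sphere (iterated joins of $\Delta^0 \amalg \Delta^0$), transports the problem to locally constant functors on $P_{n-1}$, and computes the morphism object by filtering the object $X_{n-1}=\iota^*f^*(R)$ through a sequence of right Kan extensions $X_i$ and tracking through explicit biCartesian squares. You instead (i) use the $R$-linearity of $f^*$ together with the adjunction $f^*\dashv f_*$ and Yoneda to identify $\on{End}(f^*(R))\simeq f_*f^*(R)$, (ii) recognize $f_*f^*(R)$ as the cotensor $R^{S^{n-1}}=C^*(S^{n-1};R)$, and (iii) compute this by the basepoint splitting $R^{S^{n-1}}\simeq R\oplus\widetilde{R}^{S^{n-1}}$ together with $\widetilde{R}^{S^{n-1}}\simeq\Omega^{n-1}\widetilde{R}^{S^0}\simeq R[1-n]$ (or, as you phrase it, the pushout $S^{n-1}=\ast\amalg_{S^{n-2}}\ast$ applied to $R^{(-)}$). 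Your step (i) is worth being careful about, and you are: the universal property of the morphism object plus $C\otimes_R f^*(R)\simeq f^*(C)$ (which uses only that the $\on{RMod}_R$-tensoring on $\on{Fun}(S^{n-1},\on{RMod}_R)$ is pointwise and that $f^*$ sends $R$ to the constant diagram) gives $\on{Map}(C,\on{Mor}(f^*(R),f^*(R)))\simeq\on{Map}(C,f_*f^*(R))$ naturally in $C$, and Yoneda finishes. The two proofs trade off differently: the paper's argument is self-contained within the $P_{n-1}$/Kan-extension machinery it already relies on elsewhere, while yours is shorter, more conceptual, and makes transparent that the answer is just the $R$-cohomology of the sphere; your argument also more directly generalizes to computing $\on{Mor}(f^*(M),f^*(N))$ for arbitrary $M,N\in\on{RMod}_R$.
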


\begin{proof}
The $R$-linear functor $f^*:\on{RMod}_R\rightarrow \on{Fun}(S^{n-1},\on{RMod}_R)$ is fully determined by the image of $R$, see \cite[Section 4.8.4]{HA}, and thus equivalent to the $R$-linear functor 
\[ \mhyphen\otimes_R f^*(R):\on{RMod}_R\rightarrow \on{Fun}(S^{n-1},\on{RMod}_R)\,.\] 
Its right adjoint $f_*$ is equivalent to $\on{Mor}_{\on{Fun}(S^{n-1},\on{RMod}_R)}(f^*(R),\mhyphen)$. The equivalence $f_*f^*(R)\simeq R\oplus R[1-n]$ is shown in \cite[Section 3.1]{Chr20}.
\end{proof}

Let $i:\ast\rightarrow S^{n-1}$ be the inclusion of any point with pullback $i^*:\on{Fun}(S^{n-1},\on{RMod}_R)\rightarrow \on{RMod}_R$ and right adjoint $i_*$. Under the equivalence from \Cref{model1}, the object $i_*(R)\in \on{Fun}(S^{n-1},\on{RMod}_R)$ is mapped to $R[t_{n-2}]\in \on{RMod}_{R[t_{n-2}]}$. 

For $R$ an arbitrary $\mathbb{E}_\infty$-ring spectrum, the description of the cotwist functor $T_{\on{Fun}(S^{n-1},\on{RMod}_R)}$ of the adjunction $f^*\dashv f_*$ is an open problem, see \cite[Conj. 7.16]{Chr22}. We can however describe the values $T_{\on{Fun}(S^{n-1},\on{RMod}_R)}(i^*(R))$ and $T_{\on{Fun}(S^{n-1},\on{RMod}_R)}(f^*(R))$. 

\begin{lemma}\label{lem:twist=n}
Let $T_{\on{Fun}(S^{n-1},\on{RMod}_R)}$ be the cotwist functor of the spherical adjunction $f^*\dashv f_*$ and $X\in \on{RMod}_R$. There exist equivalences in $\on{Fun}(S^{n-1},\on{RMod}_R)$
\begin{align*}
T_{\on{Fun}(S^{n-1},\on{RMod}_R)}(f^*(X))&\simeq f^*(X)[1-n]\,,\\
T_{\on{Fun}(S^{n-1},\on{RMod}_R)}(i_*(X))&\simeq i_*(X)[1-n]\,.
\end{align*}
\end{lemma}

\begin{proof}
Let $T_{\on{RMod}_R}$ be the twist functor of the adjunction $f^*\dashv f_*$. The description of image under the cotwist $T_{\on{Fun}(S^{n-1},\on{RMod}_R)}(f^*(X))$ directly follows from the equivalences of functors $f^*T_{\on{RMod}_R}\simeq T_{\on{Fun}(S^{n-1},\on{RMod}_R)}f^*$, see \cite[Lemma 2.2]{Chr20}, and $T_{\on{RMod}_R}\simeq [1-n]$, see \cite[Section 3.1]{Chr20}.

For the description of $T_{\on{Fun}(S^{n-1},\on{RMod}_R)}(i_*(X))$, we begin by recalling some notation from \cite[Section 3.1]{Chr20}, where the sphericalness of the adjunction $f^*\dashv f_*$ is proven. Given a simplicial set $Z$, we denote by $Z^\triangleright=Z\ast \Delta^0$ the simplicial join. Consider the recursively defined simplicial sets $P_0=S^0=\Delta^0 \amalg \Delta^0$ and
\[ P_n\coloneqq P_{n-1}^\triangleright \coprod_{P_{n-1}} P_{n-1}^\triangleright\,.\]
We denote any vertex in $P_n\backslash P_{n-1}$ by $n$. Let $g:P_n\rightarrow \ast$ and $g^*:\mathcal{D}\rightarrow \on{Fun}(P_n,\mathcal{D})$ be the pullback functor with right adjoint $g_*$ given by the limit functor. The $\infty$-category $\on{Fun}(S^{n-1},\on{RMod}_R)$ embeds fully faithfully into $\on{Fun}(P_n,\on{RMod}_R)$, with image the functors mapping all edges in $P_n$ to equivalences in $\on{RMod}_R$, and the adjunction $g^*\dashv g_*$ restricts to $f^*\dashv f_*$ along this inclusion. Consider the simplicial sets 
\begin{align*} 
Z_1&=\{n\} \times \Delta^{1} \amalg_{\{n\}\times \Delta^{\{1\}}} P_{n}\times \Delta^{\{1\}}\\
Z_2&=\left(P_{n}\backslash \{n\}\right) \times \Delta^{1} \amalg_{\left(P_{n}\backslash \{n\}\right) \times \Delta^{\{0\}}} P_{n}\times \Delta^{\{0\}}\\
Z_3&=P_{n}\times \Delta^1
\end{align*}
We define the following $\infty$-categories via Kan extensions.
\begin{itemize}
\item Let $\mathcal{D}_1\subset \on{Fun}(Z_1,\on{RMod}_R)$ be the full subcategory spanned by right Kan extensions along $P_n\times \Delta^{\{1\}}\hookrightarrow Z_1$.
\item Let $\mathcal{D}_2\subset \on{Fun}(Z_2,\on{RMod}_R)$ be the full subcategory spanned by left Kan extensions along $P_n\times \Delta^{\{0\}}\hookrightarrow Z_2$.
\item Let $\mathcal{D}_1'\subset \on{Fun}(Z_3,\on{RMod}_R)$ be the full subcategory spanned by left Kan extensions along $Z_1\rightarrow Z_3$ of functors in $\mathcal{D}_1$.
\item Let $\mathcal{D}_2'\subset \on{Fun}(Z_3,\on{RMod}_R)$ be the full subcategory spanned by right Kan extensions along $Z_2\rightarrow Z_3$ of functors in $\mathcal{D}_2$.
\end{itemize}
 The diagrams in $\mathcal{D}_1'$ have the property that their restriction to $P_n\times \Delta^{\{0\}}$ is everywhere zero, except at $(n,0)$, where their value is identical to their value at $(n,1)$. Similarly, the diagrams in $\mathcal{D}_2'$ have the property that their restriction to $P_n \times \Delta^{\{1\}}$ is equivalent to the their restriction to $P_n\times \Delta^{\{0\}}$, expect for having value $0$ at the vertex $(n,1)$. For $i=1,2$, the functors 
\[ R_i:\mathcal{D}_i'\rightarrow \mathcal{D}_i\rightarrow \on{Fun}(P_n\times \Delta^{\{1\}},\on{RMod}_R)\] 
are trivial fibrations by \cite[4.3.2.15]{HTT}. We can thus choose an essentially unique section of $R_i$, denoted $R^{-1}_i$. Consider the functor 
\[ \on{colim}_{\Delta^1}:\on{Fun}(Z_3,\on{RMod}_R)\simeq \on{Fun}(\Delta^1,\on{Fun}(P_n,\on{RMod}_R))\xlongrightarrow{\on{Fun}(\Delta^1,\on{colim})} \on{Fun}(\Delta^1,\on{RMod}_R)\]
The composite of $R^{-1}_1$ and the restriction of $\on{colim}_{\Delta^1}$ to $\mathcal{D}_1'$ describes a natural transformation $\eta:i^*\rightarrow f_!$. The cofiber of this natural transformation in $\on{Fun}(\on{Fun}(S^{n-1},\on{RMod}_R),\on{RMod}_R)$ can be described as the composite $\nu$ of $R^{-1}_2$, $\on{lim}_{\Delta^1}$ restricted to $\mathcal{D}_2'$ and the evaluation functor at $1\in \Delta^1$. A small computation reveals that $\nu$ is equivalent to $i^*[n-1]$.

Passing to right adjoints, we obtain a fiber and cofiber sequence $i_*[1-n]\rightarrow f^*\rightarrow i_*$ in $\on{Fun}(\on{RMod}_R,\on{Fun}(S^{n-1},\on{RMod}_R))$, which evaluated at $X\in \on{RMod}_R$ yields a fiber and cofiber sequence $i_*(X)[1-n]\rightarrow   f^*(X)\xrightarrow{\alpha} i_*(X)$. The composing of $\alpha$ with the equivalence $f^*(X)\simeq f^*f_*i_*(X)$ is a counit map of the adjunction $f^*\dashv f_*$. It thus follows $T_{\on{Fun}(S^{n-1},\on{RMod}_R)}(i_*(X))\simeq i^*(X)[1-n]$, as desired.
\end{proof}

\section{Parametrized perverse schobers}\label{sec:schobers}

After discussing some background material on ribbon graphs and surfaces in \Cref{sec:graphssurfaces}, we give in \Cref{sec:defschober} an overview over the notion of a parametrized perverse schober from \cite{Chr22}. We then proceed in \Cref{sec:relGinzburg} by describing how the relation between perverse schobers and relative Ginzburg algebras of triangulated surfaces extends to $n$-angulated surfaces.

\subsection{Ribbon graphs and \texorpdfstring{$n$}{n}-angulated surfaces}\label{sec:graphssurfaces}

\begin{definition}~
\begin{itemize}
\item A graph $\mathcal{T}$ consists of two finite sets $\mathcal{T}_0$ of vertices and $\on{H}$ of halfedges together with an involution $\tau:\on{H}\rightarrow \on{H}$ and a map $\sigma:\on{H}\rightarrow \mathcal{T}_0$.
\item Let $\mathcal{T}$ be a graph. We call the orbits of $\tau$ the edges of $\mathcal{T}$. An edge is called internal if the orbit contains two elements and called external if the orbit contains a single element. 
\item A ribbon graph consists of a graph $\mathcal{T}$ together with a cyclic order on the set $\on{H}(v)$ of halfedges incident to $v$ for all $v\in\mathcal{T}_0$.
\end{itemize}
\end{definition}

\begin{definition}
Let $\mathcal{T}$ be a graph. We define the category $\on{Exit}(\mathcal{T})$ with
\begin{itemize}
\item elements the vertices and edges of $\mathcal{T}$ and 
\item non-identity morphisms from vertices to their incident edges. If an edge $e$ both begins and ends at a vertex $v$, there are two morphisms $v\rightarrow e$.
\end{itemize}
The geometric realization $|\mathcal{T}|$ of $\mathcal{T}$ is defined as the geometric realization of $\on{Exit}(\mathcal{T})$ (as a simplicial set). 
\end{definition}

\begin{definition}\label{spgrdef}~
\begin{itemize}
\item By a surface ${\bf S}$, we mean a connected, oriented, $2$-dimensional, smooth manifold with (possibly empty) boundary $\partial {\bf S}$.
\item By a marked surface $({\bf S},M)$ (or simply ${\bf S}$), we mean a compact surface ${\bf S}$ together with a non-empty finite subset of marked points $M$. Note that all boundary components of ${\bf S}$ are circles. We further require that each boundary component contains a marked point. Internal marked points are called punctures.
\item Let ${\bf S}$ be a marked surface and let $\mathcal{T}$ be a graph with an embedding $f:|\mathcal{T}|\rightarrow {\bf S}\backslash M$. We call $f$, or by abuse of notation $\mathcal{T}$, a spanning graph for ${\bf S}$ if 
\begin{enumerate}[(1)]
\item the embedding $f$ is a homotopy equivalence,
\item the restriction $|\mathcal{T}|\cap f^{-1}(\partial {\bf S})\rightarrow \partial {\bf S}\backslash M$ of $f$ is a homotopy equivalence.
\end{enumerate}
\end{itemize}
\end{definition}

\begin{remark}\label{embrem}
A spanning graph inherits a canonical structure of a ribbon graph, where the cyclic order at each vertex is the counterclockwise order induced by the orientation of the surface. We also call such ribbon graphs spanning. 
\end{remark}

\begin{definition}
A graph $\mathcal{T}$ is called $n$-valent if each vertex of $\mathcal{T}$ has valency $n$, i.e.~$n$ incident halfedges. 
\end{definition}

\begin{remark}\label{rem:nang}
Every $n$-valent spanning graph $\mathcal{T}$ of a marked surface ${\bf S}$ is the dual graph of an ideal $n$-angulation, meaning a decomposition of ${\bf S}$ into $n$-gons whose vertices lie at the marked points. Loops of the $n$-valent spanning graph correspond to so-called self-folded edges of the $n$-gons. The case $n=3$ corresponds to ideal triangulations, see for example \cite[Def.~2.6]{FST08} for a definition.
\end{remark}

We note that not all marked surfaces admit a trivalent spanning graph, and much less an  $n$-valent spanning graph. For example, spheres with less than $3$ punctures, the unpunctured $1$-gon and the unpunctured $2$-gon do not admit a trivalent spanning graph.

\begin{remark}\label{ssurfrem}
Let $\mathcal{T}$ be a spanning ribbon graph of a marked surface ${\bf S}$. To each vertex $v$ of $\mathcal{T}$ of valency $n$ we associate a (non-compact) surface $\Sigma_v$ with an embedding of $v$ and its $n$ incident halfedges. We depict $\Sigma_v$ as follows (in green). The dotted lines correspond to open ends, whereas the solid lines indicate the boundary. 
\begin{center}
\begin{tikzpicture}
 \draw[color=ao][very thick][densely dotted] plot [smooth] coordinates {(0.5,2) (0.7,0.7) (2,0.5)};
 \draw[color=ao][very thick][densely dotted] plot [smooth] coordinates {(0.5,-2) (0.7,-0.7) (2,-0.5)};
 \draw[color=ao][very thick][densely dotted] plot [smooth] coordinates {(-0.5,-2) (-0.7,-0.7) (-1,-0.6)};
 \draw[color=ao][very thick][densely dotted] plot [smooth] coordinates {(-0.5,2) (-0.7,0.7) (-1,0.6)};
 \draw[color=ao][very thick] plot [smooth] coordinates {(2,-0.5) (2,0.5)};
 \draw[color=ao][very thick] plot [smooth] coordinates {(-0.5,2) (0.5,2)};
 \draw[color=ao][very thick] plot [smooth] coordinates {(-0.5,-2) (0.5,-2)}; 
  \node (0) at (0,0){};  
  \node (1) at (-1.4,0.14){\vdots};
  \node (2) at (-0.4,0){\small $v$};
  \node (3) at (0.44,-0.5){\small $e_{n-1}$};
  \node (4) at (0.24,0.6){\small $e_{1}$};
  \node (5) at (0.8,0.2){\small $e_{n}$};
  \fill (0) circle (0.1);
  \draw[very thick]
  (0,1.98) -- (0,0)
  (0,-1.98) -- (0,0)
  (0,0) -- (1.98,0); 
\end{tikzpicture}
\end{center}
We define the thickening of $\mathcal{T}$ to be the surface $\Sigma_\mathcal{T}$, obtained from gluing the surfaces $\Sigma_v$ at their boundaries whenever two vertices are incident to the same edge. The surface $\Sigma_{\mathcal{T}}$ comes with an embedding of $|\mathcal{T}|$, which is also a homotopy equivalence. We can further find an embedding and homotopy equivalence $\Sigma_{\mathcal{T}}\rightarrow {\bf S}$, such that the composite embedding $|\mathcal{T}|\rightarrow  \Sigma_\mathcal{T} \rightarrow {\bf S}\backslash M$ is the given embedding of the spanning ribbon graph $\mathcal{T}$. 
\end{remark}

\subsection{Definition and properties}\label{sec:defschober}

Let $F:\mathcal{A}\leftrightarrow \mathcal{B}:G$ be an adjunction between stable $\infty$-categories. We define 
\begin{itemize}
\item the twist functor $T_\mathcal{A}:\mathcal{A}\rightarrow \mathcal{A}$ as the cofiber $T_\mathcal{A}=\on{cof}(\on{id}_{\mathcal{A}}\xrightarrow{u} GF)$ of the unit of $F\dashv G$ in the stable $\infty$-category $\on{Fun}(\mathcal{A},\mathcal{A})$ and 
\item the cotwist functor as the fiber $T_{\mathcal{B}}=\on{fib}(FG\xrightarrow{\on{cu}} id_{\mathcal{B}})$ of the counit of $F\dashv G$ in the stable $\infty$-category $\on{Fun}(\mathcal{B},\mathcal{B})$.
\end{itemize}
The adjunction $F\dashv G$ is called spherical if both $T_\mathcal{A}$ and $T_{\mathcal{B}}$ are equivalences. In this case, we also call $F$ a spherical functor. The notion of a spherical functor in the context of dg-categories is due to \cite{AL17} and generalizes the notion of a spherical object. For treatments of spherical adjunctions in the setting of stable $\infty$-categories, we refer to \cite{DKSS19,Chr20}. We proceed by introducing the local model for a parametrized perverse schober, based on the datum of a spherical functor.

Let $F:\mathcal{V}\rightarrow \mathcal{N}_F$ be a spherical functor and let $[n-1]$ be the poset with elements $0,\dots,n-1$. Consider the diagram $D(F):[n-1]\rightarrow \on{Set}_{\Delta}$ obtained from the $n-1$ composable functors
\begin{equation}\label{comfun} \mathcal{V}\xlongrightarrow{F}\underbrace{\mathcal{N}_F\xlongrightarrow{\on{id}}\dots\xrightarrow{\on{id}}\mathcal{N}_F}_{(n-1)\text{-many}}\,.
\end{equation}
We define the stable $\infty$-category $\mathcal{V}^n_{F}$ as the $\infty$-category of sections of the Grothendieck construction $p:\Gamma(D(F))\rightarrow N([n-1])\simeq \Delta^{n-1}$ (see \Cref{sec:limits}) of the diagram $D(F)$. The objects of $\mathcal{V}^n_F$ are therefore diagrams of the form
\begin{equation}\label{objeq}
 a\xrightarrow{\alpha_1} b_1\xrightarrow{\alpha_2} \dots \xrightarrow{\alpha_{n-1}} b_{n-1}
\end{equation} 
with $a\in \mathcal{V}, b_i\in \mathcal{N}_F$ and the morphism $\alpha_1$ encoding the datum of a morphism $F(a)\rightarrow b_1$ in $\mathcal{N}_F$.

The $\infty$-category $\mathcal{V}^n_{F}$ comes with a semiorthogonal decomposition (SOD) $(\mathcal{V},\mathcal{N}_F,\dots,\mathcal{N}_F)$ of length $n$, see \cite[Section 2.6]{Chr22} for background. The $i$-th component of the SOD is spanned by objects of the form \eqref{objeq} satisfying that $b_l\simeq 0$ for all $l$ if $i=1$ and $a\simeq 0\simeq b_l$ for all $l\neq i-1$ for $2\leq i \leq n$. 

We consider an $n$-valent vertex $v$ of a ribbon graph $\mathcal{T}$ with incident halfedges $e_1,\dots,e_n$, ordered compatibly with their cyclic order. We consider the poset $C_v$ with objects $v,e_1,\dots,e_n$ and morphisms $v\rightarrow e_i$. It is equivalent to the under category $\on{Exit}(\mathcal{T})_{v/}$, the arising map $C_v\simeq \on{Exit}_{v/}\rightarrow \on{Exit}(\mathcal{T})$ maps $v$ to $v$ and each halfedge $e_i$ to the edge of $\mathcal{T}$ which contains that halfedge. The local model at $v$ of a parametrized perverse schober consists of a functor 
\begin{equation}\label{locseq}
 \mathcal{F}_v(F):C_v\longrightarrow \on{St}\,,
\end{equation} 
defined as follows. It is defined on objects via $\mathcal{F}_v(F)(v)=\mathcal{V}^n_F$ and $\mathcal{F}_v(F)(e_i)=\mathcal{N}_{F}$. It is defined on morphisms via 
\begin{equation}\label{rhoeq} 
\mathcal{F}_v(F)(v\rightarrow e_i)=\varrho_i\coloneqq \begin{cases}
\pi_n & i=1\\
\on{cof}_{n-i,n-i+1}[i-2] & 2\leq i \leq n-1\\
\on{rcof}_{1,2}[n-2] & i=n
\end{cases}
\end{equation}
where 
\begin{itemize}
\item $\pi_n$ denotes the projection functor to the last component of the SOD (i.e.~maps \eqref{objeq} to $b_{n-1}$), 
\item $\on{cof}_{i,i+1}$ denotes the composite of the projection to the $i$-th and $(i+1)$-th components of the SOD with the cofiber functor (i.e.~maps \eqref{objeq} to the cofiber of $\alpha_{i}$) 
\item and $\on{rcof}_{1,2}$ denotes the composite of the projection to the first two components of the SOD with the relative cofiber functor (i.e.~maps \eqref{objeq} to the cofiber of $F(a)\rightarrow b_1$). 
\end{itemize}

\begin{remark}\label{adjsrem}
The functors $\varrho_1,\dots,\varrho_n$ in \eqref{rhoeq} admit right adjoints $\varsigma_i$ which from a sequence of adjunctions 
\begin{equation}\label{sigmaadjeq}
\varsigma_1T_{\mathcal{N}_F}^{-1}[1-n]\dashv \varrho_n\dashv \varsigma_n\dashv \varrho_{n-1}\dashv \varsigma_{n-1}\dashv \dots \dashv \varrho_1\dashv \varsigma_1\,,
\end{equation}
with $T_{\mathcal{N}_F}$ the cotwist functor of the adjunction $F\dashv \on{radj}(F)$, see \cite[Section 3]{Chr22}.
\end{remark}

We are now ready to give the full definition of a parametrized perverse schober. 

\begin{definition}\label{def:schober}
Let $\mathcal{T}$ be a ribbon graph. A $\mathcal{T}$-parametrized perverse schober is a functor 
\[ \mathcal{F}:\on{Exit}(\mathcal{T})\longrightarrow \on{St}\,,\]
such that for each vertex $v$ of $\mathcal{T}$ there exists a spherical functor $F_v:\mathcal{V}_{v}\rightarrow \mathcal{N}_{F_v}$ together with an equivalence between the restriction of $\mathcal{F}$ along $C_v\rightarrow \on{Exit}(\mathcal{T})$ and $\mathcal{F}_v(F_v)$ in $\on{Fun}(C_v,\on{St})$. 

A vertex $v$ of $\mathcal{T}$ is called a singularity of $\mathcal{F}$ if the domain $\mathcal{V}_{v}$ of the spherical functor $F_v$ is not equivalent to the zero stable $\infty$-category.
\end{definition}

\begin{definition}\label{glsecdef}
Let $\mathcal{F}$ be a $\mathcal{T}$-parametrized perverse schober. 
\begin{itemize}
\item The stable $\infty$-category of global sections $\mathcal{H}(\mathcal{T},\mathcal{F})$ is defined as the limit of $\mathcal{F}$ in $\on{St}$. As the limit, we usually choose the $\infty$-category of coCartesian sections of the Grothendieck construction $p:\Gamma(\mathcal{F})\rightarrow \on{Exit}(\mathcal{T})$.
\item The stable $\infty$-category $\mathcal{L}$ of sections of $\mathcal{F}$ is the $\infty$-category of (all) sections of the Grothendieck construction $p:\Gamma(\mathcal{F})\rightarrow \on{Exit}(\mathcal{T})$.
\end{itemize}
\end{definition}

The \Cref{def:schober} of a $\mathcal{T}$-parametrized perverse schober involves for each vertex of $\mathcal{T}$ a choice of a total order on the a priori cyclically ordered set of incident edges. Different choices lead to different local models $\mathcal{F}_v(F_v)$, which are related by natural equivalences obtained from powers of the equivalence $T_{\mathcal{V}^n_F}$ described in \Cref{paratwprop}. \Cref{def:schober} is thus independent of these choices.

\begin{proposition}[$\!\!$\cite{Chr22}]\label{paratwprop}
Let $F:\mathcal{V}\leftrightarrow \mathcal{N}_F:G$ be a spherical adjunction. The functor 
\[ (\varrho_1,\dots,\varrho_n):\mathcal{V}^n_{F} \longrightarrow \mathcal{N}_F^{\times n}\] 
is spherical. The twist functor $T_{\mathcal{V}^n_{F}}$ satisfies 
\[ \varrho_i\circ T_{\mathcal{V}^n_{F}} =\begin{cases} 
\varrho_{i+1}& 1\leq i\leq n-1\\
T_{\mathcal{N}_F}[n-1]\circ \varrho_1 & i=n
\end{cases}\,,\]
where $T_{\mathcal{N}_F}$ is the cotwist functor of $F\dashv G$. 
\end{proposition}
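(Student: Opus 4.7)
\medskip
\noindent\textbf{Proof plan.}
The plan is to leverage the chain of adjunctions in Remark \ref{adjsrem} to assemble a right adjoint of the combined functor $\varrho = (\varrho_1, \dots, \varrho_n)$ and then compute the cofiber of the unit to obtain the twist. Since $\mathcal{N}_F^{\times n}$ is a finite product of stable $\infty$-categories, a right adjoint of $\varrho$ exists and decomposes as $\varsigma(b_1, \dots, b_n) \simeq \bigoplus_{i=1}^n \varsigma_i(b_i)$; the unit $\on{id}_{\mathcal{V}^n_F} \to \varsigma \varrho$ likewise decomposes componentwise into the units of the individual adjunctions $\varrho_i \dashv \varsigma_i$.

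The central computation will be the pairwise compositions $\varrho_j \varsigma_i$. The adjunction chain \eqref{sigmaadjeq} directly yields three kinds of equivalences: $\varrho_j \varsigma_j \simeq \on{id}$ (from the counit of $\varrho_j \dashv \varsigma_j$), $\varrho_j \varsigma_{j+1} \simeq \on{id}$ for $j < n$ (from the unit of $\varsigma_{j+1} \dashv \varrho_j$), and $\varrho_n \varsigma_1 \simeq T_\mathcal{B}[n-1]$ from the wrap-around adjunction $\varsigma_1 T_{\mathcal{B}}^{-1}[1-n] \dashv \varrho_n$. All other compositions should vanish as a consequence of the semiorthogonal decomposition $(\mathcal{A}, \mathcal{B}, \dots, \mathcal{B})$ on $\mathcal{V}^n_F$. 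With this in hand, I would apply $\varrho_j$ to the defining cofiber sequence $X \to \varsigma\varrho(X) \to T_{\mathcal{V}^n_F}(X)$ and obtain
\[
\varrho_j(X) \longrightarrow \bigoplus_{i=1}^n \varrho_j \varsigma_i \varrho_i(X) \longrightarrow \varrho_j T_{\mathcal{V}^n_F}(X),
\]
in which the direct sum collapses to $\varrho_j(X) \oplus \varrho_{j+1}(X)$ for $j<n$, or to $\varrho_n(X) \oplus T_\mathcal{B}[n-1]\varrho_1(X)$ for $j=n$. The first map is the inclusion of the first summand by the triangle identity, so taking cofibers yields the desired formula.

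Sphericalness then follows: the $\varrho_j$'s are jointly conservative on $\mathcal{V}^n_F$ (they detect the SOD layers), and $T_{\mathcal{V}^n_F}$ cyclically shuffles them with a single wrap-around twist by the equivalence $T_\mathcal{B}[n-1]$, so $T_{\mathcal{V}^n_F}$ is itself an equivalence. The cotwist $\on{fib}(\varrho\varsigma \to \on{id}_{\mathcal{N}_F^{\times n}})$ is computed from the same $\varrho_j \varsigma_i$ data and cyclically permutes the $n$ factors of $\mathcal{N}_F^{\times n}$ with the analogous wrap-around twist, so it is also invertible. The hardest step will be verifying the vanishing of the non-adjacent compositions $\varrho_j \varsigma_i$; to handle this I would identify each $\varsigma_i$ with an explicit embedding of $\mathcal{N}_F$ into $\mathcal{V}^n_F$ (generalising the $n=2$ formulas $\varsigma_1(b) = (F(G(b)) \to b)$ and $\varsigma_2(b) = (0 \to b)$) and kill the cross-terms using semiorthogonality of the SOD, so that only the cyclically adjacent contributions survive.
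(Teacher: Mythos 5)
The paper does not give its own proof of this proposition (it cites \cite{Chr21}), so I can only assess your argument on its own merits.

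Your strategy for the twist formula is sound. Decomposing the right adjoint of $\varrho=(\varrho_1,\dots,\varrho_n)$ as $\varsigma(\mathbf{b})\simeq\bigoplus_i\varsigma_i(b_i)$, computing the matrix of compositions $\varrho_j\varsigma_i$ via the chain \eqref{sigmaadjeq} (assuming each $\varsigma_i$ is fully faithful, so that the counit of $\varrho_j\dashv\varsigma_j$ and the unit of $\varsigma_{j+1}\dashv\varrho_j$ are equivalences, and $\varrho_n\varsigma_1\simeq T_\mathcal{B}[n-1]$), and then extracting the twist as the cofiber of $\varrho_j\xrightarrow{(\simeq,\,\ast)}\varrho_j\oplus\varrho_{j+1}$ (wrap-around twisted in the last slot) all works, provided you carry out the vanishing of the cross-terms that you correctly identify as the main technical burden.

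The gap is in the claim that sphericalness ``then follows''. Joint conservativity of the $\varrho_j$'s together with the relations $\varrho_j\circ T_{\mathcal{V}^n_F}\simeq\varrho_{j+1}$ (with wrap-around) only shows that $T_{\mathcal{V}^n_F}$ is \emph{conservative}: if $T_{\mathcal{V}^n_F}(f)$ is an equivalence then each $\varrho_j(f)$ is, hence $f$ is. Conservativity of an endofunctor does not imply it is an equivalence, and the cyclic intertwining $\varrho\circ T_{\mathcal{V}^n_F}\simeq P\circ\varrho$ with $P$ an autoequivalence of $\mathcal{N}_F^{\times n}$ does not by itself yield essential surjectivity or fully faithfulness of $T_{\mathcal{V}^n_F}$. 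What you actually need to close this is visible in the data you already have: the chain \eqref{sigmaadjeq} also gives a \emph{left} adjoint $\varsigma^L$ of $\varrho$, namely $\varsigma^L(\mathbf{b})\simeq\bigoplus_{j<n}\varsigma_{j+1}(b_j)\oplus\varsigma_1 T_\mathcal{B}^{-1}[1-n](b_n)$, and comparing with $\varsigma$ shows $\varsigma^L\simeq\varsigma\circ T_{\mathcal{N}_F^{\times n}}^{-1}$ where $T_{\mathcal{N}_F^{\times n}}$ is the cotwist you compute to be the evidently invertible $(b_1,\dots,b_n)\mapsto(b_2,\dots,b_n,T_\mathcal{B}[n-1]b_1)$. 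Having both the invertibility of the cotwist and the relation between the two adjoints, sphericalness of $\varrho$ follows from the ``two-out-of-four'' characterization of spherical adjunctions (see the references \cite{AL17,DKSS19} cited in the surrounding text); this is the step that should replace the final paragraph of your argument.
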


\begin{remark}\label{cyctwrem}
We call the equivalence $T_{\mathcal{V}^n_F}$ from \Cref{paratwprop} the paracyclic twist functor as it realizes the paracyclic rotational symmetry of perverse schobers at their vertices. 
\end{remark}

We can change the parametrizing ribbon graph of a perverse schober via contractions of ribbon graphs. A contraction $c:\mathcal{T}\rightarrow \mathcal{T}'$ of ribbon graphs contracts by definition finitely many edges of $\mathcal{T}$ such that no loops are collapsed. A more formal definition of a contraction of ribbon graphs can be found in \cite[Def. 4.24]{Chr22}. 

Assume that $c$ constracts a single edge $e$ of $\mathcal{T}$, connecting two vertices $v,v'$. Given a $\mathcal{T}$-parametrized perverse schober $\mathcal{F}$, satisfying that $v$ or $v'$ is not a singularity of $\mathcal{F}$, it is shown in \cite[Section 4.4]{Chr22} that there exists a $\mathcal{T}'$-parametrized perverse schober $c_*\mathcal{F}$ satisfying the following.
\begin{itemize}
\item $c_*\mathcal{F}(\tilde{v})= \mathcal{F}(\tilde{v})$ for each vertex $\tilde{v}\neq v,v'$ of $\mathcal{T}'$.
\item $c_*\mathcal{F}(\tilde{e})= \mathcal{F}(\tilde{e})$ for each edge $\tilde{e}\neq e$ of $\mathcal{T}'$.
\item $c_*\mathcal{F}(\tilde{v}\rightarrow \tilde{e})=\mathcal{F}(\tilde{v}\rightarrow \tilde{e})$ for each edge $\tilde{e}$ incident to a vertex $\tilde{v}$ such that $\tilde{v}\neq v,v'$ and $\tilde{e}\neq e$.
\item For the vertex $v=v'$ of $\mathcal{T}'$, there exists an equivalence of $c_*\mathcal{F}(v)$ and the pullback in $\on{St}$ of the following diagram.
\[
\begin{tikzcd}
                                                          & \mathcal{F}(v) \arrow[d, "\mathcal{F}(v\rightarrow e)"] \\
\mathcal{F}(v') \arrow[r, "\mathcal{F}(v'\rightarrow e)"] & \mathcal{F}(e)                                         
\end{tikzcd}
\]
\item For each morphism $v'=v\rightarrow \tilde{e}$ in $\on{Exit}(\mathcal{T}')$ arising from a morphism $v\rightarrow \tilde{e}$ in $\on{Exit}(\mathcal{T})$, there exists an equivalence of functors between $c_*\mathcal{F}(v\rightarrow \tilde{e})$ and  the composite
\[ c_*\mathcal{F}(v)\rightarrow \mathcal{F}(v)\xrightarrow{\mathcal{F}(v\rightarrow \tilde{e})} \mathcal{F}(\tilde{e})= c_*\mathcal{F}(\tilde{e})\,.\] An analogous statement holds with $v$ replaced by $v'$.
\end{itemize}

Since each contraction can be written as the composite of contractions which each contract a single edge, we can associate to each contraction $c$ from $\mathcal{T}$ to $\mathcal{T}'$ and $\mathcal{T}$-parametrized perverse schober $\mathcal{F}$, satisfying that no edges connecting two singularities of $\mathcal{F}$ are contracted by $c$, a $\mathcal{T}'$-parametrized perverse schober $c_*\mathcal{F}$. As shown in \cite[Prop.~4.28]{Chr22}, the functor $c_*$ commutes with taking global sections.

\begin{proposition}\label{contrprop}
Let $c:\mathcal{T}\rightarrow \mathcal{T}'$ be a contraction of ribbon graphs and let $\mathcal{F}$ be a $\mathcal{T}$-parametrized perverse schober, such that $c$ contracts no edges connecting two singularities of $\mathcal{F}$. Then there exists a canonical equivalence of $\infty$-categories 
\[ \mathcal{H}(\mathcal{T},\mathcal{F})\simeq \mathcal{H}(\mathcal{T}',c_*\mathcal{F})\,.\]
\end{proposition}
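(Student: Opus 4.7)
The plan is to realize $c_*\mathcal{F}$ as a right Kan extension of $\mathcal{F}$ along a natural projection of posets, and then invoke the general fact that right Kan extending along a composable pair preserves the overall limit.

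First, I would reduce to the case where $c$ collapses a single edge $e$ connecting vertices $v$ and $v'$: any contraction factors as a finite composition of such elementary contractions, and the proposition is transitive under composition (since $(c' \circ c)_* = c'_* \circ c_*$ by construction). Second, in the single-edge case, define a functor of posets $q\colon \on{Exit}(\mathcal{T}) \to \on{Exit}(\mathcal{T}')$ that sends each of $v,v',e$ to the new vertex $\tilde v$ and acts as the identity elsewhere. The two morphisms $v\to e$ and $v'\to e$ get sent to $\on{id}_{\tilde v}$, while each morphism $v\to \tilde e$ (resp.\ $v'\to \tilde e$) for an edge $\tilde e \neq e$ is sent to the morphism $\tilde v\to \tilde e$ in $\on{Exit}(\mathcal{T}')$.

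Third, I would identify $c_*\mathcal{F}$ with the right Kan extension $q_*\mathcal{F}$ by a pointwise check. For an object $x\in \on{Exit}(\mathcal{T}')$ different from $\tilde v$, the comma category $q_{/x}$ contains only $x$ itself, so $(q_*\mathcal{F})(x)\simeq \mathcal{F}(x)$, matching the definition of $c_*\mathcal{F}$. For $x=\tilde v$, the comma category is the cospan $v\to e\leftarrow v'$, whose limit is precisely the pullback $\mathcal{F}(v)\times_{\mathcal{F}(e)}\mathcal{F}(v')$ defining $c_*\mathcal{F}(\tilde v)$. Functoriality along a morphism $\tilde v\to \tilde e$ is then produced by the universal property of the right Kan extension; unwinding, it is the composite of the projection from the pullback onto $\mathcal{F}(v)$ (or $\mathcal{F}(v')$) with $\mathcal{F}(v\to \tilde e)$, which agrees with the data specified for $c_*\mathcal{F}$.

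Fourth, the conclusion follows from the associativity of right Kan extensions: since $\on{Exit}(\mathcal{T}) \xrightarrow{q} \on{Exit}(\mathcal{T}') \to *$ is a factorization of the constant functor, we obtain
\[
\mathcal{H}(\mathcal{T},\mathcal{F}) \;=\; \lim_{\on{Exit}(\mathcal{T})}\mathcal{F} \;\simeq\; \lim_{\on{Exit}(\mathcal{T}')} q_*\mathcal{F} \;\simeq\; \lim_{\on{Exit}(\mathcal{T}')} c_*\mathcal{F} \;=\; \mathcal{H}(\mathcal{T}',c_*\mathcal{F}).
\]

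The main obstacle will be making the identification $c_*\mathcal{F}\simeq q_*\mathcal{F}$ rigorous at the level of the full functorial diagram, not merely pointwise. Pointwise values and individual edge-maps are straightforward from the comma category computation, but one must ensure the coherence data assemble correctly in $\on{Cat}_\infty$. The cleanest way is to argue at the level of Grothendieck constructions: a coCartesian section of $p\colon \Gamma(\mathcal{F})\to N(\on{Exit}(\mathcal{T}))$ is equivalent data to a coCartesian section of the analogous Grothendieck construction over $N(\on{Exit}(\mathcal{T}'))$, because the coCartesian compatibility along $v\to e$ and $v'\to e$ is exactly the equalizer/pullback datum that lives in $c_*\mathcal{F}(\tilde v)$. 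This observation simultaneously gives the equivalence of limit $\infty$-categories and identifies $c_*\mathcal{F}$ with $q_*\mathcal{F}$ as $\on{Exit}(\mathcal{T}')$-diagrams.
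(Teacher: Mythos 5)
Your overall plan --- realize $c_*\mathcal{F}$ as a right Kan extension of $\mathcal{F}$ along the projection $q\colon\on{Exit}(\mathcal{T})\to\on{Exit}(\mathcal{T}')$ collapsing $v,v',e$ to $\tilde v$, then invoke transitivity of Kan extensions --- is the natural approach and almost certainly the same argument as the one the paper invokes by citing \cite[Prop.~4.25]{Chr21}; the paper itself supplies no proof. The reduction to single-edge contractions and the final transitivity step are both sound, and the Grothendieck-construction reformulation at the end is a clean way to package the coherence data.

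Be careful, though, with the comma-category computation in your third step. For a \emph{right} Kan extension one must take the under-comma category $(\tilde v\downarrow q)$, whose objects are pairs $(c,\,f\colon\tilde v\to q(c))$. This is not literally the cospan $v\to e\leftarrow v'$: it also contains one object $(\tilde e,\,\tilde v\to\tilde e)$ for every edge $\tilde e\neq e$ incident to $v$ or $v'$, and similarly for a vertex $w\neq\tilde v$ the comma category is not a singleton but has $(w,\on{id})$ together with edge objects. The stated values of $q_*\mathcal{F}$ are nonetheless correct because the cospan is a \emph{coinitial} subcategory of $(\tilde v\downarrow q)$ (and $(w,\on{id})$ is initial in $(w\downarrow q)$), so the limits collapse to the pullback, resp.\ to $\mathcal{F}(w)$. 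This coinitiality is what your write-up is implicitly using and should be stated. Finally, for the coinitiality to hold uniformly you need $\on{Exit}(\mathcal{T}')$ to record one morphism $\tilde v\to\tilde e$ per halfedge, so that a loop $\tilde e$ at $\tilde v$ (created when there is another edge besides $e$ joining $v$ and $v'$) contributes \emph{two} distinct objects to the comma category, one receiving a map from $v$ and the other from $v'$. With the strict-poset reading of $\on{Exit}$ suggested by the paper's wording, such a loop would yield a single object receiving maps from both $v$ and $v'$, the cospan would fail to be coinitial, and $q_*\mathcal{F}(\tilde v)$ would impose an extra compatibility condition beyond the pullback; so the halfedge convention is the one needed for the argument (and indeed for the local models $\mathcal{F}_v(F)$ to make sense at self-folded edges at all).
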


\subsection{Relative Ginzburg algebras and perverse schobers}\label{sec:relGinzburg}

We begin with the definition of the relative higher Ginzburg algebra associated with an $n$-valent spanning graph, generalizing the definition in the case $n=3$ from \cite{Chr22}.

\begin{definition}\label{gqdef}
Let $\mathcal{T}$ be an $n$-valent spanning graph of a marked surface. We define a graded quiver $\tilde{Q}_{\mathcal{T}}$ with vertices the edges of $\mathcal{T}$ and the following graded arrows.
\begin{itemize}
\item An arrow $a_{v,i,j}:i\rightarrow j$ of degree $l-1$ for each vertex $v\in \mathcal{T}_0$ at which a halfedge of $i$ follows a halfedge of $j$ in the (counterclockwise) cyclic order after $1\leq l\leq n-1$ steps. The arrows thus go in the clockwise direction and the loops of $\mathcal{T}$ give rise to loops in $\tilde{Q}_{\mathcal{T}}$.
\item A further loop $l_i:i\rightarrow i$ of degree $n-1$ for each internal edge $i$.
\end{itemize}
Given two edges $i,j\in \mathcal{T}_1$ incident to $v\in \mathcal{T}_0$, we denote by $j-i\in \{0,\dots,n-1\}$ the number of steps after which $i$ follows $j$ in the cyclic order at $v$. The arrow $a_{v,i,j}$ thus lies in degree $j-i-1$.

The relative Ginzburg algebra $\mathscr{G}_\mathcal{T}$ is defined as the dg-algebra with underlying graded algebra $k\tilde{Q}_\mathcal{T}$ and with differential $d$ determined on the generators as follows. 

For the generators $a_{v,i,j}$, we set
\[d(a_{v,i,j})= \sum_{j<k<i} (-1)^{j-k-1} a_{v,k,j}a_{v,i,k}\,,\]
where the sum runs over all edges $k$ appearing between $j$ and $i$ in the cyclic order. 
For an internal edge $i$ incident to two (possibly identical) vertices $v_1,v_2\in \mathcal{T}_0$, we set
\begin{equation}\label{dlieq} d(l_i)=\sum_{j\neq i}\,(-1)^{j-i}a_{v_1,j,i}a_{v_1,i,j}+(-1)^{n-1}\sum_{j\neq i}\,(-1)^{j-i}a_{v_2,j,i}a_{v_2,i,j}\,.\end{equation}
\end{definition}

Note that a different choice of labeling of $v_1,v_2$ in \eqref{dlieq} changes $\mathscr{G}_\mathcal{T}$ by an isomorphism of dg-algebras (by mapping $l_i$ by $-l_i$).

\begin{remark}
The formula for the differential of $a_{v,i,j}$ in \Cref{gqdef} should be considered as a version of graded cyclic derivative of a potential 
\[ W_{\mathcal{T}}'=\sum_{v\in \mathcal{T}_0} \sum_{j<k<i} \pm a_{v,k,j}a_{v,i,k}a_{v,j,i}\,.\]
We note further that if an edge, with halfedges $i,j$ is a loop, then its incident vertices $v_1,v_2$ are identical but $a_{v_1,j,i}a_{v_1,i,j}\neq a_{v_2,j,i}a_{v_2,i,j}$, as the arrows arise from the two different halfedges of $i$. The notation $a_{v,i,j}$ is thus abusive in this case.
\end{remark}

\begin{example}\label{locGinzex}
For $n=3,4$, let ${\bf S}$ be the $n$-gon and $\mathcal{T}$ the $n$-valent spanning graph of ${\bf S}$ (unique up to homotopy). The graded algebras underlying the relative Ginzburg algebras $\mathscr{G}_\mathcal{T}$ are given by the path algebras of the following graded quivers. The labels indicate degrees of the arrows.
\[
\begin{tikzcd}[row sep=large, column sep=large]
                                                 & \cdot \arrow[rd, "0", bend left] \arrow[ld, "1"] &                                                  \\
\cdot \arrow[ru, "0", bend left] \arrow[rr, "1"] &                                                  & \cdot \arrow[ll, "0", bend left] \arrow[lu, "1"]
\end{tikzcd} \hspace{3em}
\begin{tikzcd}[row sep=large, column sep=large]
                                                                                        & \cdot \arrow[rd, "0", bend left] \arrow[ld, "2" description] \arrow[dd, "1", bend left]  &                                                                                         \\
\cdot \arrow[ru, "0", bend left] \arrow[rd, "2" description] \arrow[rr, "1", bend left] &                                                                                          & \cdot \arrow[ld, "0", bend left] \arrow[lu, "2" description] \arrow[ll, "1", bend left] \\
                                                                                        & \cdot \arrow[lu, "0", bend left] \arrow[ru, "2" description] \arrow[uu, "1"', bend left] &                                                                                        
\end{tikzcd}
\]
The differentials of the arrows in degree $0$ vanish and the differential of an arrow $a:x\rightarrow y$ of degree $m$ consists, modulo signs, of the sum of all paths composed of two arrows of degrees less than $m$ which compose to a path $x\rightarrow y$.  
\end{example}

\begin{construction}\label{constr:Ginzburgschober}
We fix a marked surface ${\bf S}$ with an $n$-valent spanning graph $\mathcal{T}$ and a choice of $\mathbb{E}_\infty$-ring spectrum $R$. In this construction, we describe the $\mathcal{T}$-parametrized perverse schober $\mathcal{F}_\mathcal{T}(R)$, with the property that its $\infty$-category of global sections $\mathcal{H}(\mathcal{T},\mathcal{F}_\mathcal{T}(R))$ is equivalent to $\mathcal{D}(\mathscr{G}_\mathcal{T})$ if $R=k$ is a commutative ring, see \Cref{gluethm}.

Locally at each vertex of $\mathcal{T}$, the perverse schober $\mathcal{F}_\mathcal{T}(R)$ is described by the spherical adjunction
\[ \phi^*:\on{RMod}_R\longleftrightarrow \on{RMod}_{R[t_{n-2}]}:\phi_*\]
where $R[t_{n-2}]$ denotes the free $R$-linear ring spectrum generated by $R[n-2]$ and $\phi^*$ is the pullback along the morphism of ring spectra $\phi:R[t_{n-2}]\xrightarrow{t_{n-2}\mapsto 0} R$. We note that the adjunction $\phi^*\dashv \phi_*$ is equivalent to the spherical adjunction $f^*:\on{RMod}_R\longleftrightarrow \on{Fun}(S^{n-1},\on{RMod}_R):f_*$, arising form the pullback functor along $f:S^n\rightarrow \ast$, see \Cref{model1}.

We thus want to define $\mathcal{F}_\mathcal{T}(R)$ as the gluing of the local perverse schobers $\mathcal{F}_v(\phi^*):C_v\rightarrow \on{St}$, see \eqref{locseq}, i.e.~as the diagram $\on{Exit}(\mathcal{T})\rightarrow \on{St}$ which restricts at $C_v$ to $\mathcal{F}_v(f^*)$. Note that this definition involves making choices, see \Cref{choicerem}. In the case that $n$ is odd, these choices matter and we need to slightly modify the perverse schobers $\mathcal{F}_v(\phi^*)$, to match the signs in the differentials of the Ginzburg algebras.

For each edge $e$ of $\mathcal{T}$, we consider its two incident (possibly identical) vertices $v_1,v_2$. We denote by $i_1\in \{1,\dots,n\}$ the position of the halfedge of $e$ lying at $v_1$ in the chosen total order of the $n$ halfedges incident to $v_1$. We similarly denote by $i_2\in \{1,\dots,n\}$ the position of the halfedge of $e$ at $v_2$ in the chosen total order of halfedges at $v_2$. If $i_1-i_2$ is even, we change $\mathcal{F}_{v_1}(\phi^*)$ by composing $\mathcal{F}_{v_1}(\phi^*)(v_1\rightarrow e)$ with the autoequivalence $T$ of $\on{RMod}_{R[t_{n-2}]}$, given by the the pullback functor along the morphism of ring spectra $R[t_{n-2}]\xrightarrow{t_{n-2}\mapsto (-1)^{n}t_{n-2}} R[t_{n-2}]$. Note that for $R=k$ a commutative ring, the functor $T[n-1]$ is equivalent to the cotwist functor of $\phi^*\dashv \phi_*$, see \cite[Prop.~5.7]{Chr22}, and clearly $T\simeq \on{id}_{\on{RMod}_{R[t_{n-2}]}}$ if $n$ is even. If $i_1-i_2$ is odd, we do nothing. The perverse schober $\mathcal{F}_\mathcal{T}(R)$ is now defined as the gluing of the above modifications of the $\mathcal{F}_v(\phi^*)$'s.
\end{construction}

\begin{remark}\label{choicerem}
The definition of $\mathcal{F}_\mathcal{T}(R)$ in \Cref{constr:Ginzburgschober} involves a choice of total order of the halfedges incident to each vertex $v\in \mathcal{T}_0$, which a priori only carry a cyclic order. Different choices of total orders lead to a different perverse schober $\mathcal{F}_\mathcal{T}(R)$. It is shown in \cite[Section 7.1]{Chr20}, that in the case $R=k$ a commutative ring and $n=3$ the equivalence class of the perverse schober $\mathcal{F}_\mathcal{T}(k)$ does not depend on these choices. The proof given there generalizes to any $n\geq 3$ (and $R=k$). In the case that $n$ is even, since the twist functor of $\phi^*\dashv \phi_*$ is equivalent to $[1-n]$, instead of a choice of spin structure on ${\bf S}\backslash M$, only a choice of orientation is required as input for the construction of $\mathcal{F}_\mathcal{T}(k)$. We expect that such a statement also holds for $R$ any $\mathbb{E}_\infty$-ring spectrum. The local modifications affect the monodromy of $\mathcal{F}_\mathcal{T}(k)$ and ensure that it is trivial along loops embedded in $\mathcal{T}$. 
\end{remark}

\begin{theorem}\label{gluethm} 
Let ${\bf S}$ be a marked surface with an $n$-valent spanning graph $\mathcal{T}$. There exists an equivalence of $\infty$-categories
\begin{equation}\label{hgeq1}
 \mathcal{H}(\mathcal{T},\mathcal{F}_\mathcal{T}(k))\simeq \mathcal{D}(\mathscr{G}_\mathcal{T})
\end{equation}
between the $\infty$-category of global sections of $\mathcal{F}_{\mathcal{T}}(k)$ and the derived $\infty$-category of the relative Ginzburg algebra $\mathscr{G}_\mathcal{T}$.
\end{theorem}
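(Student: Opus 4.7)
The plan is to generalize the $n=3$ case of \cite{Chr21} by a local-to-global argument. Both sides of \Cref{hgeq1} have good gluing behavior, so the bulk of the work is a local computation followed by a verification that the two gluings match.

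First, I would treat the local case where $\bf S$ is an $n$-gon and $\mathcal{T}$ is the unique ideal $n$-angulation $\mathcal{T}_v$, consisting of a single vertex $v$ with $n$ external halfedges. Here $\mathcal{H}(\mathcal{T}_v,\mathcal{F}_{\mathcal{T}_v}(k))\simeq \mathcal{V}^n_{f^*}$ by definition, and one must show this is equivalent to $\mathcal{D}(\mathscr{G}_{\mathcal{T}_v})$ for the local Ginzburg algebra of \Cref{locGinzex}. I would exhibit $n$ compact generators $P_1,\dots,P_n$, one in each component of the semiorthogonal decomposition $(\mathcal{A},\mathcal{B},\dots,\mathcal{B})$ of $\mathcal{V}^n_{f^*}$, and compute their endomorphism dg-algebra. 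The degree-$(l-1)$ arrows $a_{v,i,j}$ of \Cref{gqdef} arise from the connecting morphisms in the SOD (obtained by iterating the functors $\varrho_i,\varsigma_i$ of \Cref{adjsrem}), and the degree-$(n-2)$ loops $l_i$ arise from the second summand of $\on{End}(f^*(k))\simeq k\oplus k[1-n]$ provided by \Cref{endomorlem}. The relations and the differential $d(l_i)$ in the external-edge case are then read off from the iterated cofiber/fiber sequences encoded in the SOD.

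For the global case, I would use that $\mathcal{H}(\mathcal{T},\mathcal{F}_\mathcal{T}(k))$ is a limit over $\on{Exit}(\mathcal{T})$, which decomposes (applying \Cref{contrprop} one edge at a time, or directly as a pullback of fibers of the Grothendieck construction) as a successive homotopy pullback of the local $\infty$-categories $\mathcal{V}^n_{f^*}$ attached to the vertices, along copies of $\mathcal{N}_{f^*}\simeq \on{Fun}(S^{n-1},\on{RMod}_k)$ attached to the internal edges. On the algebra side, $\mathscr{G}_\mathcal{T}$ is a homotopy pushout of the local algebras $\mathscr{G}_{\mathcal{T}_v}$ over the edge dg-algebras $k[t_{n-2}]$; via \Cref{dualthm}, this passes to a pullback of derived $\infty$-categories that matches the perverse-schober pullback. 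Combining this global assembly with Step~1 yields the desired equivalence.

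The hardest part is the sign bookkeeping. Formula~\eqref{dlieq} carries a $(-1)^{n-1}$ factor on the contribution from the second vertex incident to an internal edge, and this is exactly what the parity-dependent twist $\phi^*$ built into \Cref{gapsconstr} is designed to realize. Verifying that the two parity conventions --- the one governed by $i_1-i_2$ in the gluing and the one in $d(l_i)$ --- agree on the level of the endomorphism dg-algebras of the assembled generators amounts to a careful computation of the composition of coCartesian transport and counit maps along each internal edge, together with an application of \Cref{paratwprop} to relate the different total orderings of halfedges at each vertex. For $n=3$ the factor $(-1)^{n-1}$ is trivial, which is why this subtlety does not appear in \cite{Chr21}; for general $n$ it must be tracked across every internal edge, and this is the main technical input of the proof.
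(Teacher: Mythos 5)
Your local-to-global outline matches the paper's overall strategy: the paper's proof of \Cref{gluethm} reduces to \Cref{algmodprop}, which supplies the algebraic model at a single vertex, together with the gluing argument cited from \cite[Thm.~6.1]{Chr21}. Your observation that the parity-dependent twist $\phi^*$ built into \Cref{gapsconstr} must account for the sign $(-1)^{n-1}$ in \eqref{dlieq} correctly identifies the one genuinely new ingredient compared to $n=3$.

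Where you diverge from the paper is in how the local step is carried out, and this divergence matters. You propose to compute $\on{End}$ of $n$ compact generators of $\mathcal{V}^n_{f^*}$ by abstract SOD manipulations, whereas the paper constructs an explicit chain of dg-models: it first identifies $\mathcal{V}^m_{f^*}$ with $\mathcal{D}(\mathbf{A}_m)$ for an upper-triangular dg-algebra, replaces it by an intermediate dg-category $C_m$, and then proves (Lemma~\ref{dmhom}, via an induction on cycle length) that the freely generated dg-category $D_m$ with the Ginzburg-style differential is quasi-equivalent to $C_m$. The payoff of the explicit models is \Cref{algmodprop}(2): it is not enough to match endomorphism dg-algebras of generators --- one must also identify the restriction functors $\varsigma_i:\mathcal{N}_{f^*}\to\mathcal{V}^m_{f^*}$ with concrete dg-functors $\iota_i:k[t_{n-2}]\to D_m$, signs included, because it is precisely these functors that are assembled along internal edges in the gluing and that must reconcile with the sign $(-1)^{n-1}$ in \eqref{dlieq}. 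Your sketch gestures at this (``composition of coCartesian transport and counit maps''), but without an explicit model I do not see how you would pin down the signs of the $\iota_i$'s; in the paper they come out of the cycle representatives \eqref{cycle1} and the sign $(-1)^{i(n-1)}$ in \eqref{lsgneq}. This is the technical heart, and your outline elides it.

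One concrete error: you suggest applying \Cref{contrprop} ``one edge at a time'' to decompose the limit over $\on{Exit}(\mathcal{T})$. But contractions are not permitted to collapse an edge incident to two singularities, and for an ideal $n$-angulation every vertex is a singularity ($V_\mathcal{T}=\mathcal{T}_0$), so no internal edge of $\mathcal{T}$ is contractible. You must instead compute the limit directly over $\on{Exit}(\mathcal{T})$ (or via the Grothendieck construction), as your parenthetical alternative says.
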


\Cref{sec:proofofgluing} is dedicated to the proof of \Cref{gluethm}.

\begin{remark}\label{rlinrem}
The perverse schober $\mathcal{F}_\mathcal{T}(R)$ canonically factors through $\on{LinCat}_R\rightarrow \on{St}$, so that its limit $\mathcal{H}(\mathcal{T},\mathcal{F}_\mathcal{T}(R))$ inherits a canonical $R$-linear structure. \Cref{gluethm} can be strengthened to the statement that \eqref{hgeq1} is an equivalence of $k$-linear $\infty$-categories.
\end{remark}

\begin{proposition}\label{compgenprop}
Let $\mathcal{T}$ be an $n$-valent spanning graph of a marked surface. Given an edge $e$ of $\mathcal{T}$, we denote by 
\[ \on{ev}_e:\mathcal{H}(\mathcal{T},\mathcal{F}_\mathcal{T}(R))\rightarrow \mathcal{F}_\mathcal{T}(R)(e)=\on{RMod}_{R[t_{n-2}]}
\]
the evaluation functor (which evaluates a coCartesian section at $e$) and by $\on{ev}_e^*$ its left adjoint. 
\begin{enumerate}[(1)]
\item The direct sum $\bigoplus_{e} \on{ev}_e^*(R[t_{n-2}])$ over all edges of $\mathcal{T}$ is a compact generator of $\mathcal{H}(\mathcal{T},\mathcal{F}_\mathcal{T}(R))$.
\item If $R=k$, the projective $\mathscr{G}_\mathcal{T}$-module $p_e\mathscr{G}_\mathcal{T}$, where $p_e\in k\tilde{Q}_{\mathcal{T}}$ is the lazy path at $e$, is identified under the equivalence \eqref{hgeq1} with $\on{ev}^*_e(k[t_{n-2}])$.
\end{enumerate}
\end{proposition}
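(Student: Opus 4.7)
The plan is to prove part (1) by separately establishing compactness and joint generation of the family $\{\on{ev}_e^*(R[t_{n-2}])\}_e$, and then to deduce part (2) from the Morita-theoretic description of the equivalence \eqref{hgeq1} established in the proof of \Cref{gluethm}.

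For compactness, I would observe that $\mathcal{H}(\mathcal{T},\mathcal{F}_\mathcal{T}(R))$ is a limit in $\on{LinCat}_R\subset \mathcal{P}r^L$, so each evaluation functor $\on{ev}_e$ preserves all small colimits; in particular it preserves filtered colimits, hence its left adjoint $\on{ev}_e^*$ preserves compact objects. Since the unit $R[t_{n-2}]\in \on{RMod}_{R[t_{n-2}]}$ is compact, $\on{ev}_e^*(R[t_{n-2}])$ is compact. For generation, the adjunction identity
\[ \on{Mor}_{\mathcal{H}(\mathcal{T},\mathcal{F}_\mathcal{T}(R))}\bigl(\on{ev}_e^*(R[t_{n-2}]),M\bigr)\simeq \on{ev}_e(M) \]
reduces the problem to showing that a global section $M$ with $\on{ev}_e(M)=0$ for every edge $e$ must be zero. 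Viewing $M$ as a coCartesian section of the Grothendieck construction of $\mathcal{F}_\mathcal{T}(R)$, the coCartesian condition at each singularity $v$ with incident edges $e_1,\dots,e_n$ gives $\varrho_i(M(v))\simeq M(e_i)=0$ for all $i$ (the autoequivalences appearing in \Cref{gapsconstr} do not affect vanishing). The key step is then to verify that the family $(\varrho_1,\dots,\varrho_n)$ is jointly conservative on $\mathcal{V}^n_{f^*}$; this follows from the semiorthogonal decomposition $(\on{RMod}_R,\on{RMod}_R,\dots,\on{RMod}_R)$ of $\mathcal{V}^n_{f^*}$ combined with the fact that the constant-diagram functor $f^*:\on{RMod}_R\to \on{Fun}(S^{n-1},\on{RMod}_R)$ is conservative, since $S^{n-1}$ is nonempty and connected for $n\geq 3$. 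This forces $M(v)=0$ at every singularity, so $M$ is zero globally.

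For part (2), I would invoke the Morita-theoretic construction of the equivalence \eqref{hgeq1} that appears in the proof of \Cref{gluethm} in \Cref{sec4.5}: the equivalence is realized as the Morita equivalence associated to the compact generator $P=\bigoplus_e \on{ev}_e^*(k[t_{n-2}])$, under an identification of $\on{End}(P)$ with $\mathscr{G}_\mathcal{T}$ that matches the lazy-path idempotent $p_e\in \mathscr{G}_\mathcal{T}$ with the projector of $P$ onto its $e$-indexed summand. Under this equivalence the left $\mathscr{G}_\mathcal{T}$-module $p_e\mathscr{G}_\mathcal{T}$ is the image of this idempotent and therefore corresponds to $\on{ev}_e^*(k[t_{n-2}])$. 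The main obstacle, beyond the joint-conservativity step above, is ensuring that the edge-indexed summand decomposition of $P$ is respected by the dg-algebra isomorphism $\on{End}(P)\simeq \mathscr{G}_\mathcal{T}$; this compatibility is built into the proof of \Cref{gluethm}, where the equivalence is assembled edge by edge matching idempotents to summands.
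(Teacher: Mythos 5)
Your proposal follows essentially the same route as the paper: part~(1) is reduced to joint conservativity of $(\varrho_1,\dots,\varrho_n):\mathcal{V}^n_{f^*}\to\mathcal{N}_{f^*}^{\times n}$, and part~(2) is deferred to the Morita-theoretic construction underlying the equivalence \eqref{hgeq1} — the paper's proof states exactly the conservativity observation for (1) and references the $n=3$ argument of \cite{Chr21} for (2). One slip worth correcting: the semiorthogonal decomposition of $\mathcal{V}^n_{f^*}$ is $(\mathcal{A},\mathcal{B},\dots,\mathcal{B})=(\on{RMod}_R,\mathcal{N}_{f^*},\dots,\mathcal{N}_{f^*})$, not $(\on{RMod}_R,\on{RMod}_R,\dots,\on{RMod}_R)$, and the $\varrho_i$ are not plain SOD-component projections (they are the last projection, shifted cofiber functors, and a relative cofiber as in \eqref{rhoeq}); unwinding these definitions still yields the conservativity you need — vanishing of all the $\varrho_i$ forces every $b_i=0$ and $F(a)=0$, and then conservativity of $f^*$ (for which nonemptiness of $S^{n-1}$ already suffices) gives $a=0$. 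You might also record that compactness of $\bigoplus_e\on{ev}_e^*(R[t_{n-2}])$ uses finiteness of the edge set of $\mathcal{T}$.
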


\begin{proof}
For part (1), it suffices to observe that the functor $(\varrho_1,\dots,\varrho_n):\mathcal{V}^n_{f^*}\rightarrow \on{RMod}_{R[t_{n-2}]}^{\times n}$ is conservative, so that a global section $X\in \mathcal{H}(\mathcal{T},\mathcal{F}_\mathcal{T}(R))$ is zero if and only if $\on{ev}_e(X)=0$ for all edges $e$ of $\mathcal{T}$. 

For part (2), the corresponding proof from \cite[Prop. 6.7]{Chr22} for the case $n=3$ directly generalizes to any $n\geq 3$.
\end{proof}

\subsection{The proof of \texorpdfstring{\Cref{gluethm}}{Theorem 4.19}}\label{sec:proofofgluing}

For the entirety of this section, we fix an integer $n\geq 3$ and a commutative ring $k$. Let $\mathcal{T}$ be an $n$-valent spanning graph of a marked surface. The goal of this section is to show that the $\infty$-category of global sections $\mathcal{H}(\mathcal{T},\mathcal{F}_\mathcal{T})$ is equivalent to the derived $\infty$-category of the relative Ginzburg algebra of $\mathcal{T}$. Our approach is similar to the approach in the case $n=3$ in Sections 5 and 6 of \cite{Chr22}.

In the following, we very briefly recall the relation between dg-categories and stable, presentable $\infty$-categories and some sign conventions. More details can be found in \cite[Section 2]{Chr22}. We then give an algebraic description of the perverse schober on the $n$-gon obtained from the spherical adjunction $\phi^*\dashv \phi_*$. This step constitutes the main work in the proof of $\mathcal{H}(\mathcal{T},\mathcal{F}_\mathcal{T})\simeq \mathcal{D}(\mathscr{G}_\mathcal{T})$, the remaining steps are analogous to the proof in the case $n=3$ in \cite{Chr22}.\\

We denote by $\on{dgCat}_k$ the category of $k$-linear dg-categories. A dg-functor $F:A\rightarrow B$ is called a quasi-equivalence if for all $a,a'\in A$, the map between morphism complexes $F(a,a'):\on{Hom}_{A}(a,a')\rightarrow \on{Hom}_{{B}}(F(a),F(a'))$ is a quasi-isomorphism and the induced functor on homotopy categories is an equivalence. The dg-functor $F$ is called a Morita equivalence if the induced functor $F^{\on{perf}}:A^{\on{perf}}\rightarrow B^{\on{perf}}$ on dg-categories of perfect modules is a quasi-equivalence. 
The category $\on{dgCat}_k$ admits the quasi-equivalence model structure whose weak equivalences are the quasi-equivalences. There is a functor $\mathcal{D}(\mhyphen):N(\on{dgCat}_k)\rightarrow \mathcal{P}r^L$ from the nerve of $\on{dgCat}_k$. It maps dg-algebras (i.e.~dg-categories with a single object) to their unbounded derived $\infty$-categories, c.f.~\cite[Section 2.5]{Chr22}. The functor $\mathcal{D}(\mhyphen)$ maps homotopy colimits with respect to the quasi-equivalence structure to colimits in $\mathcal{P}r^L$ and maps Morita equivalences to equivalences of $\infty$-categories. 

Given a dg-category $C$, we denote by $\on{dgMod}(C)$ the dg-category of right $C$-modules. We denote the mapping complex between two objects $x,y\in C$ by $\on{Hom}_C(x,y)$ or $\on{Hom}(x,y)$. Given two dg-modules $x,y\in \on{dgMod}(C)$ and a morphism $\alpha:x\rightarrow y$, we denote by $\on{cone}(\alpha)=x[1]\oplus y$ the cone with differential $d(x,y)=(-d(x),d(y)-\alpha(x))$. Given two dg-modules $x,y$, the morphism complex $\on{Hom}(x,y)$ has differential $d(f)=d_y\circ f - (-1)^{\on{deg}(f)}f\circ d_x$.

\begin{lemma}
Let 
\[\bf{A}_m=
\begin{pmatrix}
k & k[1-n] &0 & \dots &0 &0\\
0   & k[t_{n-2}] & k[t_{n-2}]&  0 & \dots &0\\
0   & 0   & k[t_{n-2}]& k[t_{n-2}] & \dots &0\\
\vdots& \vdots & \vdots & \ddots & \vdots & \vdots \\
0   & 0   & 0  & \dots  & k[t_{n-2}] & k[t_{n-2}]\\
0   & 0   & 0  & \dots  & 0 & k[t_{n-2}]
\end{pmatrix}
\] 
be the upper triangular dg-algebra. There exists an equivalence of $\infty$-categories
\begin{equation}\label{model2}
\mathcal{V}^m_{\phi^*}\simeq \mathcal{D}(\bf{A}_m)
\end{equation}
\end{lemma}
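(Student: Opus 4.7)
The plan is to identify $\mathcal{V}^m_{f^*}$ with $\mathcal{D}(\mathbf{A}_m)$ via the Morita-theoretic correspondence: exhibit a compact generator of $\mathcal{V}^m_{f^*}$ whose endomorphism dg-algebra is quasi-isomorphic to $\mathbf{A}_m$. The argument parallels the proof for $n=3$ given in \cite[Sections~5--6]{Chr21}. As the oplax limit of a diagram of presentable stable $\infty$-categories, $\mathcal{V}^m_{f^*}$ is itself presentable and stable; under the equivalence \eqref{model1} and with $\mathcal{A}=\mathcal{D}(k)$, morphism objects between sections reduce to Hom-complexes in the explicit derived categories $\mathcal{D}(k)$ and $\mathcal{D}(k[t_{n-2}])$.

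The compact generator I would use is $P=\bigoplus_{i=1}^m P_i$, built from the evaluation adjunctions. Let $\on{ev}_j:\mathcal{V}^m_{f^*}\to D(f^*)(j)$ be evaluation at the $j$-th vertex of $\Delta^{m-1}$, with left adjoint $\on{ev}_j^*$. I take $P_1=\on{ev}_0^*(k)$ and, for $2\leq i\leq m$, $P_i=\on{ev}_{i-1}^*(Q[2-n])$, where $Q\in\mathcal{B}$ corresponds under \eqref{model1} to the free module $k[t_{n-2}]\in\mathcal{D}(k[t_{n-2}])$ and the shift by $2-n$ is selected so that the $(1,2)$-entry of the endomorphism algebra lands in homological degree $n-2$. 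Each $P_i$ is compact, and $P$ is a generator because the family $(\on{ev}_j)_{j=0,\ldots,m-1}$ is jointly conservative.

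The graded components $E_{ij}=\on{Mor}(P_j,P_i)$ are then computed entry by entry using the adjunction $\on{Mor}(\on{ev}_j^*(x),Y)\simeq \on{Mor}(x,\on{ev}_j(Y))$, the explicit form of $\on{ev}_j^*$, and \Cref{endomorlem}. Semiorthogonality of the SOD $(\mathcal{A},\mathcal{B},\ldots,\mathcal{B})$ gives $E_{ij}=0$ below the diagonal; the diagonal yields $\on{End}(P_1)=k$ and $\on{End}(P_i)=k[t_{n-2}]$ for $i\geq 2$; and the superdiagonal produces $k[n-2]$ at $(1,2)$ (from the chosen shift) and $k[t_{n-2}]$ at $(i,i+1)$ for $i\geq 2$. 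The remaining point to verify is that the entries $E_{ij}$ with $j>i+1$ vanish; since the naive $P_i$'s give nonzero such entries, one would replace the $P_i$ by iterated cones of the canonical comparison maps $P_{i+1}\to P_i$ arising from the evaluation-adjunction counits, a twisted-complex manipulation that trims the excess morphisms without losing compact generation.

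Having arranged $\on{End}(P)\simeq\mathbf{A}_m$, the equivalence $\mathcal{V}^m_{f^*}\simeq\mathcal{D}(\mathbf{A}_m)$ follows from the standard Morita fact that a stable presentable $\infty$-category with a compact generator $P$ is equivalent to the derived $\infty$-category of the endomorphism dg-algebra of $P$; see e.g.\ \cite[Section~7.1]{HA}. A cleaner alternative may be an induction on $m$ via the pullback presentation $\mathcal{V}^m_{f^*}\simeq\mathcal{V}^{m-1}_{f^*}\times_{\mathcal{B}}\on{Fun}(\Delta^1,\mathcal{B})$, matched against an analogous pullback decomposition of $\mathbf{A}_m$ over $\mathbf{A}_{m-1}$, which reduces the induction step to the case $m=2$. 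The main obstacle I anticipate is the detailed verification of the off-superdiagonal cancellations, which is intimately tied to the careful handling of the 2-cell data in the Grothendieck construction.
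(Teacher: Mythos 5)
The Morita strategy itself is reasonable and in the spirit of the result being cited (the paper's proof is just that the argument for \cite[Lemma 5.13]{Chr21} generalizes), but your execution has a genuine gap exactly at the step you defer. With $P_1=\on{ev}_0^*(k)$ and $P_i=\on{ev}_{i-1}^*(Q[2-n])$, the adjunction $\on{Mor}(\on{ev}_j^*(x),Y)\simeq\on{Mor}(x,Y(j))$ gives $\on{Mor}(P_j,P_i)\simeq\on{Mor}(Q,Q)\simeq k[t_{n-2}]$ for \emph{all} $j>i\geq 2$, and $\on{Mor}(P_j,P_1)\simeq\on{Mor}(Q,f^*(k))[n-2]\simeq k[n-2]$ for \emph{all} $j\geq 2$ (since $f^*(k)$ corresponds under \eqref{model1} to the trivial $k[t_{n-2}]$-module). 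So the endomorphism dg-algebra of your generator is a full upper-triangular algebra, not the bidiagonal $\mathbf{A}_m$; no choice of shifts can change this, since even the total number of nonzero off-diagonal entries differs. The vanishing beyond the superdiagonal is therefore not a residual "cancellation" to be checked; it is the entire content of the lemma, and your proof does not establish it.

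The proposed repair by cones does not work as stated. First, with your shifts there is no degree-zero counit comparison map $P_2\to P_1$ at all, because $\pi_0\on{Mor}(P_2,P_1)=\pi_0(k[n-2])=0$ for $n\geq 3$. Second, taking the unshifted map $u\colon \on{ev}_1^*(Q)\to\on{ev}_0^*(k)$ and setting $C=\on{cone}(u)$, applying $\on{Mor}(-,C)$ to the defining triangle gives $\on{End}(C)\simeq k\oplus k[t_{n-2}][n-2]$, not $k$; and for $i\geq 2$ the cones of the counit maps are skyscraper sections, after which the pair $(\on{ev}_0^*(k),\on{ev}_{m-1}^*(Q))$ still has a nonzero morphism object $\on{Mor}(Q,f^*(k))\simeq k$ sitting beyond the superdiagonal. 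In other words, after any replacement of generators you must recompute the whole endomorphism algebra, and the recipe you give demonstrably does not produce $\mathbf{A}_m$. The objects that actually realize the representables of $\mathbf{A}_m$ (equivalently of $B_m$) are of Cartesian/interval type, namely the images of the right adjoints $\varsigma_i$ of the $\varrho_i$ together with one coCartesian object of the form $\on{ev}_0^*(k)$ up to shift — compare \Cref{algmodprop}, where $\varsigma_i$ is modeled by $t_{n-2}\mapsto l_i$ — and verifying their bidiagonal Hom pattern is precisely the substance of the cited lemma. Your alternative inductive route via $\mathcal{V}^m_{f^*}\simeq\mathcal{V}^{m-1}_{f^*}\times_{\mathcal{B}}\on{Fun}(\Delta^1,\mathcal{B})$ is closer to a workable argument (it is essentially how one glues upper-triangular algebras), but as written it is only a sketch: the induction step still requires identifying the gluing functor with $-\otimes_{\mathbf{A}_{m-1}}M$ under the inductive equivalence, which is the same generator-tracking problem in disguise.
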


\begin{proof}
This follows from \cite[Prop. 2.39]{Chr22}, using that $\phi_*\simeq \phi_![1-n]=\on{ladj}(\phi^*)[1-n]\simeq \left(\mhyphen\otimes^L_{k[t_{n-2}]}k\right)[1-n]$.
\end{proof}

\begin{remark}
Consider the graded quiver $Q_m$
\begin{equation}\label{appquiv1}
\begin{tikzcd}[column sep=large]
x_0 \arrow[r, "a_{0,1}"] & x_1 \arrow[r, "{a_{1,2}}"] \arrow["l_1"', loop, distance=2em, in=125, out=55] & \dots \arrow[r, "{a_{m-2,m-1}}"] & x_{m-1} \arrow["l_{m-1}"', loop, distance=2em, in=125, out=55]
\end{tikzcd}
\end{equation}
with $|a_{i,i+1}|=0$ and $|l_i|=n-2$. The dg-algebra $\bf{A}_m$ is Morita-equivalent to the dg-category ${B}_m$ with objects the vertices of $Q_m$ and morphisms freely generated by the arrows of $Q_m$ subject to the relations $l_1\circ a_{0,1}=0$, $a_{i+1,i+2}\circ a_{i,i+1}=0$ for $i\geq 0$ and $a_{i,i+1}l_i=l_{i+1}a_{i,i+1}$ for $i\geq 1$.
\end{remark}

For $m\geq 3$, we define $D_{m}$ to be the dg-category with objects $z_1,\dots,z_m$ and morphisms freely generated by $b_{i,j}:z_i\rightarrow z_j$ for all $i\neq j$ in degree $j-i-1$ if $j>i$ and $n+j-i-1$ if $j<i$ and with differentials determined by
\[ 
d(b_{i,j})=\begin{cases} \sum_{i<k<j} (-1)^{j-k+1}b_{k,j} b_{i,k} & \text{if }j>i\,,\\ \sum_{i<k\leq m}(-1)^{j-k+n+1}b_{k,j} b_{i,k} + \sum_{1\leq k<j} (-1)^{j-k+1}b_{k,j}\circ b_{i,k}& \text{if }j<i\,.\end{cases}
\] 
Note that if $m=n$, the dg-category $D_m$ is Morita equivalent to the relative Ginzburg algebra of the $n$-gon, and is depicted in the cases $m=n=3$ and $m=n=4$ in \Cref{locGinzex}.

\begin{lemma}\label{dmhom}
The homology of the mapping complexes in $D_m$ is given by
\begin{align*}
\on{H}_\ast\on{Hom}_{D_m}(z_i,z_j)& \simeq \begin{cases} 0 & j\neq i,i+1\\ k[t_{n-2}] & j=i\\ k[t_{n-2}]& j=i+1\\
k[t_{n-2}][n-m]& j=1,i=m
\end{cases}    
\end{align*}
\end{lemma}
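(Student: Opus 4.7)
The morphism complex $\on{Hom}_{D_m}(z_i,z_j)$ admits a natural $k$-basis indexed by directed paths $(i=k_0,k_1,\dots,k_r=j)$ in the underlying quiver, contributing the compositions $b_{k_{r-1},k_r}\circ\cdots\circ b_{k_0,k_1}$, with differential induced by the defining formulas for $d(b_{k,l})$ via the Leibniz rule. My plan is to identify explicit cycles representing the claimed generators in each of the four cases, and then establish acyclicity of the complement.

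For the cycles: in the diagonal case $\on{Hom}(z_i,z_i)$ the identity $\on{id}_{z_i}$ is the obvious degree-$0$ cycle, while in degree $n-2$ the element
\[ L_i := \sum_{j\neq i}(-1)^{j-i}\, b_{j,i}\,b_{i,j}\,, \]
with signs chosen in analogy with the Ginzburg algebra differential $d(l_i)$ in \Cref{gqdef}, is a cycle. This can be checked by a direct computation: expanding $d(L_i)$ via the Leibniz rule, every three-arrow path $z_i\to z_a\to z_b\to z_i$ in the result arises from exactly two summands of $L_i$---one from $d(b_{a,i})\,b_{i,a}$ and one from $b_{b,i}\,d(b_{i,b})$---and the two contributions cancel by the choice of signs. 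For the case $\on{Hom}(z_i,z_{i+1})$ the generator is $b_{i,i+1}$, which is a cycle because the sum defining $d(b_{i,i+1})$ is empty (no vertex lies strictly between $i$ and $i+1$). For the wrap-around $\on{Hom}(z_m,z_1)$ the generator is $b_{m,1}$, of degree $n-m$ matching the claimed shift, and both sums defining $d(b_{m,1})$ are empty over their index sets, so it too is a cycle. In each case the free $k[t_{n-2}]$-action is realized by pre- or post-composition with $L_i$ (or $L_j$).

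Non-triviality of these classes and the absence of further classes are handled together via a filtration argument. To show non-triviality, I would pair each candidate generator with a $k$-linear functional on the path basis that isolates its coefficient and verify that this functional vanishes on the image of $d$. To rule out further classes---i.e., to prove acyclicity of the mapping complexes for $j\neq i,i+1$ and $(i,j)\neq(m,1)$---I would filter the complex by a combinatorial statistic on paths, such as path length or the number of ``backward'' arrows $b_{a,b}$ with $b<a$, and analyze the resulting spectral sequence. On each associated graded piece the differential simplifies enough that an explicit contracting homotopy can be constructed, by matching each non-canonical path with a distinguished counterpart whose differential produces it.

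The principal obstacle is the acyclicity step, which is combinatorially delicate: the path basis is infinite, the differential formulas carry subtle signs, and the filtration must be chosen so that the exceptional wrap-around case $(m,1)$ emerges naturally rather than as an unexplained anomaly. A conceptually cleaner alternative would be to construct an explicit dg-functor from a simpler model dg-category---a cyclic quiver on vertices $z_1,\dots,z_m$ with a loop of degree $n-2$ at each vertex, forward arrows of degree $0$, and a closing arrow $z_m\to z_1$ of degree $n-m$, subject to appropriate commutation relations---into $D_m$, sending the generators to the candidate cycles above, and to verify directly that it induces quasi-isomorphisms on morphism complexes. This reduces the homology computation to a read-off from the model, at the cost of having to carefully pin down all signs and relations across the cyclic wrap-around boundary where the shift $[n-m]$ originates.
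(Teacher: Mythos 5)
Your candidate cycle $L_i = \sum_{j\neq i}(-1)^{j-i}\,b_{j,i}\,b_{i,j}$ is not actually a cycle when $n$ is odd. Take $m=n=3$ and $i=2$: one computes $d(b_{2,1})=b_{3,1}b_{2,3}$ and $d(b_{3,2})=b_{1,2}b_{3,1}$, and hence $L_2 = -b_{1,2}b_{2,1} - b_{3,2}b_{2,3}$ has $d(L_2) = -2\,b_{1,2}b_{3,1}b_{2,3} \neq 0$. The correct generator must carry the sign $(-1)^{n+j-i}$ on the summands with $j<i$, not $(-1)^{j-i}$. In the formula for $d(l_i)$ in \Cref{gqdef} that you are imitating, the exponent $j-i$ denotes the cyclic step count valued in $\{0,\dots,n-1\}$; when $j<i$ this cyclic distance is $n+j-i$, so transporting the formula to $D_m$ with the naive signed difference introduces a $(-1)^n$ discrepancy for $j<i$. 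The remaining candidate cycles ($b_{i,i+1}$ and $b_{m,1}$) are fine.

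The acyclicity step is also not under control as proposed. Neither filtration gives traction: the differential of every generator strictly raises path length from $1$ to $2$, so filtering by path length produces an associated graded with zero differential (the $E_1$ page is the entire path space and $d_1$ is the full $d$ again, so nothing simplifies); and the number of backward arrows $b_{a,b}$ with $b<a$ is exactly preserved by $d$ (both sums in $d(b_{i,j})$ for $j<i$ output one forward and one backward arrow), so this quantity merely splits the complex into direct summands, each still carrying the unsimplified differential. The ``cleaner alternative'' via a dg-functor from a model quiver is essentially what the paper does in \Cref{algmodprop}, but there $\mu_m$ is shown to be a quasi-equivalence \emph{by appealing to the present lemma}, so that route is circular here. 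The paper's actual argument is a direct induction on the length of a cycle: write a cycle $z_i\to z_j$ as $\sum_k b_{k,j}u_{i,k}$ using freeness of the generators, apply the inductive hypothesis to show each $u_{i,k}$ is a boundary up to an explicit multiple of $l_i$, subtract a coboundary to eliminate one index $k$ at a time, and track when a multiple of $l_i$ survives. It is precisely this bookkeeping that makes the exceptional cases $j=i$, $j=i+1$, and $(i,j)=(m,1)$ emerge rather than remain anomalies.
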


\begin{proof}
Let $c$ be a cycle in a morphism complex in $D_m$. We can decompose $c$ into the $k$-linear sum of morphisms composed of the generating morphisms of $D_m$. We call the length of $c$ the maximal number of generating morphisms appearing in a summand of $c$. We show the following statements via an induction on the length of $c$.
\begin{enumerate}[1)]
\item If $c:z_i\rightarrow z_i$, then $c$ is homologous to 
\begin{equation}\label{cycle1}
\lambda \left(\sum_{j<i} (-1)^{n+j-i}b_{j,i}b_{i,j}+ \sum_{i<j} (-1)^{j-i}b_{j,i}b_{i,j}\right)^l
\end{equation} with $\lambda\in k$ and $l\in \mathbb{N}$.
\item If $c:z_{i}\rightarrow z_{i+1}$, with $i+1$ considered modulo $m$, then $c$ is homologous to
\begin{equation}\label{cycle2}
\lambda b_{i,i+1}\left(\sum_{j<i} (-1)^{n+j-i}b_{j,i}b_{i,j}+ \sum_{i<j} (-1)^{j-i}b_{j,i}b_{i,j}\right)^l
\end{equation}
with $\lambda\in k$ and $l\in \mathbb{N}$.
\item Otherwise, $c$ is nullhomologous.
\end{enumerate} 
Note that a cycle cannot be a sum of the above morphisms for different $l$, as these are of different degrees. 

The cycles \eqref{cycle1} and \eqref{cycle2} define nonzero homology classes. One simple way to see this is to observe that their image in $C_m$, see the proof of \Cref{algmodprop} below, define nonzero homology classes. The assertion follows. 

We continue with showing 1),2) and 3). We denote the cycles of the form \eqref{cycle1} by $l_i$ and the cycles of the form \eqref{cycle2} by $b_{i,i+1}l_i$. We consider all indices $i,j$ of the $b_{i,j}$ modulo $m$.

We begin with 2), as it is the easiest case. We consider a cycle $c:z_i\rightarrow z_{i+1}$. Since the morphisms $b_{j,l}$ freely generate $D_m$, we can write $c$ as $c=\sum_{k\neq i+1}b_{k,i+1}u_{i,k}$ for some chains $u_{i,k}$ (of smaller length than $c$). The condition $d(c)=0$ implies $d(u_{i,i+2})=0$. By the induction assumption there exists a chain $v_{i,i+2}$, which can be chosen to be of smaller length than $u_{i,i+2}$, with $d(v_{i,i+2})=u_{i,i+2}$. We thus find 
\[c+(-1)^{n-1}d(b_{i+2,i+1}v_{i,i+2})=\sum_{k\neq i+1,i+2}b_{k,i+1}\left(u_{i,k}+(-1)^{s_{k,i}+i-k}b_{i+2,k}v_{i,i+2}\right)\]
with $s_{k,i}=0$ if $i+2<k\leq m$ and $s_{k,i}=n$ if $1\leq k\leq i$. This shows that $c$ is homologous to a cycle $c_2=\sum_{k\neq i+1,i+2}b_{k,i+1}u_{i,k}$ for some other chains also denoted $u_{i,k}$. Repeating this argument $m-2$ times, we see that $c$ is homologous to a cycle of the form $b_{i,i+1}u_{i,i}$ and by the induction hypothesis we find $u_{i,i}=l_i+d(v_{i,i})$ for some chain $v_{i,i}$. It follows that $c$ is homologous to $b_{i,i+1}l_i$, showing 2).

For 3), we consider a cycle $c:z_i\rightarrow z_j$ with $j\neq i,i+1$ and assume without loss of generality that $i<j$. We write $c=\sum_{k\neq j}b_{k,j}u_{i,k}$ for some chains $u_{i,k}$ with $d(u_{i,j+1})=0$. If $j+1\neq i$, then by the induction assumption $u_{i,j+1}=d(v_{i,j+1})$. As in the case ii), we thus find that $c$ is homologous to a cycle of the form $\sum_{k\neq j,j+1}b_{k,j}u_{i,k}$ for some chains also labeled $u_{i,k}$. Repeating this process a few times, we find that $c$ is homologous to a cycle of the form $\sum_{k,\neq j,\dots,i-1}b_{k,j}u_{i,k}$ with $d(u_{i,i})=0$. Applying the induction assumption, we find that $u_{i,i}=l_i+d(v_{i,i})$ for some chain $v_{i,i}$. The condition $d(c)=0$ then implies that $(-1)^{j-i}b_{i+1,j}b_{i,i+1}l_i=b_{i+1,j}d(u_{i,i+1})$. 
Since $b_{i,i+1}l_i$ is not a boundary unless $l_i=0$, it follows that $l_i=0$. We obtain that $c$ is homologous to $\sum_{k\neq j,\dots,i}b_{k,j}u_{i,k}$, for some chains also labeled $u_{i,k}$ with $d(u_{i,i+1})=0$. If $j=i+2$, the assertion now follows and otherwise we argue as before to obtain that $c$ is homologous to $\sum_{k,\neq j,\dots,i,i+1}b_{k,j}u_{i,k}$ with $d(u_{i,i+2})=0$. The induction assumption implies that $u_{i,i+2}$ is a boundary, from which we obtain that $c$ is homologous to $\sum_{k\neq j,\dots,i+2} b_{k,j}u_{i,k}$ for some chains also labeled $u_{i,k}$. Repeating this argument a few times, we can finally conclude that $c$ is a boundary.

For 1), we consider a cycle $c:z_i\rightarrow z_i$, which we can write as $c=\sum_{k\neq i}b_{k,i}u_{i,k}$ for some chains $u_{i,k}$ with $d(u_{i,i+1})=0$. Using the induction assumption, we find a chain $v_{i,i+1}$ with $u_{i,i+1}=d(v_{i,i+1})-b_{i,i+1}l_i$. It follows that $c$ is homologous to a cycle of the form $c_1=\sum_{k\neq i,i+1}b_{k,i}u_{i,k}-b_{i+1,i}b_{i,i+1}l_i$ for some other chains also labeled $u_{i,k}$. This constitutes the base case for an induction on $j$ of the following assertion.

{\it For all $1\leq j\leq m-1$, the cycle $c$ is homologous to 
\[ c_j=\sum_{k\in I}b_{k,i}u_{i,k}+\left(\sum_{1\leq k\leq i+j-m<i} (-1)^{n+k-i}b_{k,i}b_{i,k}+ \sum_{i<k\leq i+j,m} (-1)^{k-i}b_{k,i}b_{i,k}\right)l_i\,,\]
where $I$ is the set of $1\leq k\leq m$ such that $k>i+j$ or $k<i+j-m$ and the $u_{i,k}$ are some chains.}

For the induction step, we consider the case $i+j\leq m$. The case $i+j>m$ is dealt with analogously. Suppose that $c$ is homologous to $c_j$. Evaluating the condition $d(c_j)=0$ at the summands beginning with $b_{i+j+1,i}$ yields  
\[ 0=(-1)^{n-j}b_{i+j+1,i}d(u_{i,i+j+1})+\left(\sum_{i<k\leq i+j} (-1)^{n-j+k-i}b_{i+j+1,i}b_{k,i+j+1}b_{i,k}\right)l_i\,,\]
so that by the induction hypothesis (of the induction over the length of $c$) we have $u_{i,i+j+1}=d(v_{i,i+j+1})+(-1)^{j+1}b_{i,i+j+1}l_i$ for some chain $v_{i,i+j+1}$. It follows that $c$ is homologous to $c_{j+1}$. This completes the induction step. Setting $j=m-1$, we obtain that $c$ is homologous to
\[ \left(\sum_{k<i} (-1)^{n+k-i}b_{k,i}b_{i,k}+ \sum_{i<k} (-1)^{k-i}b_{k,i}b_{i,k}\right)l_i\]
and thus of the form \eqref{cycle1}. This concludes the proof.
\end{proof}

\begin{proposition}\label{algmodprop}~
\begin{enumerate}[(1)]
\item There exists an equivalence of $\infty$-categories 
\begin{equation}\label{model3}
\mathcal{V}^m_{\phi^*}\simeq \mathcal{D}(D_m)\,.
\end{equation}
\item For $1\leq i \leq m-1$, the composite of the equivalence \eqref{model3} with the functor $\mathcal{D}(k[t_{n-2}])\simeq \on{RMod}_{k[t_{n-2}]}\xrightarrow{\varsigma_{m-i+1}} \mathcal{V}^m_{\phi^*}$ is equivalent to the image under $\mathcal{D}(\mhyphen)$ of the dg-functor $\iota_i:k[t_{n-2}]\rightarrow D_m$ determined by 
\[ \iota_i(t_{n-2})=(-1)^{m+in}\left(\sum_{j<i} (-1)^{n+j-i}b_{j,i}b_{i,j}+ \sum_{i<j} (-1)^{j-i}b_{j,i}b_{i,j}\right)\,.\]
Furthermore, if $n=m$, the composite of the equivalence \eqref{model3} with the functor $\mathcal{D}(k[t_{n-2}])\simeq \on{RMod}_{k[t_{n-2}]}\xrightarrow{\varsigma_{1}} \mathcal{V}^m_{\phi^*}$ is equivalent to the image under $\mathcal{D}(\mhyphen)$ of the dg-functor $\iota_0:k[t_{n-2}]\rightarrow D_m$ determined by 
\[ \iota_m(t_{n-2})=(-1)^{m}\left(\sum_{j<m} (-1)^{n+j-m}b_{j,m}b_{m,j}\right)\,.\]
\end{enumerate}
\end{proposition}

\begin{proof}
We recursively define objects $y_{i+1}=\on{cone}(y_i\xrightarrow{\alpha_i}x_i)$ for $i\geq 0$ in $\on{dgMod}({B}_m)$, where $y_1=x_0$ and for $i\geq 1$
\[ \alpha_i=(0,\dots,0,a_{{i-1},i})\in \bigoplus_{j=0}^{i-1} \on{Hom}_{{B}_m}\left(x_{j}[i-j-1],x_i\right)\simeq \on{Hom}_{\on{dgMod}({B}_m)}(y_i,x_i)\]
where the splitting holds only on the level of graded $k$-modules. We denote by $\langle x_1,\dots,x_{m-1},y_m\rangle\subset \on{dgMod}({B}_m)$ the full dg-subcategory spanned by $x_1,\dots,x_{m-1},y_m$. Note that $x_1,\dots,x_{m-1},y_m$ compactly generate $\mathcal{D}({B}_m)$ so that there exists an equivalence of $\infty$-categories 
\[\mathcal{D}(\langle x_1,\dots,x_{m-1},y_m\rangle)\simeq \mathcal{D}({B}_m)\,.\]
A direct computation shows that $\langle x_1,\dots,x_{m-1},y_m\rangle$ is quasi-equivalent to the dg-category $C_m$ with objects $x_1,\dots,x_{m-1},y_m$, generated by the morphisms
\begin{itemize}
\item $a_{i,i+1}:x_i\rightarrow x_{i+1}$ in degree $0$,
\item $a_{i,m}:x_i\rightarrow y_m$ in degree $m-i-1$ and
\item $a_{m,i}:y_m\rightarrow x_i$ in degree $n-m+i-1$
\end{itemize}
subject to the relations $a_{i,i+1} a_{i-1,i}=0$ for $2\leq i \leq m-2$ and $a_{m,i}a_{j,m}=0$ for $i\neq j$ and with differentials determined on generators by
\begin{itemize}
\item $d(a_{i,i+1})=0$ for $1\leq i \leq m-1$ and $d(a_{m,1})=0$,
\item $d(a_{i,m})=(-1)^{m-i}a_{i+1,m} a_{i,i+1}$ for $i\neq m-1$,
\item $d(a_{m,i})=a_{i-1,i} a_{m,i-1}$ for $i\neq 1$.
\end{itemize} 
The morphisms $a_{i,m}$ and $a_{m,i}$ are the images under the quasi-equivalence $\langle x_1,\dots,x_{m-1},y_m\rangle\rightarrow C_m$ of
\[(0,\dots,\on{id}_{x_i},\dots,0)\in \bigoplus_{j=0}^{m-1}\on{Hom}_{B_m}(x_i,x_j[m-j-1])\simeq \on{Hom}_{\on{dgMod}({B}_m)}(x_i,y_m)\]
and
\begin{equation}\label{lsgneq}
(0,\dots,(-1)^{i(n-1)}l_i,\dots,0)\in \bigoplus_{j=0}^{m-1}\on{Hom}_{B_m}(x_j[m-j-1],x_i)\simeq \on{Hom}_{\on{dgMod}({B}_m)}(y_m,x_i)\,,
\end{equation}
respectively.
For example, for $m=4$, we can depict the generating morphisms of $C_m$ as follows.
\[
\begin{tikzcd}
                                                                           & x_2   \arrow[dd, "{a_{2,4}}"', bend right] \arrow[rd, "{a_{2,3}}", bend left]                                  &                                        \\
x_1 \arrow[rd, "{a_{1,4}}" description] \arrow[ru, "{a_{1,2}}", bend left] &                                                                                                                & x_3 \arrow[ld, "{a_{3,4}}", bend left] \\
                                                                           & y_4 \arrow[lu, "{a_{4,1}}", bend left] \arrow[ru, "{a_{4,3}}" description] \arrow[uu, "{a_{4,2}}", bend right] &                                       
\end{tikzcd}
\]
Using \Cref{dmhom}, we find that the dg-functor $\mu_m:D_m\rightarrow C_m$ determined by
\begin{itemize}
\item $\mu_m(z_i)=x_i$ for $i\neq m$ and $\mu_m(z_m)=y_m$, 
\item $\mu_m(b_{i,j})=\begin{cases} a_{i,j} &\text{if }j=i+1\text{ or }i=m\text{ or }j=m\\ 0 & \text{else}\end{cases}$
\end{itemize}
is a quasi-equivalence. We thus find equivalences of $\infty$-categories 
\[\mathcal{D}(D_m)\xrightarrow{\mathcal{D}(\mu_m)}\mathcal{D}(C_m)\simeq  \mathcal{D}({B}_m)\simeq \mathcal{V}^m_{f^*}\,,\]
showing part (1).

We proceed with part (2). Inspecting the construction of the equivalence $\mathcal{D}({B}_m)\simeq \mathcal{V}^m_{f^*}$, one finds that for $1\leq i\leq m-1$ the functor $\varsigma_{m-i+1}$ is modeled by the dg-functor $k[t_{n-2}]\rightarrow B_m$, determined by mapping $t_{n-2}$ to $l_{i}$, note for this also the commutative diagram in \cite[Prop.~2.39]{Chr22}. The commutative diagram of dg-categories
\[
\begin{tikzcd}
              & {k[t_{n-2}]} \arrow[ld, "\iota_i"'] \arrow[rd, "t_{n-2}\mapsto l_{i}"] &                 \\
D_m \arrow[r] & C_m \arrow[r]                                                                                & \on{dgMod}(B_m)
\end{tikzcd}
\]
whose horizontal morphisms are Morita equivalences hence shows that $\varsigma_{m-i+1}$ is modeled by $\iota_i$; note that the sign in $\iota_i$ follows from the sign $(-1)^{m-i}$ of the summand $b_{m,i}b_{i,m}$ in \eqref{cycle1} and the sign $(-1)^{i(n-1)}$ in \eqref{lsgneq}. In the case $n=m$, the remaining assertion that $\varsigma_1$ is modeled by $\iota_1$ is a consequence of the cyclic, rotational symmetry of $D_m$ and the sequence of adjunctions \eqref{sigmaadjeq}.
\end{proof}

\begin{proof}[Proof of \Cref{gluethm}]
Using \Cref{algmodprop}, the proof of \cite[Theorem 6.1]{Chr22} directly translates from the case $n=3$ to the general case.
\end{proof}

\section{Objects from curves}\label{sec:objsfromcurves}

In \Cref{sec:curves}, we introduce a class of curves used for the geometric model called matching curves. We proceed in \Cref{sec:objectsfromcurves} with the construction of the global sections associated with matching curves equipped with extra data, referred to as matching data. In \Cref{sec:projmodules}, we show that the projective $\mathscr{G}_\mathcal{T}$-modules associated with the vertices of the underlying quiver can be realized in terms of global sections associated with matching data with underlying pure matching curves.

\subsection{Matching curves}\label{sec:curves} 

We fix a marked surface ${\bf S}$ with an $n$-valent spanning graph $\mathcal{T}$. We also fix a base $\mathbb{E}_\infty$-ring spectrum $R$. For each vertex $v$ of $\mathcal{T}$, we have an immersion $\Sigma_v\rightarrow \Sigma_\mathcal{T}$, see \Cref{ssurfrem}. This immersion is an embedding if no edge of $\mathcal{T}$ incident to $v$ is a loop.

\begin{definition}\label{def:segm}
Let $v\in \mathcal{T}_0$. A segment at $v$ is an embedded curve $\delta:[0,1]\rightarrow \Sigma_v$, which does not hit $v$ away from the endpoints and which is of one of the following two types.
\begin{enumerate}[(1)]
\item One end lies at $v$, the other on the boundary of $\Sigma_v$.
\item Both ends lie on the boundary of $\Sigma_v$. In this case, since the segment $\delta$ is embedded, it wraps $1\leq a\leq n$ steps around the vertex $v$.
\end{enumerate}
We consider segments at $v$ as equivalence classes under homotopies relative $\partial \Sigma_v\cup \{v\}$.

A segment in $\Sigma_\mathcal{T}$ is a segment at any $v\in \mathcal{T}_0$. We do not distinguish in notation its representatives from the curves in $\Sigma_{\mathcal{T}}$ obtained from composing with the immersion $\Sigma_v\rightarrow \Sigma_\mathcal{T}$.
\end{definition}

The two types of segments are depicted in \Cref{mcfig2}.

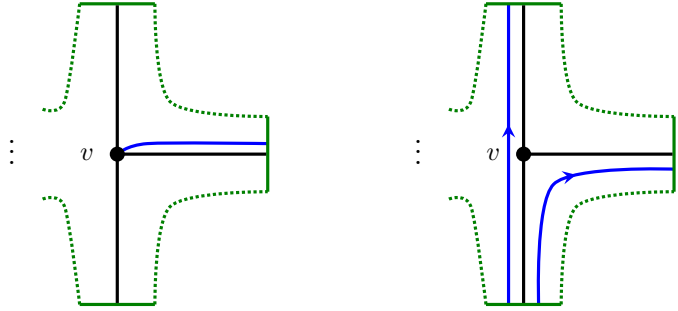
\begin{figure}[ht]
\begin{center}
\begin{tikzpicture}[scale=2]
 \draw[color=blue][very thick] plot [smooth] coordinates {(0,0) (0.2,0.07) (1,0.07)};
 \draw[color=ao][very thick][densely dotted] plot [smooth] coordinates {(0.25,1) (0.35,0.35) (1,0.25)};
 \draw[color=ao][very thick][densely dotted] plot [smooth] coordinates {(0.25,-1) (0.35,-0.35) (1,-0.25)};
 \draw[color=ao][very thick][densely dotted] plot [smooth] coordinates {(-0.25,-1) (-0.35,-0.35) (-0.5,-0.3)};
 \draw[color=ao][very thick][densely dotted] plot [smooth] coordinates {(-0.25,1) (-0.35,0.35) (-0.5,0.3)};
 \draw[color=ao][very thick] plot [smooth] coordinates {(1,-0.25) (1,0.25)};
 \draw[color=ao][very thick] plot [smooth] coordinates {(-0.25,1) (0.25,1)};
 \draw[color=ao][very thick] plot [smooth] coordinates {(-0.25,-1) (0.25,-1)}; 
  \node (0) at (0,0){};  
  \node (1) at (-0.7,0.07){\vdots};
  \node (2) at (-0.2,0){\small $v$};
  \fill (0) circle (0.05);
  \draw[very thick]
  (0,0.99) -- (0,0)
  (0,-0.99) -- (0,0)
  (0,0) -- (0.99,0); 
\end{tikzpicture}
\hspace{4em}
\begin{tikzpicture}[decoration={markings, 
	mark= at position 0.6 with {\arrow{stealth}}}, scale=2]
 \draw[color=blue][very thick][postaction={decorate}] plot [smooth] coordinates {(0.1,-1) (0.2,-0.2) (1,-0.1)};
 \draw[color=blue][very thick][postaction={decorate}]  plot [smooth] coordinates {(-0.1,-1) (-0.1,-0.2) (-0.1,0.2) (-0.1,1)};

 \draw[color=ao][very thick][densely dotted] plot [smooth] coordinates {(0.25,1) (0.35,0.35) (1,0.25)};
 \draw[color=ao][very thick][densely dotted] plot [smooth] coordinates {(0.25,-1) (0.35,-0.35) (1,-0.25)};
 \draw[color=ao][very thick][densely dotted] plot [smooth] coordinates {(-0.25,-1) (-0.35,-0.35) (-0.5,-0.3)};
 \draw[color=ao][very thick][densely dotted] plot [smooth] coordinates {(-0.25,1) (-0.35,0.35) (-0.5,0.3)};
 \draw[color=ao][very thick] plot [smooth] coordinates {(1,-0.25) (1,0.25)};
 \draw[color=ao][very thick] plot [smooth] coordinates {(-0.25,1) (0.25,1)};
 \draw[color=ao][very thick] plot [smooth] coordinates {(-0.25,-1) (0.25,-1)}; 
  \node (0) at (0,0){};  
  \node (1) at (-0.7,0.07){\vdots};
  \node (2) at (-0.2,0){\small $v$};
  \fill (0) circle (0.05);
  \draw[very thick]
  (0,0.99) -- (0,0)
  (0,-0.99) -- (0,0)
  (0,0) -- (0.99,0); 
\end{tikzpicture}
\end{center}
\caption{A segment of the first type at $v$ (in blue, on the left) with unspecified direction and two segments of the second type at $v$ (in blue, on the right), wrapping around $v$ in the counterclockwise and clockwise direction by $a=1$ and $a=n-2$ steps, respectively.}\label{mcfig2}
\end{figure}

We can always assume that a given end of a segment which does not lie at a vertex of $\mathcal{T}_0$ ends on an edge $e$ of $\mathcal{T}$. This will be useful to specify at which boundary component of $\Sigma_v$ the segments begins or ends. If the segment is of the first type and ends at $e$, we also say that the segment exits the vertex through $e$.

\begin{definition}
Let $\delta$ be a segment in $\Sigma_\mathcal{T}$. If $\delta$ is of the first type, we define its degree as $d(\delta)=0$ and if $\delta$ is of the second type, wrapping $1\leq a \leq n$ steps around the vertex $v$, we define its degree as $d(\delta)=a-1$ if $\delta$ goes in the counterclockwise direction and as $d(\delta)=1-a$ if $\delta$ goes in the clockwise direction. We call a segment pure if $d(\delta)=0$.
\end{definition}

For $N\in \mathbb{N}=\{1,2,3,\dots\}$, we consider the sets $[N]\coloneqq \{1,\dots,N\}$ and $\mathbb{Z}/N\mathbb{Z}\coloneqq \{1,\dots,N+1\}/N+1\sim 1$. The set $[N]$ is ordered linearly, the set $\mathbb{Z}/N\mathbb{Z}$ has a cyclic order. 

\begin{definition}\label{def:compseg}
Let $I=\mathbb{Z}/N\mathbb{Z},[N],\mathbb{N},\mathbb{Z}$ for $N\in \mathbb{N}$. Consider a collection of representatives of segments $\{\delta_i\}_{i\in I}$ in $\Sigma_\mathcal{T}$ satisfying for all $i\in I$ and $i\neq N$ if $I=[N]$ that
\begin{itemize}
\item $\delta_{i+1}(0)=\delta_i(1)$ in $\Sigma_{\mathcal{T}}$.
\item if the segments $\delta_i$ and $\delta_{i+1}$ both lie at the same vertex $v$ of $ \mathcal{T}$, then $\delta_{i}(1)$ lies on a loop of $\mathcal{T}$ and the two points $\delta_{i}(1),\delta_{i+1}(0)$ lie on different boundary components of $\Sigma_v$, (this means that that their composite wraps along the loop). 
\end{itemize}
Consider the curve $\gamma:U\rightarrow \Sigma_{\mathcal{T}}$ arising from composing the curves $\{\delta_i\}_{i\in I}$ in their given order. We further suppose that
\begin{itemize}
\item the curve $\gamma$ does not cut out (by tracing along a connected part of the curve) any unmarked discs in $\Sigma_{\mathcal{T}}$.
\item if $I=\mathbb{Z}/N\mathbb{Z}$, that $d(\gamma)=0$, see \Cref{def:regdeg}, and that $\gamma$ is not homotopic relative $\partial \Sigma_{\mathcal{T}} \cup\mathcal{T}_0$ to the composite of multiple identical closed curves in $\Sigma_{\mathcal{T}}$. 
\end{itemize}
We call the curve $\gamma:U\rightarrow \Sigma_{\mathcal{T}}$ a curve in $\Sigma_{\mathcal{T}}$ composed of segments. Being composed of segments is a property, the segments can be recovered from $\gamma$ by intersecting with the $\Sigma_{v}$'s.

Reparametrizing $\gamma$ if necessary, we can assume that $U=S^1$ if $I=\mathbb{Z}/N\mathbb{Z}$, $U=[0,1]$ if $I=[N]$, $U=[0,\infty)$ if $I=\mathbb{N}$ and $U=(-\infty,\infty)$ if $I=\mathbb{Z}$. If $U=S^1$, we call $\gamma$ closed. Otherwise, we call $\gamma$ open.
\end{definition}

\begin{figure}[ht]
\begin{center}
\begin{tikzpicture}
\draw[color=blue!255, very thick] plot [smooth] coordinates {(2,1) (3,0.5) (3.5,1) (3,1.5) (1,0.5) (0.5,1) (1,1.5) (2,1)};

  \draw[color=ao, very thick]
    (0, 0)
    (2, 2)
    (0, 2)
    (2, 0)
    (4, 0)
    (4, 2) 
    (0, 2) -- (0, 0)
    (2, 0) -- (0, 0)
    (0, 2) -- (2, 2)
    (2, 2) -- (4, 2)
    (4, 0) -- (4, 2)
    (2, 0) -- (4, 0); 
  \node (0) at (0, 0){};
  \node (1) at (0, 2){};
  \node (2) at (2, 0){};
  \node (3) at (2, 2){}; 
  \node (4) at (4, 0){};
  \node (5) at (4, 2){};  
  \fill[color=orange] (0) circle (0.1);
  \fill[color=orange] (1) circle (0.1);
  \fill[color=orange] (2) circle (0.1);
  \fill[color=orange] (3) circle (0.1); 
  \fill[color=orange] (4) circle (0.1); 
  \fill[color=orange] (5) circle (0.1);     

\fill (1,1) circle (0.1);
\fill (3,1) circle (0.1);

  \draw[very thick] (-0,1)--(4,1)
        (1,-0)--(1,2) (3,-0)--(3,2);

\end{tikzpicture}
\caption{An example of a closed curve in the $6$-gon, equipped with a $4$-valent spannig graph, which is not allowed as curves composed of segments. The curve cuts out two unpunctured discs (the enclosed vertices of $\mathcal{T}$ do not matter).}
\end{center}
\end{figure}

\begin{definition}\label{singfreedef}\label{def:regdeg}
Let $\gamma$ be a curve in $\Sigma_\mathcal{T}$ composed of segments.
\begin{enumerate}[(1)]
\item We call $\gamma$ regular, if it is only composed of segments of the second type which do not wrap around the vertices by $n$ steps. We call $\gamma$ singular, if it is not regular. 
\item Suppose that $\gamma$ is composed of finitely many segments $\delta_1,\dots,\delta_m$. We define the degree of $\gamma$ as 
\[ d(\gamma)=\sum_{i=1}^m d(\delta_i)\,.\] 
\end{enumerate}
\end{definition}

\begin{definition}\label{matchingcurve}~
\begin{itemize}
\item Let $U=S^1,[0,1],[0,\infty),(-\infty,\infty)$. Consider a curve $\gamma:U\rightarrow \Sigma_\mathcal{T}$ composed of segments, see \Cref{def:compseg}. The curve $\gamma$ is called a matching curve in $\Sigma_\mathcal{T}$ if for all $x\in \partial U$, $\gamma(x)$ lies in $\mathcal{T}_0$ or in $\partial \Sigma_\mathcal{T}$. We consider matching curves as equivalence classes under homotopies relative $\partial \Sigma_\mathcal{T}\cup \mathcal{T}_0$.
\item A matching curve in ${\bf S}\backslash M$ is defined to be a homotopy class relative $(\partial {\bf S}\backslash M)\cup \mathcal{T}_0$ of curves $U\rightarrow {\bf S}\backslash M$ which contains a representative given by the composite of a matching curve in $\Sigma_\mathcal{T}$ with the embedding $\Sigma_\mathcal{T}\rightarrow {\bf S}\backslash M$ of \Cref{ssurfrem}.
\item A matching curve in $\Sigma_{\mathcal{T}}$ or ${\bf S}\backslash M$ is called finite if it is composed of finitely many segments.
\item A matching curve in $\Sigma_{\mathcal{T}}$ or ${\bf S}\backslash M$ is called pure if it is only composed of pure segments.
\end{itemize}
\end{definition}

Note that matching curves do not intersect $\mathcal{T}_0$ nor the boundary of the surface except at the endpoints. 

\begin{remark}
The notion of matching curve in ${\bf S}\backslash M$ does not depend on the choice of $n$-valent spannig graph $\mathcal{T}$. An open curve in ${\bf S}\backslash M$ with endpoints in $(\partial {\bf S}\backslash M)\cup \mathcal{T}_0$, but which is away from its endpoints disjoint from $(\partial {\bf S}\backslash M)\cup \mathcal{T}_0$, arises as a matching curve if and only if it cuts out no discs and is not contractible to a point in $(\partial {\bf S}\backslash M)\cup \mathcal{T}_0$. For closed curves, additionally the degree needs to vanish.
\end{remark}

\begin{notation}\label{geqsegdef}
Let $\gamma$ be an open curve in ${\bf S}\backslash M$ composed of segments with index set $I$. Let $i\in I$ and $\delta_i$ be the corresponding segment of $\gamma$. We denote by $\gamma<\delta_i$ the curve obtained by the composite of the segments $\{\delta_j\}_{j\in I,j<i}$ and by $\gamma\leq \delta_i$ the curve obtained by the composite the segments $\{\delta_j\}_{j\in I,j\leq i}$. We similarly define the curves $\delta_i<\gamma$ and $\delta_i \leq \gamma$, and given two segments $\delta_i,\delta_{i'}$ of $\gamma$, the curves $\delta_i <\gamma <\delta_{i'}$, $\delta_i \leq \gamma <\delta_{i'}$, $\delta_i <\gamma \leq \delta_{i'}$ and  $\delta_i \leq \gamma \leq\delta_{i'}$. 
\end{notation}

\begin{lemma}\label{mcprop1}
Let $n=3$. There exists a bijection between
\begin{enumerate}[1)]
\item pure matching curves in ${\bf S}\backslash M$ and
\item curves in ${\bf S}\backslash M$ which do not cut out any discs in ${\bf S}\backslash M$ and whose endpoints lie in $\mathcal{T}_0$ or $\partial {\bf S}\backslash M$, considered modulo homotopies relative $\partial {\bf S}\backslash M$ which fix the endpoints in $\mathcal{T}_0$. 
\end{enumerate}
\end{lemma}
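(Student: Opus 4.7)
The plan is to construct mutually inverse maps $\Phi\colon (1)\to(2)$ and $\Psi\colon(2)\to(1)$.

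First I would define $\Phi$ by post-composing a pure matching curve $\gamma$ in $\Sigma_\mathcal{T}$ with the embedding $\Sigma_\mathcal{T}\hookrightarrow{\bf S}\setminus M$ of \Cref{ssurfrem}. Well-definedness on homotopy classes relative $\partial\Sigma_\mathcal{T}\cup V_\mathcal{T}$ is immediate, as such a homotopy pushes forward to a homotopy in ${\bf S}\setminus M$ rel $(\partial{\bf S}\setminus M)\cup V_\mathcal{T}$. To check that the image does not cut out a disc in ${\bf S}\setminus M$: if it did, then, using that the retraction ${\bf S}\setminus M\to \Sigma_\mathcal{T}$ is a homotopy equivalence, the bounded region would pull back to a disc in $\Sigma_\mathcal{T}$ allowing $\gamma$ to be simplified to a curve composed of strictly fewer segments, contradicting the minimality condition in \Cref{cosdef}. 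The once-punctured-disc case is already excluded in \Cref{cosdef}.

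Next I would construct $\Psi$. Given a curve $\gamma\colon U\to{\bf S}\setminus M$ as in (2), apply the retraction ${\bf S}\setminus M\to\Sigma_\mathcal{T}$ of \Cref{ssurfrem} to deform $\gamma$ into $\Sigma_\mathcal{T}$, and perturb so that $\gamma$ meets each edge of $\mathcal{T}$ transversally and avoids $V_\mathcal{T}$ except at its designated endpoints. This decomposes $\gamma$ into a chain of arcs, each lying in some $\Sigma_v$ and having endpoints on $\partial\Sigma_v\cup\{v\}$. Each arc is homotopic rel endpoints within $\Sigma_v$ to a unique segment in the sense of the definition preceding \Cref{matchingcurve}. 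The goal is then to replace $\gamma$ within its homotopy class by a curve in which every type (2) segment has wrapping number $a=1$, since $n=3$. The only obstruction is an arc entering and exiting $\Sigma_v$ through the same boundary component, corresponding to the full-wrap case $a=n=3$; such an arc bounds, together with a portion of the corresponding edge of $\mathcal{T}$, a region in ${\bf S}\setminus M$ which, by construction of the ideal triangles, contains only the non-marked point $v\in V_\mathcal{T}$. The no-disc hypothesis then permits pushing $\gamma$ across $v$ to replace the offending arc by a shorter curve. Iterating, we obtain a decomposition of $\gamma$ into pure segments, hence a pure matching curve.

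Finally I would verify that $\Phi\circ\Psi$ and $\Psi\circ\Phi$ are the identity maps on homotopy classes. The composite $\Psi\circ\Phi$ reduces to the identity because $\Phi(\gamma)$ is already in $\Sigma_\mathcal{T}$ and composed of pure segments, so no disc-elimination is needed. For $\Phi\circ\Psi$, every step in the construction of $\Psi$ (retraction, perturbation, straightening of arcs within $\Sigma_v$, and disc-cancellation across $v$) is a homotopy in ${\bf S}\setminus M$ relative to $(\partial{\bf S}\setminus M)\cup V_\mathcal{T}$, so the output and the input represent the same class in (2).

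The hardest part will be the iterative disc-elimination step in the construction of $\Psi$, specifically the case of self-folded edges. For such an edge both halfedges at the bounding vertex correspond to the same edge of $\mathcal{T}$, so an arc ``entering and exiting through the same boundary component of $\Sigma_v$'' must be analysed carefully to determine which region it actually bounds in ${\bf S}\setminus M$, and whether the interior marked point sitting inside the self-folded triangle lies in that region (triggering either the no-disc or the no-once-punctured-disc condition, as appropriate).
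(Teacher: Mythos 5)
Your proposal follows the same route as the paper's (very terse) proof: build a forward map $\Phi$ by composing with the embedding $\Sigma_\mathcal{T}\hookrightarrow {\bf S}\backslash M$, and a backward map $\Psi$ by retracting to $\Sigma_\mathcal{T}$ and replacing the curve by its ``taut'' representative composed of pure segments. Where the paper simply asserts in one sentence that every curve as in (2) is homotopic to a \emph{unique} pure matching curve, you spell out how to produce one representative. This is worthwhile detail, but note the following.

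The genuine gap is well-definedness of $\Psi$, equivalently injectivity of $\Phi$ — i.e.\ the ``unique'' in the paper's one-line proof. Your construction of $\Psi$ involves choices (which representative of the (2)-class, how to perturb to transversality, the order of disc-eliminations), and you never argue that different choices produce the same pure matching curve up to homotopy rel $\partial\Sigma_\mathcal{T}\cup V_\mathcal{T}$. This matters: your verification of $\Psi\circ\Phi=\mathrm{id}$ says ``$\Phi(\gamma)$ is already pure, so no disc-elimination is needed,'' which is only valid if one happens to start $\Psi$ from that specific representative; for an arbitrary representative in the same (2)-class one needs the uniqueness statement, and at that point the $\Phi\circ\Psi$/$\Psi\circ\Phi$ argument becomes circular. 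The content of the lemma is exactly this taut-position uniqueness, and neither your proposal nor the paper's proof actually establishes it (it is a bigon-criterion type statement; an explicit innermost-bigon/disc argument, or a reference, would close the gap).

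Two smaller points. First, your claim that the only obstruction to purity is the full-wrap case $a=n=3$ elides the $a=2$ segments: for $n=3$ an $a=2$ segment connects two distinct edges and so does not trigger a disc-elimination, but replacing it by the pure $a=1$ segment joining the same pair of edges is itself a homotopy crossing $v$; you should say this is allowed because (2)'s equivalence relation permits crossing $V_\mathcal{T}$ away from endpoints. Second, your worry about self-folded edges is reasonable but resolvable: for a self-folded edge $e$ at $v$, the interior marked point of the self-folded triangle sits on the far side of $e$ from $v$, so an arc entering and exiting $\Sigma_v$ through $e$ still bounds a region containing only the non-marked point $v$, and the push-across step is a genuine disc in ${\bf S}\backslash M$. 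Finally, as you implicitly notice, condition (2) as written only forbids discs and not once-punctured discs, whereas \Cref{cosdef} forbids both for pure matching curves; this is a (minor) imprecision in the statement rather than in your argument.
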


\begin{proof}
Let $v$ be a vertex of $\mathcal{T}$. In $\Sigma_v$, each segment of the second type wrapping around $v$ by two steps is homotopic relative $\partial \Sigma_v$ to a segment of the second type wrapping only a single step around $v$, which is thus pure. Note that this uses, that the homotopy is allowed to cross the vertex $v$. Similarly, each segment of the second type wrapping around $v$ by $3$ steps is homotopic relative $\partial \Sigma_v$ to a point. 

Given a curve in ${\bf S}\backslash M$ which cuts out no discs, it is homotopic relative $\partial {\bf S}\backslash M$ to a matching curve. It follows by the above that this curve is homotopy equivalent, relative $\partial {\bf S}\backslash M$ and fixing endpoints in $\mathcal{T}_0$, to a pure matching curve. We can thus produce from each curve as in 2) a pure matching curve in ${\bf S}\backslash M$. Conversely, any pure matching curve clearly defines a curve as in 2). These assignments are inverse bijections.
\end{proof}

\subsection{Objects from matching curves}\label{sec:objectsfromcurves}

We fix an $n$-valent spannig graph $\mathcal{T}$ of a marked surface ${\bf S}$ and an $\mathbb{E}_\infty$-ring spectrum $R$. 

\begin{definition}
A matching datum $(\gamma,L)$ in ${\bf S}\backslash M$ consists of 
\begin{itemize}
\item a matching curve $\gamma$ in ${\bf S}\backslash M$,
\item an object $L\in \on{RMod}_{R[t_{n-2}]}$ called the local value,
\item if $\gamma$ is singular an object $Q\in \on{RMod}_R$ and an equivalence $\phi^*(Q)\simeq L$, and
\item if $\gamma$ is closed, an integer $a\geq 1$ and a monodromy equivalence 
\[ \mu\in\on{Map}_{\on{RMod}_{R[t_{n-2}]}}(L^{\oplus a},L^{\oplus a})\,.\]
\end{itemize}
We further assume that $L$ satisfies a technical condition explained in \Cref{twshrem}, which is always fulfilled if $R=k$ is a commutative ring.

We call $(\gamma,L)$ open if $\gamma$ is open and closed if $\gamma$ is closed. We define the rank of $(\gamma,L)$ as $a$ if $\gamma$ is closed and as $1$ if $\gamma$ is open.
\end{definition}

Given a segment $\delta$ and an object $L\in\on{RMod}_{R[t_{n-2}]}$ (also subject to the conditions in \Cref{twshrem}), we associate below a local section $M^L_\delta$ of the $\mathcal{T}$-parametrized perverse schober $\mathcal{F}_\mathcal{T}(R)$ from \Cref{constr:Ginzburgschober}. By gluing sections, we then produce for each matching datum $(\gamma,L)$ a global section $M_{\gamma}^L$ of $\mathcal{F}_\mathcal{T}(R)$, see \Cref{clprop2}.

\begin{remark}\label{twshrem}
For computational reasons, we always assume that $L\in \on{RMod}_{R[t_{n-2}]}$ satisfies $T_{\on{RMod}_{R[t_{n-2}]}}(L)\simeq L[1-n]$, where $T_{\on{RMod}_{R[t_{n-2}]}}$ is the twist functor of the spherical adjunction $\phi^*\dashv \phi_*$. If $R=k$ is a commutative ring, then this is satisfied for any $L$, as follows from \cite[Prop. 5.7]{Chr22}. If $R$ is arbitrary, then $L=\phi^*(R)$ and $L=R[t_{n-2}]$ also satisfy the requirement, see \Cref{lem:twist=n}.
\end{remark}

\begin{construction}\label{segcstr}
Let $p:\Gamma(\mathcal{F}_\mathcal{T}(R))\rightarrow \on{Exit}(\Gamma)$ be the Grothendieck construction of $\mathcal{F}_\mathcal{T}(R)$ and $\mathcal{L}$ the $\infty$-category of sections of $\mathcal{F}_\mathcal{T}(R)$ (i.e.~sections of $p$).

We consider a vertex $v\in \mathcal{T}_0$, with cyclically ordered incident halfedges $a_1,\dots,a_n$, which are part of the edges $e_1,\dots,e_n$.

{\bf 1)} For $1\leq i\leq n$, we denote by $\delta^i$ the segment of the first type at $v$ ending at $e_i\cap \partial \Sigma_v$.  Consider the paracyclic twist functor $T_{\mathcal{V}^n_{\phi^*}}$ from \Cref{cyctwrem}. Let $L\simeq \phi^*(Q)$. We define the section $M^{L}_{\delta^i}$ of $\mathcal{F}_\mathcal{T}(R)$ as the $p$-relative left Kan extension along the inclusion $\Delta^0\xrightarrow{v} \on{Exit}(\mathcal{T})$ of the functor $\Delta^0\rightarrow \mathcal{F}_\mathcal{T}(R)(v)\subset \mathcal{L}$ with value 
\[ M_{\delta^i}(v)=T_{\mathcal{V}^n_{\phi^*}}^{i-n}\left( Q\rightarrow 0\rightarrow \dots \right)[1-n]\in\mathcal{V}^n_{f^*}=\mathcal{F}_\mathcal{T}(R)(v)\,.\]

Spelling out the definition, one sees that the section $M_{\delta^i}$ is concentrated at the elements $v,e_i\in \on{Exit}(\mathcal{T})$ and takes up to equivalence the values 
\begin{align} \label{mdieq0}
 M_{\delta^i}^L(v)& \simeq \bigg(Q\xrightarrow{!} L\xrightarrow{\on{id}}\dots\xrightarrow{\on{id}} \underbrace{L}_{(n-i+1)\text{-th}}\rightarrow  0 \rightarrow \dots \rightarrow  0\bigg)[-i+1]\in \mathcal{V}^n_{\phi^*}=\mathcal{F}_\mathcal{T}(R)(v)\\
 M_{\delta^i}^L(e_i)&\simeq L\in \on{RMod}_{R[t_{n-2}]}=\mathcal{F}_\mathcal{T}(R)(e_i)\label{mdieq}
 \end{align}
and assigns to the edge $v\rightarrow e_i$ a coCartesian morphism, describing an apparent equivalence $\varrho_i(M_{\delta^i}(v))\simeq L\simeq M_{\delta}(e_i)$. The notation $\xrightarrow{\ast}$ and $\xrightarrow{!}$ in \eqref{mdieq0} and below refer to $p$-Cartesian and $p$-coCartesian morphisms, respectively, see also \Cref{sec:limits}.

{\bf 2)} For $1\leq i,j\leq n$ and $i\neq j$, we denote by $\delta^{i,j}$ the segment at $v$ which starts at $e_i\cap \partial \Sigma_v$ and ends at $e_j\cap \partial \Sigma_v$ going in the counterclockwise direction. For $L\in \on{RMod}_{R[t_{n-2}]}$, we define $M^L_{\delta^{i,j}}$ as the $p$-relative left Kan extension along $\Delta^0\xrightarrow{v}\on{Exit}(\mathcal{T})$ of the functor $\Delta^0\rightarrow \mathcal{F}_\mathcal{T}(R)(v)\subset \mathcal{L}$ with value 
\[ M^L_{\delta^{i,j}}(v)= T_{\mathcal{V}^n_{\phi^*}}^{i-1}\bigg(0\rightarrow \dots \rightarrow 0\rightarrow \underbrace{L}_{(n-j+i+1)\text{-th}}\xrightarrow{\on{id}}\dots\xrightarrow{\on{id}} L\bigg)\in \mathcal{V}^n_{\phi^*}=\mathcal{F}_\mathcal{T}(R)(v)\,.\]

Spelling out the definition, one sees that the section $M^L_{\delta^{i,j}}$ is concentrated at $e_i,e_j$ and $v$. To the edges it assigns 
\begin{equation}\label{mdijeq}
 M^L_{\delta^{i,j}}(e_l)\simeq 
\begin{cases} 
L & l=i\,,\\
L[j-i-1] & k=j>i \,,\\
L[n+j-i-1] & k=j< i \,,\\
0 & \text{else.}
\end{cases}
\end{equation}
This uses that $T_{\on{RMod}_{R[t_{n-2}]}}(L)\simeq L[1-n]$, see \Cref{twshrem}.

The value of $M_{\delta^{i,j}}$ at $v$ is given as follows.
If $i<j$, we have
\[ M_{\delta^{i,j}}(v)\simeq \bigg(0\rightarrow \dots \rightarrow 0\rightarrow \underbrace{L}_{(n-j+2)\text{-th}}\xrightarrow{\simeq}\dots\xrightarrow{\simeq}\underbrace{L}_{(n-i+1)\text{-th}}\rightarrow 0\rightarrow \dots \rightarrow 0 \bigg)[-i+1]\,,\]
if $j<i<n$
\begin{align*}
M_{\delta^{i,j}}(v)\simeq \bigg(\phi_*(L)\xrightarrow{!} \phi^*\phi_*(L)\xrightarrow{\simeq}\dots\xrightarrow{\on{\simeq}} \underbrace{\phi^*\phi_*(L)}_{(n-i+1)\text{-th}}\xrightarrow{\on{cu}_{L}}L\xrightarrow{\simeq} \dots \xrightarrow{\simeq}\underbrace{L}_{(n-j+1)\text{-th}}\rightarrow 0\rightarrow \dots \rightarrow 0 \bigg)[n-i]\,,
\end{align*}
where $\on{cu}_{L}$ denotes the counit map of the adjunction $\phi^*\dashv \phi_*$ at $L$, 
and in the case $j<i=n$ we have 
\[ M_{\delta^{n,j}}(v)\simeq \bigg(\phi_*(L)\xrightarrow{\ast} L\xrightarrow{\simeq}\dots \xrightarrow{\simeq} \underbrace{L}_{(n-j+1)\text{-th}}\rightarrow 0\rightarrow \dots \rightarrow 0 \bigg)\,.\]

\noindent {\bf 3)} Let $1\leq i\leq n$ and $L\simeq \phi^*(Q)$. Denote by $\delta^{i,i}$ the segment of the second type starting and ending at $e_i\cap \partial \Sigma_v$, wrapping around $v$ by $n$ steps in the counterclockwise direction. We define $M_{\delta^{i,i}}^L$ as the $p$-relative left Kan extension along $\Delta^0\xrightarrow{v}\on{Exit}(\mathcal{T})$ of the functor $\Delta^0\rightarrow \mathcal{F}_{\mathcal{T}}(R)(v)\subset \mathcal{L}$ with value 
\[ 
M_{\delta^{i,i}}^L(v) \simeq \bigg(\phi_*(L)\xrightarrow{!} \phi^*\phi_*(L)\xrightarrow{\simeq}\dots \xrightarrow{\simeq} \underbrace{\phi^*\phi_*(L)}_{(n-i+1)\text{-th}}\rightarrow 0\rightarrow \dots \rightarrow 0 \bigg)[n-i]
\]
if $i\neq n$ and value 
\[ M_{\delta^{n,n}}^L(v)\simeq\bigg (\phi_*(L)\rightarrow 0\rightarrow \dots \rightarrow 0\bigg)\]
if $i=n$. One finds apparent equivalences 
\[M_{\delta^{i,i}}(e_l)\simeq \begin{cases} \phi^*\phi_*(L)[n-1]\simeq \phi^*\phi_!(L)  & l=i\\ 0 & l\neq i\end{cases}\]
We have a splitting $\phi^*\phi_!(L)\simeq \phi^*\phi_!\phi^*(Q)\simeq \phi^*(Q)\oplus \phi^*(Q)[n-1]\simeq L\oplus L[n-1]$ arising from the following diagram: 
\[
\begin{tikzcd}[column sep=large]
\phi^*(Q) \arrow[rd, "\square", phantom] \arrow[r, "\on{u}_{\phi^*(Q)}"] \arrow[d] & \phi^*\phi_!\phi^*(Q) \arrow[d] \arrow[r, "\phi^*\circ \on{cu}_{Q}"] \arrow[rd, "\square", phantom] & \phi^*(Q) \arrow[d] \\
0 \arrow[r]                                                                          & {\phi^*(Q)[n-1]} \arrow[r]                                                                                  & 0                  
\end{tikzcd}
\]
Above $\on{u}$ and $\on{cu}$ denote the unit and counit of $\phi_!\dashv \phi^*$, respectively.
\end{construction}

\begin{remark}\label{degrem2}
Consider a segment $\delta^{i,j}$ of the second type with $i\neq j$. The suspensions arising in the definition of the $M_{\delta^{i,j}}^L$ in \eqref{mdijeq} correspond to the degree of $\delta^{i,j}$. Namely, one finds $M^L_{\delta^{i,j}}(e_j)\simeq L[d(\delta^{i,j})]$ if $i\neq j$ and $M^L_{\delta^{i,i}}(e_j)=L\oplus L[d(\delta^{i,j})]$ if $i=j$.
\end{remark}

\begin{construction}\label{constr:open1}\label{clprop1}
Let $\gamma$ be an open curve composed of segments in ${\bf S}\backslash M$. Let $L\in \on{RMod}_{R[t_{n-2}]}$. In the following we give the construction of a section $M^L_{\gamma}\in \mathcal{L}$ of $\mathcal{F}_\mathcal{T}(R)$. If $L=\phi^*(R)$, we will also use the notation $M_{\gamma}=M_{\gamma}^{\phi^*(R)}$.

Let $I$ be the index set of the segments of $\gamma$. We denote (if existent) the minimal and maximal elements of $I$ by $\on{min}(I),\on{max}(I)$. We set $I'=I\backslash \{\on{max}(I)\}$, and if $I$ has no maximal element, we agree that $\{\on{max}(I)\}=\emptyset$. We set $I''=I\backslash \{\on{min}(I),\on{max}(I)\}$, and if $I$ has no minimal element, we again set $\{\on{min}(I),\on{max}(I)\}=\emptyset$. Recall that the segments of $\gamma$ are denoted by $\delta_i$ with $i\in I$ (ordered compatibly with their appearance in $\gamma$). We denote the edge of $\mathcal{T}$ where $\delta_i$ begins by $e^i$ and the edge where $\delta_i$ ends by $e^{i+1}$. For later use, we also denote by $v^i\in \mathcal{T}_0$ the vertex at which $\delta_i$ lies. We define $M^L_{\gamma}$ as the colimit of a diagram $D_{\gamma}$ in the $\infty$-category $\mathcal{L}$ of sections of $\mathcal{F}_\mathcal{T}(R)$, see \Cref{glsecdef}, which is given as follows. 

The domain of $D_{\gamma}$ is the coequalizer $E_{\gamma}$ in the $1$-category of simplicial sets of the diagram
\[ 
\begin{tikzcd}[column sep=7em]
\coprod_{i\in I''}\Delta^0 \arrow[r, "\coprod_{i\in I''}\Delta^{\{1\}}\times \{i\}", shift left] \arrow[r, "\coprod_{i\in I''}\Delta^{\{2\}}\times \{i-1\}"', shift right] & \coprod_{i\in I'} \Lambda^2_0\times \{i\}
\end{tikzcd}
\]
where the horn $\Lambda^{2}_0\times \{i\}\simeq \Lambda^2_0$ is the poset with objects $(0,i),(1,i),(2,i)$ and morphisms $(0,i)\rightarrow (1,i),(2,i)$, i.e.~a span. For $l=1,2$ and $j\in I'$, the morphism $\Delta^{\{l\}}\times\{j\}$ is the inclusion $\Delta^0\rightarrow \amalg_{i\in I'}\Lambda^2_0\times\{i\}$ determined by mapping $0\in (\Delta^0)_0$ to $(l,j)\in \Lambda^2_0\times\{j\}\subset  \amalg_{i\in I'}\Lambda^2_0\times\{i\}$. Recall that for each segment $\delta_i$, we defined an associated section $M^L_{\delta_i}\in \mathcal{L}$ in \Cref{segcstr}. We further denote by $Z^L_{e^j}\in \mathcal{L}$ the section concentrated at $e^j$ with value $L$. The diagram $D_{\gamma}$ is determined via the its restrictions to $\Lambda^2_0\times \{i\}$ with $i\in I'$, which is for $i\geq 1$ given by
\[ 
\begin{tikzcd}[row sep=small]
                                   & {Z^L_{e^i}[d(\delta_1\leq \gamma<\delta_{i+1})]} \arrow[rd, "\beta"] \arrow[ld, "\alpha"'] &                                            \\
{M^L_{\delta_i}[d(\delta_1\leq \gamma<\delta_i)]} &                                                         & {M^L_{\delta_{i+1}}[d(\delta_1\leq\gamma<\delta_{i+1})]}
\end{tikzcd}
\]
and for $i\leq 0$ given by
\[
\begin{tikzcd}[row sep=small]
                                   & {Z^L_{e^i}[-d(\delta_{i+1}\leq \gamma<\delta_{1})]} \arrow[rd, "\beta"] \arrow[ld, "\alpha"'] &                                            \\
{M^L_{\delta_i}[-d(\delta_{i}\leq \gamma<\delta_{1})]} &                                                         & {M^L_{\delta_{i+1}}[-d(\delta_{i+1}\leq \gamma<\delta_{1})]}
\end{tikzcd}
\]
where $\alpha$ and $\beta$ are the apparent (pointwise in $\on{Exit}(\mathcal{T})$) inclusions.
\end{construction}

\begin{construction}\label{constr:closed}
Let $(\gamma,L)$ be a closed matching datum in ${\bf S}\backslash M$ of rank $a$ with monodromy equivalence $\mu$. Let $\eta$ be the open curve composed of the segments $\delta_1,\dots,\delta_N$ of $\gamma$, where $\delta_N$ and $\delta_1$ have not been composed. Let $e$ be the edge of $\mathcal{T}$ where the curve $\eta$ starts and ends. We consider the section $M^L_{\eta}$ from \Cref{constr:open1}. We define $M^L_{\gamma}$ as the coequalizer in $\mathcal{L}$ of
\begin{equation}\label{s1obeq}
\begin{tikzcd}
(Z^L_{e})^{\oplus a} \arrow[r, "\iota", shift left] \arrow[r, "\iota'\circ \mu^{-1}"', shift right] & (M^L_{\eta})^{\oplus a}
\end{tikzcd}
\end{equation}
where $\iota$ and $\iota'$ are the morphisms with support at $e$ determined by the inclusions of $L^{\oplus {a}}$ into $M^L_{\eta}(e)^{\oplus a}$ arising from the two ends of $\eta$ at $e$ and $\mu:L^{\oplus a}\rightarrow L^{\oplus a}$ is the monodromy equivalence. We note that upon changing the basepoint $1\in I=\mathbb{Z}/N\mathbb{Z}$, i.e.~relabeling the elements of $I$ but keeping the cyclic order, the curve $\gamma$ does not change. One can show, that the section $M_{\gamma}^L$ also only changes up to equivalence by such a relabeling.

The section $M_{\eta}^L$ is obtained by gluing together local sections defined via Kan extensions. Making different choices of these Kan extensions yields a different section $M_{\gamma}^L$, which however only differs by a change in the monodromy equivalence $\mu$ given by composition with an invertible diagonal $a\times a$-matrix with entries in $\on{Map}_{k[t_{n-2}]}(L,L)$. By varying over all $\mu$, we thus always construct the same class of objects, independent of the choices of Kan extensions. One can remove this ambiguity by fixing choices of these Kan extensions.
\end{construction}

\begin{proposition}\label{clprop2}
Let $(\gamma,L)$ be matching datum. The section $M_{\gamma}^L$ of $\mathcal{F}_\mathcal{T}(R)$ defined in \Cref{constr:open1} or \Cref{constr:closed} is a global section.
\end{proposition}

\begin{proof}
One needs to show that $M_{\gamma}^L$ is a coCartesian section. This directly follows from unraveling the construction of $M_{\gamma}^L$ and is left to the reader.
\end{proof}

\begin{remark}
It is apparent that \Cref{segcstr} can be translated to a construction, which associates for an arbitrary perverse schober $\mathcal{F}$ with generic stalk $\mathcal{N}\in \on{St}$ to segments of the second type and choice of object $L\in \mathcal{N}$ a local section of $\mathcal{F}$. Additional data is required to associate local sections to segments of the first type. One can then proceed to glue these local sections to associate global sections to certain curves, as in the above constructions.
\end{remark}

\begin{example}\label{glsecex}

We illustrate \Cref{clprop1} and \Cref{constr:closed} in an example with $R=k$ and $n=3$. Consider the once-punctured $3$-gon (in green), with the puncture called $p$ and the ideal triangulation depicted in black.  

\begin{center}
\begin{tikzpicture}[scale=1]
  \node (0) at (0,0){};
  \node ()  at (0,0.3){$p$}; 
  \node (1) at (0,1.2){};
  \node (2) at (-1.1,-0.5){};
  \node (3) at (1.1,-0.5){};
  \node (4) at (2.8,1.6){};
  \node (5) at (-2.8,1.6){};  
  \node (6) at (0,-2.8){};
  \node ()  at (0.4,1.2){$v_2$}; 
  \node ()  at (-1.3,-0.1){$v_1$}; 
  \node ()  at (-0.6,0.8){$e$};
  \fill (1) circle (0.1);
  \fill (2) circle (0.1);
  \fill (3) circle (0.1);
  
  \draw[very thick]
  (0,1.2) -- (0,2)
  (-1.1,-0.5) -- (-1.7,-0.9)
  (1.1,-0.5) -- (1.7,-0.9)
  (-1.1,-0.5) -- (1.1,-0.5)
  (1.1,-0.5) -- (0,1.2)
  (0,1.2) -- (-1.1,-0.5); 
  
  \draw[color=ao, very thick] 
  (2.8,1.6) -- (-2.8,1.6)
  (-2.8,1.6) -- (0,-2.8)
  (0,-2.8) -- (2.8,1.6)
  (0,0) -- (2.8,1.6)
  (0,0) -- (-2.8,1.6)
  (0,0) -- (0,-2.8);
  \fill[color=orange]  (4) circle (0.1);
  \fill[color=orange]  (5) circle (0.1);
  \fill[color=orange]  (6) circle (0.1);
  \fill[color=red] (0) circle (0.1);
\end{tikzpicture}
\end{center}

The perverse schober $\mathcal{F}_\mathcal{T}(k)$ is up to natural equivalence given by the following diagram,

\[
\begin{tikzcd}
                 &                                                                                          & \mathcal{N}_{\phi^*}                                                                         &                                                                                          &                  \\
                 & \mathcal{N}_{\phi^*}                                                                         & \mathcal{V}^3_{\phi^*} \arrow[l, "\varrho_1"] \arrow[r, "\varrho_2"] \arrow[u, "\varrho_3"] & \mathcal{N}_{\phi^*}                                                                         &                  \\
\mathcal{N}_{\phi^*} & \mathcal{V}^3_{\phi^*} \arrow[u, "\varrho_2"] \arrow[l, "\varrho_3"] \arrow[r, "\varrho_1"] & \mathcal{N}_{\phi^*}                                                                         & \mathcal{V}^3_{\phi^*} \arrow[u, "\varrho_1"] \arrow[r, "\varrho_3"] \arrow[l, "\varrho_2"] & \mathcal{N}_{\phi^*}
\end{tikzcd}
\]
where $\mathcal{V}^3_{\phi^*}$ denotes the value of $\mathcal{F}_{\mathcal{T}}(k)$ at the vertices and $\mathcal{N}_{\phi^*}=\on{RMod}_{k[t_1]}\simeq \mathcal{D}(k[t_1])$ denotes the value of $\mathcal{F}_{\mathcal{T}}(k)$ at the edges of $\mathcal{T}$. Algebraically, one can describe $\mathcal{V}^3_{f*}$ as the derived $\infty$-category of the relative Ginzburg algebra of the $3$-gon, which we denote in the following by $\mathscr{G}_{\Delta}$. For a depiction of $\mathscr{G}_{\Delta}$ see \Cref{locGinzex}.

The singular matching curve $\gamma_e$ given by the edge $e$ connecting $v_1$ and $v_2$ gives rise to the following global section $M_{\gamma_e}$, which describes a $3$-spherical object. 
\[
\begin{tikzcd}
  &                                                                                     & 0                                                                                   &                                                                        &   \\
  & \phi^*(k)                                                                              & s_1 \arrow[l] \arrow[r] \arrow[u] & 0                                                                      &   \\
0 & s_2 \arrow[u] \arrow[l] \arrow[r] & 0                                                                                   & 0 \arrow[u]\arrow[r] \arrow[l] & 0
\end{tikzcd}
\]
Above, we denote $s_1=(k\xrightarrow{\ast} \phi^*(k)\xrightarrow{\on{id}}\phi^*(k)),\, s_2=(k\xrightarrow{\ast}\phi^*(k)\rightarrow 0)\in \mathcal{V}^3_{\phi^*}$. Algebraically, $s_1$ and $s_2$ describe simple $\mathscr{G}_{\Delta}$-modules (in the abelian $1$-category of modules), each associated with a vertex of the underlying quiver, and $\phi^*(k)$ is the unique $k[t_1]$-module with underlying chain complex $k$. 

Consider any module $L\in \on{RMod}_{k[t_1]}$, and a matching datum $(\gamma,L)$ of rank $1$ with $\gamma$ the closed pure matching curve wrapping around $m$. The associated global section $M_{\gamma}^L$ is of the form 
\[
\begin{tikzcd}
  &                                                                                      & 0                                                                                    &                                                                                      &   \\
  & L                                                                               & \iota_3(L) \arrow[l] \arrow[r] \arrow[u] & L                                                                               &   \\
0 & \iota_3(L) \arrow[u] \arrow[l] \arrow[r] & L                                                                               & \iota_3(L) \arrow[u] \arrow[r] \arrow[l] & 0
\end{tikzcd}
\]
where $\iota_3(L)=(0\rightarrow 0\rightarrow L)\in \mathcal{V}^3_{f^*}$ is the object concentrated in the third component of the semiorthogonal decomposition of $\mathcal{V}^3_{f^*}$ with value $L$. Algebraically, one can describe $\iota_3(L)$ as the derived tensor product $L\otimes_{k[t_{1}]}^Lp_e\mathscr{G}_\Delta$ with the projective $\mathscr{G}_\Delta$-module associated to some vertex of the quiver underlying $\mathscr{G}_\Delta$.
\end{example}

\subsection{Projective modules via pure matching curves}\label{sec:projmodules}

We fix a marked surface ${\bf S}$ with an $n$-valent spanning graph $\mathcal{T}$ and an $\mathbb{E}_\infty$-ring spectrum $R$.

Let $e$ be an edge of $\mathcal{T}$ and $v_1,v_2$ the vertices incident to $e$. Consider the curve composed of segments $c_e^i$ with $i=1,2$ whose first segment lies at $v_i$, which begins at $e$ and whose segments are all pure of the second type, wrapping exactly one step in the counterclockwise direction around a vertex of $\mathcal{T}$. We define the curve $c_e$ as the composite of $c_e^1$ with the curve obtained by reversing the orientation of $c_e^2$. Note that $c_e$ is a pure, regular matching curve. 

\begin{example}\label{ceiex}
Below, we depict the $4$-gon with an ideal triangulation $\mathcal{T}$ with edges $e_1,\dots,e_5$ and associated matching curves $c_{e_1},\dots,c_{e_5}$.
\begin{center}
\begin{tikzpicture}
  \draw[color=ao, very thick]
    (0, 4) -- (0, 0)
    (4, 0) -- (0, 0)
    (4, 4) -- (4, 0)
    (0, 4) -- (4, 4)
    (0, 4) -- (4, 0)
    ;
  \draw[very thick]
   (1.33,1.33) -- (2.67,2.67)
   (1.33,1.33) -- (1.33,-0.6)
   (1.33,1.33) -- (-0.6,1.33)
   (2.67,2.67) -- (4.6,2.67)
   (2.67,2.67) -- (2.67,4.6);
   \draw[color=blue][very thick] plot [smooth] coordinates {(0,1.9) (1.2, 2.1) (2.2, 3) (2.4, 4)};
   \draw[color=blue][very thick] plot [smooth] coordinates {(4,2.1) (2.8,1.9) (1.8, 1) (1.6, 0)};
   \draw[color=blue][very thick] plot [smooth] coordinates {(0,1.6) (4, 2.4)};
   \draw[color=blue][very thick] plot [smooth] coordinates {(0,1.1) (1, 1) (1.1, 0)};
   \draw[color=blue][very thick] plot [smooth] coordinates {(4,2.9) (3, 3) (2.9, 4)};
  \node (0) at (2.67, 2.67){};
  \node (1) at (1.33, 1.33){};
  \node (2) at (0,0){};
  \node (3) at (4,0){};
  \node (4) at (4,4){};
  \node (5) at (0,4){};
  
  \node () at (-0.4,1.1){$e_1$};
  \node () at (1.1,-0.4){$e_2$};
  \node () at (4.3,2.9){$e_3$};
  \node () at (2.9,4.3){$e_4$};
  \node () at (2,1.6){$e_5$};
  
  \node () at (0.8,0.4){$c_{e_1}$};
  \node () at (2,0.4){$c_{e_2}$};
  \node () at (3.3,3.4){$c_{e_3}$};
  \node () at (0.8,2.4){$c_{e_4}$};
  \node () at (2.75,2.35){$c_{e_5}$};
  \fill (0) circle (0.1);
  \fill (1) circle (0.1);
  \fill[color=orange] (2) circle (0.1);
  \fill[color=orange] (3) circle (0.1);
  \fill[color=orange] (4) circle (0.1);
  \fill[color=orange] (5) circle (0.1);
\end{tikzpicture}
\end{center}
\end{example}

\begin{proposition}\label{geomprprop}
For each $L\in \on{RMod}_{R[t_{n-2}]}$, there exists an equivalence in $\mathcal{H}(\mathcal{T},\mathcal{F}_\mathcal{T}(R))$ 
\begin{equation}\label{geomprpropeq}
 M_{c_e}^{L}\simeq \on{ev}^*_e(L)
\end{equation}
with $\on{ev}^*_e$ the functor from \Cref{compgenprop}.

In particular, the section $M_{c_e}^{R[t_{n-2}]}$ is the direct summand of the compact generator of the $\infty$-category $\mathcal{H}(\mathcal{T},\mathcal{F}_\mathcal{T}(R))$ associated with the edge $e$. 
\end{proposition}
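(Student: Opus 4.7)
The plan is to establish the equivalence by verifying the universal property characterizing the left adjoint $\on{ev}_e^*$, namely that there is a natural equivalence
\[
\on{Mor}_{\mathcal{H}(\mathcal{T},\mathcal{F}_\mathcal{T}(R))}(M_{c_e}^L, X)\simeq \on{Mor}_{\mathcal{N}_{f^*}}(L, \on{ev}_e(X))
\]
for every global section $X$. The second claim of the proposition, concerning $L=R[t_{n-2}]$ and the compact generator, then follows immediately from \Cref{compgenprop}(1).

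For the local identification, consider a pure one-step segment $\delta$ at a vertex $v$ with endpoints on edges $e_k,e_{k+1}$ cyclically consecutive at $v$. Using the formulas in \Cref{segcstr} together with the adjunction sequence $\varsigma_{k+1}\dashv \varrho_k\dashv \varsigma_k$ from \eqref{sigmaadjeq}, one identifies $M_\delta^L(v)\simeq \varsigma_{k+1}(L)$ in $\mathcal{V}^n_{f^*}$; fully faithfulness of $\varsigma_{k+1}$ as the inclusion of an SOD component gives that both the unit $L\to \varrho_k\varsigma_{k+1}(L)$ and the counit $\varrho_{k+1}\varsigma_{k+1}(L)\to L$ are equivalences, recovering the edge values in \eqref{mdijeq}. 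Since $M_\delta^L$ and the internal edge sections $Z_{e'}^L$ in $D_{c_e}$ are $p$-relative left Kan extensions, and $X$ is a coCartesian section so that $X(e_k)\simeq \varrho_k(X(v))$, the adjunction $\varsigma_{k+1}\dashv \varrho_k$ yields
\[
\on{Mor}_{\mathcal{L}}(M_\delta^L, X)\simeq \on{Mor}_{\mathcal{N}_{f^*}}(L, X(e_k)) \quad\text{and}\quad \on{Mor}_{\mathcal{L}}(Z_{e'}^L, X)\simeq \on{Mor}_{\mathcal{N}_{f^*}}(L, X(e')).
\]

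Invoking the colimit description of $M_{c_e}^L$ from \Cref{clprop1}, mapping out of $M_{c_e}^L$ becomes an iterated pullback along the zigzag diagram $D_{c_e}$. At each internal edge $e^j$ of $c_e$ appearing between consecutive segments, exactly one of the two cospan restriction maps is an equivalence --- the one coming from the segment for which $e^j$ is the cyclically-earlier edge, where under the above identification the restriction reduces to the identity on $\on{Mor}_{\mathcal{N}_{f^*}}(L, X(e^j))$ via the counit of $\varrho_{k+1}\dashv \varsigma_{k+1}$ --- while the other is a non-invertible ``bend'' map. The pullback at each internal edge therefore collapses to the mapping-space at the far endpoint-edge of the opposite segment, and iterating inward from both boundary ends of $c_e$, the surviving mapping-out edge travels step by step toward the central edge $e$ where $c_e^1$ meets $(c_e^2)^{-1}$. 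Because both initial segments of $c_e^1$ and of $c_e^2$ begin at $e$, the central cospan at $Z_e^L$ has both side maps equal to the identity, so the final pullback is $\on{Mor}_{\mathcal{N}_{f^*}}(L, X(e))$, as required. Naturality in $X$ then gives the equivalence $M_{c_e}^L\simeq \on{ev}_e^*(L)$.

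The main technical difficulty is establishing the local identification $M_\delta^L(v)\simeq \varsigma_{k+1}(L)$ together with a careful determination, at each internal edge of $c_e$, of which of the two cospan restrictions is the identity and which is the bend; this requires tracking orientations when transitioning from $c_e^1$ to the reversed $(c_e^2)^{-1}$ across the central edge. A secondary issue arises when some $c_e^i$ is infinite, as occurs when it spirals around an internal marked point: the colimit defining $M_{c_e}^L$ is then filtered, and one must verify that the iterated pullback commutes with this filtered colimit. This follows from the accessibility of all functors involved together with the observation that, when evaluated against any fixed $X$, the surviving mapping-out edge stabilizes after finitely many iterations.
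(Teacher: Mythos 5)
Your proposal takes essentially the same route as the paper's proof: compute $\on{Mor}_{\mathcal{C}}(M_{c_e}^L,X)$ as the limit of the zigzag $\on{Mor}_{\mathcal{L}}(D_{c_e},X)$, identify each vertex of that zigzag with $\on{Mor}_{\mathcal{N}_{f^*}}(L,X(e'))$ for a suitable edge $e'$ via the $p$-relative Kan-extension property and the coCartesian condition $\varrho_k(X(v))\simeq X(e_k)$, and observe that the resulting equivalences collapse the limit to $\on{Mor}_{\mathcal{N}_{f^*}}(L,X(e))$, which is the universal property of $\on{ev}_e^*$. Your more explicit account of the iterated pullback (including the caveat that both legs of the central cospan at $e$ are equivalences) and of the filtered-colimit issue when $c_e$ spirals around an internal marked point are useful elaborations of points the paper's terse proof leaves implicit.
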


\begin{proof}
Consider the curve $c_{e}$ as above and let $I$ be the index set of its segments. The two first segments of the curves $c_e^1$ and $c_e^2$, lying at $v_1$, respectively, $v_2$, yield segments $\delta_{x}$ and $\delta_{x+1}$ of $c_e$ with $x\in I$.
 
Consider the $R$-linear $\infty$-categories $\mathcal{C}=\mathcal{H}(\mathcal{T},\mathcal{F}_\mathcal{T}(R))$ and $\mathcal{L}$ of global sections, respectively, all sections of $\mathcal{F}_\mathcal{T}$. Let $v\in \mathcal{T}_0$ be a vertex with incident edges labeled $e_1,\dots,e_n$ and let $\delta^{i,i+1}$ be the pure segment of the second type lying at $v$ passing from $e_i$ to $e_{i+1}$. The functor $\on{Mor}_{\mathcal{L}}(M^{L}_{\delta^{i,i+1}},\mhyphen):\mathcal{L}\rightarrow \on{RMod}_R$ is equivalent to the functor 
\[ \widetilde{\on{ev}}_{v,i}^L:\mathcal{L}\xrightarrow{\on{ev}_v}\mathcal{V}^n_{\phi^*}\xrightarrow{\varrho_i} \on{RMod}_{R[t_{n-2}]}\xrightarrow{\on{Mor}(L,\mhyphen)}\on{RMod}_R\,,\]
as can be seen using that $M^{L}_{\delta^{i,i+1}}$ is a $p$-relative left Kan extension of its restriction to $v$. Similarly, for each edge $e$ of $\mathcal{T}$, we find the functor $\on{Mor}_{\mathcal{L}}(Z_e^{L},\mhyphen):\mathcal{L}\rightarrow \on{RMod}_R$ to be equivalent to 
\[ \widetilde{\on{ev}}_e^L:\mathcal{L}\xrightarrow{\on{ev}_e}\on{RMod}_{R[t_{n-2}]}\xrightarrow{\on{Mor}(L,\mhyphen)}\on{RMod}_R\,.\]
Note that the composites of $\widetilde{\on{ev}}_{v,i}$ and $\widetilde{\on{ev}}_{e_i}$ with the inclusion $\mathcal{C}\rightarrow \mathcal{L}$ are equivalent, we denote this functor by $\on{ev}_{e_i}^L$. 

Using the definition of $M_{c_e}^L$ and that $\on{Mor}_{\mathcal{L}}(\mhyphen,\mhyphen)$ preserves limits in the first entry, we obtain that $\on{Mor}_{\mathcal{L}}(M^{L}_{c_e},\mhyphen):\mathcal{L}\rightarrow \on{RMod}_R$ is given by the limit of the diagram $\on{Mor}_{\mathcal{L}}(D_{c_e},\mhyphen):E_{c_e}^{\on{op}}\rightarrow \on{Fun}(\mathcal{L},\on{RMod}_R)$. Composing with the limit preserving pullback functor $\on{Fun}(\mathcal{L},\on{RMod}_R)\rightarrow \on{Fun}(\mathcal{C},\on{RMod}_R)$ along the inclusion $\mathcal{C}\rightarrow \mathcal{L}$, one obtains the diagram in $\on{Fun}(\mathcal{C},\on{RMod}_R)$, which assigns, up to equivalence, to $\Lambda^2_0\times \{x\}$ the constant diagram with value ${\on{ev}}^L_e$, to $(0,j)\rightarrow (2,j)$ for $j> x$ and to $(0,j)\rightarrow (1,j)$ for $j<x$ the identity on ${\on{ev}}^L_{e^{j-1}}$, where $e^{j-1}$ is the edge where $\delta^{j-1}$ ends and $\delta^{j}$ begins. The limit $\on{Mor}_\mathcal{C}(M_{c_e}^L,\mhyphen)$ is thus equivalent to the functor ${\on{ev}}^L_e$. Evaluating the left adjoints at $R$ shows the desired equivalence \eqref{geomprpropeq}.
\end{proof}

\section{Morphisms from intersections}\label{sec:homs}

For the entirety of \Cref{sec:homs}, we fix an $\mathbb{E}_\infty$-ring spectrum $R$ and a marked surface ${\bf S}$ with an $n$-valent spanning graph $\mathcal{T}$. In this section, we describe the morphism objects between the global sections of $\mathscr{F}_\mathcal{T}(R)$ arising from matching data with pure underlying matching curves. We begin by describing the different types of intersections between matching curves.

\begin{definition}\label{intnumdef}
Let $\gamma,\gamma'$ be two matching curves in ${\bf S}$. We choose representatives of $\gamma$ and $\gamma'$ with the minimal number of intersections.  
\begin{itemize}
\item We define the set of singular intersections $i^{\on{sg}}(\gamma,\gamma')$ as the set of intersections of $\gamma$ and $\gamma'$ at their endpoints in $\mathcal{T}_0$. If $\gamma=\gamma'$ with distinct endpoints, we define $i^{\on{sg}}(\gamma,\gamma)$ as the set of endpoints of $\gamma$ in $\mathcal{T}_0$. If $\gamma=\gamma'$ with two identical endpoints, the set $i^{\on{sg}}(\gamma,\gamma)$ has four elements. We also abusively write $i^{\on{sg}}(\gamma,\gamma')$ for the number of singular intersections, i.e.~the cardinality of $i^{\on{sg}}(\gamma,\gamma')$.
\item We define the number of directed boundary intersections $i^{\on{bdry}}(\gamma,\gamma')$ as the number of intersections of $\gamma$ and $\gamma'$ with the same connected component of $\partial {\bf S}\backslash M$ such that the intersection of $\gamma$ and $\partial {\bf S}\backslash M$ precedes the intersection of $\gamma'$ and $\partial{\bf S}\backslash M$ in the orientation of $\partial {\bf S}\backslash M$ induced by the clockwise orientation of ${\bf S}$. If $\gamma=\gamma'$, we only consider directed boundary intersections of distinct endpoints.
\item We denote by $i^{\on{cr}}(\gamma,\gamma')$ the number of crossings from $\gamma$ to $\gamma'$, see \Cref{def:cross}. If $\gamma=\gamma'$, then $i^{\on{cr}}(\gamma,\gamma)$ counts each self-crossing only once. 
\end{itemize}
\end{definition}

\begin{definition}\label{def:cross}
Consider an intersection $x$ of two matching curves $\gamma,\gamma'$ in ${\bf S}\backslash M$ away from their endpoints. The intersection $x$ can be chosen to lie in a small neighborhood of an edge $e$ of $\mathcal{T}$. We say that $x$ is a crossing from $\gamma$ to $\gamma'$ if in this neighborhood, the curves are arranged as follows.
\[
\begin{tikzpicture}
\draw[very thick] (0,0)--(3,0);
\draw[color=blue, very thick] (0,0.5)--(3,-0.5) (0,-0.5)--(3,0.5);
\node (1) at (0.2,0.4){};
\node (2) at (0.2,-0.4){} edge [<-, bend left, color=orange, very thick] (1);
\node () at (0.8,0.45){$\gamma$};
\node () at (0.8,-0.5){$\gamma'$};
\node () at (3.2,0){$e$};
\end{tikzpicture}
\]
The orange arrow goes in the counterclockwise direction. If the curves are arranged in the opposite way, we say that the intersection is a crossing from $\gamma'$ to $\gamma$.
\end{definition}

\begin{notation}~
\begin{itemize}
\item Consider a singular intersection $x$ of $\gamma$ and $\gamma'$ at a vertex $v$, with segments $\delta=\delta^i$ and $\delta'=\delta^j$ at $v$ and $1\leq i,j\leq n$ in the notation of part {\bf 1)} of \Cref{segcstr}. We denote by $\on{deg}_x=i-j$, if $i<j$, and $\on{deg}_x=i-j-n$, if $j<i$, the number of steps after which the segment $\delta'$ follows the segment $\delta$ at $v$ in the counterclockwise direction. If $i=j$, then both $\delta$ and $\delta'$ exit $\Sigma_v$ through the same boundary component $f\subset \partial \Sigma_v$. Choosing $\gamma,\gamma'$ with the 
\item Given $L,L'\in \on{RMod}_{R[t_{n-2}]}$, we denote by $\on{Mor}(L,L')=\on{Mor}_{\on{RMod}_{R[t_{n-2}]}}(L,L')$ the morphism object. 
\item Given $Q,Q'\in \on{RMod}_R$, we denote by $\on{Mor}_R(Q,Q')=\on{Mor}_{\on{RMod}_R}(Q,Q')$ the morphism object.
\end{itemize}
\end{notation}

\begin{theorem}\label{homthm}
Let $(\gamma,L),(\gamma',L')$ be two matching data in ${\bf S}\backslash M$, such that $\gamma\neq \gamma'$, $\gamma,\gamma'$ are pure and have no common infinite ends, see \Cref{infrem}. Let $a$ be the rank of $(\gamma,L)$ and $a'$ the rank of $(\gamma',L')$. Consider the associated global sections $M^L_{\gamma},M^{L'}_{\gamma'}\in \mathcal{C}\coloneqq\mathcal{H}(\mathcal{T},\mathcal{F}_\mathcal{T}(R))$. 

The morphism object $\on{Mor}_{\mathcal{C}}(M^L_{\gamma},M^{L'}_{\gamma'})\in \on{RMod}_R$ is equivalent to 
\[ \on{Mor}(L,L')^{\oplus i^{\on{bdry}}(\gamma,\gamma'))+ aa'(i^{\on{cr}}(\gamma,\gamma')} \oplus \on{Mor}(L,L')[-1]^{\oplus aa' i^{\on{cr}}(\gamma',\gamma)}\oplus \bigoplus_{x\in i^{\on{sg}}(\gamma,\gamma')}\on{Mor}_R(Q,Q')[\on{deg}_x]\,.\]
\end{theorem}

\begin{theorem}\label{homthm2}
Let $(\gamma,L)$ and $(\gamma,L')$ be two matching data in ${\bf S}\backslash M$, whose underlying matching curves are identical. Suppose that $\gamma$ is pure.
\begin{enumerate}[i)]
\item Suppose that $\gamma$ is open and regular. The morphism object $\on{Mor}_{\mathcal{C}}(M_{\gamma}^L,M_{\gamma}^{L'})\in \on{RMod}_R$ is equivalent to 
\[ \on{Mor}(L,L')^{\oplus 1+i^{\on{cr}}(\gamma,\gamma)+ i^{\on{bdry}}(\gamma,\gamma)}\oplus \on{Mor}(L,L')[-1]^{\oplus i^{\on{cr}}(\gamma,\gamma)}\]
\item Suppose that $\gamma$ is open and singular. The morphism object $\on{Mor}_{\mathcal{C}}(M_{\gamma}^L,M_{\gamma}^{L'})\in \on{RMod}_R$ is equivalent to 
\[ \left(\on{Mor}(L,L')\oplus \on{Mor}(L,L')[-1]\right)^{\oplus i^{\on{cr}}(\gamma,\gamma)}\oplus \bigoplus_{x\in i^{\on{sg}}(\gamma,\gamma')}\on{Mor}_R(Q,Q')[\on{deg}_x]\,. 
\] 
\item Let $R=k$ be a field and assume that $\gamma$ is a closed matching curve (thus automatically regular) and that $\on{Map}(L,L)\simeq k$. Assume further that the monodromy equivalence of $(\gamma,L)$ is given by a single $a\times a$-Jordan block. The morphism object $\on{Mor}_{\mathcal{C}}(M^L_{\gamma},M^{L}_{\gamma})\in \on{RMod}_k$ is equivalent to  
\[
\left(\on{Mor}(L,L)\oplus \on{Mor}(L,L)[-1]\right)^{\oplus a+a^2i^{\on{cr}}(\gamma,\gamma)}\,.
\]
\end{enumerate}
\end{theorem}

\Cref{homthm,homthm2} immediately imply the following:

\begin{example}\label{homex}
Let $(\gamma,\phi^*(R))$ be a matching datum in ${\bf S}\backslash M$, such that $\gamma$ is finite, pure and has no self-intersections. 
\begin{enumerate}
\item If both ends of $\gamma$ lie at vertices of $\mathcal{T}_0$, then 
\[\on{Mor}_{\mathcal{C}}(M_{\gamma},M_{\gamma})\simeq R\oplus R[-n]\,,\]
meaning that $M_{\gamma}$ is an $n$-spherical object.
\item If $\gamma$ begins at a vertex of $\mathcal{T}$ and ends on the boundary of ${\bf S}$, then $M_{\gamma}$ is an exceptional object, i.e.~$\on{Mor}_{\mathcal{C}}(M_{\gamma},M_{\gamma})\simeq R$.
\item If $\gamma$ begins and ends on the boundary of ${\bf S}$, then 
\[ \on{Mor}_{\mathcal{C}}(M_{\gamma},M_{\gamma})\simeq R\oplus R[1-n]\,,\]
meaning that $M_{\gamma}$ is an $(n-1)$-spherical object.
\item If $(\gamma,\phi^*(R))$ is closed and of rank $1$ and $R=k$ a field, then
\[ \on{Mor}_{\mathcal{C}}(M_{\gamma},M_{\gamma})\simeq k\oplus k[-1]\oplus k[1-n]\oplus k[-n]\,.\]
\end{enumerate}
\end{example}

\begin{remark}\label{rem:moregen}
Let $(\gamma,L)$ and $(\gamma',L')$ be two matching data, such that $\gamma$ and $\gamma'$ are not necessarily pure. The method of proof of \Cref{homthm} also applies to compute the morphism object $\on{Mor}_\mathcal{C}(M_{\gamma}^L,M_{\gamma'}^{L'})$. In this more general setting, there are however exceptions to the simple rule that $\on{Mor}_\mathcal{C}(M_{\gamma}^L,M_{\gamma'}^{L'})$ counts intersections, unless $L,L'\in \on{Im}(\phi^*)$. We discuss this in \Cref{sec:localHom} and \Cref{ex:counter}. In the cases that there are segments of the second type wrapping around a vertex by $n$ steps, the proofs of \Cref{homthm,homthm2} do not directly apply and would need some minor adaptions. Giving a systematic description of the morphism objects in the non-pure setting would also require introducing gradings of the objects and the surface, as for example done in \cite{IQZ20}. For many applications, it suffices to consider pure matching curves. 
\end{remark}

\begin{remark}\label{infrem}
Consider two distinct matching curves $\gamma:U\rightarrow \Sigma$ and $\gamma':U'\rightarrow \Sigma$. We say that $\gamma$ and $\gamma'$ have a common infinite end if there exist immersions $I_1:\mathbb{R}_{\geq 0}\rightarrow U$, $I_2:\mathbb{R}_{\geq 0}\rightarrow U'$ such that the curves $\gamma|_{I_1}$ and $\gamma|_{I_2}$ are composed of infinitely many identical segments. Note that this definition allows that one of the two curves $\gamma$ or $\gamma'$ is closed. If in \Cref{homthm}, $\gamma$ and $\gamma'$ are open with a common infinite end, then the $R$-module $\on{Mor}_{\mathcal{C}}(M^L_{\gamma},M^{L'}_{\gamma'})$ consists of infinitely many copies of $\on{Mor}(L,L')$. 
\end{remark}

The proofs of \Cref{homthm,homthm2} consist of gluing arguments. We decompose $\gamma$ and $\gamma'$ into segments and begin in \Cref{sec:localHom} by describing all morphisms between the associated local sections. In \Cref{sec:localHom}, we also allow non-pure segments. In \Cref{sec:globalHom}, we then describe $\on{Mor}_{\mathcal{C}}(M^L_{\gamma},M^{L'}_{\gamma'})$ via the colimit of a diagram of morphism objects between the local sections associated to the segments. In \Cref{sec:Homproof} we combine the findings of \Cref{sec:localHom} and \Cref{sec:globalHom} to prove \Cref{homthm,homthm2}.

\subsection{Intersections locally}\label{sec:localHom}

In this section, we exclude all segments of the second type wrapping around a vertex by $n$ steps and all curves composed of segments in which such a segment appear.

Let $\mathcal{L}$ denote the $R$-linear $\infty$-category of (all) sections of $\mathcal{F}_\mathcal{T}(R)$, see \Cref{glsecdef}. In the following, we describe the morphism objects $\on{Mor}_{\mathcal{L}}(M_\delta^L,M_{\eta}^{L'})$ and $\on{Mor}_{\mathcal{L}}(Z_e^L,M_{\eta}^{L'})$ where $L,L'\in \on{RMod}_{R[t_{n-2}]}$, $\delta$ is a segment in $\Sigma_\mathcal{T}$, $e$ is an edge of $\mathcal{T}$ and $\eta$ is any matching curve in ${\bf S}\backslash M$ or an open curve composed of segments, see also \Cref{constr:open1} for the notation. If $\delta$ (or $\eta$) is singular, we require as always that $L\simeq \phi^*(Q)$ (or $L'\simeq \phi^*(Q')$).

We begin by determining the morphism objects between sections $M_{\delta}^L$ and $M_{\delta'}^{L'}$ associated to segments $\delta,\delta'$.  

If the segments $\delta$ and $\delta'$ are not located at the same vertex of $\mathcal{T}$, one finds $\on{Mor}_{\mathcal{L}}(M^L_{\delta},M^{L'}_{\delta'})\simeq 0$, see also \Cref{lilem1}. We thus assume that $\delta$, $\delta'$ are located at the same vertex $v\in \mathcal{T}_0$. We choose representatives of $\delta$ and $\delta'$ with the minimal number of intersections. With the exception of some cases described further below, we find that $\on{Mor}_{\mathcal{L}}(M^{L}_{\delta},M^{L'}_{\delta'})$ is the direct sum of $R$-modules given as follows.

\begin{itemize}

\item Each directed boundary intersection from $\delta$ to $\delta'$ in $\Sigma_v$ contributes a copy of $\on{Mor}(L,L')$, up to suspensions, to $\on{Mor}_{\mathcal{L}}(M^L_{\delta},M^{L'}_{\delta'})$. The corresponding morphisms have support at $v,e_i$, where $e_i$ is the edge of $\mathcal{T}$ intersecting the same component of $\partial \Sigma_v$ as $\delta,\delta'$.  If $\delta,\delta'$ are pure, no suspensions appear. 
\item If $L\simeq\phi^*(Q)$ and $L'\simeq \phi^*(Q')$, then each singular intersection of $\delta=\delta^i$ and $\delta'=\delta^j$ contributes a single copy of $\on{Mor}_R(Q,Q')[b]$ to $\on{Mor}_{\mathcal{L}}(M^L_{\delta},M^{L'}_{\delta'})$, where $b=i-j$ if $j>i$, $b=i-j-n$ if $j<i$, and $b=0$ for $i=j$. The support of the corresponding morphisms is given by $v$ if $\delta\neq \delta'$ and by $v,e_i$ if $\delta=\delta'=\delta^i$. 
\item Each crossing of $\delta$ and $\delta'$ contributes a copy of $\on{Mor}(L,L')$, up to suspensions, to $\on{Mor}_{\mathcal{L}}(M^L_{\delta},M^{L'}_{\delta'})$, corresponding to morphisms with support at $v$. There are no crossings if both $\delta$ and $\delta'$ are pure.
\item If $\delta=\delta'=\delta^{i,j}$ is of the second type with $i\neq j$, then $\on{Mor}_{\mathcal{L}}(M_{\delta}^L,M_{\delta'}^L)\simeq \on{Mor}(L,L')$. 
\end{itemize}

There are three possible exceptions to the above description, which appear if $L$ and $L'$ do not lie in the image of $\phi^*$ and at least one of the segments $\delta,\delta'$ is not pure. In these exceptional cases, we have that $\delta$ and $\delta'$ are of the second type, $\delta\neq \delta'$ and the two segments have either two crossings, two boundary intersections or a crossing and a boundary intersection. Further below, we describe the outcome in these cases in more detail.

The above descriptions of $\on{Mor}_{\mathcal{L}}(M_\delta^L,M_{\delta'}^{L'})$ follow from the universal properties of the involved Kan extensions and some basic computations, as we now explain. We denote the Grothendieck construction of $\mathcal{F}_\mathcal{T}(R)$ by $p:\Gamma(\mathcal{F}_\mathcal{T}(R))\rightarrow \on{Exit}(\mathcal{T})$. The sections $M^L_{\delta}$ and $M^{L'}_{\delta'}$ were defined as the $p$-relative left Kan extensions of their restrictions to $\{v\}\subset \on{Exit}(\mathcal{T})$. Using the universal property of Kan extensions and arguing as in the proof of \Cref{lilem1} below, we find that the restriction functor induces an equivalence of $R$-modules 
\begin{align*} \on{Mor}_{\mathcal{L}}(M^L_{\delta},M^{L'}_{\delta'})& \simeq \on{Mor}_{\mathcal{V}^n_{\phi^*}}(M^L_{\delta}(v),M^{L'}_{\delta'}(v))\,,\\
\on{Mor}_{\mathcal{L}}(M^L_{\delta'},M^{L'}_{\delta})&\simeq \on{Mor}_{\mathcal{V}^n_{\phi^*}}(M^L_{\delta'}(v),M^{L'}_{\delta}(v)) \,.
\end{align*}

The resulting morphism objects in $\mathcal{V}^n_{\phi^*}$ can be directly determined by a case by case analysis, using again universal properties of Kan extensions and making use of the paracyclic twist $T_{\mathcal{V}^n_{\phi^*}}$ from \Cref{cyctwrem}. We collect the resulting morphisms objects for all possible pairs $\delta,\delta'$ in \Cref{locmorfig1,locmorfig2,locmorfig3}. For the description of $\delta,\delta'$, we use the notation of \Cref{segcstr}. In \Cref{locmorfig1}, we consider the cases $\delta=\delta^1$ and $\delta'=\delta^i$ with $1\leq i \leq n$ and $L\simeq \phi^*(Q)$, $L'\simeq \phi^*(Q')$. In \Cref{locmorfig2}, we consider the cases $\delta=\delta^i$ and $\delta'=\delta^{j,n}$ with $1\leq i,j\leq n$ and $j\neq n$ and $L\simeq \phi^*(Q)$. In \Cref{locmorfig3}, we consider (a subset of) the cases $\delta=\delta^{1,j}$ and $\delta'=\delta^{i',j'}$ with $1\leq i',j,j'\leq n$ and $1\neq j,i'\neq j'$. Any unordered pair $\delta,\delta'$ is described by one of the pairs considered below, up to the rational symmetry of $\Sigma_v$, on the categorical level realized by the action of the paracyclic twist functor $T_{\mathcal{V}^n_{\psi^*}}$ on $\mathcal{F}_\mathcal{T}(R)(v)$. 

\begin{table}
\begin{center}
\begin{tabular}{c|c|c|c|c} 
\centering Intersections & $\delta^{1},\delta^{i}$ & \centering $\on{Mor}_{\mathcal{L}}(M_{\delta^{1}}^L,M_{\delta^{i}}^{L'})$ & \centering $\on{Mor}_{\mathcal{L}}(M_{\delta^{i}}^{L'},M_{\delta^{1}}^{L})$ & Support\\ 
\hline
\centering 1x singular & $i>1$ &\centering $\on{Mor}_R(Q,Q')[1-i]$  & \centering $\on{Mor}_R(Q',Q)[i-1-n]$ & at $v$\\
\centering 1x singular & $i=1$ &\centering $\on{Mor}_R(Q,Q')$  & \centering $\on{Mor}_R(Q',Q)$ & at $v,e_1$\\
\end{tabular}
\caption{All possible pairs of two segments $\delta^1,\delta^{i}$ of the first type up to rotational symmetry.}\label{locmorfig1}
\end{center}

\begin{center}
\begin{tabular}{c|c|c|c|c} 
\centering Intersections & $\delta^{i},\delta^{j,n}$ & \centering $\on{Mor}_{\mathcal{L}}(M^L_{\delta^{i}},M^{L'}_{\delta^{j,n}})$ & \centering $\on{Mor}_{\mathcal{L}}(M^{L'}_{\delta^{j,n}},M^{L}_{\delta^{i}})$ & Support\\ 
\hline
\centering none & $i<j<n$ &\centering 0 & \centering 0 & / \\
\centering 1x crossing & $j<i<n$ &\centering $\on{Mor}(L,L')[i-j-1]$  & \centering $\on{Mor}(L',L)[j-i]$ & at $v$\\
\centering 1x boundary & $j=i<n$ &\centering 0 &\centering $\on{Mor}(L',L)[j-i]$ & at $v,e_i$ \\
\centering 1x boundary & $j<i=n$ &\centering $\on{Mor}(L,L')[n-j-1]$ &\centering $0$ & at $v,e_n$ \\
\end{tabular}
\caption{All possible pairs of one segment $\delta^{i}$ of the first type and one segments $\delta^{j,n}$ of the second type up to rotational symmetry.}\label{locmorfig2}
\end{center}

\begin{center}\small
\begin{tabular}{p{21mm}|c|p{37mm}|p{37mm}|c} 
\centering Intersections & $\delta^{1,j},\delta^{i',j'}$ & \centering $\on{Mor}_{\mathcal{L}}(M^L_{\delta^{1,i}},M^{L'}_{\delta^{i',j'}})$ & \centering $\on{Mor}_{\mathcal{L}}(M^{L'}_{\delta^{i',j'}},M^{L}_{\delta^{1,i}})$ & Support\\ 
\hline
\centering none & $1<j<i'<j'$ &\centering 0 & \centering 0 & / \\
\centering none & $1<i'<j'<j$ &\centering 0 &\centering 0 & / \\
\centering 1x crossing & $1<i'<j<j'$ &\centering $\on{Mor}(L,L')[1-i']$ &\centering $\on{Mor}(L',L)[i'-2]$ & at $v$\\
\centering 1x boundary & $1<i'<j=j'$& \centering $\on{Mor}(L,L')[1-i']$ &  \centering $0$ & at $v,e_j$\\
\centering 1x boundary & $1<i'=j<j'$ & \centering $0$ & \centering $\on{Mor}(L',L)[i'-2]$ & at $v,e_j$\\
\centering 1x boundary & $1=i'< j < j'$ &\centering $\on{Mor}(L,L')$ & \centering $0$ &  at $v,e_1$ \\
\centering 2x boundary & $1=i'<j=j'$ &\centering $\on{Mor}(L,L')$ &\centering $\on{Mor}(L',L)$ & at $v,e_1,e_j$\\
\hline
\centering 2x crossing & $1<j'< i'<j$ & \centering $ \on{Mor}(L,\phi^*\phi_*(L'))[n-i']$ &\centering $\on{Mor}(L',\phi^*\phi_*(L))[i'-2]$ & at $v$ \\
\centering 2x boundary & $1=j'< j = i'$ &\centering $\on{Mor}(L,L')[n-i']$ & \centering $\on{Mor}(L',L)[i'-2]$ &  at $v,e_1,e_j$ \\
\centering 1x crossing $+$ 1x boundary & $1< j'< j=i'$ & \centering $\on{Mor}(L,L')[n-i']$ & \centering $\on{Mor}(L',\phi^*\phi_*(L))[i'-2]$ & at $v,e_j$ \\
\end{tabular}
\caption{All possible pairs of two segments $\delta^{1,j}$ and $\delta^{i',j'
}$ of the second type up to a swap and rotational symmetry.
}\label{locmorfig3}
\end{center}
\end{table}

In \Cref{locmorfig3}, the three cases where the simple description of the morphisms objects in terms of intersections can fail are separated from the other cases. In the case of $\delta\neq\delta'$ with two boundary intersections, the morphism objects match the number of intersections, but the support of the morphisms does not behave as expected (unless $L,L'\in \on{Im}(\phi^*)$). See also \Cref{ex:counter} for the consequences of this phenomenon. To summarize our computations, the exceptions to the simple rule "\#{}intersections=$\on{dim}$ of Homs" arise if $\delta$ and $\delta'$ cut out a vertex of $\mathcal{T}_0$. 

\begin{lemma}\label{lilem1}
Let $\eta$ be a curve in ${\bf S}\backslash M$ composed of segments $\{\delta_i\}_{i\in I}$. Given a subset $\tilde{I}\subset I$, we denote $\tilde{I}^{\geq 1}=\tilde{I}\cap \mathbb{N}$ if $I\neq \mathbb{Z}/N\mathbb{Z}$ and $\tilde{I}^{\geq 1}=\tilde{I}$ if $I=\mathbb{Z}/N\mathbb{Z}$. We denote $\tilde{I}^{\leq 0}=\tilde{I}\backslash \tilde{I}^{\geq 1}$.
\begin{enumerate}[(1)]
\item Let $\delta$ be a segment lying at a vertex $v\in \mathcal{T}_0$ and let $I_v\subset I$ be the set of segments of $\eta$ lying at $v$. Then there exists an equivalence of $R$-modules 
\begin{align*} \on{Mor}_{\mathcal{L}}(M^L_{\delta},M^{L'}_{\eta})\simeq & \bigoplus_{\delta'\in I_v^{\geq 1}}\on{Mor}_{\mathcal{L}}(M^L_{\delta},M^{L'}_{\delta'}[d(\delta^1\leq \eta<\delta')])\oplus\\ & \bigoplus_{\delta'\in I_v^{\leq 0}}\on{Mor}_{\mathcal{L}}(M^L_{\delta},M^{L'}_{\delta'}[d(\delta' \leq \eta < \delta^1)])\,.
\end{align*}
\item Let $e$ be an edge of $\mathcal{T}$. Then there exists an equivalence of $R$-modules 
\[ \on{Mor}_{\mathcal{L}}(Z^L_e,M^{L'}_{\eta})\simeq \on{Mor}(L,M^{L'}_{\eta}(e))\,.\]
Denote 
\[ N=\bigoplus_{\delta'\in I_e^{\geq 1}} \on{Mor}(L,L')[d(\delta^1\leq \eta<\delta')]\oplus \bigoplus_{\delta'\in I_e^{\leq 0}} \on{Mor}(L,L')[d(\delta'\leq \eta <\delta^1)]\,,\]
where $I_e\subset I$ is the set of segments of $\eta$ which begin at $e$.

If $\eta$ does not end at $e$, there exists an equivalence of $R$-modules 
\begin{equation}\label{limoreq} 
\on{Mor}_{\mathcal{L}}(Z^L_e,M^{L'}_{\eta})\simeq N
\end{equation}
If $\eta$ ends at $e$, then there exists an equivalence of $R$-modules
\begin{equation}\label{limoreq1} 
\on{Mor}_{\mathcal{L}}(Z^L_e,M^{L'}_{\eta})\simeq N \oplus \on{Mor}(L,L')[d(\delta^1\leq \eta)]\,.
\end{equation}
\end{enumerate}
\end{lemma}

\begin{proof}
Note that $M^L_{\delta}$ and $Z^{L}_{e}$ are left Kan extensions relative the Grothendieck construction of $\mathcal{F}_\mathcal{T}(R)$ of their restrictions to $v$, respectively, $e$. Using the universal property of Kan extensions, see \cite[4.3.2.17]{HTT}, it follows that for any section $X\in \mathcal{L}$ the restriction morphisms of $R$-modules 
\begin{align*} \on{Mor}_{\mathcal{L}}(M^L_\delta,X)&\longrightarrow \on{Mor}_{\mathcal{V}^n_{\phi^*}}(M^L_{\delta}(v),X(v))\\
\on{Mor}_{\mathcal{L}}(Z^L_e,X)&\longrightarrow \on{Mor}(L,X(e))
\end{align*}
restrict to equivalences on all homotopy groups so that they are equivalences of $R$-modules. 

By construction of $M^{L'}_{\eta}$, we find an equivalence in $\mathcal{V}^n_{\phi^*}$
\[ M^{L'}_{\eta}(v)\simeq \bigoplus_{\delta'\in I_v^{\geq 1}}M^{L'}_{\delta'}(v)[d(\delta^1\leq \eta<\delta')] \oplus \bigoplus_{\delta'\in I_v^{\leq 0}}M^{L'}_{\delta'}(v)[d(\delta'\leq  \eta< \delta^1)]\]
showing statement (1). Similarly, there exists an equivalence in $\on{RMod}_{R[t_{n-2}]}$
\[ M_{\eta}^{L'}(e)\simeq \bigoplus_{\delta'\in I_e^{\geq 1}}L'[d(\delta^1\leq \eta<\delta')]\oplus \bigoplus_{\delta'\in I_e^{\leq 0}} L'[d(\delta'\leq \eta <\delta^1)]\]
if $\eta$ does not end at $e$ and 
\[ M^{L'}_{\eta}(e)\simeq \bigoplus_{\delta'\in I_e^{\geq 1}}L'[d(\delta^1\leq \eta <\delta')]\oplus \bigoplus_{\delta'\in I_e^{\leq 0}} L'[d(\delta'\leq \eta <\delta^1)]\oplus L'[d(\delta^1\leq \eta)]\]
if $\eta$ ends at $e$, see also \Cref{degrem2} for the shifts. This shows statement (2).
\end{proof}

\begin{remark}\label{supprem}
The support of the morphisms objects in \Cref{locmorfig1,locmorfig2,locmorfig3} refers to the support of the corresponding morphisms between sections in the sense of \Cref{def:supportofmorph}. It has the following further interpretation: let $\delta,\delta'$ be two segment in ${\bf S}$ both lying at a vertex $v\in \mathcal{T}_0$. Suppose that $\delta$ starts at an edge $e$ and ends at an edge $f$. Let $\on{Mor}(L,L')[l]$, with $l\in \mathbb{Z}$, be a summand of $\on{Mor}_{\mathcal{L}}(M^L_\delta,M^{L'}_{\delta'})$ identified in \Cref{locmorfig1,locmorfig2,locmorfig3}, corresponding to morphisms with support at $v$ and the edges $J\subset \{e,f\}$ ($J=\emptyset$ is possible). The composite $c$ of the inclusion morphism of $R$-modules 
\[ \on{Mor}(L,L')[l]\longrightarrow \on{Mor}_{\mathcal{L}}(M^L_\delta,M^{L'}_{\delta'})\]
with the morphism 
\[  \on{Mor}_{\mathcal{L}}(M^L_\delta,M^{L'}_{\delta'})\longrightarrow \on{Mor}_{\mathcal{L}}(Z^L_{e},M^{L'}_{\delta'})\] 
obtained from precomposing with the pointwise inclusion of the section $Z^L_{e}$ into  $M^L_{\delta}$ can be described as follows.
\begin{itemize}
\item If $e\notin J$, then $c$ is zero.
\item If $e\in J$, then $c$ is the inclusion of a direct summand under the equivalence \eqref{limoreq} or \eqref{limoreq1}.
\end{itemize}
An analogous description holds for the morphism $c$ arising by replacing the edge $e$ with $f$.

The three cases at the end of \Cref{locmorfig3} are exceptions to the above descriptions (unless $L,L'\in \on{Im}(\phi^*)$). 
\end{remark}

\subsection{Intersections globally}\label{sec:globalHom}

We fix two matching data $(\gamma,L)$ and $(\gamma',L')$ in ${\bf S}\backslash M$, such that $\gamma,\gamma'$ are composed of pure segments. We also assume that $\gamma,\gamma'$ do not have a common infinite end. We choose representatives of $\gamma$ and $\gamma'$ with the minimal number of intersections. We also assume in this section that $\gamma$ is open (and in particular has rank $1$). To ease notation, we further assume that the rank $a'$ of $\gamma'$ is also $1$, the general case is entirely analogous.

The segments of $\gamma$ are denoted $\{\delta_i\}_{i\in I}$ and the segments of $\gamma'$ are denoted $\{\delta_j'\}_{j\in I'}$. Recall that $M^L_{\gamma}$ is defined in \Cref{clprop1} as the colimit of the diagram $D_{\gamma}:E_{\gamma}\rightarrow \mathcal{L}$. Using that the functor 
\[\on{Mor}_{\mathcal{L}}(\mhyphen,M^{L'}_{\gamma'}):\mathcal{L}^{\on{op}}\longrightarrow \on{RMod}_R\] 
preserves limits, it follows that $\on{Mor}_{\mathcal{L}}(M^L_{\gamma},M^{L'}_{\gamma'})$ is the limit of the diagram 
\begin{equation}\label{glinteq1}
\on{Mor}_{\mathcal{L}}(\mhyphen,M^{L'}_{\gamma'})\circ D_{\gamma}^{\on{op}}:E_{\gamma}^{\on{op}}\longrightarrow \on{RMod}_R\,.
\end{equation}
In the following we fully describe the diagram \eqref{glinteq1}. We will see that the diagram \eqref{glinteq1} is equivalent to the direct sum in the stable $\infty$-category $\on{Fun}(E^{\on{op}}_{\gamma},\on{RMod}_R)$ of a collection of very manageable diagrams. A subset of these diagrams correspond to the intersections of $\gamma$ and $\gamma'$ which we will show in the case that $\gamma,\gamma'$ are matching curves to be the only summands with nonzero limits in $\on{RMod}_R$. 

We proceed with the constructions of the summands of \eqref{glinteq1} associated to the different types of intersections.\\

\noindent {\bf Singular intersections}\\
For this case, we assume that $\gamma\neq \gamma'$. Assume that the endpoints of $\gamma, \gamma'$ intersect in a vertex $v\in \mathcal{T}_0$. Note that in this case $\gamma$ and $\gamma'$ are singular and thus $L\simeq \phi^*(Q),L'\simeq \phi^*(Q')$. Since $\gamma$ and $\gamma'$ are pure, reversing their orientations does not change $M^L_{\gamma}$ or $M_{\gamma'}^L$. We may thus assume that $\gamma$ and $\gamma'$ both start at $v$. 

Using the rotational symmetry at $v$, we may assume that the first segment $\delta_1$ is given by the segment $\delta^1$ of the first type at $v$. We distinguish two cases. Either the first segment $\delta_1'=\delta^i$ of $\gamma'$ is identical to $\delta_1$, i.e.~$i=1$, or it is not. We begin with the case $i\neq 1$. By \Cref{locmorfig1}, we have $\on{Mor}_{\mathcal{L}}(M_{\delta^1}^L,M_{\delta^i}^{L'})\simeq \on{Mor}_R(Q,Q')[1-i]$ and the corresponding morphisms have support at $v$. Using \Cref{supprem}, it follows that there is a direct summand of the diagram \eqref{glinteq1} which restricts at $(\Lambda^2_0\times \{1\})^{\on{op}}$ to the diagram 
\[
\begin{tikzcd}[row sep=tiny, column sep=tiny]
{\on{Mor}_R(Q,Q')[1-i]} \arrow[rd] &   & 0 \arrow[ld] \\
                                   & 0 &             
\end{tikzcd}
\]
and vanishes on $(\Lambda^2_0\times \{i\})^{\on{op}}$ for $1<i\in I'$. Passing to limits, we get a direct summand $\on{Mor}_R(Q,Q')[1-i]$ of $\on{Mor}_{\mathcal{L}}(M_{\gamma}^L,M_{\gamma'}^{L'})$.

We now consider the case $i=1$. In that case, the matching curves $\gamma$ and $\gamma'$ are composed of $m$ identical segments $\delta_1=\delta_1',\dots,\delta_m=\delta_m'$, starting at $v$, such that $\delta_{m+1}\neq \delta_{m+1}'$ (this uses that $\gamma\neq\gamma'$). Let $v'\in \mathcal{T}_0$ be the vertex where $\delta_{m+1},\delta_{m+1}'$ lie. We choose $\delta_{m+1},\delta_{m+1}'$ such that they have a minimal number of intersections.

We distinguish the following two cases.
\begin{enumerate}[1)]
\item $\delta_{m+1}(0)\in \partial \Sigma_{v'}$ precedes $\delta_{m+1}'(0)\in \partial \Sigma_{v'}$ in the clockwise orientation of $\partial \Sigma_{v'}$.
\item $\delta_{m+1}(0)\in \partial \Sigma_{v'}$ follows $\delta_{m+1}'(0)\in \partial \Sigma_{v'}$ in the clockwise orientation of $\partial \Sigma_{v'}$
\end{enumerate} 

We find in either case for $2\leq i\leq m$
\begin{align*}
\on{Mor}_{\mathcal{L}}(M_{\delta_1}^L,M_{\delta_1'}^{L'})&\simeq \on{Mor}_R(Q,Q') \\
\on{Mor}_{\mathcal{L}}(M_{\delta_i}^L,M^{L'}_{\delta_i'})& \simeq \on{Mor}(L,L')\\
\end{align*} 
In case 1), we find 
\[ \on{Mor}_{\mathcal{L}}(M_{\delta_{m+1}}^L,M^{L'}_{\delta_{m+1}'}) \simeq \on{Mor}(L,L')\,,\]
whereas in case 2), we find
\[\on{Mor}_{\mathcal{L}}(M_{\delta_{m+1}},M_{\delta_{m+1}'}) \simeq 0\,,\]
see \Cref{sec:localHom}.
By part (1) of \Cref{lilem1}, each of the above $R$-modules also gives rise to a direct summand of the morphism object $\on{Mor}_{\mathcal{L}}(M_{\delta}^L,M^{L'}_{\gamma'})$. In the case 1), using again \Cref{supprem}, we thus find a summand of \eqref{glinteq1} which restricts on $(\Lambda^2_0\times \{1\})^{\on{op}}$ up to equivalence to the diagram
\begin{equation}\label{rdiag1}
\begin{tikzcd}[row sep=tiny, column sep=tiny]
\on{Mor}_R(Q,Q') \arrow[rd] &                  & {\on{Mor}(L,L')} \arrow[ld, "\simeq"'] \\
                         & {\on{Mor}(L,L')} &                                  
\end{tikzcd}
\end{equation}
on $(\Lambda^2_0\times \{i\})^{\on{op}}$ for $2\leq i \leq m$ to the constant diagram with value $\on{Mor}(L,L')$ and vanishes on the remaining parts of $E_{\gamma}^{\on{op}}$. The limit of this summand gives us a direct summand $\on{Mor}_R(Q,Q')\subset \on{Mor}_{\mathcal{L}}(M_{\gamma},M_{\gamma'})$, as desired. In the case 2), we analogously find a summand of \eqref{glinteq1} which restricts on $(\Lambda^2_0\times \{1\})^{\on{op}}$ to the diagram \eqref{rdiag1}, on $(\Lambda^2_0\times \{i\})^{\on{op}}$ for $2\leq i <m$ to the constant diagram with value $\on{Mor}(L,L')$, on $(\Lambda^2_0\times \{m\})^{\on{op}}$ to the diagram
\begin{equation*}
\begin{tikzcd}[row sep=tiny, column sep=tiny]
{\on{Mor}(L,L')} \arrow[rd, "\simeq"] &                  & 0 \arrow[ld, "\simeq"'] \\
                                      & {\on{Mor}(L,L')} &                        
\end{tikzcd}
\end{equation*}
and vanishes on the remaining parts of $E_{\gamma}^{\on{op}}$. Using that $\on{Mor}(L,L')\simeq \on{Mor}_R(Q,\phi_*\phi^*(Q'))$ and $\phi_*\phi^*(Q')\simeq Q'\oplus Q'[1-n]$, one finds the limit to be given by the direct summand $\on{Mor}_R(Q,Q')[-n]\subset \on{Mor}_{\mathcal{C}}(M_{\gamma'},M_{\gamma})$.\\

\noindent {\bf Crossings}

Assume that $\gamma$ and $\gamma'$ have a crossing and consider the segments $\delta$ and $\delta'$ of $\gamma$ and $\gamma'$, respectively, describing the curves at the crossing. The segments $\delta$ and $\delta'$ are located at a vertex $v\in \mathcal{T}_0$. Since pure segments cannot have crossings between themselves, the crossing between $\gamma$ and $\gamma'$ does not arise as a crossing between segments in $\Sigma_v$.

Before and after the crossing, the two curves are composed of $m\geq 0$ identical segments. If $m$ were infinite, we could find different representatives for $\gamma,\gamma'$ with one intersection less which would contradict our assumptions. We can thus assume $m$ to be finite. 

We can choose representatives of $\gamma$ and $\gamma'$ such that the crossing lies on an edge connecting two vertices $v,v'\in \mathcal{T}_0$. We assume that $\gamma$ and $\gamma'$ are oriented such that locally around the crossing, they both first pass through $\Sigma_v$ and then $\Sigma_{v'}$. We consider all segments $\delta_{x+i}$ and $\delta'_{y+i}$ with $x\in I$, $y\in I'$ and $0\leq i\leq m+1$ for which there exist representatives of $\gamma$ and $\gamma'$, such that the induced representatives of the segments $\delta_{x+i}$ and $\delta'_{y+i}$ share the crossing in question. Note also that by assumption $\delta_{x+i}=\delta_{y+i}$ for $1\leq i\leq m$.

The segments $\delta_x$ and $\delta_y'$ both lie at a vertex $v_1\in \mathcal{T}_0$ and the segments $\delta_{x+m+1}$ and $\delta_{y+m+1}'$ also both lie at a vertex $v_2\in \mathcal{T}_0$. We distinguish the following two cases. 

\begin{enumerate}[1)]
\item The point $\delta_y'(1)\in \partial \Sigma_{v_1}$ follows the point $\delta_x(1)\in \partial\Sigma_{v_1}$ in the clockwise direction on the intersected boundary component of $\partial \Sigma_{v_1}$ and the point $\delta_{y+m+1}'(0)\in \partial \Sigma_{v_2}$ follows the point $\delta_{x+m+1}(0)\in \partial \Sigma_{v_2}$  in the clockwise direction on the intersected boundary component of $\partial \Sigma_{v_2}$. This means that the crossing goes from $\gamma$ to $\gamma'$
\item The point $\delta_y'(1)\in \partial \Sigma_{v_1}$ precedes the point $\delta_x(1)\in \partial\Sigma_{v_1}$  in the clockwise direction on the intersected boundary component of $\partial \Sigma_{v_1}$ and the point $\delta_{y+m+1}'(0)\in \partial \Sigma_{v_2}$ precedes the point $\delta_{x+m+1}(0)\in \partial \Sigma_{v_2}$  in the clockwise direction on the intersected boundary component of $\partial \Sigma_{v_2}$. This means that the crossing goes form $\gamma'$ to $\gamma$.
\end{enumerate}

It follows from \Cref{sec:localHom} that there exist direct summands
\[\on{Mor}(L,L')\subset \on{Mor}_{\mathcal{L}}(M^L_{\delta_{x+i}},M^{L'}_{\delta'_{y+i}})\]
for $0\leq i \leq m+1$ in the case 1) and $1\leq i \leq m$ in the case 2).

In the case 1), we thus find a direct summand of the diagram \eqref{glinteq1} which restricts on $(\Lambda^2_0\times \{x+i\})^{\on{op}}$ for $0\leq i\leq m$ up to equivalence to the constant diagram with value $\on{Mor}(L,L')$ and vanishes on the remaining parts of $E_{\gamma}^{\on{op}}$. Passing to limits, we obtain the desired summand $\on{Mor}(L,L')\subset \on{Mor}_{\mathcal{L}}(M^L_{\gamma},M^{L'}_{\gamma'})$. In the case 2), we similarly find a direct summand of the diagram \eqref{glinteq1} which restricts on $(\Lambda^2_0\times \{x\})^{\on{op}}$ to the diagram 
\[
\begin{tikzcd}[row sep=tiny, column sep=tiny]
0 \arrow[rd] &   & \on{Mor}(L,L')\arrow[ld, "\simeq"'] \\
             & \on{Mor}(L,L') &                       
\end{tikzcd}
\]
on $(\Lambda^2_0\times \{x+i\})^{\on{op}}$ for $1\leq i\leq m-1$ to the constant diagram with value $\on{Mor}(L,L')$, on $(\Lambda^2_0\times \{x+m\})^{\on{op}}$ to the diagram 
\[
\begin{tikzcd}[row sep=tiny, column sep=tiny]
\on{Mor}(L,L') \arrow[rd, "\simeq"] &   & 0 \arrow[ld] \\
                       & \on{Mor}(L,L') &             
\end{tikzcd}
\]
and vanishes on the remaining parts of $E_{\gamma}^{\on{op}}$. Passing to limits, we find the direct summand $\on{Mor}(L,L')[-1]\subset \on{Mor}_{\mathcal{L}}(M^L_{\gamma},M^{L'}_{\gamma'})$.\\

\noindent {\bf Boundary intersections}

We assume that $\gamma$ and $\gamma'$ both intersect a boundary component $B$ of ${\bf S}\backslash M$ and distinguish two cases. 

\begin{enumerate}[1)]
\item The intersection of $\gamma'$ and $B$ follows the intersection of $\gamma$ and $B$ in the orientation of $B$ induced by the clockwise orientation of ${\bf S}$.
\item The intersection of $\gamma'$ and $B$ precedes the intersection of $\gamma$ and $B$ in the orientation of $B$ induced by the clockwise orientation of ${\bf S}$.
\end{enumerate}

We can assume that both $\gamma$ and $\gamma'$ start at $B$ and have $m$ identical segments $\delta_1=\delta_1',\dots,\delta_m=\delta_m'$, and the segments $\delta_{m+1}\neq \delta'_{m+1}$ both lie at a vertex $v\in \mathcal{T}_0$. In the case 1), we find that $\delta_{m+1}'(0)\in \partial \Sigma_{v}$ follows $\delta_{m+1}(0)\in \partial \Sigma_v$ on a boundary component of $\partial \Sigma_v$ in the clockwise direction. In the case 2), we find that $\delta'_{m+1}(0)\in \partial \Sigma_{v}$ precedes $\delta_{m+1}(0)\in \partial \Sigma_v$ in the clockwise orientation of $\partial \Sigma_v$.

We thus find direct summands
\begin{equation}\label{brdsumeq} 
\on{Mor}(L,L')\subset \on{Mor}_{\mathcal{L}}(M^L_{\delta_{i}},M^{L'}_{\delta'_{i}})\,,
\end{equation}
with $1\leq i\leq m+1$ in the case 1) and $1\leq i\leq m$ in the case 2).

In the case 1), the summands \eqref{brdsumeq} assemble using \Cref{lilem1} and \Cref{supprem} into a direct summand of \eqref{glinteq1} which has constant value $\on{Mor}(L,L')$ on $(\Lambda^2_0\times \{i\})^{\on{op}}$ for $1\leq i\leq m$ and vanishes on the remainder of $E_{\gamma}^{\on{op}}$. Passing to limits, we obtain the desired summand $\on{Mor}(L,L')\subset \on{Mor}_{\mathcal{L}}(M^L_{\gamma},M^{L'}_{\gamma'})$. In the case 2), the summands \eqref{brdsumeq} assemble using \Cref{lilem1} into a direct summand of \eqref{glinteq1} which has constant value $\on{Mor}(L,L')$ on $(\Lambda^2_0\times \{i\})^{\on{op}}$ for $1\leq i\leq m-1$, takes the value 
\begin{equation*}
\begin{tikzcd}[row sep=tiny, column sep=tiny]
{\on{Mor}(L,L')} \arrow[rd, "\simeq"] &                  & 0 \arrow[ld] \\
                                      & {\on{Mor}(L,L')} &                        
\end{tikzcd}
\end{equation*}
on $(\Lambda^2_0\times \{m\})^{\on{op}}$ and vanishes on the remainder of $E_{\gamma}^{\on{op}}$. The limit of this summand vanishes.\\

\begin{remark}\label{intbdryint}
Let $\eta$ be a curve composed of segments, which is not necessarily a matching curve. The above arguments generalize to describe direct summands of $\on{Mor}_{\mathcal{L}}(M_{\eta}^L,M_{\gamma'}^{L'})$ associated to singular intersections, crossings and directed boundary intersections (defined as for matching curves) of $\eta$ and $\gamma'$. 

In the case that $\eta$ begins or ends at an internal edge $e$, and $\gamma'$ has a segment $\delta'$ which begins or ends at $e$, we have the following further direct summands. Reorienting $\eta$ if necessary, we may assume that $\eta$ starts at $e$. We denote by $\delta$ the first segment of $\eta$ and by $v\in \mathcal{T}_0$ the vertex at which $\eta$ is located. We assume that $\delta'$ also lies at $v$ and, reorienting $\gamma'$ if necessary, that $\delta'$ also begins at $e$. We choose $\delta'$ in such a way that it has the minimal number of intersections with $\delta$. This arrangement roughly looks as follows (for $n=3$). 

\begin{center}
\begin{tikzpicture}[decoration={markings, 
	mark= at position 0.7 with {\arrow{stealth}}}]
   \draw[color=blue][very thick][postaction={decorate}] plot [smooth] coordinates {(-0.9,1) (0.4,0.2) (1.5,0.1)(2.6,0.2) (3.9,1)};
  \draw[color=blue][very thick][postaction={decorate}] plot [smooth] coordinates {(1.5,-0.15) (2.25,-0.15) (3,-0.4) (3.8,-1)};
  \node (6) at (0,0){};  
  \node (7) at (3,0){};
  \node (3) at (-0.3,0.9){$\gamma'$};
  \node ()  at (2.4,-0.45){$\delta$};
  \node ()  at (2.6,0.55){$\delta'$};
  \node (4) at (3.2,-0.8){$\eta$};
  \node ()  at (1.2,-0.15){$e$};
  \node ()  at (3.4, 0){$v$};
  \fill (6) circle (0.1);
  \fill (7) circle (0.1);
  \draw[very thick]
  (0,0) -- (3,0)--(4,1)
  (3,0)--(4,-1)
  (-1,-1) -- (0,0)
  (-1,1) -- (0,0);
\end{tikzpicture}
\end{center}

If $\delta'(0)\in\partial \Sigma_v$ follows $\delta(0)\in \partial \Sigma_v$ in the clockwise direction in the boundary component of $\partial \Sigma_v$, we find $\on{Mor}_{\mathcal{L}}(M^L_{\delta},M^{L'}_{\delta'})\simeq \on{Mor}(L,L')$, if not then $\on{Mor}_{\mathcal{L}}(M^L_{\delta},M^{L'}_{\delta'})\simeq 0$. Assuming that we are in the former case, the above construction for directed boundary intersections generalizes to this situation and provides us with a direct summand $\on{Mor}(L,L')\subset \on{Mor}_{\mathcal{L}}(M^L_{\eta},M^{L'}_{\gamma'})$.
\end{remark}

\noindent {\bf Non-intersections}

A relevant non-intersection appears every time both curves $\gamma,\gamma'$ pass through $\Sigma_v\subset \Sigma_\mathcal{T}$ for $v\in \mathcal{T}_0$, so that the corresponding sections $\delta,\delta'$ at $v$ satisfy that $\on{Mor}_{\mathcal{L}}(M^L_\delta,M^{L'}_{\delta'})\neq 0$ or $\on{Mor}_{\mathcal{L}}(M^{L'}_{\delta'},M^L_{\delta})\neq 0$, even though $\delta$ and $\delta'$ do not have a singular intersection and are neither part of a crossing or a boundary intersection. 

Since $\delta$ and $\delta'$ do not have a crossing, we find by the computations of \Cref{sec:localHom} that there must exist a boundary component $B$ of $\Sigma_v$ which intersects both $\gamma$ and $\gamma'$. We choose to orient $\gamma$ and $\gamma'$ so that both $\delta$ and $\delta'$ start at $B$. Before and after $\delta$ and $\delta'$, the curves $\gamma$ and $\gamma'$ are composed of $m$ identical segments. If $m$ is infinite, $\gamma$ and $\gamma'$ have a common infinite end. We may thus assume that $m$ is finite. We find $x\in I$ and $y\in I'$, such that the $m$ common segments of $\gamma$ and $\gamma'$ are $\delta_{x+i}=\delta'_{y+i}$ with $1\leq i\leq m$. We assume without loss of generality that $x,y\geq 1$. The segments $\delta_x$ and $\delta'_{y}$ both lie at a vertex $v_1\in \mathcal{T}_0$ and the segments $\delta_{x+m+1}$ and $\delta'_{y+m+1}$ both lie at a vertex $v_2\in \mathcal{T}_0$. The discussion now resembles the discussion in the case of a crossing above. However in contrast to the situation there, we find the following two possibilities.
\begin{enumerate}[1)]
\item The point $\delta'_y(1)\in \partial \Sigma_{v_1}$ follows the point $\delta_x(1)\in \partial\Sigma_{v_1}$  in the clockwise direction on the intersected boundary component of $\partial \Sigma_{v_1}$ and the point $\delta'_{y+m+1}(0)\in \partial \Sigma_{v_2}$ precedes the point $\delta_{x+m+1}(0)\in \partial \Sigma_{v_2}$  in the clockwise direction on the intersected boundary component of $\partial \Sigma_{v_2}$. 
\item The point $\delta_x(1)\in \partial\Sigma_{v_1}$ precedes the point $\delta'_y(1)\in \partial \Sigma_{v_1}$  in the clockwise direction on the intersected boundary component of $\partial \Sigma_{v_1}$ and the point $\delta_{x+m+1}(0)\in \partial \Sigma_{v_2}$ follows the point $\delta_{y+m+1}'(0)\in \partial \Sigma_{v_2}$  in the clockwise direction on the intersected boundary component of $\partial \Sigma_{v_2}$. 
\end{enumerate}

We continue with the case 1), the case 2) is analogous. The direct summand of \eqref{glinteq1} corresponding to the non-intersection is given by the diagram which restricts on each $\Lambda^2_0\times \{x+i\}$ for $0\leq i\leq m$ to the diagram 
\[
\begin{tikzcd}[column sep=0]
{\on{Mor}(L,L')} \arrow[rd, "\simeq"] &                       & {\on{Mor}(L,L')} \arrow[ld, "\simeq"] \\
                                           & {\on{Mor}(L,L')} &                                           
\end{tikzcd}
\]
restricts on $\Lambda^2_0\times \{x+m+1\}$ to the following diagram 
\[
\begin{tikzcd}[column sep=0]
{\on{Mor}(L,L')} \arrow[rd, "\simeq"] &                       & 0 \arrow[ld] \\
                                           & {\on{Mor}(L,L')} &             
\end{tikzcd}
\]
and vanishes on the remainder of $E_\gamma^{\on{op}}$. The limit of this summand thus vanishes, as desired.

\subsection{The proofs of \texorpdfstring{\Cref{homthm,homthm2}}{Theorems 6.4 and 6.5}}\label{sec:Homproof}

\begin{proof}[Proof of \Cref{homthm}]
We distinguish the following two cases.\\

{\bf Case 1: $\gamma$ is open.}\\
In \Cref{sec:globalHom} we have associated to each intersection (or relevant non-intersection) of $\gamma,\gamma'$ a direct summand of the diagram \eqref{glinteq1}, which passing to limits gave direct summands of $\on{Mor}_{\mathcal{C}}(M^L_{\gamma},M^{L'}_{\gamma'})$, matching exactly the desired description of the morphism object in \Cref{homthm}. Note that if $\gamma'$ is closed of rank $a'$, then it is locally equivalent to an $a'$-fold direct sum. In this case, each direct summand of $\on{Mor}_{\mathcal{C}}(M^L_{\gamma},M^{L'}_{\gamma'})$ thus appears with multiplicity $a'$.

By \Cref{lilem1}, the diagram \eqref{glinteq1} fully arises from morphisms between the segments of $\gamma$ and $\gamma'$. These morphisms are all each accounted for in exactly one of the direct summands of the diagram \eqref{glinteq1} described above. These direct summands thus describe the entirety of the diagram \eqref{glinteq1} and we may conclude that \Cref{homthm} holds for $\gamma$ not closed.\\

{\bf Case 2: $\gamma$ is closed.}\\
The global section $M^L_{\gamma}$ is given by the coequalizer of the diagram \eqref{s1obeq}, so that the $R$-module $\on{Mor}_{\mathcal{L}}(M^L_{\gamma},M^{L'}_{\gamma'})$ is equivalent to the equalizer of the following diagram in $\on{RMod}_R$.
\begin{equation}\label{6.3eq1}
\begin{tikzcd}
{\on{Mor}_{\mathcal{L}}((M^L_{\eta})^{\oplus {a}},M^{L'}_{\gamma'})} \arrow[r, shift left] \arrow[r, shift right] & {\on{Mor}_{\mathcal{L}}((Z^L_e)^{\oplus {a}},M^{L'}_{\gamma'})}
\end{tikzcd}
\end{equation}
The $R$-module $\on{Mor}_{\mathcal{L}}((M^L_{\eta})^{\oplus {a}},M^{L'}_{\gamma'})\simeq \on{Mor}_{\mathcal{L}}(M^L_{\eta},M^{L'}_{\gamma'})^{\oplus a}$ can be determined using its description as the limit of the $a$-fold direct sum of the diagram \eqref{glinteq1}. All direct summands of \eqref{glinteq1} associated to intersections between $\eta$ and $\gamma'$ yield direct summands of the diagram \eqref{6.3eq1} of the form 
\[
\begin{tikzcd}
N \arrow[r, shift right] \arrow[r, shift left] & {0}
\end{tikzcd}
\]
so that passing to limits yields the direct summands $N\subset \on{Mor}_{\mathcal{C}}(M^L_{\gamma},M^{L'}_{\gamma'})$. However, not all summands of $\on{Mor}_{\mathcal{C}}(M^L_{\gamma},M^{L'}_{\gamma'})$ have to be of this form, because there can be crossings between $\gamma$ and $\gamma'$ which do not restrict to a crossing between $\eta$ and $\gamma'$. These remainder of this proof consists of an account of these summands.  

Since $\eta$ is not a matching curve, the direct summands associated to the intersections of $\eta$ and $\gamma'$ in general do not describe the entirety of $\on{Mor}_{\mathcal{L}}((M^L_{\eta})^{\oplus a},M^{L'}_{\gamma'})$. We additionally have to include the direct summands described in \Cref{intbdryint} to obtain the entire morphism object $\on{Mor}_{\mathcal{L}}((M^L_{\eta})^{\oplus a},M^{L'}_{\gamma'})$. The closed curve $\gamma$ was opened at $e$ to the curve $\eta$. We denote the first segment of $\eta$ by $\delta_2$ and the last segment of $\eta$ by $\delta_1$. We denote the compose of $\delta^{1}$ and $\delta^{2}$ at $e$ by $\mu$. We now show the following. 
\begin{enumerate}[a)]
\item Every crossing from $\gamma$ to $\gamma'$ (or from $\gamma'$ to $\gamma$), which does not give rise to a crossing of $\mu$ and $\gamma'$, leads to a direct summands $\on{Mor}(L,L')^{\oplus a}$ (or $\on{Mor}(L,L')^{\oplus a}[-1]$) in the equalizer of \eqref{6.3eq1}.
\item Direct summands of $\on{Mor}_{\mathcal{L}}((M^L_{\eta})^{\oplus a},M^{L'}_{\gamma'})$ as described in \Cref{intbdryint} and direct summands of $\on{Mor}_{\mathcal{L}}((Z^L_e)^{\oplus a},M^{L'}_{\gamma'})$ do not persist in the equalizer of \eqref{6.3eq1} if they cannot be accounted for by a crossing as above.
\end{enumerate}

If $\gamma'$ is closed, statement a) needs to be modified, as in the previous case where $\gamma$ is not closed, to include the $a'$-fold multiplicity. Together with the previous discussion, the statements a) and b) then imply that $\on{Mor}_{\mathcal{C}}(M^L_{\gamma},M^{L'}_{\gamma'})$ is the direct sum of the desired number of suspensions or deloopings of $\on{Mor}(L,L')$, concluding this proof.

We begin by showing part a). The curves $\gamma$ and $\gamma'$ can be chosen so that their crossing restricts to an intersection between $\gamma$ and the composite of two segments $\delta'_{y}$ and $\delta'_{y+1}$, with $y\in I'$, of $\gamma'$ which end, respectively, begin at $e$. We denote the two vertices incident to $e$ by $v_1$ and $v_2$ and, reorienting $\eta,\gamma'$ if necessary, can assume that both $\delta'_{y}$ and $\delta_{1}$ lie at $v_1$ and both $\delta'_{y+1}$ and $\delta_{2}$ lie at $v_2$. 

We distinguish the following two cases. 

\begin{enumerate}[1)]
\item $\delta_1(1)\in \partial \Sigma_{v_1}$ precedes $\delta'_{y}(1)\in \partial \Sigma_{v_1}$ in the clockwise orientation on the intersected boundary component of $\partial \Sigma_{v_1}$ and $\delta_2(0)\in \partial \Sigma_{v_2}$ precedes $\delta'_{y+1}(0)\in \partial \Sigma_{v_2}$ in the clockwise orientation on the intersected boundary component of $\partial \Sigma_{v_2}$. This means that the crossing goes from $\gamma$ to $\gamma'$
\item $\delta_1(1)\in \partial \Sigma_{v_1}$ follows $\delta_{y}(1)\in \partial \Sigma_{v_1}$ in the clockwise orientation on the intersected boundary component of $\partial \Sigma_{v_1}$ and $\delta_2(0)\in \partial \Sigma_{v_2}$ follows $\delta'_{y+1}(0)\in \partial \Sigma_{v_2}$ in the clockwise orientation on the intersected boundary component of $\partial \Sigma_{v_2}$. This means that the crossing goes from $\gamma'$ to $\gamma$.
\end{enumerate}

In the case 1), we find by \Cref{intbdryint} a direct summand $\on{Mor}(L,L')^{\oplus a}\oplus \on{Mor}(L,L')^{\oplus a}\subset \on{Mor}_{\mathcal{L}}((M^L_{\eta})^{\oplus a},M^{L'}_{\gamma'})$, where the first copy of $\on{Mor}(L,L')^{\oplus a}$ arises from the boundary intersection of $\delta_2$ and $\delta'_{y+1}$ and the second copy of $\on{Mor}(L,L')^{\oplus a}$ arises from the boundary intersection of $\delta_{1}$ and $\delta'_{y}$. In terms of the diagram \eqref{6.3eq1}, the crossing corresponds to a direct summand of \eqref{6.3eq1} of the form
\[
\begin{tikzcd}
{\on{Mor}(L,L')^{\oplus a}\oplus \on{Mor}(L,L')^{\oplus a}} \arrow[r, "{(0,\on{id})}"', shift right] \arrow[r, "{(\on{id},0)}", shift left] & {\on{Mor}(L,L')^{\oplus a}}
\end{tikzcd}
\]
whose equalizer gives a direct summand $\on{Mor}(L,L')^{\oplus a}\subset \on{Mor}_{\mathcal{L}}(M^L_{\gamma},M^{L'}_{\gamma'})$. 

In the case 2), there are no morphisms in $\on{Mor}_{\mathcal{L}}((M^L_{\eta})^{\oplus a},M^{L'}_{\gamma'})$ associated to the crossing. Using part (2) of \Cref{lilem1}, we thus find a direct summand of \eqref{6.3eq1} corresponding to the crossing of the following form.
\[
\begin{tikzcd}
0 \arrow[r, shift right] \arrow[r, shift left] & {\on{Mor}(L,L')^{\oplus a}}
\end{tikzcd}
\]
Passing to equalizers, we thus obtain the direct summand $\on{Mor}(L,L')^{\oplus a}[-1]$ of $\on{Mor}_{\mathcal{L}}(M^L_{\gamma},M^{L'}_{\gamma'})$. This concludes the proof of a).\\

For part b), we assume that part of $\gamma'$ passes along $e$ but does partake in a crossing of $\gamma'$ and $\gamma$. We employ the same notation for the segments of $\gamma'$ at $e$ as above. In this case, either $\delta_{1}(1)\in \partial \Sigma_{v_1}$ and $\delta'_{y+1}(0)\in \partial \Sigma_{v_2}$ both follow or both precede $\delta_{y}'(1)\in \partial \Sigma_{v_1}$ and $\delta_{2}(0)\in \partial \Sigma_{v_2}$, respectively. The corresponding direct summand of \eqref{6.3eq1} is thus of the following form.
\begin{equation}\label{eqdiag2}
\begin{tikzcd}
{\on{Mor}(L,L')^{\oplus a}} \arrow[r, "0"', shift right] \arrow[r, "\on{id}", shift left] & {\on{Mor}(L,L')^{\oplus a}}
\end{tikzcd}
\end{equation}
The equalizer of \eqref{eqdiag2} vanishes, showing b).
\end{proof}

\begin{proof}[Proof of \Cref{homthm2}]
The proof of \Cref{homthm2} goes along the same lines as the proof of \Cref{homthm}.\\

{\bf Case 1: $\gamma$ is open.}\\ 
As shown in \Cref{sec:globalHom}, each self-crossing or directed boundary self-intersection gives rise to direct summands of $\on{Mor}_{\mathcal{L}}(M^L_{\gamma},M^{L'}_{\gamma})$ given by suspensions or deloopings of $\on{Mor}(L,L')$ or $\on{Mor}_{R}(Q,Q')$ of the desired form. 

Assume that all segments of $\gamma$ are of the second type. Given a segment $\delta$ of the second type, we have $\on{Mor}_{\mathcal{L}}(M_{\delta}^L,M_{\delta}^{L'})\simeq \on{Mor}(L,L')$, see \Cref{locmorfig3}. Similarly, we have $\on{Mor}_{\mathcal{L}}(Z^L_e,Z^{L'}_e)\simeq \on{Mor}(L,L')$. The constant diagram, as always up to equivalence, with value $\on{Mor}(L,L')$ thus defines a direct summand of \eqref{glinteq1}. Passing to limits, we obtain the direct summand $\on{Mor}(L,L')\subset \on{Mor}_{\mathcal{L}}(M^L_{\gamma},M^{L'}_{\gamma})$. 

Assume that exactly one segment of $\gamma$ is of the first type. The curve $\gamma$ is thus singular and exactly one end lies at a vertex of $\mathcal{T}$. Reorienting $\gamma$ if necessary, we can assume that $\gamma$ begins at the vertex. The endomorphisms of the segments of $\gamma$ thus yield a direct summand of \eqref{glinteq1}, which assigns to $(\Lambda^2_0)^{\on{op}}\times \{1\}$ the diagram \eqref{rdiag1} and is constant on the remainder of $E_{\gamma}^{\on{op}}$ with value $\on{Mor}(L,L')$. The limit of this direct summand is given by $\on{Mor}_R(Q,Q')\subset \on{Mor}_{\mathcal{L}}(M_{\gamma},M_{\gamma})$.

If exactly two segments of $\gamma$ are of the first type, then $\gamma$ is singular, and begins and ends at vertices of $\mathcal{T}$. Let $N$ be the number of segments of $\gamma$. The endomorphisms of the segments of $\gamma$ yield a direct summand of \eqref{glinteq1}, which assigns to $(\Lambda^2_0)^{\on{op}}\times \{1\}$ the diagram \eqref{rdiag1}, to $(\Lambda^2_0)^{\on{op}}\times \{N-1\}$ the diagram 
\[
\begin{tikzcd}[row sep=tiny, column sep=tiny]
{\on{Mor}(L,L')} \arrow[rd, "{\simeq}"] &                  &  \on{Mor}_R(Q,Q') \arrow[ld] \\
                         & {\on{Mor}(L,L')} &                                  
\end{tikzcd}
\]
and to the remainder of $E_{\gamma}^{\on{op}}$ the constant diagram with value $\on{Mor}(L,L')$. To compute the limit of this diagram, one uses $\on{Mor}(L,L')\simeq \on{Mor}_R(Q,\phi_*\phi^*(Q'))$ and $\phi_*\phi^*(Q')\simeq Q'\oplus Q'[1-n]$. The resulting direct summand is given by $\on{Mor}_R(Q,Q')\oplus \on{Mor}_R(Q,Q')[-n]\subset \on{Mor}_\mathcal{L}(M_{\gamma},M_{\gamma})$. If the endpoints of $\gamma$ furthermore coincide, then the singular intersections of $\on{Mor}_\mathcal{L}(M_{\gamma},M_{\gamma})$ produce two further direct summands given by suspensions or deloopings of $\on{Mor}_R(Q,Q')$. 

The above identified direct summand of $\on{Mor}_{\mathcal{L}}(M^L_{\gamma},M^{L'}_{\gamma})$ account for the entire morphism object and match the count given in \Cref{homthm2}. This thus concludes the proof in the case that $\gamma$ is not closed.\\

{\bf Case 2: $\gamma$ is closed.}\\
We need to compute the equalizer of \eqref{6.3eq1} with $R=k$ a field. Showing that each self-crossing of $\gamma$ contributes a direct summand given by $\left(\on{Mor}(L,L)\oplus \on{Mor}(L,L)[-1]\right)^{\oplus a^2}$ to the equalizer of \eqref{6.3eq1} is analogous to the discussion in the proof of \Cref{homthm} in the case that $\gamma$ is closed. A novel argument is required to determine the endomorphisms not corresponding to self-crossings. The morphisms from $M_{\eta}^L$ to $M_{\gamma}^{L}$ arising from the morphisms between the sections associated to the common segments of $\eta,\gamma$ (all of the second type) contribute a direct summand $\on{Mor}(L,L)^{\oplus a^2}\subset \on{Mor}(M^L_{\eta},M^{L}_{\gamma})$. Each of the two composites with the pointwise inclusion $Z^L_{e}\rightarrow M^{L}_{\eta}$ in $\mathcal{L}$ arising from an end of $\eta$ at $e$ yields an equivalence between the direct summand $\on{Mor}(L,L)^{\oplus a^2}$ of both $\on{Mor}_{\mathcal{L}}(M^L_{\eta},M^{L}_{\gamma})$ and $\on{Mor}_{\mathcal{L}}(Z^L_e,M^{L}_{\gamma})$. As we explain below, these equivalences give rise to the following direct summand of \eqref{6.3eq1}.
\begin{equation}\label{gcleqeq}
\begin{tikzcd}[column sep=huge]
{\on{Mor}(L,L)^{\oplus a^2}} \arrow[r, "{\on{id}}", shift left] \arrow[r, "{\mathscr{J}\circ (\mhyphen)\circ \mathscr{J}^{-1}}"', shift right] & {\on{Mor}(L,L)^{\oplus a^2}}
\end{tikzcd}
\end{equation}
Above $\mathscr{J}$ denotes the monodromy equivalence, which was assumed to be a single Jordan block with eigenvalue $\lambda \in k\backslash \{0\}$. The matrix $\mathscr{J}^{-1}$ is the inverse matrix. Using the equivalence $\on{Mor}(L,L)^{\oplus a^2}\simeq \on{Mor}(L^{\oplus a},L^{\oplus a})$, the morphism $\mathscr{J}\circ (\mhyphen)\circ \mathscr{J}^{-1}$ takes a map $L^{\oplus a}\rightarrow L^{\oplus a}$, precomposes it with the endomorphism of $L^{\oplus a}$ given by $\mathscr{J}^{-1}$ and postcomposes it with the endomorphism of $L^{\oplus a}$ given by $\mathscr{J}$. The equalizer of \eqref{gcleqeq} is equivalent to the fiber of the morphism
\begin{equation}\label{eq:id-jj} {\on{Mor}(L,L)^{\oplus a^2}} \xlongrightarrow{\on{id}-\mathscr{J}\circ (\mhyphen)\circ \mathscr{J}^{-1}} {\on{Mor}(L,L)^{\oplus a^2}}\,.
\end{equation}
A direct computation shows, that the above morphism maps an $a\times a$-matrix $(m_{i,j})_{1\leq i,j\leq a}$ with entries in $\on{Mor}(L,L)$ to the $a\times a$-matrix $(m_{i,j}')_{1\leq i,j\leq a}$ with 
\[ -m_{i,j}'=\frac{m_{i+1,j}}{\lambda}+\sum_{l>0}(-1)^l\frac{\lambda m_{i,j-l}+m_{i+1,j-l}}{\lambda^{l+1}}
\]
where we set $m_{i,j}=0$ for $j\leq 0$ or $i>a$. The kernel of \eqref{eq:id-jj} thus consists of upper triangular matrices $(m_{i,j})_{1\leq i,j\leq a}$, satisfying that $m_{i,j}=m_{i+1,j+1}$ for all $1\leq i,j\leq a-1$.  The fiber of \eqref{eq:id-jj} splits as its kernel, which is equivalent to $\on{Mor}(L,L)^{\oplus a}$ and the delooping of its cokernel, which is given by $\on{Mor}(L,L)^{\oplus a}[-1]$. This shows that we obtain the desired direct summand of $\on{Mor}_{\mathcal{C}}(M^L_{\gamma},M^{L}_{\gamma})$. We have again determined the entire morphism object $\on{Mor}_{\mathcal{L}}(M_{\gamma}^L,M_{\gamma}^{L})$, showing \Cref{homthm2}.

To arrive at the direct summand \eqref{gcleqeq}, we need to describe the equivalence of the direct summand $\on{Mor}(L,L)^{\oplus a^2}$ obtained from composing the two equivalences contained in the following diagram in $\on{RMod}_k$, arising from restricting morphisms to the two endpoints of $\eta$. 
\begin{equation}\label{morobjdiag}
\begin{tikzcd}[column sep=small]
                                                 & \on{Mor}(L,L)^{\oplus a^2} \arrow[d, hook] \arrow[rd, "\simeq"]                        &                                                \\
\on{Mor}(L,L)^{\oplus a^2} \arrow[d, hook] \arrow[ru, "\simeq"] & {\on{Mor}_{\mathcal{L}}\left(\left(M^L_{\eta}\right)^{\oplus a},M^{L}_{\gamma}\right)} \arrow[rd] \arrow[ld] & \on{Mor}(L,L)^{\oplus a^2} \arrow[d, hook]                    \\
{\on{Mor}_{\mathcal{L}}\left(\left(Z^L_{e}\right)^{\oplus a},M^{L}_{\gamma}\right)}   &                                                                         & {\on{Mor}_{\mathcal{L}}\left(\left(Z^L_{e}\right)^{\oplus a},M^{L}_{\gamma}\right)}
\end{tikzcd}
\end{equation}

This equivalence is affected by two kinds of monodromy. Firstly, the monodromy of the perverse schober $\mathcal{F}_\mathcal{T}(k)$ along $\gamma$, which is defined below and shown to be trivial. Secondly, the monodromy equivalence of $(\gamma,L)$, given by the Jordan-block $\mathscr{J}$. 

Given a pure segment $\delta$ of the second type in ${\bf S}$ lying at a vertex $v'$ going from the edge $e'$ to the edge $e''$, we define the transport (partial monodromy) of $\mathcal{F}_\mathcal{T}(k)$ along $\delta$ as the following autoequivalence of $\mathcal{F}_\mathcal{T}(k)(e')=\mathcal{F}_\mathcal{T}(k)(e'')=\on{RMod}_{R[t_{n-2}]}$.
\begin{itemize}
\item If $\delta$ wraps around $v'$ in the counterclockwise direction, we define the transport as the composite of the left adjoint of $\mathcal{F}_\mathcal{T}(k)(v\rightarrow e')$ with the functor $\mathcal{F}_\mathcal{T}(k)(v\rightarrow e'')$.
\item If $\delta$ wraps around $v'$ in the clockwise direction, we define the transport as the composite of the right adjoint of $\mathcal{F}_\mathcal{T}(k)(v\rightarrow e')$ with the functor $\mathcal{F}_\mathcal{T}(k)(v\rightarrow e'')$.
\end{itemize}

The monodromy of $\mathcal{F}_\mathcal{T}(k)$ along $\gamma$ is defined as the autoequivalence of $\on{RMod}_{R[t_{n-2}]}$ obtained from composing the transports of all segments of $\gamma$. It is computed as follows.

Consider a segment $\delta_i$ of $\gamma$ lying at $v^i$ and connecting $e^i$ and $e^{i+1}$. We assume that $\delta_i$ turns counterclockwise, the clockwise case is analogous. Up to the action of the paracyclic twist functor $T_{v^i}$, see \Cref{cyctwrem} and \Cref{paratwprop}, we can assume that $\mathcal{F}_\mathcal{T}(k)(v^i\rightarrow e^{i})$ and $\mathcal{F}_\mathcal{T}(k)(v^i\rightarrow e^{i+1})$ are given by $T_1\circ \varrho_1$, respectively, $T_2\circ \varrho_2$, with $T_1,T_2$ each given by one of the two autoequivalences $\on{id},T$, using the notation from \Cref{constr:Ginzburgschober}. The left adjoint $\varsigma_2$ of $\varrho_1$ is right adjoint to $\varrho_2$, see \Cref{adjsrem}. Since $\varsigma_2$ is a fully faithful functor, it thus follows that $\varrho_2\circ\varsigma_2\simeq \on{id}_{\on{RMod}_{R[t_{n-2}]}}$. The transport along $\delta_i$ is thus a power of the involution $T$. The total monodromy along $\gamma$ is hence given by an $i$-th power of the involution $T$ for some $i\in \mathbb{Z}$. The integer $i$ is even, as follows from inspecting the construction of $\mathcal{F}_\mathcal{T}(k)$ and from the observation that there are an equal number of halfedges being transversed by $\gamma$ which carry an even or odd labeling in the chosen total orders. It follows that the monodromy of $\mathcal{F}_\mathcal{T}(k)$ along $\gamma$ is trivial. 

We proceed by spelling out in detail how the equivalence \eqref{morobjdiag} arises from the composition of the two kinds of monodromy.

The morphism object $\on{Mor}(M_{\eta}^L,M_{\gamma}^{L})$ is given by the limit of \eqref{glinteq1}. Instead of directly computing the equivalence in \eqref{morobjdiag}, we can thus equivalently compute the equivalences obtained by tracing along the segments of $\eta$, i.e.~compose the endomorphisms of $\on{Mor}(L,L)^{\oplus a^2}$ contained in the commutative diagrams 

\begin{adjustwidth}{-0.8in}{-0.8in}
\[
\begin{tikzcd}[column sep=huge]
{\on{Mor}(L,L)^{\oplus a^2}} \arrow[r, "\simeq"] \arrow[d, "\simeq"]                                          & {\on{Mor}(L,L)^{\oplus a^2}} \arrow[r, "\simeq"] \arrow[d, "\simeq"]            & {\on{Mor}(L,L)^{\oplus a^2}} \arrow[d, "\simeq"]                                                                                           \\
{\on{Mor}\left({Z_{e^i}^L(e^i)},M^{L}_{\delta_i}(e^i)\right)^{\oplus a^2}}  \arrow[d, "\simeq"] &  {\on{Mor}\left(M^{L}_{\delta_i}(e^i),M^{L}_{\delta_i}(e^i)\right)^{\oplus a^2}} \arrow[l, "\simeq"'] & {\on{Mor}_{\mathcal{V}^n_{\phi^*}}\left(M_{\delta_i}^L(v^i),M^{L}_{\delta_i}(v^i)\right)^{\oplus a^2}} \arrow[d, hook] \arrow[l, "{\mathcal{F}_\mathcal{T}(v^i\rightarrow e^{i})}"'] \\
{\on{Mor}_{\mathcal{L}}\left(\left(Z^L_{e^i}\right)^{\oplus a},M^{L}_{\gamma}\right)}    &                                                                                 & {\on{Mor}_{\mathcal{L}}\left(\left(M^L_{\delta_i}\right)^{\oplus a},M^{L}_{\gamma}\right)}   \arrow[ll]                                              
\end{tikzcd}
\]
\[
\begin{tikzcd}[column sep=huge]
{\on{Mor}(L,L)^{\oplus a^2}} \arrow[d, "\simeq"] \arrow[r, "\simeq"]                                                                       & {\on{Mor}(L,L)^{\oplus a^2}} \arrow[d, "\simeq"] \arrow[r, "\simeq"]                    & {\on{Mor}(L,L)^{\oplus a^2}} \arrow[d, "\simeq"]                                                                  \\
{\on{Mor}_{\mathcal{V}^n_{\phi^*}}\left(M_{\delta_i}^L(v^i),M^{L}_{\delta_i}(v^i)\right)^{\oplus a^2}} \arrow[d, hook] \arrow[r, "{\mathcal{F}_\mathcal{T}(v^i\rightarrow e^{i+1})}"] & {\on{Mor}\left(M^{L}_{\delta_i}(e^{i+1}),M^{L}_{\delta_i}(e^{i+1})\right)^{\oplus a^2}} \arrow[r, "\simeq"]  & {\on{Mor}\left({Z_{e^{i+1}}^L(e^{i+1})},M^{L}_{\delta_i}(e^{i+1})\right)^{\oplus a^2}} \arrow[d, hook] \\
{\on{Mor}_{\mathcal{L}}\left(\left(M^L_{\delta_i}\right)^{\oplus a},M^{L}_{\gamma}\right)} \arrow[rr]                                      &                                                                                         & {\on{Mor}_{\mathcal{L}}\left(\left(Z^L_{e^{i+1}}\right)^{\oplus a},M^{L}_{\gamma}\right)}                        
\end{tikzcd}
\]
\end{adjustwidth}

\noindent with $1\leq i \leq N$, where $\gamma$ has $N$ segments $\delta_i$, $i\in I=\mathbb{Z}/N\mathbb{Z}$, lying at $v^i\in \mathcal{T}_0$ and beginning and ending at the edge $e^i$, respectively, $e^{i+1}$. For $i=N$, the morphism $\mathcal{F}_{\mathcal{T}}(v^n\rightarrow e^{1})$ in the above diagram needs to additionally be composed with the endomorphism $\mathscr{J}\circ (\mhyphen)$. To justify this, we make a choice of coequalizer $M_{\gamma}^{L}$ of \eqref{s1obeq}, which assigns to each edge $v\rightarrow e$ the morphism $M_{\eta}^{L}(v\rightarrow e)^{\oplus a}$ (strictly and not just up to equivalence), except for the morphism $v^n\rightarrow e^1$, where $M_{\eta}^{L}(v^n\rightarrow e^1)^{\oplus a}$ is composed with the equivalence $\mathscr{J}$. The precomposition with $\mathscr{J}^{-1}$ arises from the appearance of $\mathscr{J}^{-1}$ in \eqref{s1obeq}.

Before explaining why the equivalences in the above diagram also describe the monodromy of $\mathcal{F}_\mathcal{T}(k)$, we have to take care of some further contributions. These are the equivalences \[ E_i:\on{Mor}(M^L_{\delta_{i-1}}(e_i),M^L_{\delta_{i-1}}(e_i))^{\oplus a^2}\simeq \on{Mor}({Z^L_{e_i}}(e_i),{Z^L_{e_i}}(e_i))^{\oplus a^2}\simeq \on{Mor}(M^L_{\delta_{i}}(e_i),M^L_{\delta_{i}}(e_i))^{\oplus a^2}\]
which arise from the fact that the inclusions $Z_{e_i}^L\rightarrow M^L_{\delta_{i}},M^L_{\delta_{i-1}}$ were only specified up to $k$-linear equivalence (since the local sections were defined as Kan extensions). Under the equivalences with $\on{Mor}(L,L)^{\oplus a^2}$, the equivalence $E_i$ corresponds to an endomorphism $\mathscr{D}^{-1}(\mhyphen)\mathscr{D}$ of $\on{Mor}(L,L)^{\oplus a^2}$, where $\mathscr{D}$ is some invertible diagonal $a\times a$-matrix with entries in $\pi_0\on{Map}(L,L)$, all of whose diagonal entries are identical. It follows that $\mathscr{D}^{-1}(\mhyphen)\mathscr{D}$ acts as the identity on $\on{Mor}(L,L)^{\oplus a^2}$. Thus, the equivalences $E_i$ do not contribute to the diagram \eqref{gcleqeq}.

The left and right adjoints of the functors $\mathcal{F}_\mathcal{T}(v^i\shortrightarrow e^{i})$ contained in the middle parts of the above diagrams are fully faithful and hence define right inverses of $\mathcal{F}_\mathcal{T}(v^i\shortrightarrow e^{i})$ on morphism objects. If $\delta_i$ wraps clockwise, then $M_{\delta_i}^L(v^i)\simeq \on{radj}(\mathcal{F}_\mathcal{T}(v^i\shortrightarrow e^{i}))(M_{\delta_i}^L(e^i))$. Similarly, if $\delta_i$ wraps counterclockwise, then $M_{\delta_i}^L(v^i)\simeq \on{ladj}(\mathcal{F}_\mathcal{T}(v^i\shortrightarrow e^{i}))(M_{\delta_i}^L(e^i))$. Tracing along the middle part of the above diagram thus yields a contribution of the monodromy of $\mathcal{F}_\mathcal{T}(k)$. This concludes the argument, why the direct summand \eqref{gcleqeq} of the diagram \eqref{6.3eq1} appears.
\end{proof}

We describe in \Cref{ex:counter} global sections arising from matching data, whose matching curves are non-pure, and for which an arising morphism object does not simply count intersections.

\begin{example}\label{ex:counter}
Consider the $4$-gon with an ideal triangulation with dual trivalent spanning $\mathcal{T}$ and two matching curves $\gamma,\gamma'$, which can be depicted as follows.
\begin{center}
\begin{tikzpicture}
  \draw[color=ao, very thick]
    (0, 4) -- (0, 0)
    (4, 0) -- (0, 0)
    (4, 4) -- (4, 0)
    (0, 4) -- (4, 4)
    (0, 4) -- (4, 0)
    ;
  \draw[very thick]
   (1.33,1.33) -- (2.67,2.67)
   (1.33,1.33) -- (1.33,-0.6)
   (1.33,1.33) -- (-0.6,1.33)
   (2.67,2.67) -- (4.6,2.67)
   (2.67,2.67) -- (2.67,4.6);
   \draw[color=blue][very thick] plot [smooth] coordinates {(0,1.6) (3, 2.3) (3.3, 4)};
   \draw[color=blue][very thick] plot [smooth] coordinates {(2.4,4) (2.35, 3) (1.65,1) (1.6, 0)};
  \node (0) at (2.67, 2.67){};
  \node (1) at (1.33, 1.33){};
  \node (2) at (0,0){};
  \node (3) at (4,0){};
  \node (4) at (4,4){};
  \node (5) at (0,4){};
  
  \node () at (0.3,1.9){$\gamma'$};
  \node () at (1.85,0.4){$\gamma$};
  \fill (0) circle (0.1);
  \fill (1) circle (0.1);
  \fill[color=orange] (2) circle (0.1);
  \fill[color=orange] (3) circle (0.1);
  \fill[color=orange] (4) circle (0.1);
  \fill[color=orange] (5) circle (0.1);
\end{tikzpicture}
\end{center}
The matching curve $\gamma$ is pure, whereas $\gamma'$ is not pure. There is a directed boundary intersection from $\gamma$ to $\gamma'$ and a crossing from $\gamma'$ to $\gamma$. For any $L,L'\in \on{RMod}_{R[t_1]}$, there are apparent matching data $(\gamma,L)$ and $(\gamma',L')$.

The computation of $\on{Mor}_{\mathcal{H}(\mathcal{T},\mathcal{F}_\mathcal{T}(R)}(M_{\gamma}^{L},M_{\gamma'}^{L'})$ boils down to the computation of the limit  of the $E_{\gamma}^{\on{op}}$-indexed diagram:
\begin{equation}\label{eq:counter}
\begin{tikzcd}
  & 0 \arrow[rd] \arrow[ld] &                     & {\on{Mor}(L,L')[1]} \arrow[rd, "\simeq"] \arrow[ld, "\alpha"'] &                  \\
0 &                                                     & {\on{Mor}(L,L')} &                                                             & {\on{Mor}(L,L')[1]}
\end{tikzcd}
\end{equation}
Spelling out its construction, we see that the morphism $\alpha$ is given by precomposition with the morphism $s_L\colon L\rightarrow T_{\on{RMod}_{R[t_{1}]}}(L)[1]\simeq L[-1]$, arising from the fiber and cofiber sequence of endofunctors of $\on{RMod}_{R[t_1]}$
\[ \phi^*\phi_*\rightarrow \on{id}_ {\on{RMod}_{R[t_{1}]}}\xrightarrow{s} T_{\on{RMod}_{R[t_{1}]}}[1]\] describing the twist functor of the adjunction $\phi^*\dashv \phi_*$. The natural transformation $s$ is also called the section of the twist functor. It evaluates to a zero morphism at $L$ if $L\in \on{Im}(\phi^*)$, but evaluates non-trivially for other $L$ (such as $L=R[t_1]$). The limit of the diagram \eqref{eq:counter} is given by the fiber of the morphism $\alpha$. If $\alpha=0$, the limit thus consists of the sum of two copies of suspensions or deloopings of $\on{Mor}(L,L)$, matching the number of intersections of $\gamma$ and $\gamma'$. If $\alpha\neq 0$, this is not true.
\end{example}

\subsection{Indecomposability of objects}\label{sec:indecomposability}

\begin{corollary}\label{cor:indecomp}
Let $R=k$ be a field and $L\in \on{RMod}_{k[t_{n-2}]}$, such that $H_0\on{End}(L)\simeq k$.
\begin{enumerate}[(1)]
\item Let $(\gamma,L)$ be an open matching datum in ${\bf S}\backslash M$, such that $\gamma$ is finite and pure. The discrete endomorphism ring $H_0\on{End}(M_{\gamma}^L)$ is local and $M_{\gamma}^L$ thus indecomposable.
\item Let $(\gamma,L)$ be a closed matching datum, whose monodromy matrix is a single Jordan block and such that $\gamma$ is pure. The discrete endomorphism ring $H_0\on{End}(M_{\gamma}^L)$ is local and $M_{\gamma}^L$ thus indecomposable.
\end{enumerate}
\end{corollary}

\begin{remark}
For $L=\phi^*(k),k[t_{n-1}],k[t_{n-1}^\pm]\in \on{RMmod}_{k[t_{n-1}]}$, we have $H_0(\on{End}(L))\simeq k$. We can thus apply \Cref{cor:indecomp} to matching data with local values of different sizes. 
\end{remark}

\begin{proof}[Proof of \Cref{cor:indecomp}.]
We begin with proving part (1). By \Cref{homthm2}, we have a complete description of the degree $0$ endomorphisms $H_0\on{End}(M_{\gamma}^L)$. The degree $0$ morphisms arising from crossings, directed boundary intersections and singular intersections of distinct endpoints generate an ideal $J$. Every endomorphism of $M_{\gamma}^L$ is the sum of an endomorphism in $J$ and a $k$-linear multiple of the identity.
We argue below, that an endomorphism $\alpha=\beta+\lambda \on{id}_{M_{\gamma}^L}$ of $M_{\gamma}^L$ with $\beta\in J$ is an equivalence if and only if $\lambda \in k$ is nonzero. It thus follows, that $J$ is the unique maximal ideal, showing that $H_0\on{End}(M_{\gamma}^L)$ is a local ring. This shows part (1).

The argument generalizes to the setting in part (2). We can again form a maximal ideal out of the contributions from crossings, out of the copy of $H_0\on{Mor}(L,L)^{\oplus a-1}$ consisting of strictly upper triangular matrices in the kernel of \eqref{eq:id-jj} and the copy of $H_0(\on{Mor}(L,L)^{\oplus a}[-1])$. 

Let $\beta \in J$ and $\lambda \in k$. We conclude this proof by showing that the endomorphism $\alpha=\beta+\lambda \on{id}_{M_{\gamma}^L}$ of $M_{\gamma}^L$ is invertible if and only if $\lambda\neq 0$. The morphism $\alpha$ is a natural transformation between sections and thus invertible if and only if the morphism $\alpha(x):M_{\gamma}^L(x)\rightarrow M_{\gamma}^L(x)$ in $\mathcal{F}_\mathcal{T}(x)$ is an equivalence for all $x\in \on{Exit}(\mathcal{T})$. Locally, near a vertex $v$  of $\mathcal{T}$ with incident edges $e_1,\dots,e_n$, $\mathcal{F}_\mathcal{T}$ is described by the conservative functor 
\[ (\varrho_1,\dots,\varrho_n):\mathcal{F}_\mathcal{T}(v)\simeq \mathcal{V}^n_{f^*}\longrightarrow \on{RMod}_{R[t_{n-2}]}^{\times n}\simeq \prod_{i=1}^n \mathcal{F}_\mathcal{T}(e_i)\,.\] 
It thus suffices to consider the case that $x=e\in \on{Exit}(\mathcal{T})$ is a vertex of $\mathcal{T}$.

Before we proceed, let us note that the endomorphisms of $M_{\gamma}^L$ appearing in the direct summand $H_0\left(\on{Mor}(L,L)[-1]^{\oplus i^{\on{cr}}(\gamma,\gamma)}\right)$ evaluate to zero at all objects $x\in \on{Exit}(\mathcal{T})$. This is easily seen from tracing through their construction. We thus do not need to consider these endomorphisms in the following to determine whether $\alpha(x)$ is invertible.

Let now $x=e$ be an edge of $\mathcal{T}$ and choose a halfedge with incident vertex $v$. The object $M_{\gamma}^L(e)\in \mathcal{V}^n_{\phi^*}$ decomposes into the sum of the evaluations at $e$ of the local sections associated with the different segments of $\gamma$ at $v$. There are three pure segments at $v$ with an intersection with $e$, two of the second type and one of the first type. The corresponding local sections are pairwise semiorthogonal, since the morphisms between these sections arise from directed boundary intersections in $\Sigma_v$ at the boundary component intersecting $e$. From this directedness, it follows that $\alpha(e)$ is of a block upper triangular shape and hence an equivalence if and only if for any such segment $\delta$ at $v$, the restriction of $\alpha(e)$ to an endomorphism of $\bigoplus_{\delta_i=\delta} M_{\delta}^L(e)$ is an equivalence, where the sum runs over the segments $\delta_i$ with $i\in I$ of $\gamma$ at $v$ which are identical to $\delta$. 

If $\delta$ is of the first type at $v$ ending at $e$, it is obvious that the restriction of $\alpha(e)$ to $\bigoplus_{\delta_i=\delta} M_{\delta}^L(e)$ is an equivalence if and only if $\lambda\neq 0$.

Suppose that $\delta$ is a segment of the second type at $v$ ending at $e$. Let $B$ be the boundary component of $\Sigma_v$ corresponding to the chosen halfedge of $e$ (this essentially means $B$ intersects $e$, but remains correct if $e$ is a loop). We consider each segment $\delta_i=\delta$ as oriented, so that it ends at $B$. An orientation of a segment $\delta_i$, with $i\in I$, determines an orientation of $\gamma$. We can choose the map $\gamma$ into ${\bf S}\backslash M$ such that all crossings of $\gamma$ involving two segments $\delta_i=\delta_j=\delta$ with $i\neq j$ are such that they appear after $\delta_i$ in the induced orientation of $\gamma$ by $\delta_i$ (and thus also after $\delta_j$ in the induced orientation of $\gamma$ by $\delta_j$). The curve $\gamma$ induces an embedding of the segments $\{\delta_i=\delta\}_{i\in I}$ into $\Sigma_v\subset {\bf S}\backslash M$, which endows these segments with a total order, given by the clockwise order of the intersections with $B$. This total order has the following good property: an endomorphism of $\gamma$ arising from a self-crossing which evaluates non-trivially at $e$ always arises via evaluating at $e$ a morphism from $M_{\delta_i}^L$ to $M_{\delta_j}^L$ with $\delta_i=\delta_j=\delta$ and $i<j$. 

If there is a directed boundary self-intersection of $\gamma$ so that it goes from the segment $\delta_i=\delta$ to the segment $\delta_j=\delta$, we further distinguish two cases. In the first case, the directed boundary intersection follows after $\delta_i$ in the induced oriented of $\gamma$ (by $\delta_i)$. It is immediate that $i<j$. In the second case, the directed boundary is before $\delta_i$ in the induced orientation of $\gamma$. In this case, we have $j=i+1$, as otherwise there would have to be crossings appearing before $\delta_i$ in the orientation of $\gamma$. We swap the order of $i$ and $j=i+1$. 

In the constructed order of the segments $\delta_i=\delta$, we find that if $\alpha(e)$ induces a nonzero morphism from $M_{\delta_i}^L$ to $M_{\delta_j}^L$ with $\delta^i=\delta^j=\delta$, then $i<j$. This means that $\alpha(e)$ restricts to an endomorphism of $\bigoplus_{\delta_i=\delta} M_{\delta}^L(v)$ given by an upper triangular matrix, with diagonal entries $\lambda$. It follows that any edge $e$, $\alpha(e)$ is an equivalence if and only if $\lambda\neq 0$, concluding the proof.
\end{proof}

\section{Further topics}

\subsection{The Jacobian gentle algebra}\label{subsec:Jacobian}

We fix a marked surface ${\bf S}$ with an $n$-valent spanning graph $\mathcal{T}$.

\begin{definition}
Let $k$ be the commutative ground ring. The Jacobian algebra $\mathscr{J}_\mathcal{T}$ is defined as the $0$-th homology $k$-algebra $\mathscr{J}_\mathcal{T}=H_0(\mathscr{G}_\mathcal{T})$ of the relative Ginzburg algebra $\mathscr{G}_\mathcal{T}$.
\end{definition}

Consider the sub-quiver $P_\mathcal{T}$ of the quiver $\tilde{Q}_\mathcal{T}$ of \Cref{gqdef} consisting of all vertices and the subset of arrows $\{a_{v,i,i+1}\}_{v,i}$, where $i+1$ denotes the counterclockwise halfedge predecessor of the halfedge $i$, meaning that $P_\mathcal{T}$ consists of all arrows of $\tilde{Q}_\mathcal{T}$ lying in degree $0$. It is immediate from the definition of $\mathscr{G}_\mathcal{T}$ that $\mathscr{J}_\mathcal{T}\simeq kP_{\mathcal{T}}/I$, where the ideal $I=\{a_{v,i,i+1}a_{v,i-1,i}\}$ consists of certain paths of length two. With this, it is straightforward to see that the Jacobian algebra $\mathscr{J}_\mathcal{T}$ is a gentle algebra, in the sense recalled in \Cref{def:gtl}. While the gentle algebra $\mathscr{J}_\mathcal{T}$ is finite dimensional if ${\bf S}$ has no punctures, it is infinite dimensional if there are punctures, as the cycles wrapping around the puncture do not lie in the ideal $I$.

\begin{definition}\label{def:gtl}
A $k$-algebra is called a gentle algebra if it is isomorphic to the path algebra $kQ/I$ of a finite quiver $Q$ modulo an ideal $I$ generated by paths of length $2$, such that
\begin{itemize}
\item every vertex of $Q$ has at most two incoming and two outgoing arrows.
\item For each arrow $a$, there is at most one arrow $b$ such that $ab$ lies in $I$ and there is at most one arrow $b$ such that $ba\in I$.
\item For each arrow $a$, there is at most one arrow $b$ such that $ab\notin I$ and there is at most one arrow $b$ such that $ba\notin I$.
\end{itemize}
\end{definition}

The main result of this section is the following description of the homology algebra $H_*(\mathscr{G}_\mathcal{T})$. 

\begin{proposition}\label{jacprop4}
Let $R=k$ be a commutative ring and suppose that ${\bf S}$ has no punctures.
There exists an isomorphism of dg-algebras with vanishing differentials between $H_*(\mathscr{G}_\mathcal{T})$ and the tensor algebra $\mathscr{J}_\mathcal{T}\otimes_k k[t_{n-2}]$.
\end{proposition}

Other classes of Ginzburg algebras whose homology has been computed include non-relative Ginzburg algebras of acyclic quivers, see \cite{Her16}, and some classes of relative Ginzburg algebras whose homology is concentrated in degree $0$, see \cite[Section 8.2]{Wu21}.

\begin{remark}
If ${\bf S}$ has punctures, the curves $c_{e}$ associated with the edges of $\mathcal{T}$ have common infinite ends. We expect that a generalization of \Cref{homthm} which allows common infinite ends would allow to extend \Cref{jacprop4} to arbitrary surfaces.
\end{remark}

\begin{proof}[Proof of \Cref{jacprop4}.]
Let $L=k[t_{n-2}]\in \on{RMod}_{k[t_{n-2}]}$. By \Cref{geomprprop}, there exists an isomorphism of dg-algebras 
\begin{equation}\label{jacpfeq1}
\mathscr{G}_\mathcal{T}\simeq \on{End}\left(\bigoplus_e M_{c_e}^L\right)\,,
\end{equation}
so that it suffices for part (1) to construct an isomorphism between $H_*\on{End}\left(\bigoplus_e M_{c_e}^L\right)$ and $\mathscr{J}_\mathcal{T}\otimes_k k[t_{n-2}]$. 

Given two edges $e_1,e_2$ of $\mathcal{T}$, the associated pure matching curves $c_{e_1},c_{e_2}$ do not intersect, except for directed boundary intersections. Applying \Cref{homthm}, we obtain for each directed boundary intersection a direct summand 
\[ k[t_{n-2}]\simeq \on{Mor}(k[t_{n-2}],k[t_{n-2}])\subset \on{End}\left(\bigoplus_e M_{c_e}^L\right)\,.\] 
This shows that there exists an equivalence in $\on{RMod}_k$
\begin{equation}\label{jacpfeq2}
H_*\on{End}\left(\bigoplus_e M_{c_e}^L\right)\simeq H_0\on{End}\left( \bigoplus_e M_{c_e}^L\right)\otimes_k k[t_{n-2}]\,.
\end{equation}
Since there exists an isomorphism of $k$-algebras $H_0\on{End}\left( \bigoplus_e M_{c_e}^L\right)\simeq \mathscr{J}_\mathcal{T}$ by \eqref{jacpfeq1}, to conclude this proof, it suffices to show that \eqref{jacpfeq2} is also an isomorphism of dg-algebras (with vanishing differentials). For that, we need to compare the composition in the two dg-algebras. 

We call two directed boundary intersections from $c_{e_1}$ to $c_{e_2}$ and $c_{e_2}$ to $c_{e_3}$ composable if they lie at the same boundary component $B$ of ${\bf S}\backslash M$. In this case, starting at $B$, the curves are composed of identical segments such that $c_{e_1}$ shares the same segments with both $c_{e_2}$ and $c_{e_3}$ and the two curves $c_{e_2}$ and $c_{e_3}$ share at least as many segments with each other as with $c_{e_1}$. Let $a,b\in \{1,2,3\}$ with $a\leq b$. Each generating morphisms given by $t_{n-2}^i\in k[t_{n-2}]\subset \on{End}(\bigoplus_e M_{c_e}^L)$ with $i\geq 0$ associated to the boundary intersections of $c_{e_a},c_{e_b}$ at $B$, or the endomorphisms of $M_{c_{e_a}}^L$ if $a=b$, corresponds to a morphism between the sections $M_{c_{e_a}}^L,M_{c_{e_b}}^L$ which restricts for each shared segment $\delta$ of $c_{e_a},c_{e_b}$ to the endomorphism $t_{n-2}^i\in k[t_{n-2}]\simeq \on{End}(M_{\delta}^L)$. We thus see, that the composite of $t_{n-2}^i:M_{c_{e_a}}^L\rightarrow M_{c_{e_b}}^L$ with $t_{n-2}^j:M_{c_{e_b}}^L\rightarrow M_{c_{e_c}}^L$ is given by $t_{n-2}^{i+j}:M_{c_{e_a}}^L\rightarrow M_{c_{e_c}}^L$ for all $a\leq b\leq c \in\{1,2,3\}$. 

We also note that if two boundary intersections are not composable, then the corresponding endomorphisms of $\bigoplus_e M_{c_e}^L$ compose to zero.

Comparing the two sides of \eqref{jacpfeq1} in degree $0$, one obtains that dircted boundary intersection from $c_{e_1}$ to $c_{e_2}$ are in bijection with nonzero paths from $e_1$ to $e_2$ in $\mathscr{J}_\mathcal{T}$. Using that by construction $H_0(\alpha)$ is an isomorphism of $k$-algebras, we further obtain that two boundary intersections are composable if and only if the corresponding paths in $\mathscr{J}_\mathcal{T}$ are composable with nonzero composite. The description of the product of the generating morphisms of $\on{End}\left(M_{c_e}^L\right)$ given above implies that $\alpha$ commutes with the multiplications and is thus an isomorphism of dg-algebras, concluding this proof.
\end{proof}

\subsection{Derived equivalences arising from flips of the \texorpdfstring{$n$}{n}-angulation}\label{subsec:flips}

In this section, we construct derived equivalences between the global sections of the perverse schobers $\mathcal{F}_{\mathcal{T}}(R)$ arising from changing the ideal $n$-angulation by a flip of an edge. The construction is a direct generalization of the one given in \cite[Section 6.4]{Chr22} for the global sections of $\mathcal{F}_\mathcal{T}(k)$ in the case $n=3$. In the case $R=k$, we thus obtain derived equivalences between relative Ginzburg algebras. Related results on derived equivalences of (relative) Ginzburg algebras from mutations of quivers with potentials were obtained in \cite{KY11,Wu21b}. We then further describe these derived equivalences in terms of the partial geometric model.

Given a decomposition of a surface into $n$-gons ($n\geq 3$) and an edge $e$ of one of the $n$-gons which is not a loop, there are $n-2$ possible \textit{flips} of the decomposition, which are obtained by replacing $e$ by a different diagonal contained in the $(2n-2)$-gon formed by the two adjacent $n$-gons of $e$. For example, the local pictures of the two possible flips of a decomposition into $4$-gons are depicted in \Cref{flipfig}.

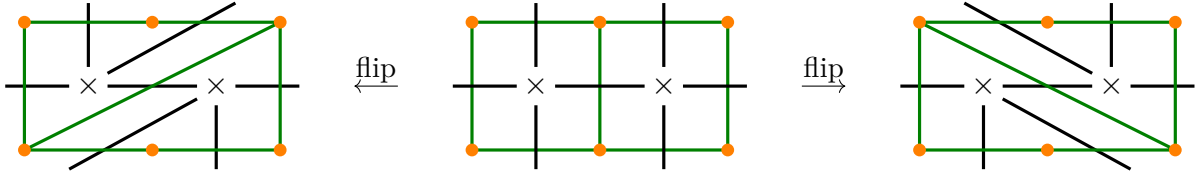
\begin{figure}[ht]
\begin{center}
\begin{tikzpicture}[scale=0.85]
  \draw[color=ao, very thick]
    (0, 0)
    (2, 2)
    (0, 2)
    (2, 0)
    (4, 0)
    (4, 2) 
    (0, 2) -- (0, 0)
    (2, 0) -- (0, 0)
    (2, 2) -- (2, 0)
    (0, 2) -- (2, 2)
    (2, 2) -- (4, 2)
    (4, 0) -- (4, 2)
    (2, 0) -- (4, 0); 
  \node (0) at (0, 0){};
  \node (1) at (0, 2){};
  \node (2) at (2, 0){};
  \node (3) at (2, 2){}; 
  \node (4) at (4, 0){};
  \node (5) at (4, 2){};  
  \fill[color=orange] (0) circle (0.1);
  \fill[color=orange] (1) circle (0.1);
  \fill[color=orange] (2) circle (0.1);
  \fill[color=orange] (3) circle (0.1); 
  \fill[color=orange] (4) circle (0.1); 
  \fill[color=orange] (5) circle (0.1);     

  \node (21) at (1,1){\large $\times$};
  \node (22) at (3,1){\large $\times$};

  \draw[very thick] (-0.3,1)--(0.7,1) (1.3,1)--(2.7,1) (3.3,1)--(4.3,1)
        (1,1.3)--(1,2.3) (1,0.7)--(1,-0.3) (3,1.3)--(3,2.3) (3,0.7)--(3,-0.3);

  \node (23) at (10,1){\large $\times$};
  \node (24) at (8,1){\large $\times$};

  \draw[very thick] (6.7,1)--(7.7,1) (8.3,1)--(9.7,1) (10.3,1)--(11.3,1)
        (9.7,1.2)--(7.7,2.3) (8,0.7)--(8,-0.3) (10,1.3)--(10,2.3) (8.3,0.8)--(10.3,-0.3);

  \node (25) at (-4,1){\large $\times$};
  \node (26) at (-6,1){\large $\times$};

  \draw[very thick] (-7.3,1)--(-6.3,1) (-5.7,1)--(-4.3,1) (-3.7,1)--(-2.7,1)
        (-6,1.3)--(-6,2.3) (-4.3,0.8)--(-6.3,-0.3) (-5.7,1.2)--(-3.7,2.3) (-4,0.7)--(-4,-0.3);

  \node (A) at (5, 1){};
  \node (B) at (6, 1){};
  \node at (5.5, 1.25){flip};
  \draw [->] (A) edge (B);  
  
  \draw[color=ao, very thick]
    (7, 0)
    (9, 2)
    (7, 2)
    (9, 0) 
    (7, 2) -- (7, 0)
    (9, 0) -- (7, 0)
    (7, 2) -- (9, 2)
    (11, 2) -- (11, 0)
    (9, 0) -- (11, 0)
    (9, 2) -- (11, 2)
    (11, 0) -- (7, 2); 
  \node (5) at (7, 0){};
  \node (6) at (7, 2){};
  \node (7) at (9, 0){};
  \node (8) at (9, 2){};   
  \node (9) at (11, 2){};
  \node (10) at (11, 0){};    
  \fill[color=orange] (6) circle (0.1);
  \fill[color=orange] (7) circle (0.1);
  \fill[color=orange] (8) circle (0.1);
  \fill[color=orange] (9) circle (0.1); 
  \fill[color=orange] (10) circle (0.1); 
  \fill[color=orange] (5) circle (0.1);   
  
  \node (C) at (-1, 1){};
  \node (D) at (-2, 1){};
  \node at (-1.5, 1.25){flip};
  \draw [->] (C) edge (D);  
  
    \draw[color=ao, very thick]
    (-3, 0)
    (-5, 2)
    (-3, 2)
    (-5, 0) 
    (-3, 2) -- (-3, 0)
    (-5, 0) -- (-3, 0)
    (-3, 2) -- (-5, 2)
    (-7, 2) -- (-7, 0)
    (-5, 0) -- (-7, 0)
    (-5, 2) -- (-7, 2)
    (-7, 0) -- (-3, 2); 
  \node (11) at (-3, 0){};
  \node (12) at (-3, 2){};
  \node (13) at (-5, 0){};
  \node (14) at (-5, 2){};   
  \node (15) at (-7, 2){};
  \node (16) at (-7, 0){};    
  \fill[color=orange] (11) circle (0.1);
  \fill[color=orange] (12) circle (0.1);
  \fill[color=orange] (13) circle (0.1);
  \fill[color=orange] (14) circle (0.1); 
  \fill[color=orange] (15) circle (0.1); 
  \fill[color=orange] (16) circle (0.1);  
\end{tikzpicture}
\end{center}
\caption{The two possible flips of a decomposition into $4$-gons (in green) at an edge and the corresponding change in the dual ribbon graphs (in black).}\label{flipfig} 
\end{figure}

Starting with a flip of a decomposition into $n$-gons of the $2n-2$-gon and passing to the dual ribbon graphs, we obtain the local description of a flip of a ribbon graph at an edge $e$. The flip of an $n$-valent ribbon graph $\mathcal{T}$ at a non-loop edge $e$ is defined by locally at $e$ changing $\mathcal{T}$ as above and away from $e$ not changing $\mathcal{T}$.

We proceed with describing flips of $n$-valent ribbon graphs in terms of contractions of ribbon graphs. It suffices to restrict to a flip at an edge, which moves the edge by one step in the counterclockwise direction. We use the following graphical notation for ribbon graphs. The edges (interior and exterior) of a ribbon graph are denoted by straight lines. The vertices are denoted by $\times$ and $\cdot$, we use $\times$ if we think of a vertex as a singularity of a not yet specified parametrized perverse schober. If an edge ends in an integer, that means that this edge represents that number of edges. 

The flip by one step is realized by two spans of contractions which are everywhere trivial, except near the edge which is being flipped, where they can be depicted as follows.

\begin{adjustwidth}{-0.8in}{-0.8in}

\begin{equation}\label{span1}
\begin{tikzcd}
{} \arrow[d, no head] & n-2 \arrow[d, no head]         \\
\times \arrow[r, no head]      & \times                         \\
n-2 \arrow[u, no head]         & {} \arrow[u, no head]
\end{tikzcd}
\quad \xleftarrow{c_1}\quad
\begin{tikzcd}
                          & {} \arrow[d, no head] &                                & n-2 \arrow[d, no head]    \\
\times \arrow[r, no head] & \cdot \arrow[r, no head]       & \cdot                          & \times \arrow[l, no head] \\
n-2 \arrow[u, no head]    &                                & {} \arrow[u, no head] &                          
\end{tikzcd}
\quad \xrightarrow{c_2} \quad
\begin{tikzcd}
                          & {} \arrow[d, no head] & n-2 \arrow[d, no head]    \\
\times \arrow[r, no head] & \cdot                          & \times \arrow[l, no head] \\
n-2 \arrow[u, no head]    & {} \arrow[u, no head] &                          
\end{tikzcd}
\end{equation}

\begin{equation}\label{span2}
\begin{tikzcd}
                          & {} \arrow[d, no head] & n-2 \arrow[d, no head]    \\
\times \arrow[r, no head] & \cdot                          & \times \arrow[l, no head] \\
n-2 \arrow[u, no head]    & {} \arrow[u, no head] &                          
\end{tikzcd}
\quad \xleftarrow{c_3} \quad
\begin{tikzcd}
                          &                                & {} \arrow[d, no head] & n-2 \arrow[d, no head]    \\
\times \arrow[r, no head] & \cdot \arrow[r, no head]       & \cdot                          & \times \arrow[l, no head] \\
n-2 \arrow[u, no head]    & {} \arrow[u, no head] &                                &                          
\end{tikzcd}
\quad \xrightarrow{c_4}\quad
\begin{tikzcd}
                       & n-1 \arrow[d, no head]    \\
\times                 & \times \arrow[l, no head] \\
n-1 \arrow[u, no head] &                          
\end{tikzcd}
\end{equation}
\end{adjustwidth}

Applying \Cref{contrprop}, we can use the above contractions of ribbon graphs to produce an equivalence between $\infty$-categories of global sections of perverse schobers parametrized by the involved ribbon graphs. To this end, we describe below a collection of parametrized perverse schobers, also based on a graphical notation. Each vertex of the underlying ribbon graph is decorated with a spherical functor, which in the following will be either $\phi^*:\on{RMod}_R\rightarrow \on{RMod}_{R[t_{n-2}]}$ or the zero functor $0:0\rightarrow \on{RMod}_{R[t_{n-2}]}$. To a vertex labeled by a spherical functor $F:\mathcal{V}_F\rightarrow \mathcal{N}_F$, the perverse schober assigns the $\infty$-category $\mathcal{V}^n_{F}$. Each incidence of an edge with a vertex in the underlying ribbon graph is decorated with a functor $\varrho_i$ from \eqref{rhoeq}, possibly composed with an autoequivalence of $\mathcal{N}_{F}$, which describes the functor $\mathcal{V}^n_{F}\rightarrow \mathcal{N}_{F}$ assigned by the perverse schober to the given incidence. Below, $T$ denotes the autoequivalence of $\on{RMod}_{R[t_{n-2}]}$ from \Cref{constr:Ginzburgschober}.

From now on we fix a marked surface ${\bf S}$ and an $\mathbb{E}_\infty$-ring spectrum $R$. Let $\mathcal{T}_1,\mathcal{T}_2$ be two $n$-valent spanning graphs of ${\bf S}$ which differ by a flip at any non-loop edge $e$ of $\mathcal{T}_1$ by one step in the counterclockwise direction. We find a collection of parametrized perverse schobers, related by equivalences and contractions, which are everywhere identical except at $e$ and its two incident vertices $v,v'$, where they are given as follows, starting with $\mathcal{F}_{\mathcal{T}_1}(R)$ and ending with $\mathcal{F}_{\mathcal{T}_2}(R)$. For better readability, we do not depict all edges below.

\begin{adjustwidth}{-1.2in}{-1.2in}

\begin{equation}\label{schobers0}
\begin{tikzcd}[row sep=12]
                                            & {} \arrow[d, "\varrho_2", no head]             & {} \arrow[d, "\varrho_n", no head]  \\
{} \arrow[r, "\varrho_3", no head] & \phi^* \arrow[r, "{(\varrho_1,T\circ\varrho_1)}", no head] & \phi^*                                         & {} \arrow[l, "\varrho_3", no head] \\
                                      & {} \arrow[u, "\varrho_n", no head]             & {} \arrow[u, "\varrho_2", no head] &                                            
\end{tikzcd}\simeq
\end{equation}

\begin{equation}\label{schobers1}
 \simeq
\begin{tikzcd}[row sep=12]
                                            & {} \arrow[d, "\varrho_2", no head]       & {} \arrow[d, "T^{-1}\circ \varrho_n", no head]  \\
{} \arrow[r, "\varrho_3", no head] & \phi^* \arrow[r, "{(\varrho_1,\varrho_1)}", no head] & \phi^*                                                & {} \arrow[l, "T^{-1}\circ \varrho_3", no head] \\
                                     & {} \arrow[u, "\varrho_n", no head]       & {} \arrow[u, "T^{-1}\circ \varrho_2", no head] &                                                   
\end{tikzcd}
 \xlongleftarrow{~(c_1)_*~} 
\begin{tikzcd}[row sep=12]
                                            &                                                   & {} \arrow[d, "\varrho_2", no head]                         &                                                                     & {} \arrow[d, "T^{-1}\circ \varrho_{n-1}", no head]  \\
{} \arrow[r, "\varrho_2", no head] & \phi^* \arrow[r, "{(\varrho_1,\varrho_3)}", no head] & {0} \arrow[r, "{(\varrho_1,\varrho_1)}", no head] & {0} \arrow[r, "{(\varrho_3,\varrho_1)}", no head] & \phi^*                                                     & {} \arrow[l, "T^{-1}\circ\varrho_2", no head] \\
                                  & {} \arrow[u, "\varrho_{n-1}", no head]   &                                                                     & {} \arrow[u, "T^{-1}\circ\varrho_2", no head]                   &                                                         &                                                  
\end{tikzcd}
\simeq
\end{equation}

\begin{equation}\label{schobers2} 
\simeq
\begin{tikzcd}[row sep=12]
                                            &                                                       & {} \arrow[d, "{\varrho_1[1]}", no head]                    &                                                                     & {} \arrow[d, "T^{-1}\circ \varrho_{n-1}", no head]  \\
{} \arrow[r, "\varrho_2", no head] & \phi^* \arrow[r, "{(\varrho_1,\varrho_2[1])}"', no head] & {0} \arrow[r, "{(\varrho_3,\varrho_1)}", no head] & {0} \arrow[r, "{(\varrho_3,\varrho_1)}", no head] & \phi^*                                                    & {} \arrow[l, "T^{-1}\circ\varrho_2", no head] \\
                                     & {} \arrow[u, "\varrho_{n-1}", no head]       &                                                                     & {} \arrow[u, "T^{-1}\circ\varrho_2", no head]                   &                                                        &                                                  
\end{tikzcd}
\xlongrightarrow{~(c_2)_*~}
\begin{tikzcd}[row sep=12]
                                            &                                                        & {} \arrow[d, "{\varrho_1[1]}", no head]                    & {} \arrow[d, "T^{-1}\circ \varrho_{n-1}", no head] \\
{} \arrow[r, "\varrho_2", no head] & \phi^* \arrow[r, "{(\varrho_1,\varrho_2[1])}"', no head]  & {0} \arrow[r, "{(\varrho_4,\varrho_1)}", no head] & \phi^*                                                    & {} \arrow[l, "T^{-1}\circ\varrho_2", no head] \\
      & {} \arrow[u, "\varrho_{n-1}", no head] & {} \arrow[u, "T^{-1}\circ\varrho_3"', no head]                  &                                                        &                                                   
\end{tikzcd}
\simeq 
\end{equation}

\begin{equation}\label{schobers3}
\simeq
\begin{tikzcd}[row sep=12]
                                            &                                                        & {} \arrow[d, "{\varrho_4[1]}", no head]                       & {} \arrow[d, "T^{-1}\circ \varrho_{n-1}", no head] \\
{} \arrow[r, "\varrho_2", no head] & \phi^* \arrow[r, "{(\varrho_1,\varrho_1[3])}", no head]   & {0} \arrow[r, "{(\varrho_3[2],\varrho_1)}", no head] & \phi^*                                                    & {} \arrow[l, "T^{-1}\circ \varrho_2", no head] \\
                                      & {} \arrow[u, "\varrho_{n-1}", no head] & {} \arrow[u, "{T^{-1}\circ \varrho_2[2]}"', no head]               &                                                        &                                                    
\end{tikzcd}
\xlongleftarrow{~(c_3)_*~}
\begin{tikzcd}[row sep=12]
                                            &                                                      &                                                         & {} \arrow[d, "{\varrho_3[1]}", no head]                    & {} \arrow[d, "T^{-1}\circ \varrho_{n-1}", no head]  \\
{} \arrow[r, "\varrho_2", no head] & \phi^* \arrow[r, "{(\varrho_1,\varrho_1[3])}", no head] & {0}                                  & {0} \arrow[l, "{(\varrho_3,\varrho_1)}", no head] & \phi^* \arrow[l, "{(\varrho_2[2],\varrho_1)}"', no head]   & {} \arrow[l, "T^{-1}\circ \varrho_2", no head] \\
                                    & {} \arrow[u, "\varrho_{n-1}", no head]      & {} \arrow[u, "{T^{-1}\circ \varrho_2[2]}", no head] &                                                                     &                                                        &                                                   
\end{tikzcd}
\simeq
\end{equation}

\begin{equation}\label{schobers4}
\simeq
\begin{tikzcd}[row sep=12]
                                            &                                                      &                                                        & {} \arrow[d, "{\varrho_1[2]}", no head]                                                                        & {} \arrow[d, "T^{-1}\circ \varrho_{n-1}", no head] &  \\
{} \arrow[r, "\varrho_2", no head] & \phi^* \arrow[r, "{(\varrho_1,\varrho_3[3])}", no head] & {0}                                  & {0} \arrow[r, "{(\varrho_3[2],\varrho_1)}", no head] \arrow[l, "{(\varrho_2[1],\varrho_2)}", no head] & \phi^*                                                    & {} \arrow[l, "T^{-1}\circ \varrho_2", no head] \\
                                      & {} \arrow[u, "\varrho_{n-1}", no head]      & {} \arrow[u, "{T^{-1}\circ\varrho_1[3]}", no head] &                                                                                                                         &                                                        &                                                   
\end{tikzcd}
\xlongrightarrow{~(c_4)_*~}
\begin{tikzcd}[row sep=12, column sep=large]
                                             & {} \arrow[d, "{\varrho_3[3]}", no head]       & {} \arrow[d, "{\varrho_1[2]}", no head]        &                                                            \\
{} \arrow[r, "{\varrho_{n}[3]}", no head] & \phi^*                                                    & \phi^* \arrow[l, "{(\varrho_2[1],\varrho_2)}"', no head]    & {} \arrow[l, "{T^{-1}\circ\varrho_{n}[2]}", no head] \\
                                                     & {} \arrow[u, "{T^{-1}\circ\varrho_1[3]}", no head] & {} \arrow[u, "{T^{-1}\circ \varrho_3[2]}", no head] &                                                      
\end{tikzcd}
\simeq 
\end{equation}

\begin{equation}\label{schobers5}
\simeq 
\begin{tikzcd}[row sep=12]
                                      & {} \arrow[d, "\varrho_3", no head]              & {} \arrow[d, "T\circ \varrho_1", no head] &                                             \\
{} \arrow[r, "\varrho_n", no head] & \phi^* \arrow[r, "{(\varrho_2,T\circ \varrho_2)}", no head] & \phi^*                                                & {} \arrow[l, "\varrho_n", no head] \\
                                            & {} \arrow[u, "T^{-1}\circ \varrho_1", no head]       & {} \arrow[u, "\varrho_3", no head]        
\end{tikzcd}
\end{equation}

\end{adjustwidth}

The above equivalences of parametrized perverse schober are each nontrivial only at one or two vertices with label $0$, where they are each given by a power of the paracyclic twist functor $T_{\mathcal{V}^i_{0}}$ with $i=3,4$, see \Cref{sec:defschober}, except for the equivalence between the parametrized perverse schober in \eqref{schobers0} and the left parametrized perverse schober in \eqref{schobers1} and the equivalence between the right parametrized perverse schober of \eqref{schobers4} and the parametrized perverse schober of \eqref{schobers5}. The former is nontrivial only at the right vertex labeled $\phi^*$, where it is given by the autoequivalence $\epsilon$ of $\mathcal{V}^n_{\phi^*}$, defined by restricting on each of the $n-1$ components $\on{RMod}_{R[t_{n-2}]}$ of the semiorthogonal decomposition to $T$ and on the component $\on{RMod}_R$ of the semiorthogonal decomposition to the identity functor. The latter equivalence of parametrized perverse schobers is nontrivial at the three objects of $\on{Exit}(\mathcal{T}')$ corresponding to $e$ and the two incident vertices. At the left vertex, the equivalence is given by $[3]$, at the right vertex by $\epsilon^{-1} \circ [2]$ and at $e$ by $[-2]$.

We thus obtain an equivalence 
\begin{equation}\label{eq:mueq}
 \mu_e^{1}:\mathcal{H}(\mathcal{T}_1,\mathcal{F}_{\mathcal{T}_1}(R))\xlongrightarrow{\simeq} \mathcal{H}(\mathcal{T}_2,\mathcal{F}_{\mathcal{T}_2}(R))\,,
\end{equation}
which we call the mutation equivalence. We denote the repeated mutation by $\mu_e^i\coloneqq (\mu_e^1)^i$ for $i\in \mathbb{Z}$.

In the remainder of this section, we give a geometric description of $\mu_e^1$ in terms of a homeomorphism 
\[ D_e\left(\frac{1}{n-1}\pi\right):{\bf S}\backslash M\rightarrow {\bf S}\backslash M\]
which we now describe. 

Let $v,v'$ be the two vertices of $\mathcal{T}_1$ and $\mathcal{T}_2$ incident to $e$. Recall from \Cref{ssurfrem}, that $\Sigma_{\mathcal{T}_1}$ and $\Sigma_{\mathcal{T}_2}$ are embedded in ${\bf S}\backslash M$. The two subspaces $\Sigma_v\cup_{\Sigma_{\mathcal{T}_1}} \Sigma_{v'}$ and $\Sigma_v\cup_{\Sigma_{\mathcal{T}_2}} \Sigma_{v'}$ of ${\bf S}\backslash M$ are both clearly homeomorphic to the closed unit disc in $\mathbb{R}^2$ with $2n-2$ intervals removed from the boundary. For concreteness, we arrange the homeomorphism so that it maps $v$ to $(-\frac{1}{4},0)$ and $v'$ to $(\frac{1}{4},0)$. 

We set $D_e(\frac{1}{n-1}\pi)$ to be any homeomorphism that 
\begin{itemize}
\item restricts to a homeomorphisms between $\Sigma_v\cup_{\Sigma_{\mathcal{T}_1}} \Sigma_{v'}$ and $\Sigma_v\cup_{\Sigma_{\mathcal{T}_2}} \Sigma_{v'}$ which under the above homeomorphisms with the unit disc is an automorphism of the disc which keeps the boundary fixed and rotates the convex hull of $v$ and $v'$ by $\frac{1}{n-1}\pi$.
\item is constant on the remainder of ${\bf S}\backslash M$.
\end{itemize}

\begin{figure}[ht]
\begin{center}
\begin{tikzpicture}

\draw[very thick, color=ao]  (1.732,1)
  arc[start angle=30,end angle=60, radius=2]; 
\draw[very thick, color=ao]  (1.732,-1)
  arc[start angle=330,end angle=300, radius=2]; 
\draw[very thick, color=ao]  (-1,1.732)
  arc[start angle=120,end angle=150, radius=2]; 
\draw[very thick, color=ao]  (-1,-1.732)
  arc[start angle=240,end angle=210, radius=2]; 
\draw[very thick, color=ao, densely dotted] (1.732,1)
  arc[start angle=30, end angle=-30, radius=2];
\draw[very thick, color=ao, densely dotted] (-1.732,-1)
  arc[start angle=210, end angle=150, radius=2];
\draw[very thick, color=ao, densely dotted]  (1,1.732)
  arc[start angle=60,end angle=120, radius=2]; 
\draw[very thick, color=ao, densely dotted]  (-1,-1.732)
  arc[start angle=240,end angle=300, radius=2]; 
  
  \draw[color=blue!255, very thick] plot [smooth] coordinates {(1.4142,1.4142) (0.5,0)};
  \draw[color=blue!255, very thick] plot [smooth] coordinates {(1.4142,-1.4142) (0.5,0)};
  \draw[color=blue!255, very thick] plot [smooth] coordinates {(-1.4142,1.4142) (-0.5,0)};
  \draw[color=blue!255, very thick] plot [smooth] coordinates {(-1.4142,-1.4142) (-0.5,0)};
  \node (0) at (0.5,0){};   
  \node (1) at (-0.5,0){};
  \node (3) at (1.4142,1.4142){};
  \node at (1.2,0.6){$\delta_3'$};
  \node (4) at (1.4142,-1.4142){};
  \node at (1.2,-0.6){$\delta_2'$};
  \node at (0,0.2){$e$};
  \node (6) at (-1.4142,1.4142){}; 
  \node at (-1.2,0.6){$\delta_2$};  
  \node (7) at (-1.4142,-1.4142){};
  \node at (-1.2,-0.67){$\delta_3$};   
  \node (14) at (0.9,0){$v'$};
  \node (15) at (-0.9,0){$v$};
  \fill (0) circle (0.1);
  \fill (1) circle (0.1);
  \draw[very thick]
  (0.5,0) -- (-0.5,0);
\end{tikzpicture} 
\hspace{5em}
\begin{tikzpicture}
\draw[very thick, color=ao]  (1.732,1)
  arc[start angle=30,end angle=60, radius=2]; 
\draw[very thick, color=ao]  (1.732,-1)
  arc[start angle=330,end angle=300, radius=2]; 
\draw[very thick, color=ao]  (-1,1.732)
  arc[start angle=120,end angle=150, radius=2]; 
\draw[very thick, color=ao]  (-1,-1.732)
  arc[start angle=240,end angle=210, radius=2]; 
\draw[very thick, color=ao, densely dotted] (1.732,1)
  arc[start angle=30, end angle=-30, radius=2];
\draw[very thick, color=ao, densely dotted] (-1.732,-1)
  arc[start angle=210, end angle=150, radius=2];
\draw[very thick, color=ao, densely dotted]  (1,1.732)
  arc[start angle=60,end angle=120, radius=2]; 
\draw[very thick, color=ao, densely dotted]  (-1,-1.732)
  arc[start angle=240,end angle=300, radius=2]; 
  
  \draw[color=blue!255, very thick] plot [smooth] coordinates {(-1.4142,1.4142) (-0.6,-0.2)  (0,-0.5)};
  \draw[color=blue!255, very thick] plot [smooth] coordinates {(1.4142,-1.4142) (0.6,0.2)  (0,0.5)};
  \draw[color=blue!255, very thick] plot [smooth] coordinates {(1.4142,1.4142) (0,0.5)};
  \draw[color=blue!255, very thick] plot [smooth] coordinates {(-1.4142,-1.4142) (0,-0.5)};
  \node (1) at (0,0.5){};   
  \node (0) at (0,-0.5){};
  \node (3) at (1.4142,1.4142){};
  \node at (0.6,1.25){$\delta_3'$};
  \node (4) at (1.4142,-1.4142){};
  \node at (1.4,-0.6){$\delta_2'$};
  \node at (0.2,0){$e$};
  \node (6) at (-1.4142,1.4142){}; 
  \node at (-0.6,-1.2){$\delta_3$};  
  \node (7) at (-1.4142,-1.4142){};
  \node at (-1.25,0.6){$\delta_2$};
  \node (14) at (0,0.9){$v'$};
  \node (15) at (0,-0.9){$v$};
  \fill (0) circle (0.1);
  \fill (1) circle (0.1);
  \draw[very thick]
  (0,0.5) -- (0,-0.5);
\end{tikzpicture}
\end{center}
\caption{The $4$-gon with two vertices $v,v'$, the edge $e$ and some matching curves (in blue) on the left and their image under $D_e(\frac{1}{2}\pi)$ on the right.}\label{Drotfig}
\end{figure}
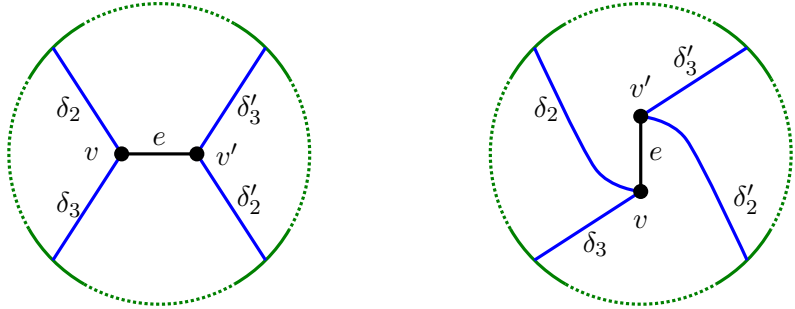

\begin{theorem}\label{mutthm}
Let $\mathcal{T}_1$ be an $n$-valent spanning graph of ${\bf S}$ and let $\mathcal{T}_2$ be the $n$-valent spanning graph of ${\bf S}$ obtained by a flip of an edge $e$ of $\mathcal{T}$ which is not a loop by one step in the counterclockwise direction. Let $(\gamma,L)$ be a matching datum in ${\bf S}\backslash M$, such that $\gamma$ is pure. There exists a matching datum $(D_e(\frac{1}{n-1}\pi)\circ \gamma,L)$ and an equivalence in $\mathcal{H}(\mathcal{T}_2,\mathcal{F}_{\mathcal{T}_2}(R))$
\begin{equation}\label{steq1} 
\mu_e^{1}(M^L_{\gamma})\simeq M^L_{D_e(\frac{1}{n-1}\pi)\circ \gamma}[m]\,,
\end{equation}
where 
\begin{itemize}
\item $m=1$ if the first or last segment of $\gamma$ is of the first type and lies at $v$, exiting $v$ through the edge $e$. 
\item $m=0$ if $\gamma$ is not as above. This includes all cases in which $\gamma$ is regular, i.e.~all its endpoints lie in $\partial {\bf S}\backslash M$.
\end{itemize}
\end{theorem}

\begin{remark}
Note that the matching curve $D_e(\frac{1}{n-1}\pi)\circ \gamma$ in \Cref{mutthm} is not necessarily pure. We expect that \Cref{mutthm} can be extended to the global sections associated to arbitrary matching data.
\end{remark}

\begin{proof}[Proof of \Cref{mutthm}.]
We note that it suffices to show that 
\begin{equation}\label{steq}
\mu_e^{1}(M^L_{\gamma})\simeq M^L_{D_e(\frac{1}{n-1}\pi)\circ \gamma}[m]
\end{equation}
 for each pure matching curve $\gamma$ in the $(2n-2)$-gon. The theorem then follows, using that $\mu_e^1$ and the object $M^L_{\gamma}$ associated to the matching curve $\gamma$ in ${\bf S}$ are defined via gluing. This leaves finitely many cases, which are directly verified by tracing through the equivalences between the global sections of the parametrized perverse schobers defining $\mu_e^1$. 

To help the reader appreciate the appearance or absence of suspensions in the theorem, without having to trace through the definition of $\mu_e^1$, we offer the following hints.

To see the absence of suspensions for pure, regular segments, one simply observes that the values of the corresponding sections at the external edges of the $(2n-2)$-gon remain unchanged under $\mu_e^1$ and in particular do not acquire any suspensions. 

Denote the vertices of $\mathcal{T}_1$ incident to $e$ by $v,v'$. Let $\gamma$ be a pure matching curve starting at $v$ (for $v'$ the argument is analogous). One has for $Q\in \on{RMod}_R$ an equivalence
\[ M_{\gamma}^{L}(v)\simeq \Big( Q\xrightarrow{!}\phi^*(Q)\xrightarrow{\on{id}}\dots\xrightarrow{\on{id}}\phi^*(Q)\Big)\in \mathcal{V}^n_{\phi^*}\] (where $M_{\gamma}^L$ is the section of the schober \eqref{schobers0}) and to verify the appearance of the suspension, one computes 
\[ \mu_e^1(M_{\gamma}^L)(v)\simeq \Big( Q\xrightarrow{!}\phi^*(Q)\xrightarrow{\on{id}}\dots\xrightarrow{\on{id}}\phi^*(Q)\rightarrow 0\Big)\in \mathcal{V}^n_{\phi^*}\,,\]
(where $\mu_e^1(M_{\gamma}^L)$ is a section of the schober \eqref{schobers5}), which is the suspension of \eqref{mdieq0} (for $i=2$). 
\end{proof}

\begin{remark}
Descriptions of derived equivalences in terms of rotations of a disc by fractions of $\pi$ also appear in \cite[Prop. 3.5.1]{DJL21} and for gentle algebras associated to unpunctured $n$-gons in \cite[Thm. 5.1]{OPS18}. A similar geometric description for $\mu_{e}^{n-1}$ in the case $n=3$ also appears in \cite{Qiu16}. The automorphism $\mu_e^{n-1}$ acts objectwise as the cotwist functor of the $n$-spherical object $M_{\gamma_e}$. 
\end{remark}

\bibliography{biblio} 

\begin{thebibliography}{ABCJP10}

\bibitem[ABCJP10]{ABCP10}
I.~Assem, T.~Brüstle, G.~Charbonneau-Jodoin, and P.~Plamondon.
\newblock Gentle algebras arising from surface triangulations.
\newblock {\em Algebra Number Theory}, 4(2):201--229, 2010.

\bibitem[Abo11]{Abo11}
M.~Abouzaid.
\newblock A cotangent fibre generates the {F}ukaya category.
\newblock {\em Adv. Math.}, 228(2):894--939, 2011.

\bibitem[AL17]{AL17}
R.~Anno and T.~Logvinenko.
\newblock Spherical {DG}-functors.
\newblock {\em J. Eur. Math. Soc.}, 19:2577--2656, 2017.

\bibitem[Ami09]{Ami09}
C.~Amiot.
\newblock Cluster categories for algebras of global dimension 2 and quivers
  with potential.
\newblock {\em Ann. Inst. Fourier (Grenoble)}, 59(6):2525--2590, 2009.

\bibitem[BS19]{BS19}
K.~Baur and R.~Simões.
\newblock A geometric model for the module category of a gentle algebra.
\newblock {\em Int. Math. Res. Not. IMRN}, 2021(15):11357--11392, 2019.

\bibitem[BT09]{BT09}
A.~Buan and H.~Thomas.
\newblock Coloured quiver mutation for higher cluster categories.
\newblock {\em Adv. Math.}, 222(3):971--995, 2009.

\bibitem[Chr22a]{Chr22b}
M.~Christ.
\newblock Cluster theory of topological {F}ukaya categories.
\newblock {\em \href{https://arxiv.org/abs/2209.06595}{{\sf
  arXiv:2209.06595}}}, 2022.

\bibitem[Chr22b]{Chr22}
M.~Christ.
\newblock Ginzburg algebras of triangulated surfaces and perverse schobers.
\newblock {\em Forum Math. Sigma}, 10:e8, 2022.

\bibitem[Chr22c]{Chr20}
M.~Christ.
\newblock Spherical monadic adjunctions of stable infinity categories.
\newblock {\em Int. Math. Res. Not. IMRN, rnac187}, 2022.

\bibitem[DJL21]{DJL21}
T.~Dyckerhoff, G.~Jasso, and Y.~Lekili.
\newblock The symplectic geometry of higher {A}uslander algebras: Symmetric
  products of disks.
\newblock {\em Forum Math. Sigma}, 9(e10), 2021.

\bibitem[DK15]{DK15}
T.~Dyckerhoff and M.~Kapranov.
\newblock Crossed simplicial groups and structured surfaces.
\newblock {\em Stacks and categories in geometry, topology, and algebra,
  Contemp. Math., 643, Amer. Math. Soc., Providence, RI}, pages 37--110, 2015.

\bibitem[DK18]{DK18}
T.~Dyckerhoff and M.~Kapranov.
\newblock Triangulated surfaces in triangulated categories.
\newblock {\em J. Eur. Math. Soc. (JEMS)}, 20(6):1473--1524, 2018.

\bibitem[DKSS21]{DKSS19}
T.~Dyckerhoff, M.~Kapranov, V.~Schechtman, and Y.~Soibelman.
\newblock Spherical adjunctions of stable $\infty$-categories and the relative
  {S}-construction.
\newblock {\em \href{https://arxiv.org/pdf/2106.02873.pdf}{{\sf
  arXiv:2106.02873}}}, 2021.

\bibitem[FST08]{FST08}
S.~Fomin, M.~Shapiro, and D.~Thurston.
\newblock Cluster algebras and triangulated surfaces. {P}art {I}: {C}luster
  complexes.
\newblock {\em Acta Math.}, 201(1):83--146, 2008.

\bibitem[FT18]{FT12}
S.~Fomin and D.~Thurston.
\newblock Cluster algebras and triangulated surfaces. {P}art {II}: {L}ambda
  lengths.
\newblock {\em Memoirs of Amer. Math. Soc.}, 225(1223), 2018.

\bibitem[Gin06]{Gin06}
V.~Ginzburg.
\newblock Calabi-{Y}au algebras.
\newblock {\em \href{https://arxiv.org/abs/math/0612139}{{\sf
  arXiv:math/0612139}}}, 2006.

\bibitem[Her16]{Her16}
S.~Hermes.
\newblock Minimal model of {G}inzburg algebras.
\newblock {\em J. Algebra}, 459:389--436, 2016.

\bibitem[HKK17]{HKK17}
F.~Haiden, L.~Katzarkov, and M.~Kontsevich.
\newblock Flat surfaces and stability structures.
\newblock {\em Publ. Math. Inst. Hautes Études Sci,}, 126(1):247–318, 2017.

\bibitem[IQZ20]{IQZ20}
A.~Ikeda, Y.~Qiu, and Y.~Zhou.
\newblock Graded decorated marked surfaces: {C}alabi-{Y}au-{$\mathbb{X}$}
  categories of gentle algebras.
\newblock {\em \href{https://arxiv.org/abs/2006.00009}{{\sf
  arXiv:2006.00009}}}, 2020.

\bibitem[Kel12]{Kel12}
B.~Keller.
\newblock Cluster algebras and derived categories.
\newblock {\em \href{https://arxiv.org/abs/1202.4161}{{\sf arXiv:1202.4161}}},
  2012.

\bibitem[KQ15]{KQ15}
A.~King and Y.~Qiu.
\newblock Exchange graphs and {E}xt quivers.
\newblock {\em Adv. Math.}, 285:1106–1154, 2015.

\bibitem[KS08]{KS08}
M.~Kontsevich and Y.~Soibelmann.
\newblock Stability structures, motivic {D}onaldson-{T}homas invariants and
  cluster transformations.
\newblock {\em \href{https://arxiv.org/abs/0811.2435}{{\sf arXiv:0811.2435}}},
  2008.

\bibitem[KS14]{KS14}
M.~Kapranov and V.~Schechtman.
\newblock Perverse {S}chobers.
\newblock {\em \href{https://arxiv.org/abs/1411.2772}{{\sf arXiv:1411.2772}}},
  2014.

\bibitem[KY11]{KY11}
B.~Keller and D.~Yang.
\newblock Derived equivalences from mutations of quivers with potential.
\newblock {\em Adv. Math.}, 226(3):2118–2168, 2011.

\bibitem[LF09]{Lab09}
D.~Labardini-Fragoso.
\newblock Quivers with potentials associated to triangulated surfaces.
\newblock {\em Proc. Lond. Math. Soc.}, 98(3):797--839, 2009.

\bibitem[LP20]{LP20}
Y.~Lekili and A.~Polishchuk.
\newblock Derived equivalences of gentle algebras via {F}ukaya categories.
\newblock {\em Math. Ann.}, 376(5):187–225, 2020.

\bibitem[Lur09]{HTT}
J.~Lurie.
\newblock {\em Higher {T}opos {T}heory}.
\newblock Annals of Mathematics Studies, vol. 170, Princeton University Press,
  Princeton, NJ. MR 2522659, 2009.

\bibitem[Lur17]{HA}
J.~Lurie.
\newblock {\em Higher {A}lgebra}.
\newblock preprint, available on the author's
  {\href{https://www.math.ias.edu/~lurie/papers/HA.pdf}{{\sf webpage}}}, 2017.

\bibitem[Lur23]{Ker}
J.~Lurie.
\newblock {\em Kerodon}.
\newblock preprint, available at {\href{https://kerodon.net/}{{\sf
  https://kerodon.net/}}}, Version of January, 2023.

\bibitem[OPS18]{OPS18}
S.~Opper, P.~Plamondon, and S.~Schroll.
\newblock A geometric model of the derived category of a gentle algebra.
\newblock {\em \href{https://arxiv.org/abs/1801.09659}{{\sf
  arXiv:1801:09659}}}, 2018.

\bibitem[Qiu16]{Qiu16}
Y.~Qiu.
\newblock Decorated marked surfaces: spherical twists versus braid twists.
\newblock {\em Math. Ann. 365, 595-633}, 2016.

\bibitem[Qiu18]{Qiu18}
Y.~Qiu.
\newblock Decorated marked surfaces (part {B}): topological realizations.
\newblock {\em Math. Z.}, 288(1-2):39–53, 2018.

\bibitem[QZ19]{QZ19}
Y.~Qiu and Y.~Zhou.
\newblock Decorated marked surfaces {II}: {I}ntersection numbers and dimensions
  of {H}oms.
\newblock {\em Trans. Amer. Math. Soc.}, 372(1):635–660, 2019.

\bibitem[Sei08]{Sei08}
P.~Seidel.
\newblock {\em Fukaya Categories and Picard-Lefschetz Theory}.
\newblock European Mathematical Soceity, 2008.

\bibitem[Smi15]{Smi15}
I.~Smith.
\newblock Quiver {A}lgebras as {F}ukaya {C}ategories.
\newblock {\em Geom. Topol.}, 19(5):2557--2617, 2015.

\bibitem[Smi21]{Smi21}
I.~Smith.
\newblock Floer theory of higher rank quiver 3-folds.
\newblock {\em Commun. Math. Phys.}, 388(3):1181--1203, 2021.

\bibitem[SW23]{SW23}
I.~Smith and M.~Wemyss.
\newblock Double bubble plumbings and two-curve flops.
\newblock {\em Sel. Math., New Ser.}, 29(2):62, 2023.
\newblock Id/No 29.

\bibitem[Wu23a]{Wu21b}
Y.~Wu.
\newblock Categoriﬁcation of ice quiver mutation.
\newblock {\em Math. Z.}, 304(11), 2023.

\bibitem[Wu23b]{Wu21}
Y.~Wu.
\newblock Relative cluster categories and {H}iggs categories.
\newblock {\em Adv. Math.}, 424:109040, 2023.

\end{thebibliography}
\bibliographystyle{alpha}
~\\
\textsc{Fachbereich Mathematik, Universität Hamburg, Bundesstraße 55, 20146 Hamburg, Germany}

\textit{Email address:} \texttt{merlin.christ@uni-hamburg.de}
\end{document}